\numberwithin{equation}{subsection}
\newtheorem{theorem}[equation]{Theorem}
\newtheorem*{thm}{Theorem}
\newtheorem{conjecture}[equation]{Conjecture}
\newtheorem{corollary}[equation]{Corollary}
\newtheorem{lemma}[equation]{Lemma}
\newtheorem{proposition}[equation]{Proposition}
\theoremstyle{definition}
\newtheorem{caution}[equation]{Caution}
\newtheorem{construction}[equation]{Construction}
\newtheorem{convention}[equation]{Convention}
\newtheorem{definition}[equation]{Definition}
\newtheorem{example}[equation]{Example}
\newtheorem{fake-assumption}[equation]{Fake-assumption}
\newtheorem{notation}[equation]{Notation}
\newtheorem{remark}[equation]{Remark}
\newtheorem{situation}[equation]{Situation}
\newtheorem{hypo}[equation]{Hypothesis}
\def\bbk{\mathbf{k}}
\def\bbv{\mathbf{v}}
\def\bbx{\mathbf{x}}
\def\bby{\mathbf{y}}
\def\bbF{\mathbf{F}}
\def\bbN{\mathbf{N}}
\def\FF{\mathbb{F}}
\def\NN{\mathbb{N}}
\def\QQ{\mathbb{Q}}
\def\RR{\mathbb{R}}
\def\ZZ{\mathbb{Z}}
\def\calA{\mathcal{A}}
\def\calE{\mathcal{E}}
\def\calF{\mathcal{F}}
\def\calI{\mathcal{I}}
\def\calJ{\mathcal{J}}
\def\calO{\mathcal{O}}
\def\calR{\mathcal{R}}
\def\calS{\mathcal{S}}
\def\gothb{\mathfrak{b}}
\def\gothc{\mathfrak{c}}
\def\gothd{\mathfrak{d}}
\def\gothe{\mathfrak{e}}
\def\gothh{\mathfrak{h}}
\def\gothm{\mathfrak{m}}
\def\gothp{\mathfrak{p}}
\def\gothq{\mathfrak{q}}
\def\gothr{\mathfrak{r}}
\def\gothu{\mathfrak{u}}
\def\gothv{\mathfrak{v}}
\def\gothE{\mathfrak{E}}
\def\gothI{\mathfrak{I}}
\def\gothN{\mathfrak{N}}
\def\gothR{\mathfrak{R}}
\def\gothS{\mathfrak{S}}
\newcommand{\bs}{\backslash}
\newcommand{\Def}{\stackrel{\mathrm{def}}=}
\newcommand{\serie}[2]{{#11},\dots,{#1{#2}}}
\newcommand{\seriezero}[2]{{#10}, \dots, {#1{#2}}}
\newcommand{\vectzero}[2]{\begin{pmatrix}
{#10} \\ \vdots \\ {#1{#2}}
\end{pmatrix}}
\newcommand{\rar}{\rightarrow}
\newcommand{\lrar}{\longrightarrow}
\newcommand{\Rar}{\Rightarrow}
\newcommand{\LRar}{\Leftrightarrow}
\newcommand{\inj}{\hookrightarrow}
\newcommand{\surj}{\twoheadrightarrow}
\newcommand{\isom}{\stackrel \sim \rar}
\newcommand{\map}[2]{\stackrel{#2}{#1}}
\newcommand{\Fil}{\mathrm{Fil}}
\newcommand{\Ker}{\mathrm{Ker}\,}
\newcommand{\Mat}{\mathrm{Mat}}
\renewcommand{\mod}{\mathrm{\;mod\;}}
\newcommand{\rank}{\mathrm{rank}\,}
\renewcommand{\log}{\mathrm{log}}
\newcommand{\alg}{\mathrm{alg}}
\newcommand{\Art}{\mathrm{Art}}
\newcommand{\Gal}{\mathrm{Gal}}
\newcommand{\sep}{\mathrm{sep}}
\newcommand{\Swan}{\mathrm{Swan}}
\newcommand{\geom}{\mathrm{geom}}
\newcommand{\Spec}{\mathrm{Spec}\,}
\renewcommand{\sp}{\mathrm{sp}}
\newcommand{\Spm}{\mathrm{Spm}}
\newcommand{\Fp}{\FF_p}
\newcommand{\OK}{{\calO_K}}
\newcommand{\Zp}{\ZZ_p}
\newcommand{\gen}{\mathrm{gen}}
\begin{document}

\title{On Ramification Filtrations and $p$-adic Differential Equations, II: mixed characteristic case}

\author{Liang Xiao \\ Department of Mathematics, Room 2-090 \\ Massachusetts Institute of Technology \\ 77 Massachusetts Avenue \\ Cambridge, MA 02139 \\
\texttt{lxiao@mit.edu}}

\date{Version of July 24, 2011}

\maketitle

\begin{abstract}
Let $K$ be a complete discrete valuation field of mixed characteristic (0, $p$) with possibly imperfect residue field. We prove a Hasse-Arf theorem for the arithmetic ramification filtrations \cite{AS-cond1} on $G_K$, except possibly in the absolutely unramified and non-logarithmic case, or $p=2$ and logarithmic case.  As an application, we obtain a Hasse-Arf theorem for filtrations on finite flat group schemes over $\calO_K$ \cite{AM-sous-groupes, Hattori-tame-char}.
\end{abstract}

\tableofcontents

\setcounter{section}{-1}
\section{Introduction}

\subsection{Main results}
\label{S:intro}

This paper is a sequel to \cite{Me-condI}, in which we proved a comparison theorem between the arithmetic ramification conductors defined by Abbes and Saito \cite{AS-cond1} and the differential ramification conductors defined by Kedlaya \cite{KSK-Swan1}.  In that paper, a key consequence is that one can use the Hasse-Arf theorem for the differential conductors to obtain a Hasse-Arf theorem for the arithmetic conductors in the equal characteristic $p>0$ case.

In this paper, we  combine the ideas from \cite{KSK-Swan1, Me-condI} with the techniques of nonarchimedean differential modules in \cite{KSK-me-pDE-TRP}, to give a proof of the following Hasse-Arf theorem for the arithmetic ramification conductors in the mixed characteristic case.

\begin{thm}
Let $K$ be a complete discrete valuation field of mixed characteristic $(0, p)$ and let $G_K$ be its absolute Galois group.  Let $\Fil^\bullet G_K$ and $\Fil^\bullet_\log G_K$ denote the ramification filtrations defined by Abbes and Saito \cite{AS-cond1}.
\begin{enumerate}
\item[1] (Hasse-Arf Theorem) Let $\rho: G_K \rar GL(V_\rho)$ be a continuous representation of finite monodromy, where $V_\rho$ is a finite dimensional vector space over a field of characteristic zero.  Then the Artin conductor $\Art(\rho)$ (defined using $\Fil^\bullet G_K$) is a nonnegative integer if $K$ is not absolutely unramified; the Swan conductor $\Swan(\rho)$ (defined using $\Fil^\bullet_\log G_K$) is a nonnegative integer if $p>2$, and $\Swan(\rho) \in \frac 12\ZZ_{\geq 0}$ if $p=2$.
\item[2] The subquotients $\Fil^a G_K / \Fil^{a+} G_K$ for $a > 1$ and $\Fil_\log^a G_K / \Fil_\log^{a+} G_K$ for $a>0$ of the ramification filtrations are trivial if $a \notin \QQ$ and are abelian groups killed by $p$ if $a \in \QQ$, except in the absolutely unramified and non-logarithmic case.
\end{enumerate}
\end{thm}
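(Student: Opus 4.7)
The plan is to transfer the problem from the arithmetic Abbes--Saito side to the side of nonarchimedean differential modules, where a Hasse--Arf type integrality is available from the theory developed in \cite{KSK-Swan1,KSK-me-pDE-TRP}, and then to apply that integrality to a differential module built from $\rho$. This mirrors the equal characteristic strategy of \cite{Me-condI}, but requires the heavier machinery of higher-dimensional $p$-adic differential modules in order to handle the interplay of arithmetic and geometric ramification that is intrinsic to mixed characteristic.

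The principal step is to establish a mixed characteristic comparison theorem. Given a continuous finite-monodromy representation $\rho \colon G_K \rar \GL(V_\rho)$, cut out by a finite Galois extension $L/K$, I would associate a $(\varphi, \nabla)$-module $\calM_\rho$ over a suitable $p$-adic polyannulus thickening $\Spec \OK$, chosen via a Frobenius lift adapted to a $p$-basis of the residue field of $K$. The aim is to show that the rigid-analytic thickenings used by Abbes and Saito to define $\Fil^\bullet G_K$ and $\Fil^\bullet_\log G_K$ are computed by the spectral norms of the differential operators acting on $\calM_\rho$; equivalently, that the maximal generic radii of convergence of $\calM_\rho$ along the various derivations match the arithmetic ramification breaks of $\rho$, up to the logarithmic/non-logarithmic shift. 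I expect this step to be the main obstacle: the mixed characteristic Abbes--Saito construction does not cleanly decouple the arithmetic Frobenius contribution from the geometric one, and the absolutely unramified non-logarithmic case (where the Frobenius lift does not act sufficiently strongly to trivialise the arithmetic part) together with the $p=2$ logarithmic case (where the $2$-adic spectral estimates leave an irreducible factor of $2$ defect) reflect genuine losses of comparison rather than artefacts of the method.

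With the comparison in hand, Part~1 reduces to the Hasse--Arf theorem for differential Artin and Swan conductors of Frobenius-equivariant differential modules of finite monodromy. The subharmonicity and variational results of \cite{KSK-me-pDE-TRP} for generic radii of convergence yield that the differential Artin conductor lies in $\ZZ_{\geq 0}$ and the differential Swan conductor lies in $\ZZ_{\geq 0}$ for $p>2$ and in $\frac{1}{2}\ZZ_{\geq 0}$ for $p=2$. Transporting these statements back through the comparison gives the asserted integrality of $\Art(\rho)$ and $\Swan(\rho)$.

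Part~2 then follows formally from Part~1 and the compatibility of the Abbes--Saito filtrations with finite Galois subquotients. Triviality of $\Fil^a G_K / \Fil^{a+} G_K$ for irrational $a$ is immediate: Part~1 shows that every finite-monodromy representation has rational break, so the filtration cannot jump at an irrational level. That the rational-level subquotients are abelian and killed by $p$ is deduced from the standard commutator and $p$-th power estimates for the Abbes--Saito filtration, which, in the allowed ranges $a>1$ non-logarithmic and $a>0$ logarithmic, force any nontrivial graded piece to be an elementary abelian $p$-group.
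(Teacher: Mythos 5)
Your proposal takes a genuinely different tack from the paper, and there are two gaps that I do not see how to close.

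First, the central step in your plan is to invoke ``the Hasse--Arf theorem for differential Artin and Swan conductors of Frobenius-equivariant differential modules'' as an input from \cite{KSK-Swan1,KSK-me-pDE-TRP}, having transported $\rho$ into that world. No such theorem is available in the mixed characteristic setting of this paper. The integrality results in \cite{KSK-Swan1} are established for one-dimensional differential modules in equal characteristic with Frobenius structure, and \cite{KSK-me-pDE-TRP} proves \emph{variational} facts on polydiscs (continuity, monotonicity, and transrational polyhedrality of the zero locus $Z(\calE)$ of the intrinsic radius, recorded here as Proposition~\ref{P:prop-diff-eqns}), not an integrality theorem for conductors. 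Indeed, $K$ has characteristic zero, so there is no natural Frobenius lift on $\calO_K$ to attach a $(\varphi,\nabla)$-module to $\rho$; the differential modules the paper actually uses (Construction~\ref{C:diff-eqns}) carry no Frobenius at all. They arise by pushing forward along the finite \'etale map $\Pi$ from a \emph{thickening space}, which one must first prove is isomorphic to the Abbes--Saito space (Theorem~\ref{T:ts=as}); and the price paid for working in mixed characteristic is that the defining function $\psi$ is only an \emph{approximate} homomorphism modulo $I_K = p(\delta_0/\pi_K,\delta_J)$ (Proposition~\ref{P:psi-almost-hom}). Your narrative of ``decoupling the arithmetic Frobenius contribution from the geometric one'' does not touch this issue, which is in fact the exact source of the absolutely-unramified exclusion: when $\beta_K=1$, $I_K = (\delta_0, p\delta_J)$ and the error-gauge bookkeeping collapses (Remark~\ref{R:beta_K=1-evil}). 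The actual engine of the paper, which your plan omits entirely, is the generic $p$-th root reduction (Definition~\ref{D:generic-pth-root}, Theorem~\ref{T:base-change}, Theorem~\ref{T:AS-inv-gen-rot}): one shows the break is invariant under adding a generic $p$-th root, iterates to make the residue extension trivial (Proposition~\ref{P:finite-gen-pth-roots}), and only then invokes the \emph{classical} Hasse--Arf theorem with perfect residue field (Proposition~\ref{P:classical-HA-thm}). The polydisc variational results from \cite{KSK-me-pDE-TRP} serve to track intrinsic radii across this base change, not to directly produce integrality. The $p=2$ defect likewise has a concrete source your sketch does not capture: after the off-centered tame base change $K_n = K(\pi_K^{1/n})$ one must use a generic rotation with $\pi_{K_n}^3$ rather than $\pi_{K_n}^2$ when $p=2$ (Remark~\ref{R:p=2}), leading to the shift $3\alpha_{L/K}$ vs.\ $2\alpha_{L/K}$ and hence only $\alpha_{L/K}\dim\rho \in \tfrac12\ZZ$.

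Second, Part~2 does not ``follow formally'' from Part~1. Integrality of $\Art(\rho) = \sum a\cdot\dim(\cdot)$ is a statement about a weighted sum and does not on its own yield rationality of every break, nor does the published Abbes--Saito structure result give ``killed by $p$'' (Proposition~\ref{P:AS-space-properties}(5) gives that the subquotients are abelian $p$-groups, not elementary abelian). In the paper, the non-logarithmic Part~2 is obtained as part of Proposition~\ref{P:gen-rot=>HA-thm} by reducing a \emph{fixed finite Galois extension} to the perfect-residue case, and the logarithmic Part~2 is a separate argument (Theorem~\ref{T:log-subquot-p-ele}): one base-changes through the wildly ramified $K_* = K(T)/(T^p+\pi_K T^{p-1}-\pi_K)$ of logarithmic break $1$, uses the computation of Subsection~\ref{S:example} to split the logarithmic thickening space for $L/K$ over $K_*$ into nonlogarithmic data, and applies the nonlogarithmic theorem there. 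None of this is ``standard commutator and $p$-th power estimates,'' and without specifying where those estimates come from your argument for Part~2 has no content beyond restating the conclusion.
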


This theorem summarizes the results from Theorems~\ref{T:main-thm-nonlog}, \ref{T:log-HA-thm}, and \ref{T:log-subquot-p-ele}.

We do not know if $\Swan(\rho)$ may fail to be an integer when $p=2$ in general; the exclusion of absolute unramified and non-logarithmic case seems to be essential.

The theorem is first asked in \cite{AS-cond2}, in which Abbes and Saito proved that the subquotients of the filtrations are abelian groups, except in the absolutely unramified and non-logarithmic case.  After that, Hattori \cite{Hattori-ram-ffgs, Hattori-tame-char} gave some partial results on the first part of the theorem when the corresponding field extension can be realized by a commutative finite flat group scheme.  After the first draft of this paper was written, Saito \cite{saito3} proved the second part of the theorem in the logarithmic case independently; it follows that $\Swan(\rho) \in \ZZ[\frac 1p]$.

The technique used in this paper is very different from the approaches above except that we need a small technical lemma (see Subsection~\ref{S:etale}) which is borrowed from \cite{AS-cond2}.  This paper shares some core ideas with the first paper \cite{Me-condI} in the series, but it is logically independent of that paper.

\subsection{Idea of the proof}
\label{S:outline}

To best convey the idea, we assume that we are not in the unfortunately excluded cases listed in the main theorem.  We will come back to the reasons to exclude these cases later.  We start with a na\"ive approach to the above theorem in the non-logarithmic case.  One easily reduces to the following case.

Let $L/K$ be a finite totally ramified and wildly ramified Galois extension of complete discrete valuation fields of mixed characteristic $(0, p)$.  Let $\calO_K$, $\pi_K$, and $k$ denote the ring of integers, a uniformizer, and the residue field, respectively.  Assume that $\dim_{k^p}k < +\infty$.  There are elements $\serie{\bar b_}m \in k$ such that $\bar b_1^{i_1}\cdots \bar b_m^{i_m}$ for $i_1, \dots, i_m \in \{0, \dots, p-1\}$, form a basis of $k$ as a $k^p$-vector space;  let $\serie{b_}m$ be lifts of $\serie{\bar b_}m$ in $\calO_K$.
Our representation $\rho$ is assumed to be absolutely irreducible and it factors exactly through the Galois group $G_{L/K}$.  We need to prove that $b(L/K)\cdot \dim \rho \in \ZZ$, where $b(L/K)$ is the ramification break, i.e. the maximal number $b$ such that $\Fil^bG_{L/K} = G_L\Fil^bG_K / G_L \neq \{1\}$.

\vspace{5pt}
\textbf{\underline{Step I:} $AS=TS$ Theorem}.  (Make the Abbes-Saito space more functorial.)

Roughly speaking, the ramification break $b(L/K)$ is defined as follows.  For the extension $L/K$ and any rational number $a \in \QQ_{>0}$, Abbes and Saito \cite{AS-cond1} defined a rigid analytic space $AS^a$ together with a finite morphism $\Pi': AS^a \to A_K^{m+1}[0, |\pi_K|^a]$ (of degree $[L:K]$), where $A_K^{m+1}[0, |\pi_K|^a]$ denotes a (closed) polydisc over $K$ of radius $|\pi_K|^a$. The ramification break $b(L/K)$ is the infimum among all $a \in \QQ_{>0}$ such that the number of geometric connected components $\# \pi_0^\geom(AS^a) = [L:K]$.
A problem of this rigid analytic space is that it is not functorial under the operation of replacing $K$ by a (not necessarily finite) complete extension $K'$, which we refer to as base change later on.

Pretend for a moment that we have a continuous homomorphism $\psi: \calO_K \rar \calO_K \llbracket \delta_0, \dots, \delta_m \rrbracket$ such that $\psi(\pi_K) = \pi_K + \delta_0$, and $\psi(b_i) = b_i + \delta_i$ for $i = \serie{}m$.  We define a new rigid analytic space, called the thickening space, to be 
$$
TS_{L/K}^a = \Spm\big(L \otimes_{K, \psi} K \langle \pi_K^{-a}\delta_0, \dots, \pi_K^{-a}\delta_m \rangle \big) \map\rar\Pi A_K^{m+1}[0, |\pi_K|^a],
$$
where $\Pi$ is the projection to the second factor.

We can prove that $AS^a \simeq TS_{L/K}^a$ as rigid analytic $K$-spaces (see Theorem~\ref{T:ts=as}); this isomorphism does \emph{not} respect the morphisms $\Pi$ and $\Pi'$ to the polydisc.  The rigid analytic space $TS_{L/K}^a$ also carries the information of ramification break $b(L/K)$; together with $\Pi$, it is functorial under base change.

\vspace{5pt}
\textbf{\underline{Step II:} generic $p^\infty$-th roots.} (A procedure to reduce to the perfect residue case.)

It is natural to make the following observation. Let $a$ be a rational number slightly bigger than $b(L/K)$, then $TS_{L/K}^a$($=AS^a$) is geometrically the disjoint union of $[L:K]$ (poly)discs.  What often happens is that if you only increase the radius on certain $\delta_i$, $\pi_0^\geom(TS_{L/K})$ stays the same even when the radius goes beyond the cut-off point $|\pi_K|^{b(L/K)}$.  In contrast, if one increases the radius along some other $\delta_i$, $\pi_0^\geom(TS_{L/K})$ will change as soon as the radius reaches $|\pi_K|^{b(L/K)}$.  In the latter case, we say that the corresponding $\delta_i$ dominates.  We remark that if we change the lift of $\bar b_j$ from $b_j$ to $b_j + \pi_K$, then whether the ``uniformizer direction" $\delta_0$ is dominant may be changed as well.

The ideal situation is when  $\delta_0$ is dominant.  In this case, we can ``forget" about other directions, or more concretely, we can make the residue field perfect by simply adding in all $p$-power roots of $b_j$ for all $j$ (and then complete.)  We will talk about this procedure in more detail in the next step.  As remarked above, for this to happen, we need to find the ``correct lift" of each $b_j$.
Following the idea of Borger \cite{Borger-conductor}, we consider the notion of \emph{generic rotation}.  Let $\serie{x_}m$ be transcendental over $K$, let $K'$ be the completion of $K(\serie{x_}m)$ with respect to the $(1, \dots, 1)$-Gauss norm and let $L' = K'L$.  It easy to see that $b(L'/K') = b(L/K)$.  The upshot is that if we set the $p$-basis of $K'$ to be $\{b_1 + x_1\pi_K, \dots, b_m+ x_m\pi_K, \serie{x_}m\}$, then the uniformizer direction is going to be dominant.  So, if we set $\widetilde K$ to be the completion of the field obtained by adjoining to $K'$ all $p$-power roots of $b_i + x_i\pi_K$ and $x_i$, then we should have $b(\widetilde KL/\widetilde K) = b(L/K)$ and we are reduced to the classical situation because $\widetilde K$ has a perfect residue field.

\vspace{5pt}
\textbf{\underline{Step III:} ramification break v.s. radii of convergence for differential modules} (Where differential modules come into the picture)

Since we ``pretended" earlier that we have a homomorphism $\psi$, the morphism $\Pi: TS_{L/K}^a \to A_K^{m+1}[0, |\pi_K|^a]$ is \'etale; we can then pushforward the ring of functions on $TS_{L/K}^a$ to get a differential module $\calE$ on the polydisc (compatible as $a$ varies).  Consider the na\"ive extension of scalar to $A_L^{m+1}[0, |\pi_K|^a]$.  It is not hard to show that $\pi_0^\geom(TS_{L/K}^a) = [L:K]$ is almost equivalent to the differential module $\calE$ being trivial over $A_L^{m+1}[0, |\pi_K|^a]$ (see Proposition~\ref{P:AS-break=spec-norms}).

A good thing about radii of convergence is that it is quite computable under base change.  When replacing $K$ by $\widetilde K$, we should have a Cartesian diagram
\begin{equation}
\label{E:introduction}
\xymatrix{
TS_{L/K}^a \ar[d]^\Pi & TS_{L\widetilde K/\widetilde K}^a \ar[l] \ar[d]^{\Pi_{\widetilde K}} \\
A_K^{m+1}[0, |\pi_K|^a] & \ar[l]_f A_{\widetilde K}^{2m+1}[0, |\pi_K|^a]
}
\end{equation}
where $f$ is induced by some map $f^*: \calO_K\llbracket \delta_0, \dots, \delta_m\rrbracket \to \calO_{\widetilde K}\llbracket \eta_0, \dots, \eta_{2m}\rrbracket$ characterized by $f^* \circ \psi = \psi_{\widetilde K}|_K: \calO_K \to \calO_{\widetilde K}\llbracket \eta_0, \dots, \eta_{2m}\rrbracket$.
It is very easy to compare the radii of convergence of $\calE$ with the radii of convergence of $f^*\calE$ and the comparison of $b(L/K)$ and $b(L\widetilde K/\widetilde K)$ follows.

\vspace{5pt}
\textbf{\underline{Step IV:} Logarithmic filtration.}  (A trick to deal with logarithmic filtration.)

We briefly discuss the idea behind the proof in the logarithmic case. We do not expect that we can always make the uniformizer direction ``log-dominant".  Instead, we expect a dichotomy:
\begin{itemize}
\item if the uniformizer direction is log-dominant, we are good anyway;
\item if the uniformizer direction is not log-dominant, we expect that, after a large tame base change to $K_n = K(\pi_K^{1/n})$ and then a generic rotation for $K_n$ as in Step II, $b(L'_n / K'_n) = n b_\log(L/K)$ and the uniformizer direction is \emph{non-log}-dominant.  Here the multiple $n$ comes from the normalization; the key is that after the followed-up generic rotation, the nonlog ramification break is one less than the log ramification break.
\end{itemize}
Thus, we can always deduce that $n \cdot \Swan(\rho) \in \ZZ$ for $n \gg 0$ and $p \nmid n$.  Taking two coprime numbers $n_1$ and $n_2$ will imply that $\Swan(\rho)$ itself is an integer.

\vspace{10pt}
We now come back to real life and discuss where the na\"ive approach fails and how we fix it.

(1)  The first thing to notice is that the desired homomorphism $\psi$ \emph{never} exists, as we cannot make $\psi(p) = p$ and $\psi(\pi_K) = \pi_K + \delta_0$ happen at the same time.  As a salvage, we take $\psi$ to be a function, which becomes a homomorphism if we modulo the ideal $I_K = p (\delta_0 /\pi_K, \serie{\delta_}m)$ (Proposition~\ref{P:psi-almost-hom}).  When $K$ is absolutely unramified or, in other words, $v_K(p) = 1$, this condition is significantly weakened.  This is the only hindrance to extend our main result to the absolutely unramified and non-logarithmic case (see also Remark~\ref{R:beta_K=1-evil}).

We define the space $TS_{L/K, \psi}^a$ by writing down the equations generating the extension $\calO_L/ \calO_K$ and applying $\psi$ termwise.  When considering the effect of adding a generic $p$-th root (instead of $p^\infty$-th root, see Remark~\ref{R:gen-p-infty-not-work}), we similarly require that $f \circ \psi$ and $\psi_{\widetilde K}$ only agree modulo $I_{\widetilde K} = p(\eta_0/\pi_{\widetilde K}, \eta_1, \dots, \eta_{2m})$.  We have to carefully keep track of the error terms due to the non-homomorphism $\psi$ and non-commutativity of $f \circ \psi$ and $\psi_{\widetilde K}$.  In particular, if we still want \eqref{E:introduction} to be a Cartesian diagram, we need to make modification on $TS_{L\widetilde K / \widetilde K }^a$ (see Theorem~\ref{T:AS-inv-gen-rot}); this is the most difficult theorem of the paper.  Luckily, the modification made here is not too serious so that we still have AS=TS Theorem \ref{T:ts=as} for this modified thickening space.

(2) Since we have the problem with defining $\psi$, the morphism $\Pi:TS_{L/K, \psi}^a \to A_K^{m+1}[0, |\pi_K|^a]$ is only finite and \'etale if $a \geq b(L/K) -\epsilon$ for some $\epsilon > 0$.  This is the only technical place we need to refer back to Abbes and Saito's approach: \cite[Theorem~7.2]{AS-cond1} (and \cite[Corollary~4.12]{AS-cond2} in the logarithmic case).  This \'etaleness statement validates the construction of differential modules. The auxiliary \'etale locus given by $\epsilon$ enables us to find the exact loci where the intrinsic radii are maximal (or equivalently the loci where the differential module is trivial), and hence to identify the ramification break.

(3) Since $\psi$ fails to be a homomorphism, we have a minor technical issue when using differential modules.  We have to study the generic radii of convergence over polydiscs instead of one dimensional discs (as did in \cite{Me-condI}); this makes essential use of the recent results on $p$-adic differential modules from \cite{KSK-me-pDE-TRP}.  As a result, the proof of the logarithmic case is slightly more complicated and for $p=2$, we can only prove that Swan conductors lie in $\frac 12 \ZZ$ instead of in $\ZZ$.

\subsection{Who cares about the imperfect residue field case, anyway?}
\label{S:who-cares-nonperfect}

In algebraic geometry, if one wants to measure the ramification of an $l$-adic sheaf along a divisor, it is natural to pass to the completion at the generic point of the divisor; this would naturally give rise to a complete discrete valuation field with imperfect residue field, if the dimension of the divisor is not zero.

It is natural to ask how the ramification information varies from one divisor to another.
Kedlaya \cite{KSK-Swan2} started an interesting study along this line, inspired by the semicontinuity results of Andr\'e \cite{Andre-semiconti-irr} in complex algebraic geometry.  In \cite{KSK-Swan2}, Kedlaya took an $F$-isocrystal on a smooth surface $X$ overconvergent along the complement divisor $D$ of simple normal crossings, in a compactification of $X$.  If we blow up the intersection of two irreducible components of $D$, we may realize $\calF$ over this new space and  measure the Swan conductor along the exceptional divisor.  This process can be iterated.  Kedlaya proved in \cite{KSK-Swan2} that, after suitable normalization, the Swan conductors along these exceptional divisors are interpolated by a continuous piecewise linear convex function.  This result also holds for general smooth varieties of arbitrary dimension (see \cite{KSK-Swan2}), and also for lisse $l$-adic sheaves.

An interesting question is: does the same phenomenon happen for a noetherian complete regular local ring $\calO_K \llbracket \serie{t_}n \rrbracket$, where $\OK$ is a complete discrete valuation ring of mixed characteristic?

\vspace{5pt}
Another application is to the study of finite flat group schemes via ramification filtration initiated by Abbes and Mokrane in \cite{AM-sous-groupes}.  Hattori conjectured that one can give a bound on the denominators of ramification breaks.  This can be proved by an analogous Hasse-Arf theorem for finite flat group schemes.  Thus, as a consequence of the main theorem of this paper, we obtain a Hasse-Arf theorem for finite flat group schemes in the mixed characteristic case by an argument originally due to Hattori.

\subsection{Structure of the paper}
\label{S:structure}
In Section 1, we first recall some results of $p$-adic differential modules from \cite{KSK-me-pDE-TRP}.  Then we review the definition of ramification filtrations in Subsection~\ref{S:review-AS-fil}.  

In Section 2, we set up the framework for the proof of the main result.  In Subsection~\ref{S:standard-AS}, we introduce the standard Abbes-Saito spaces.  In Subsections~\ref{S:psi-map}-\ref{S:diff-eqn}, we define the function $\psi$ we mentioned earlier and construct the thickening spaces and the associated differential modules; the aim is to translate the question about the ramification breaks into a question about the intrinsic radii of convergence.  In Subsection~\ref{S:recursive-TS}, we discuss a variant of thickening spaces.

The proofs of the main Theorems~\ref{T:main-thm-nonlog}, \ref{T:log-HA-thm}, and \ref{T:log-subquot-p-ele} occupy the whole Section 3.  In the first three subsections, we deduce the Hasse-Arf theorem for non-logarithmic ramification filtration.  In Subsection~\ref{S:applications}, we apply the Hasse-Arf theorem for Artin conductors to obtain a Hasse-Arf theorem for finite flat group schemes.  In Subsection~\ref{S:tame}, we deduce the integrality of Swan conductors from that of Artin conductors by tame base change.  In the last two subsections, we use a trick of Kedlaya to prove that the subquotients of the logarithmic filtration (on the wild ramification group) are abelian groups killed by $p$.

\subsection{Acknowledgments}
\label{S:acknowledgments}
Many thanks are due to my advisor, Kiran Kedlaya, for introducing me to the problem, for generating some crucial ideas, for answering my stupid questions, and for spending many hours reviewing early drafts.

Thanks to Jennifer Balakrishnan for helping me review early drafts, correct the grammar, and smooth the argument.
Thanks also to Ahmed Abbes, Shin Hattori, Ruochuan Liu, Shun Ohkubo, Andrea Pulita, Takeshi Saito, Yichao Tian, and Xin Zhou for helpful discussions.  The author would also like to express his special thank to Ahmed Abbes and Takeshi Saito for organizing the wonderful conference \textit{On Vanishing Cycles} in Japan in 2007.  It provided the author a great opportunity to learn about this field.

Thanks to referees for great help on improving the presentation of this paper.

Financial support was provided by MIT Department of Mathematics.  Also, when working on this paper, the author had Research Assistantship funded by Kedlaya's NSF CAREER grant DMS-0545904.

\subsection{Notation}
Due to the technical details involved, the notation in this paper is in particularly complicated.  We list a few important ones together with short explanations and first appearance.  We hope that this could make the paper more accessible.

$K$ CDVF of mixed char of absolute ramification degree $\beta_K$; $L$ finite extension; $\theta = |\pi_K|$.

$\widetilde K$ (\ref{N:base-change}) adding a generic $p$-th root to $K$. 

$\widetilde K_n$ (\ref{N:tilde-K_n}) adding $\pi_K^{1/n}$ and generic $p$-th roots to $K$.

$K_*$ (\ref{L:K*-over-K}) an ``Artin-Scheier" extension of $K$. 

$\widetilde K_\gamma$ (\ref{N:tilde-K-gamma}) adding generic $p$-th roots to $K_*$.

$J = \{1, \dots, m\}$ and $J^+ =J \cup \{0\}$; they are used to index $p$-basis.

$b_1, \dots, b_m$ or $b_J$ (\ref{C:generators-of-calI}) lifts of a $p$-basis of $k$.

$c_1, \dots, c_m$ or $c_J$
(\ref{C:generators-of-calI}) lifts of a $p$-basis of $l$.

$u_0, \dots, u_m$ (\ref{C:generators-of-calI}) proxies for $c_{J^+}$. 

$p_0, \dots, p_m$ or $p_{J^+}$ (\ref{C:generators-of-calI}) relations of the extension $\calO_L$ over $\calO_K$ with generators $c_J$ and $\pi_L$. 

$N^a$ (\ref{N:norm-OK-uJ+}) set of elements of $\calO_K[u_{J^+}]$ with norm $\leq \theta^a$.

$AS_{L/K(,\log)}^a$ and $\calO_{AS, L/K(,\log)}^a$ (\ref{D:standard-AS}) (standard) Abbes-Saito spaces and their rings of functions.

$\calR_K = \calO_K \llbracket \delta_0 / \pi_K, \delta_{J} \rrbracket$ (\ref{N:R_K}), similar for $\calR_{\widetilde K}$ (\ref{N:R-tilde-K}).

$\psi_K: \calO_K \to \calO_K \llbracket \delta_{J^+}\rrbracket \subseteq \calR_K$ (\ref{C:psi-map}), similar for $\psi_{\widetilde K}$ (\ref{N:R-tilde-K}) and other fields.

$\calS_K = \calR_K \langle u_{J^+} \rangle$ (\ref{D:admissible}).

$R_{J^+}$ (\ref{D:admissible}) elements of $(\delta_{J^+})\calS_K$ representing the error terms with error gauge $\leq \omega$.

$TS_{L/K(,\log), R_{J^+}}^a$ and $\calO_{TS, L/K(,\log), R_{J^+}}^a$ (\ref{D:th-space}) thickening spaces and their rings of functions, similar for the standard ones $TS_{L/K(,\log), \psi}^a$ and $\calO_{TS, L/K(,\log),\psi}^a$ (\ref{D:th-space}).

$\Delta: \calS_K / (\psi(p_{J^+}) + R_{J^+}) \to \calO_K\langle u_{J^+}\rangle / (p_{J^+}) \stackrel \sim\to \calO_L$ (\ref{C:generators-of-calI} and \ref{N:Delta-for-ts}); $\overline \Delta$ its reduction.

$ET_{L/K, R_{J^+}}$ or $ET_{L/K}$ (\ref{D:etale-locus}) \'etale locus over which the thickening space is \'etale.

$\gothc_{0,I},\gothc_\Lambda, \gothu_{0,I}, \gothu_\Lambda, \gothp_{0,I}, \gothp_\Lambda,  \gothS_K, \gothR_{0, I}, \gothR_\Lambda, \gothN^a, \dots$ (Subsection~\ref{S:recursive-TS}) recursive version of all above.

$\Delta: \gothS_K / (\psi(\gothp_{0,I}) + \gothR_{0,I}, \psi(\gothp_\Lambda) + \gothR_\Lambda) \to \calO_K\langle \gothu_{0,I}, \gothu_\Lambda \rangle / (\gothp_{0,I}, \gothp_\Lambda) \stackrel \sim\to \calO_L$ (\ref{C:small-D-sp} and \ref{D:error-gauge-rec}).

$ \tilde \gothc_{0,I}, \tilde \gothc_\Lambda, \tilde \gothu_{0,I},\tilde\gothu_\Lambda,\tilde \gothv,\tilde \gothp_{0,I}, \tilde\gothp_\Lambda,\tilde \gothq, \gothS_{\widetilde K}, \widetilde \gothR_{0,I},\widetilde \gothR_\Lambda,  \widetilde \gothR_{\tilde \gothq}, \dots$ (proof of \eqref{T:base-change}) recursive version for $\widetilde K$.

\section{Background Reviews}
\setcounter{equation}{0}

\subsection{Differential modules}\label{S:pDE}

We first recall some recent results in the theory of $p$-adic differential modules.  This subject was first studied by Christol, Dwork, Mebkhout, and Robba \cite{ChrDwork-dif-mod-on-annulus, CM-index-thm-III, ChrRobba-pDE}.  Recently, Kedlaya and the author improved some of the techniques in \cite{KSK-notes, KSK-me-pDE-TRP}.  We record some useful results from these sources.

\begin{convention}
Throughout this paper, $p>0$ will be a prime number.  By a \emph{$p$-adic field}, we mean a field $K$ of characteristic zero, complete with respect to a nonarchimedean norm for which $|p| = 1/p$.  In particular, the residue field of $K$ has characteristic $p$.
\end{convention}

\begin{convention}
For an index set $J$, we write $e_J$ or $(e_J)$ for a tuple $(e_j)_{j \in J}$.  For another tuple $b_J$, denote $b_J^{e_J} = \prod_{j \in J} b_j^{e_j}$ if only finitely many $e_j \neq 0$. We also use $\sum_{e_J = 0}^{n}$ to mean the sum over $e_j \in \{0, 1, \dots, n\}$ for each $j \in J$, only allowing finitely many of them to be nonzero.  For simple notation, we may suppress the range of the summation when it is clear.  For a set $A$, we write $e_J \subset A$ or $(e_J) \subset A$ to mean that 
$e_j \in A$ for any $j \in J$.
\end{convention}

\begin{notation}
From now on, let $K$ be a $p$-adic field and fix an element $\pi_K \in K^\times$ of norm $\theta<1$.  When $K$ has  discrete valuation, we take $\pi_K$ to be a uniformizer.
\end{notation}

\begin{notation}\label{N:affinoids}
For an interval $I \subset [0, +\infty]$, we denote the $n$-dimensional polyannulus with radii in $I$ by $A_K^n(I)$.  
(We do not impose any rationality condition on the endpoints of $I$, so this
space should be viewed as an analytic space in the sense of 
Berkovich \cite{Berkovich-book}.)
If $I$ is written explicitly in terms of its
endpoints (e.g., $[\alpha, \beta]$),
we suppress the parentheses around $I$ (e.g., 
$A_K^n[\alpha, \beta]$).
\end{notation}

\begin{notation}
For $R$ a complete topological ring, we use $R \langle \serie{u_}m \rangle$ to denote the completion of the polynomial ring $R[\serie{u_}m]$ with respect to the topology induced from $R$.  When $R$ is a complete $\calO_K$-algebra, we write $R \langle \pi_K^{-a_1}\delta_1, \dots, \pi_K^{-a_m}\delta_m \rangle$ to denote the formal substitution of $R \langle \serie{u_}m \rangle$ via $u_j = \pi_K^{-a_j}\delta_j$ for $j = \serie{}m$, where $a_1, \dots, a_m \in \RR$.  In particular, $K \langle\pi_K^{-a_1}\delta_1, \dots, \pi_K^{-a_m}\delta_m \rangle$ is the ring of analytic functions on $A_K^1[0, \theta^{a_1}] \times \cdots \times A_K^1[0, \theta^{a_m}]$.

We use $K \llbracket T \rrbracket_0$ to denote the bounded power series ring consisting of formal power series $\sum_{i \in \ZZ_{\geq 0}}a_iT^i$ for which $a_i \in K$ and $|a_i|$ are bounded.
\end{notation}

\begin{notation} \label{N:J+1}
In this subsection, let $J = \{1, \dots, m\}$ and $J^+ = J \cup \{0\}$.
\end{notation}

\begin{definition}
For $s_{J^+} \subset \RR$, the \emph{$\theta^{s_{J^+}}$-Gauss norm} on $K[\delta_{J^+}]$ is the norm given by
$$
\Big| \sum_{e_{J^+}} a_{e_{J^+}} \delta_{J^+}^{e_{J^+}} \Big|_{s_{J^+}} = \max \big\{ |a_{e_{J^+}}| \cdot \theta^{e_0s_0 + \cdots + e_ms_m}\big\}.
$$
It extends uniquely to $K(\delta_{J^+})$; we denote the completion by $F_{s_{J^+}}$.  This Gauss norm also extends continuously to $K \langle\pi_K^{-a_0}\delta_0, \dots, \pi_K^{-a_m}\delta_m \rangle$ if $s_j \in [a_j, + \infty)$ for all $j \in J^+$.  Hence,  $K \langle \pi_K^{-a_0}\delta_0, \dots, \pi_K^{-a_m}\delta_m \rangle$ embeds into $F_{s_{J^+}}$.
\end{definition}

\begin{convention}
Throughout this paper, all (relative) differentials and derivations are continuous and all connections are integrable.  For simple notation, we may suppress the continuity and integrability.
\end{convention}

\begin{definition}\label{D:spectral-norms}
Let $F$ be a differential field of order 1 and characteristic zero, i.e., a field of characteristic zero equipped with a derivation $\partial$.  Assume that $F$ is complete for a nonarchimedean norm $|\cdot|$.   Let $V$ be a differential module with the differential operator $\partial$. The \textit{spectral norm of $\partial$ on $V$} is defined to be
$$
|\partial|_{\sp, V} = \lim_{n \rar +\infty} |\partial^n|_V^{1/n}.
$$
One can show that $|\partial|_{\sp, V} \geq |\partial|_{\sp, F}$ \cite[Lemma~6.2.4]{KSK-notes}.

Define the \emph{intrinsic $\partial$-radius} of $V$ to be
$$
IR_\partial(V) = |\partial|_{\sp, F} / |\partial|_{\sp, V} \in (0,1].
$$
\end{definition}

\begin{example}
For $s_{J^+} \subset \RR$, the spectral norms of $\partial_{J^+}$ on $F_{s_{J^+}}$ are as follows.
\[
|\partial_j|_{F_{s_{J^+}}, \sp} = 
p^{-1/(p-1)}\theta^{-s_j}, \quad j \in J^+.
\]
\end{example}

\begin{remark}
If $F'/F$ is a complete extension and $\partial$ extends to $F'$, then for any differential module $V$ on $F$, $V \otimes F'$ is a differential module on $F'$.  Moreover, if $|\partial|_{\sp, F} = |\partial|_{\sp, F'}$, we have $IR_\partial(V) = IR_\partial(V \otimes F')$.
\end{remark}

\begin{notation}
Let $a_{J^+} \subset \RR$ be a tuple and let $X = A_K^1[0, \theta^{a_0}] \times \cdots \times A_K^1[0, \theta^{a_m}]$ be the closed polydisc with radii $\theta^{a_{J^+}}$ and with $\delta_{J^+}$ as coordinates.
\end{notation}

\begin{notation}\label{N:diff-mod}
A \emph{differential module} over $X$ (relative to $K$) is a finite locally free coherent sheaf $\calE$ on $X$ together with an integrable connection
$$
\nabla: \calE \rar \calE \otimes_{\calO_X} \Big( \bigoplus_{j \in J^+} \calO_X \cdot d\delta_j \Big).
$$
Let $\partial_{J^+} = \partial / \partial \delta_{J^+}$ be the dual basis of $d\delta_{J^+}$.  They act commutatively on $\calE$.  A section $\bbv$ of $\calE$ over $X$ is called \emph{horizontal} if $\partial_j(\bbv) = 0$ for $\forall j \in J^+$.  Let $H^0_\nabla(X, \calE)$ denote the set of horizontal sections on $\calE$ over $X$.  A differential module is called \emph{trivial} if there exists a set of horizontal sections which forms a basis of $\calE$ as a free coherent sheaf.

Let $s_j \in [a_j, +\infty)$ for $j \in J^+$.  For $j \in J^+$, let $IR_j(\calE; s_{J^+})$ denote the intrinsic $\partial_j$-radius $IR_{\partial_j}(\calE \otimes_{\calO_X} F_{s_{J^+}})$.  Let $IR(\calE; s_{J^+}) = \min_{j \in J^+} \big\{ IR_j(\calE; s_{J^+}) \big\}$ be the \emph{intrinsic radius} of $\calE$.  If $s_{j'} = s$ for all $j' \in J$, we simply write $IR_j(\calE; s_0, \underline s)$ and $IR(\calE; s_0, \underline s)$ for intrinsic $\partial_j$-radius and intrinsic radius, respectively.  Moreover, if $s_0 = s$, we may further simplify the notation as $IR_j(\calE; \underline s)$ and $IR(\calE; \underline s)$.
\end{notation}

\begin{lemma} \label{L:generic-point}
Fix $j \in J^+$.  There exists a unique continuous $K$-homomorphism $f_{\gen, j}^*: F_{a_{J^+}} \rar F_{a_{J^+}} \llbracket \pi_K^{-a_j} T_j \rrbracket_0$, such that $f^*_{\gen,j}(\delta_{J^+ \bs\{j\}}) = \delta_{J^+ \bs\{j\}}$ and $f^*_{\gen,j}(\delta_j) = \delta_j + T_j$.
\end{lemma}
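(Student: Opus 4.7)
The plan is to build $f^*_{\gen,j}$ by the classical Taylor expansion construction, extending it step by step from polynomials to the fraction field and then to the Gauss completion. Uniqueness is essentially free: any continuous $K$-homomorphism out of $F_{a_{J^+}}$ is determined by its values on the topological generators $\delta_{J^+}$, so I concentrate on existence.

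First I define a $K$-algebra map on $K[\delta_{J^+}]$ by
\[
P \longmapsto P(\delta_{J^+ \bs\{j\}},\, \delta_j + T_j) \;=\; \sum_{k \geq 0} \frac{\partial_j^k P}{k!}\, T_j^k,
\]
a finite sum for each polynomial. I equip the target $F_{a_{J^+}} \llbracket \pi_K^{-a_j} T_j \rrbracket_0$ with the sup norm $\bigl\|\sum c_i T_j^i\bigr\| = \sup_i |c_i|_{a_{J^+}}\, \theta^{i a_j}$, which makes it a Banach $K$-algebra. Using $|\binom{n}{k}| \leq 1$ and the multiplicativity of the Gauss norm, one checks that for any $P \in K[\delta_{J^+}]$ the $k$-th Taylor coefficient satisfies $|\partial_j^k P / k!|_{a_{J^+}}\, \theta^{k a_j} \leq |P|_{a_{J^+}}$, so the map is contractive.

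Next I extend to $K(\delta_{J^+})$ by inverting the images of polynomial denominators. For nonzero $Q \in K[\delta_{J^+}]$, its image factors as $Q \cdot (1 + X)$ with $X = \sum_{k \geq 1}\xi_k T_j^k$ and $\xi_k = (\partial_j^k Q / k!)/Q$; the contraction estimate above gives $\|X\| \leq 1$. Inverting $1 + X$ formally via the recursion $h_0 = 1,\ h_k = -\sum_{i=1}^k \xi_i h_{k-i}$, a routine induction yields $|h_k|_{a_{J^+}}\, \theta^{k a_j} \leq 1$ for all $k$, so $(1+X)^{-1}$ lies in $F_{a_{J^+}} \llbracket \pi_K^{-a_j} T_j \rrbracket_0$. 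Consequently the image of $Q$ is invertible with $\|Q(\delta_{J^+\bs\{j\}}, \delta_j+T_j)^{-1}\| \leq |Q|^{-1}_{a_{J^+}}$, giving a contractive $K$-homomorphism on $K(\delta_{J^+})$.

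Finally, since the target is complete under the sup norm and the map is contractive with respect to the Gauss norm on the source, it extends uniquely by continuity to the completion $F_{a_{J^+}}$, producing the sought $f^*_{\gen,j}$. The hard part is the invertibility step in the middle paragraph: once polynomial denominators are shown to become units in the bounded power series ring, the rest is a standard completion argument. At its core, everything reduces to the bound $|\partial_j^k Q/k!|_{a_{J^+}}\, \theta^{ka_j} \leq |Q|_{a_{J^+}}$, which is itself a direct consequence of multiplicativity of the Gauss norm together with $|\binom{n}{k}|\leq 1$.
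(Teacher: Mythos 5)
Your proof is correct, and since the paper itself only cites \cite[Lemma~1.2.12]{KSK-me-pDE-TRP} without reproducing an argument, what you've written is essentially an unwinding of the standard generic-point / Taylor-series construction that the reference carries out. The key contraction bound $|\partial_j^k P/k!|_{a_{J^+}}\,\theta^{ka_j}\le|P|_{a_{J^+}}$ is right (termwise, $\binom{e_j}{k}$ is an integer, so $|\binom{e_j}{k}|\le 1$, and $\theta^{(e_j-k)a_j}\cdot\theta^{ka_j}=\theta^{e_j a_j}$), the inversion of $1+X$ via the bounded recursion is sound, and the passage from $K[\delta_{J^+}]$ to $K(\delta_{J^+})$ to $F_{a_{J^+}}$ is the standard localization-then-completion argument. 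One small wording note: your parenthetical attribution of the contraction bound to ``multiplicativity of the Gauss norm'' is slightly off — it is really the ultrametric max formula plus $|\binom{n}{k}|\le 1$ that does the work; multiplicativity of the Gauss norm is instead what you use to pass the contraction from $K[\delta_{J^+}]$ to $K(\delta_{J^+})$ (i.e.\ to conclude $|P/Q|_{a_{J^+}}=|P|_{a_{J^+}}/|Q|_{a_{J^+}}$). That is a cosmetic point; the argument itself is fine.
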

\begin{proof}
See \cite[Lemma~1.2.12]{KSK-me-pDE-TRP}.
\end{proof}

\begin{lemma}\label{L:sp-norms-vs-gen-rad}
Set $F = F_{a_{J^+}}$ for short.  The pullback $f_{\gen,j}^* (\calE \otimes_{\calO_X} F)$ becomes a differential module over $A_F^1[0,  \theta^{a_j})$ relative to $F$.  Then for any $r \in [0, 1]$, $IR_j(\calE; a_{J^+}) \geq r$ if and only if $f_{\gen,j}^*(\calE \otimes_{\calO_X} F)$ is trivial over $A_F^1[0, r\theta^{a_j})$.
\end{lemma}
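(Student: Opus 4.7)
The plan is to recognize this as the classical Dwork generic-radius-of-convergence theorem applied to the single variable $T_j$, with the Gauss-norm completion $F$ absorbing the other directions. First I would make the pullback connection on $f_{\gen,j}^*(\calE \otimes_{\calO_X} F)$ explicit: since $f_{\gen,j}^*$ sends $\delta_j$ to $\delta_j + T_j$ (with $\delta_j \in F$ now a scalar) and fixes $\delta_{j'}$ for $j' \neq j$, the relative cotangent sheaf of $A_F^1[0, \theta^{a_j})$ over $F$ is generated by $dT_j$, with $f_{\gen,j}^*(d\delta_j) = dT_j$ and $f_{\gen,j}^*(d\delta_{j'}) = 0$. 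Consequently, on a canonical generator $1 \otimes \bbv$ of the pullback, the operator $d/dT_j$ acts as $1 \otimes \partial_j(\bbv)$, and solving $d/dT_j(W) = 0$ by the Leibniz rule forces the unique formal horizontal extension of $\bbv$ at $T_j = 0$ to be the Taylor series
$$
W(\bbv) \;=\; \sum_{s \geq 0} \frac{(-T_j)^s}{s!} \otimes \partial_j^s(\bbv).
$$

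The next step is to compute the radius of convergence of $W(\bbv)$ in terms of the spectral norm of $\partial_j$. Combining the elementary identity $|s!|_K^{-1/s} \to p^{1/(p-1)}$ with $|\partial_j|_{\sp, \calE \otimes F} = \lim_s |\partial_j^s|_{\calE \otimes F}^{1/s}$ gives
$$
\limsup_s \bigl| \partial_j^s(\bbv)/s! \bigr|^{1/s} \;\leq\; p^{1/(p-1)} \cdot |\partial_j|_{\sp, \calE \otimes F},
$$
so $W(\bbv)$ converges on the open disc of radius $\rho := p^{-1/(p-1)}/|\partial_j|_{\sp, \calE \otimes F}$. The classical Dwork theorem on the generic disc of convergence says this bound is sharp (for at least one $\bbv$), and finite-dimensionality of $\calE \otimes F$ then gives a uniform bound for a basis. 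So $f_{\gen,j}^*(\calE \otimes F)$ is trivial over $A_F^1[0, \rho)$ and over no larger open disc.

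To finish, I would substitute $|\partial_j|_{\sp, F} = p^{-1/(p-1)}\theta^{-a_j}$ from the preceding Example into the definition $IR_j(\calE; a_{J^+}) = |\partial_j|_{\sp, F}/|\partial_j|_{\sp, \calE \otimes F}$ to get $\rho = \theta^{a_j} \cdot IR_j(\calE; a_{J^+})$, which yields the stated equivalence $r \leq IR_j(\calE; a_{J^+}) \Leftrightarrow$ triviality over $A_F^1[0, r\theta^{a_j})$. I expect no serious obstacle here; the one nontrivial ingredient is the sharpness of the Dwork estimate (the ``no larger disc'' half), which is standard and covered in \cite{KSK-notes}, with the multivariable extension in \cite{KSK-me-pDE-TRP}. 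Accordingly, I would expect the paper's proof to be a single line citing one of these references.
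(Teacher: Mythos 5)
Your proposal is correct and follows essentially the same route as the paper: the paper's proof consists precisely of writing down the Taylor series $\sum_n \partial_j^n(\bbv)\, T_j^n/n!$, observing that its convergence on $|T_j| < r\theta^{a_j}$ for all $\bbv$ is equivalent to $IR_j(\calE; a_{J^+}) \geq r$, and citing \cite[Proposition~1.2.14]{KSK-me-pDE-TRP} for the details; your write-up simply unpacks that reference (the Christol--Dwork identification of spectral norm with generic radius of convergence and the $|s!|^{-1/s}\to p^{1/(p-1)}$ normalization) without deviating from the idea.
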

\begin{proof}
This is essentially because the Taylor series $\sum_{n=0}^\infty \partial^n_{T_j}(\bbv)\cdot T_j^n/(n!) = \sum_{n=0}^\infty \partial^n_j(\bbv)\cdot T_j^n/(n!)$ converges when $|T_j| < r\theta^{a_j}$ for any section $\bbv$ if and only if $IR_j(\calE; a_{J^+}) \geq r$.  For more details, see \cite[Proposition~1.2.14]{KSK-me-pDE-TRP}.
\end{proof}

We reproduce some basic properties of intrinsic radii, starting with the following off-centered tame base change, which is a fun exercise in \cite[Chap.~9,~Exercise~8]{KSK-notes}.  To ease the readers who are not familiar with differential modules, we give a complete proof.
\begin{construction}\label{C:off-center-tame}
Fix $n \in \NN$ prime to $p$.  Assume for a moment that $m=0$ (and $a = a_0$), i.e., we consider the one dimensional case $X = A_K^1[0, \theta^a]$.  Fix $x_0 \in K$ such that $|x_0| = \theta^b > \theta^a$ ($b<a$). In particular, the point $\delta_0 = -x_0$ is not in the disc $X$.  Denote $K_n = K(x_0^{1/n})$, where we fix an $n$-th root $x_0^{1/n}$ of $x_0$.

Consider the $K$-homomorphism $f_n^*: K \langle \pi_K^{-a}\delta_0 \rangle \rar K_n \langle \pi_K^{-a + b(n-1)/n } \eta_0 \rangle$, sending $\delta_0$ to \[
(x_0^{1/n} + \eta_0)^n - x_0 = x_0^{(n-1)/n} \eta_0 \Big( \sum_{i = 0}^{n-1} \binom n{i+1} \big(\frac{\eta_0}{x_0^{1/n}} \big)^i \Big),
\]
where the term in the parentheses on the right has norm 1 and is invertible because $|x_0^{1/n}| > |\eta_0|$. Hence $f_n^*$ extends continuously to a homomorphism $F_a \rar F'_{a-b(n-1)/n}$, where $F'_{a-b(n-1)/n}$ is the completion of $K_n(\eta_0)$ with respect to the $\theta^{a-b(n-1)/n}$-Gauss norm.

Also, $f_n^*$ gives a morphism of rigid $K$-spaces $f_n: Z = A_{K_n}^1[0, \theta^{a-b(n-1)/n}] \rar X = A_K^1[0, \theta^a]$.  It is finite and \'etale because the branching locus is at $\delta_0 = -x_0$, outside the disc $X$.  Thus, for a differential module $\calE$ on $X$, its pullback $f_n^* \calE$ is a differential module over $Z$ via
$$
f_n^*\calE \stackrel {f_n^*\nabla} \longrightarrow  f_n^*\Big(\calE \otimes_{\calO_X} \calO_X d\delta_0 \Big) \longrightarrow f_n^* \calE \otimes_{\calO_Z} \calO_{Z} d\eta_0,
$$
where the last homomorphism is given by $d\delta_0 \mapsto n(x_0^{1/n} + \eta_0)^{n-1} d\eta_0$.
\end{construction}

\begin{proposition}\label{P:off-center-tame}
Keep the notation as above.  We have
$$
IR_{\partial_{\eta_0}}(f_n^*\calE; a-b(n-1)/n) = IR_{\partial_0}(\calE; a).
$$
\end{proposition}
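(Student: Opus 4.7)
The plan is to invoke Lemma~\ref{L:sp-norms-vs-gen-rad} on both sides, reducing the equality of intrinsic radii to the identification of two generic Taylor-disc pullbacks, and then to construct an explicit isomorphism of rigid analytic discs that matches them up.

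Set $F := F_a$ and let $F' := F'_{a'}$ denote the completion of $K_n(\eta_0)$ with respect to the $\theta^{a'}$-Gauss norm, where $a' = a-b(n-1)/n$. Under $f_n^*: F \hookrightarrow F'$, the element $\delta_0$ is sent to $(x_0^{1/n}+\eta_0)^n - x_0$, which has $F'$-norm $\theta^a$: the hypothesis $b<a$ forces $|x_0^{1/n}| = \theta^{b/n} > \theta^{a'} = |\eta_0|$, so $|x_0^{1/n}+\eta_0| = \theta^{b/n}$. By Lemma~\ref{L:sp-norms-vs-gen-rad}, for each $r\in(0,1]$ it suffices to show that $f_{\gen,0}^*(\calE\otimes F)$ is trivial on $A_F^1[0,r\theta^a)$ if and only if $f_{\gen,\eta_0}^*(f_n^*\calE\otimes F')$ is trivial on $A_{F'}^1[0,r\theta^{a'})$.

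To identify these two triviality statements, I will produce an isomorphism of rigid $F'$-spaces
\[
\mu\colon A_{F'}^1[0,r\theta^{a'}) \stackrel{\sim}{\longrightarrow} A_{F'}^1[0,r\theta^a),
\]
given on coordinate rings by $T \mapsto (x_0^{1/n}+\eta_0+T')^n - (x_0^{1/n}+\eta_0)^n = T'\cdot g(T')$, where $g(0) = n(x_0^{1/n}+\eta_0)^{n-1}$ has $F'$-norm $\theta^{b(n-1)/n}$. The hypothesis $a>b$ translates into $a' > b/n$, so $|T'| < |x_0^{1/n}+\eta_0|$ throughout the $T'$-disc; together with $p\nmid n$ (so $|n|=1$), this makes $g$ a unit of constant norm $\theta^{b(n-1)/n}$, and a Newton polygon / Weierstrass-inversion argument produces the inverse map. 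Moreover, by direct computation both composites $\mu^*\circ f_{\gen,0}^*$ and $f_{\gen,\eta_0}^*\circ f_n^*$ send $\delta_0$ to $(x_0^{1/n}+\eta_0+T')^n - x_0$ and agree on $K$; they coincide as ring maps $F \to F'\llbracket\pi_K^{-a'}T'\rrbracket_0$. Pulling $\calE$ back through either composite gives the isomorphism
\[
\mu^*\bigl(f_{\gen,0}^*(\calE\otimes F)\bigr) \;\cong\; f_{\gen,\eta_0}^*(f_n^*\calE\otimes F')
\]
of differential modules on $A_{F'}^1[0,r\theta^{a'})$.

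Finally, triviality of a differential module on an open disc is invariant under scalar extension of the base field: the solution space of the associated linear ODE system is cut out by convergence conditions on formal power series whose Taylor coefficients have norms independent of the base field. Hence $f_{\gen,0}^*(\calE\otimes F)$ is trivial on $A_F^1[0,r\theta^a)$ if and only if its base change to $F'$ is trivial on $A_{F'}^1[0,r\theta^a)$; combined with the isomorphism $\mu$ and the compatibility above, this is equivalent to triviality of $f_{\gen,\eta_0}^*(f_n^*\calE\otimes F')$ on $A_{F'}^1[0,r\theta^{a'})$. Taking the supremum over $r$ gives the equality of intrinsic radii. The main obstacle is the Newton polygon step establishing that $\mu$ is an isomorphism, which relies on a careful norm comparison keyed to the interplay of $a>b$ and $p\nmid n$; once that is in hand, the rest of the argument is essentially formal.
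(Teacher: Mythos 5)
Your proof is correct and is essentially the paper's argument: invoke Lemma~\ref{L:sp-norms-vs-gen-rad} on both sides, then identify the two Taylor discs via the map $T_0 \mapsto (x_0^{1/n}+\eta_0+T'_0)^n - (x_0^{1/n}+\eta_0)^n$, with the radius comparison $r\theta^a \leftrightarrow r\theta^{a-b(n-1)/n}$ following from $|x_0^{1/n}+\eta_0| = \theta^{b/n} > |T'_0|$ and $|n|=1$. The only cosmetic differences are that the paper exhibits the inverse via the explicit binomial series $T'_0 = \sum_{i\geq 1}\binom{1/n}{i}T_0^i/(x_0^{1/n}+\eta_0)^{ni-1}$ rather than appealing to Weierstrass preparation / Newton polygons, and the paper leaves the base-field invariance of triviality implicit inside its commutative-diagram identification, whereas you spell it out.
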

\begin{proof}
The proof is essentially the same as \cite[Lemma~5.11]{KSK-overview} or \cite[Proposition~9.7.6]{KSK-notes}.  Lemma~\ref{L:generic-point} gives the following commutative diagram
$$
\xymatrix{
F_a \ar[d]^{f_n^*} \ar[r]^-{f_{\gen,0}^*} & F_a \llbracket \pi_K^{-a} T_0 \rrbracket_0 \ar[d]^{\tilde f_n^*} \\
F'_{a-b(n-1)/n} \ar[r]^-{f'^*_{\gen,0}} & F'_{a-b(n-1)/n} \llbracket \pi_K^{-a+b(n-1)/n} T'_0 \rrbracket_0 
}
$$
where $\tilde f_n^*$ extends $f^*_n$ by sending $T_0$ to $(x_0^{1/n} + \eta_0 + T'_0)^n - (x_0^{1/n} + \eta_0)^n$.

We claim that for $r \in [0, 1]$, $\tilde f_n$ induces an isomorphism between
$$
F'_{a-b(n-1)/n} \times_{f_n^*, F_a} \big( A_{F_a}^1[0, r \theta^a) \big) \cong  A_{F'_{a-b(n-1)/n}}^1[0, r \theta^{a-b(n-1)/n}).
$$
Indeed, if $|T'_0| < r \theta^{a-b(n-1)/n} <\theta^{b/n}$, then
$$
|T_0| = |(x_0^{1/n} + \eta_0 + T'_0)^n - (x_0^{1/n} + \eta_0)^n| = |nT'_0 (x_0^{1/n} + \eta_0)^{n-1}| < r \theta^{a-b(n-1)/n} \cdot  (\theta^{b/n})^{n-1} = r \theta^a.
$$
Conversely, if $|T_0| < r\theta^a$, we define the inverse map by the binomial series
$$
T'_0 = (x_0^{1/n} + \eta_0) \cdot \Big[ -1 + \Big(1+ \frac {T_0}{(x_0^{1/n} + \eta_0)^n} \Big)^{1/n}\Big] = \sum_{i = 1}^\infty \binom{1/n}i \frac{T_0^i}{(x_0^{1/n} + \eta_0)^{ni-1}}.
$$
The series converges to an element with norm $< r \theta^{a-b(n-1)/n}$.

Therefore, Lemma~\ref{L:sp-norms-vs-gen-rad} implies that for $r \in [0, 1]$, 
\begin{align*}
&\ IR_{\partial_0}(\calE; a) \geq r \\
\LRar&\ f_{\gen,0}^*(\calE \otimes_{\calO_X} F_a) \textrm{ is trivial over } A_{F_a}^1[0, r \theta^a)\\
\LRar&\ \tilde f_n^* f_{\gen,0}^*(\calE \otimes_{\calO_X} F_a) = f'^*_{\gen,0} \big(f_n^*\calE \otimes_{\calO_Z} F'_{a-b(n-1)/n} \big) \textrm{ is trivial over } A_{F'_{a-b(n-1)/n}}^1[0, r \theta^{a-b(n-1)/n})\\
\LRar&\ IR_{\partial_{\eta_0}}(f_n^*\calE; a-b(n-1)/n) \geq r.
\end{align*}
The proposition follows.
\end{proof}

Similarly, we can study a type of off-centered Frobenius.

\begin{construction}
Let $b>0$ and $0< a < \min\{-\log_\theta p + b, pb\}$ and let $\beta \in K$ be an element of norm $1$.  Let $L$ be the completion of $K(x)$ with respect to the $\theta^a$-Gauss norm.

Let $f: Z = A_L^1[0, \theta^b] \rar A_K^1[0, \theta^a]$ be the morphism given by $f^*: \delta_0 \mapsto (\beta + \eta_0)^p - \beta^p +x$.  By our choices of $a$ and $b$, the leading term of $f^*(\delta_0)$ is $x$, which is transcendental over $K$.  Hence $f^*$ extends continuously to a homomorphism $F_a \rar F'_b$, where $F'_b$ is the completion of $L(\eta_0)$ with respect to the $\theta^b$-Gauss norm.  Moreover, $f^*\Omega^1_X \cong \Omega^1_Z$ because the branching locus is at $\eta_0 = -\beta$, outside the disc.  Thus $f^*\calE$ becomes a differential module over $Z = A^1_L[0, \theta^b]$ via
$$
f^*\calE \stackrel {f^*\nabla} \longrightarrow  f^*\Big(\calE \otimes_{\calO_X} \calO_X d\delta_0 \Big) \longrightarrow f^* \calE \otimes_{\calO_Z} \calO_{Z} d\eta_0,
$$
where the second homomorphism is given by $d\delta_0 \mapsto p(\beta+\eta_0)^{p-1} d \eta_0$.
\end{construction}

\begin{proposition}\label{P:sp-norm-under-Frob}
Keep the notation as above.  We have
$$
IR_{\partial_0}(f^*\calE; b) \geq IR_{\partial_{\eta_0}}(\calE; a).
$$
\end{proposition}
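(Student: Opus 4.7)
The proof will mirror that of Proposition~\ref{P:off-center-tame}: use generic-disc embeddings (Lemma~\ref{L:generic-point}) to lift $f$ to a map between Taylor expansions, then apply the triviality criterion (Lemma~\ref{L:sp-norms-vs-gen-rad}) to transfer an intrinsic-radius bound from $\calE$ to $f^*\calE$. Concretely, let $f_{\gen,0}^* : F_a \to F_a \llbracket \pi_K^{-a}T_0\rrbracket_0$ send $\delta_0 \mapsto \delta_0 + T_0$, and analogously $f'^*_{\gen,0} : F'_b \to F'_b \llbracket \pi_K^{-b}T'_0\rrbracket_0$ send $\eta_0 \mapsto \eta_0 + T'_0$. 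Extend $f^*$ to $\tilde f^* : F_a\llbracket \pi_K^{-a}T_0\rrbracket_0 \to F'_b\llbracket \pi_K^{-b}T'_0\rrbracket_0$ by $T_0 \mapsto (\beta+\eta_0+T'_0)^p - (\beta+\eta_0)^p$; this is the unique continuous extension making the square with rows $f_{\gen,0}^*, f'^*_{\gen,0}$ and columns $f^*, \tilde f^*$ commute.

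The heart of the argument is the disc-image estimate: for any $r \in [0,1]$ and any $T'_0$ with $|T'_0| < r\theta^b$, the bound $|\tilde f^*(T_0)| < r\theta^a$ must hold. Expanding
$$
\tilde f^*(T_0) = \sum_{k=1}^{p} \binom{p}{k}(\beta + \eta_0)^{p-k}(T'_0)^k
$$
and using $|\beta+\eta_0|=1$ at the generic point of $F'_b$, the $k$-th term has norm $\bigl|\binom{p}{k}\bigr|\cdot|T'_0|^k$. The hypothesis $a < -\log_\theta p + b$ (equivalently $p^{-1}\theta^b < \theta^a$) handles the $k=1$ term via $p^{-1}\cdot r\theta^b < r\theta^a$; the hypothesis $a < pb$ (equivalently $\theta^{bp} < \theta^a$) handles the $k=p$ term via $r^p\theta^{bp} \leq r\theta^a$; for $1 < k < p$, $\bigl|\binom{p}{k}\bigr|=p^{-1}$ and $r^k \leq r$ make each intermediate term dominated by one of the two extremes.

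Combining, $\tilde f^*$ sends $A^1_{F'_b}[0, r\theta^b)$ into $A^1_{F_a}[0, r\theta^a)$. By Lemma~\ref{L:sp-norms-vs-gen-rad}, $IR_{\partial_{\eta_0}}(\calE; a) \geq r$ amounts to triviality of $f_{\gen,0}^*(\calE\otimes F_a)$ over the latter disc, so its pullback through $\tilde f^*$ is trivial over the former; commutativity of the square identifies this pullback with $f'^*_{\gen,0}(f^*\calE \otimes F'_b)$, so Lemma~\ref{L:sp-norms-vs-gen-rad} applied in reverse yields $IR_{\partial_0}(f^*\calE; b) \geq r$. Taking the supremum over admissible $r$ gives the claimed inequality.

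The main obstacle is the disc-image estimate, which requires both hypotheses $a < -\log_\theta p + b$ and $a < pb$ to be used in tandem, with careful attention to strictness so that the open disc $A^1[0, r\theta^a)$ is landed in and Lemma~\ref{L:sp-norms-vs-gen-rad} applies verbatim. Commutativity of the generic-disc square need only be checked on the generator $\delta_0$, which is immediate from the defining formula for $\tilde f^*$; the extension to all power series in $T_0$ then follows by continuity.
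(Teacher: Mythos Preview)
Your proof is correct and follows essentially the same approach as the paper's: set up the commutative square of generic-disc embeddings, establish the disc-image estimate $|T'_0| < r\theta^b \Rightarrow |\tilde f^*(T_0)| < r\theta^a$, and then transfer triviality via Lemma~\ref{L:sp-norms-vs-gen-rad}. The only cosmetic difference is that the paper packages the binomial estimate as Lemma~\ref{L:radius-p-th-power} rather than expanding term-by-term as you do; note also that the paper's printed statement swaps the labels $\partial_0$ and $\partial_{\eta_0}$ (and its proof line swaps $a$ and $b$ in the disc estimate), so your version in fact has the correct orientation.
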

\begin{proof}
As in Proposition~\ref{P:off-center-tame}, we start with the following commutative diagram from Lemma~\ref{L:generic-point}.
$$
\xymatrix{
F_a \ar[d]^{f^*} \ar[r]^-{f_{\gen,0}^*} & F_a \llbracket \pi_K^{-a} T_0 \rrbracket_0 \ar[d]^{\tilde f^*} \\
F'_b \ar[r]^-{f'^*_{\gen,0}} & F'_b \llbracket \pi_K^{-b} T'_0 \rrbracket_0 
}
$$
where $\tilde f^*$ extends $f^*$ by sending $T_0$ to $(\beta + \eta_0 + T'_0)^p - (\beta +\eta_0)^p$.

For $r \in [0, 1]$, by Lemma~\ref{L:radius-p-th-power} below, $|T'_0| < r \theta^a$ implies $|T_0| < \max\{r^p \theta^{pa}, p^{-1}r \theta^a\} < r\theta^b$.

Therefore, Lemma~\ref{L:sp-norms-vs-gen-rad} implies that
\begin{eqnarray*}
&& IR_{\partial_0}(\calE; a) \geq r \\
&\LRar& f_{\gen,0}^*(\calE \otimes_{\calO_X} F_a) \textrm{ is trivial over } A_{F_a}^1[0, r \theta^a)\\
&\Rar& \tilde f^* f_{\gen,0}^*(\calE \otimes_{\calO_X} F_a) = f'^*_{\gen,0}(f^*\calE \otimes_{\calO_Z} F'_b) \textrm{ is trivial over } A_{F'_b}^1[0, r \theta^b)\\
&\LRar& IR_{\partial_{\eta_0}}(f^*\calE; b) \geq r.
\end{eqnarray*}
The proposition follows.
\end{proof}

\begin{lemma}\label{L:radius-p-th-power}
\emph{\cite[Lemma~10.2.2(a)]{KSK-notes}}
Let $K$ be a non-archimedean field and let $b, T \in K$.  For $r \in (0, 1)$, if $|b - T| < r |b|$, then
$$
|b^p - T^p| \leq \max \{ r^p|b|^p, p^{-1}r|b|^p \}.
$$
\end{lemma}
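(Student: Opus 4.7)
The plan is to prove this by a direct non-archimedean binomial expansion around $b$. First I would observe that since $|b-T| < r|b|$ with $r < 1$, the strong triangle inequality forces $|T| = |b|$, so we do not need to worry about the ``degenerate'' case $b = 0$ (if $b = 0$ then the hypothesis would force $T = 0$ and both sides vanish).

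Next I would write $T = b + (T-b)$ and expand
\[
T^p - b^p = \sum_{k=1}^{p} \binom{p}{k} b^{p-k} (T-b)^k
= p b^{p-1}(T-b) + \sum_{k=2}^{p-1} \binom{p}{k} b^{p-k}(T-b)^k + (T-b)^p.
\]
The residue field has characteristic $p$, so $|p| = p^{-1}$ and $|\binom{p}{k}| = p^{-1}$ for $1 \leq k \leq p-1$. Using $|T-b| < r|b|$, I would estimate each summand separately: the $k=1$ term is bounded by $p^{-1} r |b|^p$; the $k=p$ term is bounded by $r^p |b|^p$; and for $2 \leq k \leq p-1$ the $k$-th term is bounded by $p^{-1} r^k |b|^p \leq p^{-1} r |b|^p$ since $r < 1$.

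Finally, applying the non-archimedean triangle inequality to the whole sum shows
\[
|T^p - b^p| \leq \max\bigl\{ p^{-1} r |b|^p,\ r^p |b|^p \bigr\},
\]
which is the desired bound. There is no serious obstacle here; the only mild subtlety is recognizing that the ``intermediate'' binomial terms (those with $2 \leq k \leq p-1$) can always be absorbed into the $k=1$ term via the crude estimate $r^k \leq r$, so that only the two extreme terms $k=1$ and $k=p$ appear in the final maximum. The proof does not require any facts from the rest of the paper beyond ultrametric arithmetic.
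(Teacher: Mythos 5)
Your proof is correct and is the standard binomial-expansion argument; the paper itself does not prove this lemma but merely cites \cite[Lemma~10.2.2(a)]{KSK-notes}. Two minor points: your handling of $b=0$ is slightly misphrased, since in that case $|b-T|<r|b|=0$ is simply impossible (the norm is nonnegative), so the hypothesis is vacuous rather than forcing $T=0$; and your estimate $|\binom{p}{k}|\leq p^{-1}$ for $1\leq k\leq p-1$ uses that $K$ has residue characteristic $p$ with $|p|\leq p^{-1}$, which is not literally stated in the lemma's ``non-archimedean field'' hypothesis but is the standing convention for $p$-adic fields in the paper (and in Kedlaya's notes), so it is worth making explicit as you did. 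Otherwise the estimates on the $k=1$, intermediate, and $k=p$ terms and the final ultrametric bound are exactly right.
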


\begin{remark}
A stronger form of Proposition~\ref{P:sp-norm-under-Frob} above for (straight) Frobenius can be found in \cite[Lemma~10.3.2]{KSK-notes} or \cite[Lemma~1.4.11]{KSK-me-pDE-TRP}.
\end{remark}

Now, we study the variation of intrinsic radii on polydiscs.

\begin{definition}
An \emph{affine functional} on $\RR^{m+1}$ is a function $\lambda: \RR^{m+1} \to \RR$ of the
form $\lambda(x_0,\dots,x_m) = a_0 x_0 + \cdots + a_m x_m + b$ for some
$a_0,\dots, a_m, b \in \RR$. If $a_0,\dots,a_m \in \ZZ$, we say $\lambda$
is \emph{transintegral} (short for ``integral after translation'').

A subset $C \subseteq \RR^{m+1}$ is \emph{polyhedral} if there exist finitely many
affine functionals $\lambda_1, \dots, \lambda_r$ such that
\[
C = \{x \in \RR^{m+1}: \lambda_i(x) \geq 0 \qquad (i=1,\dots,r)\}.
\]
If the $\lambda_i$ can be all taken to be transintegral, we say that $C$ is 
\emph{transrational polyhedral}.
\end{definition}

\begin{proposition}\label{P:prop-diff-eqns}
Let $a_{J^+} \subset \RR$ be a tuple and let $X = A_K^1[0, \theta^{a_0}] \times \cdots \times A_K^1[0, \theta^{a_m}]$ be the polydisc with radii $\theta^{a_{J^+}}$ and coordinates $\delta_{J^+}$.  Let $\calE$ be a differential module over $X$.  Then

(a) (Continuity) The function $\log_\theta IR(\calE; s_{J^+})$ is continuous for $s_j \in [a_j, +\infty)$ and $j \in J^+$.

(b) (Monotonicity) Let $s_j \geq s'_j \geq a_j$ for all $j \in J^+$.  Then $IR(\calE; s_{J^+}) \geq IR(\calE; s'_{J^+})$.

(c) (Zero Loci) The subset $Z(\calE) = \{s_{J^+} \in [a_0, +\infty) \times \cdots \times [a_m, +\infty)| IR(\calE; s_{J^+}) = 1\}$ is transrational polyhedral.  Moreover, it contains $[a'_0, +\infty) \times \cdots \times [a'_m, +\infty)$ for $a'_0, \dots, a'_m$ sufficiently large.
\end{proposition}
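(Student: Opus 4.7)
The plan is to reduce all three assertions to results on $p$-adic differential modules from \cite{KSK-me-pDE-TRP}, using as the main dictionary the generic-point Taylor-series characterization in Lemmas~\ref{L:generic-point}--\ref{L:sp-norms-vs-gen-rad}. For each $j \in J^+$, $IR_j(\calE; s_{J^+})$ equals the largest $r \in (0,1]$ such that $f_{\gen,j}^*(\calE \otimes F_{s_{J^+}})$ is trivial over the open disc $A^1_{F_{s_{J^+}}}[0, r\theta^{s_j})$, so the behaviour of each $IR_j$ is controlled by convergence radii of Taylor series in a single direction. I would analyse each $IR_j$ individually and then combine via the minimum.

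For (a), the Gauss norm $|\cdot|_{s_{J^+}}$ depends continuously on $s_{J^+}$, so the spectral norms $|\partial_j|_{\sp, F_{s_{J^+}}}$ and $|\partial_j|_{\sp, \calE \otimes F_{s_{J^+}}}$, each a limit of continuous functions of $s_{J^+}$, are themselves continuous in $s_{J^+}$; the ratio $IR_j$ is then continuous, and the minimum over $j \in J^+$ preserves continuity. For (b), I would fix $j, k \in J^+$ and show that $s_k \mapsto IR_j(\calE; s_{J^+})$ is non-decreasing on its domain. When $k = j$ this is the classical one-variable fact on a disc: increasing $s_j$ shrinks the generic disc in the $\delta_j$-direction, so a convergent Taylor expansion stays convergent at a proportionally larger relative radius. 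When $k \neq j$ the derivations $\partial_j$ and $\partial_k$ commute, so narrowing the generic disc in the $\delta_k$-direction cannot decrease the $\partial_j$-radius; the minimum over $j$ preserves monotonicity.

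For (c), the essential input is the structure theorem from \cite{KSK-me-pDE-TRP}: on the region where it is defined, $-\log_\theta IR(\calE; s_{J^+})$ is a convex, piecewise-affine function whose affine pieces have transintegral linear parts. Granting this, the zero locus $Z(\calE) = \{-\log_\theta IR = 0\}$ is a face of such a function, hence a transrational polyhedron by definition. For the final orthant-containment assertion, any coherent differential module on the polydisc $X$ has Taylor expansions in each $\delta_j$-direction that, for $s_j$ sufficiently large, converge on the full generic disc of radius $\theta^{s_j}$; equivalently, $IR(\calE; s_{J^+}) \to 1$ as each $s_j \to +\infty$. Combined with piecewise affinity with transintegral slopes, this forces $\log_\theta IR$ to vanish identically on a positive orthant $[a'_0, +\infty) \times \cdots \times [a'_m, +\infty)$.

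The main obstacle is the structure theorem invoked in (c). In one variable the corresponding statement is classical (convexity due to Christol--Mebkhout, integer-slope refinements of Kedlaya), but the multivariable version requires the slope-filtration and decomposition machinery developed for polyannular settings in \cite{KSK-me-pDE-TRP}; once that is granted, the proposition follows essentially as a compilation of its consequences specialised to closed polydiscs.
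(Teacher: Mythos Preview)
Your overall plan---defer to the structure results of \cite{KSK-me-pDE-TRP}---matches the paper, which simply cites \cite[Theorem~3.3.9]{KSK-me-pDE-TRP} for (a) and (c), and for (b) reduces by a zig-zag to a single coordinate and cites \cite[Theorem~2.4.4(c)]{KSK-me-pDE-TRP}. Your treatment of (c) is essentially the same as the paper's; for the orthant containment the paper phrases it as ``intrinsic radii are always nonzero'', which together with piecewise-affinity and monotonicity forces the function to be eventually constant equal to $0$, so your version and the paper's coincide.

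There is, however, a genuine gap in your argument for (a). You write that the spectral norms $|\partial_j|_{\sp,\calE\otimes F_{s_{J^+}}}$ are ``each a limit of continuous functions of $s_{J^+}$, [hence] themselves continuous''. This inference fails: the spectral norm is $\inf_n |\partial_j^n|^{1/n}$, and an infimum (or pointwise limit) of continuous functions is in general only upper semicontinuous, not continuous. So your argument gives at best semicontinuity of each $IR_j$, not continuity, and taking a minimum over $j$ does not repair this. The way continuity is actually established in \cite{KSK-me-pDE-TRP} is not via this direct route but as a corollary of the much finer statement that $\log_\theta IR(\calE;-)$ is piecewise affine with transintegral slopes (the same Theorem~3.3.9 you invoke for (c)); continuity then drops out for free. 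In other words, (a) is not easier than (c); it is a consequence of the same structural theorem.

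For (b), your sketch is in the right spirit but the two one-line justifications (``a convergent Taylor expansion stays convergent at a proportionally larger relative radius'' for $k=j$, and ``commuting derivations'' for $k\neq j$) are not proofs as stated: the generic fields $F_{s_{J^+}}$ and $F_{s'_{J^+}}$ are different, so one must actually compare the modules after a base change. The paper handles this exactly as you suggest by reducing to a single coordinate, but then invokes the one-variable monotonicity theorem \cite[Theorem~2.4.4(c)]{KSK-me-pDE-TRP} rather than arguing informally.
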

\begin{proof}
Statements (a) and (c) follow from \cite[Theorem~3.3.9]{KSK-me-pDE-TRP}; $Z(\calE)$ contains $[a'_0, +\infty) \times \cdots \times [a'_m, +\infty)$ for $a'_0, \dots, a'_m$ sufficiently large because the intrinsic radii are always nonzero. For (b), by drawing zig-zag lines parallel to axes linking the two points $s_{J^+}$ and $s'_{J^+}$, it suffices to consider the case when $s_j = s'_j$ for $j \in J^+ \bs \{j_0\}$ and $s_{j_0} \geq s'_{j_0}$.  In this case, we may base change to the completion of $K(\delta_{J^+ \bs \{j_0\}})$ with respect to the $s_{J^+ \bs \{j_0\}}$-Gauss norm.  The result follows from \cite[Theorem~2.4.4(c)]{KSK-me-pDE-TRP}.
\end{proof}

\subsection{Ramification filtrations}
\label{S:review-AS-fil}

In this subsection, we sketch Abbes and Saito's definition of ramification filtrations on the Galois group $G_K$ of a complete discrete valuation field $K$ of mixed characteristic $(0,p)$.  For more details, one can consult \cite{AS-cond1} and \cite{AS-cond2}.

In this subsection, we temporarily drop Notation~\ref{N:J+1}.

\begin{notation}
  For any complete discrete valuation field $K$ of mixed characteristic $(0, p)$, we denote its ring of integers and residue field by $\calO_K$ and $k$, respectively.  Let $\pi_K$ denote a uniformizer and $\gothm_K$ denote the maximal ideal of $\calO_K$ (generated by $\pi_K)$.  We normalize the valuation $v_{K}(\cdot)$ on $K$ so that $v_K(\pi_K) = 1$; the \emph{absolute ramification degree} is defined to be $\beta_K = v_K(p)$.  We say that $K$ is \emph{absolutely unramified} if $\beta_K = 1$.  For an element $a \in \calO_K$, we write its reduction in $k$ as $\bar a$; $a$ is called a \emph{lift} of $\bar a$.

We choose and fix an algebraic closure $K^\alg$ of $K$; all finite extensions of $K$ are taken inside $K^\alg$.  Let $G_K$  denote the absolute Galois group $\Gal(K^\alg / K)$.  If $L$ is a finite Galois extension of $K$, we denote the Galois group by $G_{L/K}$.  We use $\bbN_{L/K}(x)$ to denote the norm of an element $x \in L$.  If $L$ is a (not necessarily algebraic) complete extension of $K$ and is itself a discrete valuation field, we use $e_{L/K}$ to denote its \emph{na\"ive ramification degree}, i.e., the index of the value group of $K$ in that of $L$.  We say that $L/K$ is tamely ramified if $p \nmid e_{L/K}$ and the residue field extension $l/k$ is \emph{algebraic} and separable.  If moreover $e_{L/K} = 1$, we say that $L/ K$ is unramified.
\end{notation}

\begin{notation}\label{N:K-and-L}
From now on, $K$ will denote a complete discrete valuation field of mixed characteristic $(0, p)$, and $L$ will be a finite Galois extension of $K$ of na\"ive ramification degree $e = e_{L/K}$.  Set $\theta = |\pi_K|$; this agrees the convention in the previous subsection.
\end{notation}

\begin{definition}\label{D:AS-space}
Take $Z = (z_j)_{j \in J} \subset \calO_L$ to be a finite set of elements generating $\calO_L$ over $\calO_K$, i.e., $\calO_K [u_J] / \calI \isom \calO_L$ mapping $u_j$ to $z_j$ for all $j \in J = \{\serie{}m\}$. Let $(f_i) _{i=\serie{}n}$ be a finite set of generators of $\calI$.  For $a\in\QQ_{>0}$, define the \emph{Abbes-Saito space} to be
\[
AS_{L/K, Z}^a = \big\{ (u_1, \dots, u_m) \in A_K^m[0, 1] \; \big| \;
|f_i(u_J)| \leq \theta^a,   1 \leq i \leq n \big\}.
\]

We denote the set of \emph{geometric} connected components of $AS_{L/K,Z}^a$ by $\pi_0^\geom (AS_{L/K, Z}^a)$. The \emph{highest ramification break} $b(L/K)$ of the extension $L/K$ is defined to be the minimal $b \in \RR_{\geq 0}$ such that for any rational number $a>b,$ $\# \pi_0^\geom (AS_{L/K,Z}^a) = [L:K]$.
\end{definition}

\begin{definition}\label{D:AS-space-log}
Keep the notation as above.  Take a subset $P \subset Z$ and assume that $P$ and hence $Z$ contain $\pi_L$. Let $e_j = v_L(z_j)$, $z_j \in P$. Take a lift $g_j \in \calO_K [u_J]$ of $z_j^e / \pi_K^{e_j}$ for each $z_j \in P$; take a lift $h_{i,j} \in \calO_K [u_J]$ of $z_j^{e_i} / z_i^{e_j}$ for each pair $(z_i, z_j) \in P \times P$.  For $a\in \QQ_{>0}$, define the \emph{logarithmic Abbes-Saito space} to be
\[
AS_{L/K, \log,Z,P}^a = \left\{ (u_J) \in A_K^m[0,1] \Bigg| 
\begin{array}{cc}
|f_i(u_J)| \leq \theta^a,  & 1 \leq i \leq n \\
|u_j^e - \pi_K^{e_j} g_j| \leq \theta^{a+e_j} & \textrm{for all } z_j \in P \\
|u_j^{e_i} - u_i^{e_j} h_{i,j}| \leq \theta^{a+e_i e_j/e}& \textrm{for all } (z_i, z_j) \in P \times P
\end{array}
\right\}.
\]

Similarly, the \emph{highest logarithmic ramification break} $b_\log(L/K)$ of the extension $L/K$ is defined to be the minimal $b \in \RR_{\geq 0}$ such that for any rational number $a>b$, $\# \pi_0^\geom (AS_{L/K,\log, Z,P}^a) = [L:K]$.
\end{definition}

We reproduce several statements from \cite{AS-cond1} and \cite{AS-cond2}.

\begin{proposition} \label{P:AS-space-properties}
The Abbes-Saito spaces have the following properties.

\emph{(1)} For $a\in \QQ_{>0}$, the Abbes-Saito spaces $AS_{L/K,Z}^a$ and $AS_{L/K, \log, Z, P}^a$ do not depend on the choices of the generators $(f_i)_{i = \serie{}n}$ of $\calI$ and the lifts $g_j$ and $h_{i,j}$ for $i, j \in P$ \emph{\cite[Section~3]{AS-cond1}}.

\emph{(1')} If in the definition of both Abbes-Saito spaces, we choose polynomials $(f_i)_{i = \serie{}n}$ as generators of $\Ker(\calO_K \langle u_J \rangle \rar \calO_L)$ instead of $\Ker(\calO_K[u_J] \rar \calO_L)$, the spaces do not change.

\emph{(2)} If we use another pair of generating sets $Z$ and $P$ satisfying the same properties, then we have a canonical bijection on the sets of the geometric connected components $\pi_0^\geom (AS_{L/K, Z}^a)$ and $\pi_0^\geom (AS_{L/K,\log,Z,P}^a)$ for different generating sets, where $a\in \QQ_{>0}$. In particular, both highest ramification breaks are well-defined \emph{\cite[Section~3]{AS-cond1}}.

\emph{(3)} The highest ramification break (resp. highest logarithmic ramification break) gives rise to a filtration on the Galois group $G_K$ consisting of normal subgroups $\Fil^a G_K$ for (resp., $\Fil_{\log}^a G_K$) for $a\geq 0$ such that $b(L/K) = \inf\{a| \Fil^aG_K \subseteq G_L\}$ (resp. $b_\log(L/K) = \inf\{a| \Fil^a_\log G_K \subseteq G_L\}$) \emph{\cite[Theorems~3.3, 3.11]{AS-cond1}}. Moreover, for $L/K$ a finite Galois extension, both highest ramification breaks are rational numbers \emph{\cite[Theorem~3.8, 3.16]{AS-cond1}}.

\emph{(4)} Let $K'/K$ be a (not necessarily finite) extension of complete discrete valuation fields.  If $K'/K$ is unramified, then $\Fil^a G_{K'} = \Fil^a G_K$ for $a>0$ \emph{\cite[Proposition~3.7]{AS-cond1}}. If $K'/K$ is tamely ramified with ramification index $e < \infty$, then $\Fil_\log^{ea}G_{K'} = \Fil_\log^a G_K$ for $a>0$ \emph{\cite[Proposition~3.15]{AS-cond1}}.

\emph{(4')} More generally, let $L/K$ be a finite algebraic extension and let $K'/K$ be a complete extension of discrete valuation fields with the \emph{same} valued group and linearly independent of $L$.  Denote $L' = K'K$.  If $\calO_{L'} = \calO_L \otimes_{\OK} \calO_{K'}$, then $b(L/K) = b(L'/K')$ \emph{\cite[Lemme~2.1.5]{AM-sous-groupes}}.

\emph{(5)} For $a\geq 0$, define $\Fil^{a+} G_K = \overline{\cup_{b>a}\Fil^b G_K}$ and $\Fil_\log^{a+} G_K = \overline{\cup_{b>a}\Fil_\log^b G_K}$. Then, the subquotients $\Fil^a G_K / \Fil^{a+} G_K$ are abelian $p$-groups if $a \in \QQ_{> 1}$ and are $0$ if $a \notin \QQ$, except when $K$ is absolutely unramified \emph{(\cite[Theorem~3.8]{AS-cond1} and \cite[Theorem~1]{AS-cond2})}.  The subquotients $\Fil_\log^a G_K / \Fil_\log^{a+} G_K$ are abelian $p$-groups if $a \in \QQ_{> 0}$ and are $0$ if $a \notin \QQ$ \emph{(\cite[Theorem~3.16]{AS-cond1}, \cite[Theorem~1]{AS-cond2})}.

\emph{(6)} For $a >0$, $\Fil^{a+1} G_K \subseteq \Fil_\log^a G_K \subseteq \Fil^a G_K$ \emph{\cite[Theorem~3.15(1)]{AS-cond1}}.

\emph{(7)} The inertia subgroup is $\Fil^{a} G_K$ for $a \in (0, 1]$ and the wild inertia subgroup is $\Fil^{1+} G_K = \Fil^{0+}_\log G_K$ \emph{\cite[Theorems~3.7 and 3.15]{AS-cond1}}.

\emph{(8)} When the residue field $k$ is perfect, the arithmetic ramification filtrations agree with the classical upper numbered filtration \emph{\cite{BOOK-local-fields}} in the following way: $\Fil^a G_K = \Fil_\log^{a-1} G_K = G_K^{a-1}$ for $a \geq 1$, where $G_K^a$ is the classical upper numbered filtration on $G_K$ \emph{\cite[Section~6.1]{AS-cond1}}.
\end{proposition}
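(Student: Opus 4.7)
The plan is to treat each of the eight items in turn, since they describe distinct facets of the ramification filtrations; most reduce either to a direct inspection of the defining inequalities of Definitions~\ref{D:AS-space}--\ref{D:AS-space-log}, or to invoking one of the core analyses in \cite{AS-cond1, AS-cond2, AM-sous-groupes}.

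First I would handle (1), (1'), and (2), the independence-of-presentation statements. Given two sets of generators $(f_i)$ and $(f'_{i'})$ of the same ideal, each is a $\calO_K[u_J]$-linear combination of the other; since the coefficients have norm at most $1$ on $A_K^m[0,1]$, the resulting inequalities cut out the same subspace. For (1') the passage from polynomials to power series is harmless because any element of the closed ideal in $\calO_K\langle u_J \rangle$ can be approximated by polynomials modulo $\pi_K^N$ for arbitrary $N$, and the polydisc condition $|u_j|\leq 1$ absorbs the error. For (2), I would compare two generating sets $Z, Z'$ through a common refinement $Z \cup Z'$ and show that projecting away the extra coordinates induces a bijection on geometric $\pi_0$, following \cite[\S 3]{AS-cond1}.

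Items (3), (4), (4'), and (6) come from manipulating the defining inequalities. Item (6) is immediate: the logarithmic space carries strictly more conditions, yielding the containments $\Fil^{a+1}G_K \subseteq \Fil^a_\log G_K \subseteq \Fil^a G_K$. For (4), unramified base change preserves coordinates and uniformizer, so the spaces are identified, while tame base change rescales coordinates by an $n$-th root with $p \nmid n$ analogously to Construction~\ref{C:off-center-tame}, shifting log breaks by the ramification index. Item (4') is similar but uses the hypothesis $\calO_{L'} = \calO_L \otimes_{\calO_K} \calO_{K'}$ to ensure the defining ideal pulls back correctly. Item (3), the assertion that the breaks define a filtration by normal subgroups and that they are rational, is the main theorem of \cite[\S 3]{AS-cond1} and requires continuity of $\#\pi_0^\geom(AS^a)$ in $a$, combined with compatibility of the filtration under Galois quotients of $L$.

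Items (7) and (8) I would settle by direct computation: the tame case has $b(L/K) \leq 1$ by inspecting $AS^a$ for small $a$, and in the perfect residue field setting the Abbes-Saito space reduces to the classical disc construction underlying the upper numbering filtration. The main technical obstacle, and by far the deepest statement, is (5). Reproducing the abelian $p$-group structure of $\Fil^a G_K / \Fil^{a+} G_K$ without reference to \cite{AS-cond2} would require the cotangent-space linearization carried out there: one analyzes the boundary of $AS^a$ after dilation, identifies the graded piece with a subspace of differential forms on $\calO_L/\gothm_L$, and uses an Artin--Schreier--Witt type map to read off the abelian group structure. I expect this linearization step to be the main hurdle in any self-contained approach.
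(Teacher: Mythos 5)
The paper's own ``proof'' of this proposition is a three-sentence citation: only item~(1') is not already in the literature, and the author asserts it is proved ``verbatim as (1)''; for everything else the reader is referred to the equal-characteristic summary in \cite[Proposition~4.1.6]{Me-condI} together with the original sources \cite{AS-cond1,AS-cond2,AM-sous-groupes}. Your proposal is essentially the same approach---defer to the references for the hard content, give elementary arguments for the presentation-independence statements---but expanded into more explicit sketches. That is fine as far as it goes, and your handling of (1), (1'), (2), (4), (4') is sound. Two small comments. First, for (1') the cleanest argument is not the mod-$\pi_K^N$ approximation you propose but the observation that, since $\calO_K\langle u_J\rangle$ is the $\pi_K$-adic completion of the Noetherian ring $\calO_K[u_J]$, any polynomial generating set of $\calI = \Ker(\calO_K[u_J]\to\calO_L)$ also generates $\hat\calI = \calI\cdot\calO_K\langle u_J\rangle = \Ker(\calO_K\langle u_J\rangle\to\calO_L)$; then one is literally in the situation of (1), which is the paper's intended meaning of ``verbatim as (1).'' Your approximation argument also works, but it is not the route the paper signals. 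Second, your one-line treatment of (6) is too quick: the inclusion $\Fil_\log^a G_K\subseteq\Fil^a G_K$ does follow from $AS^a_{\log}\subseteq AS^a$ (together with the fact that the $[L:K]$ distinguished $\bar K$-points are separated by components), but the other inclusion $\Fil^{a+1}G_K\subseteq\Fil_\log^a G_K$ requires the genuine computation that the extra logarithmic inequalities at level $a$ are forced by the non-logarithmic inequality at level $a+1$; it is not merely ``the log space has more conditions.'' Finally, it is worth noting (as the paper does in the Remark immediately following the proposition) that item (5) and the rationality assertion in (3) are deliberately \emph{not} used as inputs to the main theorem, so your worry that the linearization step in \cite{AS-cond2} is an obstacle to a self-contained treatment is moot from the paper's point of view: those facts are re-proved independently along the way.
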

\begin{proof}
Only (1') is not proved in any literature.  But one can prove it verbatim as (1).  For a brief summary of the proofs for other statements, one may consult  \cite[Proposition~4.1.6]{Me-condI}.  (Although the statements there are stated for equal characteristic case, the proofs work just fine.)
\end{proof}

\begin{remark}
To avoid confusion, we point out that in the proof of our main theorem, we do not need (5) and the second statement of (3) on the rationality of the breaks in the proposition above.  Therefore, we will prove these properties along the way of proving the main theorem.
\end{remark}

\begin{remark}
Recently, T. Saito \cite{saito3} gave a proof of the fact that $\Fil_\log^a G_K / \Fil_\log^{a+} G_K$ are  abelian groups killed by $p$ for $a \in \QQ_{> 0}$.  This will be proved independently in our main theorem (which in fact appeared before his preprint).
\end{remark}

\begin{definition}
For $b \geq 0$, we write $\Fil^b G_{L/K} = (G_L \Fil^b G_K) /G_L$ and $\Fil^b_\log G_{L/K} = (G_L \Fil^b_\log G_K) /G_L$.  We call $b$ a \emph{non-logarithmic (resp. logarithmic) ramification break} of $L/K$ if $\Fil^b G_{L/K} / \Fil^{b+} G_{L/K}$ (resp. $\Fil^b_\log G_{L/K} / \Fil^{b+}_\log G_{L/K}$) is non-trivial.
\end{definition}

\begin{definition}
By a representation of $G_K$, we mean a continuous homomorphism $\rho: G_K \rar GL(V_\rho)$, where $V_\rho$ is a finite dimensional vector space over  a  field $F$ of characteristic zero.  We allow $F$ to have a non-archimedean topology; hence the image of $G_K$ may not be finite.  We say that $\rho$ has \emph{finite monodromy} if the image of the inertia subgroup of $G_K$ is finite.
\end{definition}

\begin{definition}\label{D:conductors}
For a representation $\rho: G_K \rar GL(V_\rho)$ of $G_K$ with finite monodromy, define the \emph{Artin and Swan conductors} of $\rho$ as
\begin{eqnarray}
\Art(\rho) & \Def & \sum_{a \in \QQ_{\geq 0}} a \cdot \dim \big( V_\rho^{\Fil^{a+} G_K} \big/ V_\rho^{\Fil^a G_K} \big), \\
\Swan(\rho) &\Def& \sum_{a \in \QQ_{\geq 0}} a \cdot \dim \big( V_\rho^{\Fil_\log^{a+} G_K} \big/ V_\rho^{\Fil_\log^a G_K} \big).
\end{eqnarray}
In fact, they are finite sums.
\end{definition}

\begin{conjecture}[Hasse-Arf Theorem]
\label{C:HA-conj}
Let $K$ be a complete discrete valuation field of mixed characteristic $(0, p)$ and let $\rho: G_K \rar GL(V_\rho)$ be a representation with finite monodromy.  Then we have

(1) $\Art(\rho)$ and $\Swan(\rho)$ are non-negative integers, and

(2) the subquotients $\Fil^a G_K / \Fil^{a+}G_K$ and $\Fil^a_\log G_K / \Fil^{a+}_\log G_K$ are abelian groups killed by $p$.
\end{conjecture}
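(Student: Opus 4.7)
The plan is to follow the four-step outline sketched in Subsection~\ref{S:outline}. I would first reduce to proving $b(L/K)\cdot\dim\rho\in\ZZ$ for $L/K$ a finite totally and wildly ramified Galois extension and $\rho$ absolutely irreducible factoring through $G_{L/K}$; integrality of $\Art(\rho)$ and $\Swan(\rho)$ then follows from the standard argument on irreducible constituents. The first real step is to construct a continuous quasi-homomorphism $\psi_K\colon \OK\to\OK\llbracket\delta_0/\pi_K,\delta_J\rrbracket$ with $\psi_K(\pi_K)=\pi_K+\delta_0$ and $\psi_K(b_j)=b_j+\delta_j$ (with $b_J$ lifting a $p$-basis of $k$); since $v_K(p)=\beta_K$ prevents $\psi_K$ from being a genuine ring homomorphism outside the absolutely unramified setting, I would only demand multiplicativity modulo the error ideal $I_K=p(\delta_0/\pi_K,\delta_J)$. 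Applying $\psi_K$ termwise to the defining relations of $\calO_L/\OK$ produces the thickening space $TS_{L/K,\psi}^a$ equipped with a projection $\Pi$ onto a polydisc, and the key technical output of this step is the isomorphism $AS_{L/K}^a\simeq TS_{L/K,\psi}^a$ of rigid $K$-spaces (Theorem~\ref{T:ts=as}), which gives the Abbes-Saito construction the base-change functoriality along $\Pi$ that it intrinsically lacks.

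Second, following Borger, I would perform a generic rotation: take transcendentals $x_J$, replace $K$ by the completion $K'$ of $K(x_J)$ at the Gauss norm with the shifted $p$-basis $\{b_j+x_j\pi_K,x_j\}$, and then pass to $\widetilde K$ by adjoining all $p$-power roots of $b_j+x_j\pi_K$ and $x_j$ and completing. The residue field of $\widetilde K$ is perfect, and the pushforward $\calE$ of the sheaf of functions on $TS_{L/K,\psi}^a$ along $\Pi$ is a differential module on a polydisc for which the condition $\#\pi_0^\geom(TS^a)=[L:K]$ translates (Proposition~\ref{P:AS-break=spec-norms}) into triviality of $\calE$ after base change to $L$. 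A Cartesian diagram of shape \eqref{E:introduction} would then reduce the comparison $b(L/K)\stackrel{?}{=}b(L\widetilde K/\widetilde K)$ to tracking intrinsic radii of $\calE$ versus $f^*\calE$; the continuity, monotonicity and transrational polyhedrality statements of Proposition~\ref{P:prop-diff-eqns}, together with the off-centered tame and Frobenius formulas of Propositions~\ref{P:off-center-tame} and \ref{P:sp-norm-under-Frob}, would then let me conclude by invoking the classical perfect-residue Hasse-Arf theorem.

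For the logarithmic case I would argue by dichotomy: either the uniformizer direction $\delta_0$ is log-dominant, in which case the non-log argument applies verbatim, or after a tame base change to $K_n=K(\pi_K^{1/n})$ with $p\nmid n$ followed by a generic rotation I reach $b(L_n'/K_n')=n\cdot b_\log(L/K)$ with $\delta_0$ non-log-dominant, to which the non-log result applies. Two coprime choices of $n$ upgrade $n\,\Swan(\rho)\in\ZZ$ to $\Swan(\rho)\in\ZZ$ when $p>2$; for $p=2$ the coprime-to-$p$ trick is unable to eliminate a factor of $2$, which is why the conclusion degrades to $\Swan(\rho)\in\tfrac12\ZZ$. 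Part~(2) of the theorem (subquotients abelian and killed by $p$) would then follow by applying a trick of Kedlaya to the same thickening-space presentation.

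The principal obstacle throughout is that $\psi_K$ is only an approximate homomorphism: constructing $TS^a$, establishing $AS\simeq TS$, promoting \eqref{E:introduction} to a genuine Cartesian diagram (after the delicate modification of $TS_{L\widetilde K/\widetilde K}^a$ provided by Theorem~\ref{T:AS-inv-gen-rot}), and transporting intrinsic radii through $f^*$ must all be carried out modulo carefully tracked error ideals. The argument degenerates precisely where those error ideals do: in the absolutely unramified non-logarithmic case, because $v_K(p)=1$ strips the factor of $p$ taming $I_K$, and in the $p=2$ logarithmic case, because the coprime-to-$p$ tame trick can no longer furnish two distinct prime denominators.
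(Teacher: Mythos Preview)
Your outline tracks the paper's strategy closely, but two points are stated in the na\"ive form that the paper explicitly warns against, and a third is too vague to count as a plan.

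First, you write that after the generic rotation you ``pass to $\widetilde K$ by adjoining all $p$-power roots \ldots\ and completing.'' This is the $p^\infty$-th root shortcut that the paper flags as problematic (Remark~\ref{R:gen-p-infty-not-work}): because $\psi_K$ is only an approximate homomorphism, each generic $p$-th root drops the error gauge by one (Theorem~\ref{T:base-change}), so you cannot add infinitely many at once and keep admissibility. The paper instead iterates single generic $p$-th roots (Definition~\ref{D:generic-pth-root}, Proposition~\ref{P:finite-gen-pth-roots}), showing that finitely many suffice to kill the inseparable residue extension, and this is exactly why the hypothesis $\beta_K\geq 2$ enters. Your later invocation of Theorem~\ref{T:AS-inv-gen-rot} is the right fix, but it is a statement about a single $p$-th root, not about $\widetilde K$ as you describe it.

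Second, your logarithmic integrality argument (``$b(L'_n/K'_n)=n\cdot b_\log(L/K)$, then two coprime $n$'') misses a step. The paper does not get that clean equality; it gets $b(L_n/K_n)=n\,b_\log(L/K)+\alpha_{L/K}$ for an unknown $\alpha_{L/K}\in[0,1]$ (Proposition~\ref{P:tame-base-change-1}), and must produce a \emph{second} relation $b(\widetilde L_n/\widetilde K_n)=n\,b_\log(L/K)-1+2\alpha_{L/K}$ via an additional generic rotation (Proposition~\ref{P:tame-base-change-2}) to eliminate $\alpha_{L/K}$. Only then do two coprime $n$'s finish. The $p=2$ loss of a factor $2$ comes from the coefficient $3$ replacing $2$ in this second relation (Remark~\ref{R:p=2}), not from a coprimality obstruction as you suggest.

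Third, for the logarithmic half of part~(2) you say only ``a trick of Kedlaya.'' The paper's argument (Subsections~\ref{S:example}--\ref{S:log-HA-thm}) is a concrete wildly ramified base change through the explicit extension $K_*=K[T]/(T^p+\pi_KT^{p-1}-\pi_K)$, whose logarithmic break is exactly $1$; pulling the thickening space through $K_*$ and then a generic rotation converts each logarithmic break $b>1$ into a non-logarithmic break $pb-p+2$ (Theorem~\ref{T:log-subquot-p-ele}), after which Theorem~\ref{T:main-thm-nonlog}(2) applies. This construction is specific enough that it should be named in the plan.
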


In Theorems~\ref{T:main-thm-nonlog}, \ref{T:log-HA-thm}, and \ref{T:log-subquot-p-ele}, we will prove this conjecture except in the absolutely unramified and non-logarithmic case, or the $p=2$ and logarithmic case.

\begin{proposition}
\label{P:classical-HA-thm}
When the residue field $k$ is perfect, Conjecture~\ref{C:HA-conj} is true.
\end{proposition}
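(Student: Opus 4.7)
The plan is a direct reduction to the classical Hasse--Arf theorem for complete discrete valuation fields with perfect residue field, as developed in Serre's \emph{Local Fields}. The essential bridge is Proposition~\ref{P:AS-space-properties}(8), which identifies the Abbes--Saito filtrations with the classical upper-numbered filtration $G_K^\bullet$: explicitly, $\Fil^a G_K = G_K^{a-1}$ for $a \geq 1$ and $\Fil^a_\log G_K = G_K^a$ for $a \geq 0$.

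For part (1), let $\rho: G_K \to GL(V_\rho)$ be a representation with finite monodromy. Then $\rho|_{I_K}$ factors through $G_{L/K}$ for some finite Galois extension $L/K$, where $I_K = \Fil^1 G_K$ is the inertia subgroup. Consequently the sums defining $\Art(\rho)$ and $\Swan(\rho)$ in Definition~\ref{D:conductors} are finite and only involve values of $a$ at which $V_\rho^{\Fil^a G_K}$ (resp.\ $V_\rho^{\Fil^a_\log G_K}$) jumps. Translating via Proposition~\ref{P:AS-space-properties}(8), these jumps occur only at $a = 1 + b$ (resp.\ $a = b$) for $b$ a positive break of $G_K^\bullet$ visible in $V_\rho$. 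The classical Hasse--Arf theorem \cite[Chap.~V, Thm.~1]{BOOK-local-fields}, applied to the finite Galois quotient through which $\rho|_{I_K}$ factors, asserts that all such $b$ are positive integers. Since each summand of $\Art(\rho)$ and $\Swan(\rho)$ is the product of an integer coefficient with an integer dimension, both conductors are non-negative integers as claimed.

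For part (2), classical local field theory in the perfect residue case \cite[Chap.~IV]{BOOK-local-fields} shows that each subquotient $G_K^b / G_K^{b+}$ with $b > 0$ embeds into the additive group of the residue field of a sufficiently large finite Galois extension, hence is an elementary abelian $p$-group. Via the identification in Proposition~\ref{P:AS-space-properties}(8), this immediately yields the analogous statement for $\Fil^a G_K / \Fil^{a+} G_K$ (for $a > 1$) and for $\Fil^a_\log G_K / \Fil^{a+}_\log G_K$ (for $a > 0$). Triviality when $a \notin \QQ$ follows from the same integrality of breaks guaranteed by the classical Hasse--Arf theorem.

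The reduction is entirely formal once Proposition~\ref{P:AS-space-properties}(8) is in hand, so there is no real obstacle to overcome; the content of the proposition is precisely that the framework of this paper recovers the classical Hasse--Arf theorem in the range of cases the classical theorem covers.
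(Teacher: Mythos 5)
Your overall strategy coincides exactly with the paper's: invoke Proposition~\ref{P:AS-space-properties}(8) to translate the Abbes--Saito filtrations into the classical upper-numbering filtration $G_K^\bullet$, then quote Serre.  The proof in the paper is literally one sentence doing just that, so the reduction you describe is the intended one.

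However, your argument for part (1) contains a genuine logical error.  You assert that ``the classical Hasse--Arf theorem, applied to the finite Galois quotient through which $\rho|_{I_K}$ factors, asserts that all such $b$ are positive integers,'' and then conclude integrality of $\Art(\rho)$ and $\Swan(\rho)$ because each summand is an integer times an integer.  But the Hasse--Arf theorem only guarantees integer jumps of the upper-numbering filtration for \emph{abelian} Galois groups; Serre explicitly remarks (Local Fields, Ch.~IV, \S 3, after Prop.~9) that non-abelian finite quotients can and do have non-integer upper jumps even over perfect residue fields.  So for a $\rho$ that factors through a non-abelian quotient, the individual breaks $b$ visible in $V_\rho$ need not be integers, and the summand-by-summand argument breaks down.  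What is true — and what the paper cites via \cite[\S~VI.2, Theorem~1]{BOOK-local-fields} — is that the \emph{total} Artin conductor $f(\chi)$ is a non-negative integer for every character $\chi$; this is deduced from Hasse--Arf (for cyclic/abelian subgroups) via the Artin representation and Brauer induction, not by showing each break is an integer.  After reducing to an irreducible $\rho$, the correct shape of the conclusion is $b(\rho)\cdot \dim\rho \in \ZZ$, with no claim that $b(\rho)$ itself is integral.  Replacing your Chap.~V citation by \S~VI.2 Theorem~1 and dropping the false ``all breaks are integers'' claim repairs the argument and brings it into line with the paper.  A parallel (milder) slip occurs in part (2): the triviality of $\Fil^a/\Fil^{a+}$ for $a \notin \QQ$ only needs \emph{rationality} of the upper breaks (which holds automatically since $\varphi_{L/K}$ has rational slopes and integer lower breaks), not integrality, and phrasing it as a consequence of ``integrality of breaks'' repeats the problematic claim from part (1).
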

\begin{proof}
By Proposition~\ref{P:AS-space-properties}(8), it follows from the classical Hasse-Arf theorem \cite[\S~VI.2~Theorem 1]{BOOK-local-fields}.
\end{proof}

\section{Construction of Spaces}
In this section, we construct a series of rigid analytic spaces and study their relations; in particular, we prove that the Abbes-Saito spaces are the same as thickening spaces, and hence translate the question on ramification breaks to a question on generic radii of differential modules.

\subsection{Standard Abbes-Saito spaces}
\label{S:standard-AS}

In this subsection, we introduce the standard Abbes-Saito spaces by choosing a distinguished set of generators of $\calO_L / \calO_K$.

\begin{definition}
For a field $k$ of characteristic $p$, a \emph{$p$-basis} of $k$ is a set $\bar b_J \subset k$ such that $\bar b_J^{e_J}$, where $e_j \in \{0, 1, \dots, p-1\}$ for all $j \in J$ and $e_j = 0$ for all but finitely many $j$, form a basis of $k$ as a $k^p$-vector space.  For a complete discrete valuation field $K$ of mixed characteristic $(0, p)$, a \emph{$p$-basis} is a set of lifts $b_J \subset \OK$ of a $p$-basis of the residue field $k$.
\end{definition}

\begin{hypo}\label{H:J-finite-set}
Throughout this section, let $K$ be a discrete valuation field of mixed characteristic $(0, p)$ with \emph{separably closed} and \emph{imperfect} residue field.  Assume that $K$ admits a \emph{finite} $p$-basis.  Also, let $L/K$ be a wildly ramified Galois extension of na\"ive ramification degree $e = e_{L/K}$.  In particular, $L/K$ is totally ramified and $b(L/K) > 1$, $b_\log(L/K) >0$.
\end{hypo}

\begin{remark}
In case there is a confusion of the terminology here, by wildly ramified extension, we mean a finite extension which is not tamely ramified, namely, it \emph{can} have tamely ramified part.

This is a mild hypothesis because the conductors behave well under unramified base changes, and the tamely ramified case is well-studied.
\end{remark}

\begin{notation}\label{N:J+}
For the rest of the paper, we retrieve Notation~\ref{N:J+1}, namely, let $J = \{\serie{}m\}$ and $J^+ = J \cup \{0\}$.  We will save the notations $j$ and $m$ only for indexing $p$-bases and related variables, and $j=0$ refers to the uniformizer.
\end{notation}

\begin{notation}\label{N:norm-OK-uJ+}
We define a norm on $\OK [u_{J^+}]$: for $h = \sum_{e_{J^+}} \alpha_{e_{J^+}} u_{J^+}^{e_{J^+}}$, where $\alpha_{e_{J^+}} \in \OK$, we set $|h| = \max_{e_{J^+}} \{ |\alpha_{e_{J^+}}|\cdot \theta^{e_0 / e}\}$.  For $a \in \frac 1e\ZZ_{\geq 0}$, denote $N^a$ to be the set of elements with norm $\leq \theta^a$; it is in fact an ideal.
\end{notation}

The following construction provides a good set of generators for the extension $\calO_L / \calO_K$.  Essentially, we just need some generators and relations with no redundancy which we can write down and work with.

\begin{construction}
\label{C:generators-of-calI}
Choose $p$-bases $b_J \subset \OK$ and $c_J \subset \calO_L$ of $K$ and $L$, respectively.  Let $\bbk_0 = k$ with $p$-basis $(\bar b_j)_{j \in J}$.  By possibly rearranging the indexing in $b_J$, we can filter the extension $l/k$ by subextensions $\bbk_j = k(\bar c_1, \dots, \bar c_j)$ with $p$-bases $\big\{ \bar c_1, \dots, \bar c_j, \bar b_{j+1}, \dots, \bar b_m \big\}$ for $j \in J$.  Moreover, if $[\bbk_j: \bbk_{j-1}] = p^{r_j}$, then $\bar c_j^{p^{r_j}} \in \bbk_{j-1}$.

Write $\Delta: \calO_K \langle u_{J^+} \rangle / \calI_{L/K} \isom \calO_L$ mapping $u_j$ to $c_j$ for $j \in J$ and $u_0$ to $\pi_L$, where $\calI_{L/K}$ is some proper ideal.  Let $\overline\Delta$ be the composite of $\Delta$ with the reduction $\calO_L \surj l$.  Hence,
\begin{equation} \label{E:basis-standard}
\big\{u_{J^+}^{e_{J^+}} |e_j \in \{0, \dots, p^{r_j}-1\} \textrm{ for all } j\in J \textrm{, and } e_0 \in \{0, \dots, e-1\}\big\}
\end{equation}
form a basis of $\calO_K \langle u_{J^+} \rangle / \calI_{L/K}$ as a free $\calO_K$-module.  We choose a set of generators $p_{J^+}$ of $\calI_{L/K}$ by writing each $u_j^{p^{r_j}}$ (for $j \in J$) or $u_0^e$ (for $j =0$) in terms of the basis \eqref{E:basis-standard}.  We say that $p_j$ \emph{corresponds} to $c_j$.  Obviously, $p_{J^+}$ generates $\calI_{L/K}$.  Moreover,
\begin{eqnarray*}
p_j & \in & u_j^{p^{r_j}} - \tilde b_j(u_1, \dots, u_{j-1}) + N^{1/e} \cdot \calO_K [u_{J^+}],\quad j \in J, \\
p_0 & \in & u_0^e - d(u_1, \dots, u_m)\pi_K + \pi_K N^{1/e} \cdot \calO_K [u_{J^+}],
\end{eqnarray*}
where $\tilde b_j(u_1, \dots, u_{j-1}) \in \calO_K [\serie{u_}{j-1}]$ with powers on $u_i$ smaller than $p^{r_i}$ for all $i = 1, \dots, j-1$, where $d(u_1, \dots, u_m) \in \calO_K[u_1, \dots, u_m]$ is a polynomial such that $d(c_1,\dots, c_m) \in \calO_L^\times$.
\end{construction}

\begin{remark}
\label{R:Ohkubo}
One may not avoid introducing $\tilde b_j(u_1, \dots, u_{j-1})$ and $d(u_1, \dots, u_m)$.  Counterexamples are provided and communicated to the author by Shun Ohkubo, see \cite[Remark 3.3.6 and Example 3.3.10]{Me-condI}.  However, to best convey the idea of the proof, the readers are invited to pretend that these two elements are trivial, which is already quite general.
\end{remark}

\begin{definition}
\label{D:standard-AS}
The \emph{(standard) Abbes-Saito spaces} $AS_{L/K}^a$ for $a\in \QQ_{>1}$ and $AS_{L/K, \log}^a$ for $a\in \QQ_{>0}$ are defined by taking  generators to be $\{c_J, \pi_L\}$ and  relations to be $p_{J^+}$ (see Proposition~\ref{P:AS-space-properties}(1')).  In particular, their rings of functions are
\begin{eqnarray*}
& \calO_{AS, L/K}^a = K \langle u_{J^+}, \pi_K^{-a}V_{J^+} \rangle \big/ \big(p_0 (u_{J^+}) - V_0, \dots, p_m (u_{J^+}) - V_m \big), \textrm{ and} \\
&\calO_{AS, L/K, \log}^a = K \langle u_{J^+}, \pi_K^{-a-1} V_0, \pi_K^{-a}V_J \rangle \big/ \big(p_0 (u_{J^+}) - V_0, \dots, p_m (u_{J^+}) - V_m \big).
\end{eqnarray*}
\end{definition}

\subsection{The $\psi$-function and thickening spaces}\label{S:psi-map}

In this subsection, we first define a function (\emph{not} a homomorphism) $\psi: \calO_K \rar \calO_K \llbracket \delta_0/ \pi_K, \delta_J \rrbracket$, which is an approximation to the deformation of the uniformizer $\pi_K$ and $p$-basis as in \cite[Theorem~3.2.7]{Me-condI}.  Then, we introduce the thickening spaces for the extension $L/K$ (See \cite[Section~3.1]{Me-condI} for motivations).

As a reminder, we assume Hypothesis~\ref{H:J-finite-set} for this section; we fix a finite $p$-basis $(b_J)$ and a uniformizer $\pi_K$ of $K$.

\begin{construction}\label{C:psi-map}
Let $r \in \NN$ and $h \in \calO_K^\times$.  An \emph{$r$-th $p$-basis decomposition} of $h$ is to write $h$ as
\begin{equation}\label{E:f-element-in-OK}
h = \sum_{e_J = 0}^{p^r-1} b_J^{e_J} \Big( \sum_{n=0}^\infty \big( \sum_{n'=0}^{\lambda_{r, e_J, n}} \alpha_{r, e_J, n, n'}^{p^r}\big) \pi_K^n \Big)
\end{equation}
for some $\alpha_{r, e_J, n, n'} \in \calO_K^\times \cup \{0\}$ and some $\lambda_{r, e_J, n} \in \ZZ_{\geq 0}$.  Such expressions always exist but are not unique.  For $r'>r$, we can express each of $\alpha_{r, e_J, n, n'}$ in \eqref{E:f-element-in-OK} using an $(r'-r)$-th $p$-basis decomposition and then rearrange the formal sum to obtain an $r'$-th $p$-basis decomposition.  For $h \in \calO_K^\times$, we say that an $r'$-th $p$-basis decomposition is \emph{compatible} with the $r$-th $p$-basis decomposition in \eqref{E:f-element-in-OK} if it can be obtained in the above sense.

We define the function $\psi: \calO_K \rar \calO_K \llbracket \delta_{J^+} \rrbracket$ as follows: for each $h \in \calO_K^\times \bs \{1\}$, we fix a compatible system of $r$-th $p$-basis decomposition for all $r \in \NN$, and define
\begin{equation}\label{E:psi-map}
\psi(h) = \lim_{r \rar +\infty} \sum_{e_J = 0}^{p^r-1} (b_J+\delta_J)^{e_J} \Big( \sum_{n=0}^\infty \big( \sum_{n'=0}^{\lambda_{r, e_J, n}} \alpha_{r, e_J, n, n'}^{p^r} \big) (\pi_K+\delta_0)^n \Big);
\end{equation}
this expression converges by the compatibility of the $p$-basis decompositions.  Define $\psi(1) = 1$, which corresponds to the na\"ive compatible system of $p$-basis decomposition of the element $1$.  For $h \in \calO_K \backslash \{0\}$, write $h = \pi_K^s h_0$ for $s \in \NN$ and $h_0 \in \calO_K^\times$.  Define $\psi(h) = (\pi_K + \delta_0)^s \psi'(h_0)$, where $\psi'(h_0)$ is the limit as in \eqref{E:psi-map} with respect to a compatible system of $p$-basis decompositions of $h_0$ (which does not have to be the same as the one that defines $\psi(h_0)$).  Finally, we define $\psi(0) = 0$.

Most of the time, it is more convenient to view $\psi$ as a function on $\calO_K$ which takes value in the larger ring $\OK \llbracket \delta_0/\pi_K, \delta_J \rrbracket$.

We naturally extend $\psi$ to polynomial rings or formal power series rings with coefficients in $\calO_K$ by applying $\psi$ termwise.
\end{construction}

\begin{notation}
\label{N:R_K}
For the rest of the paper, let $\calR_K = \OK \llbracket \delta_0 / \pi_K, \delta_J \rrbracket$.
\end{notation}

\begin{caution}
The map $\psi$ is \emph{not} a homomorphism, nor is it canonically defined. This is because one cannot ``deform" the uniformizer in the mixed characteristic case.  Moreover, since $K$ will not be absolutely unramified in applications, $p$-basis may not deform  freely either.  However, Proposition~\ref{P:psi-almost-hom} below says that $\psi$ is approximately a homomorphism.
\end{caution}

\begin{remark}
In the $p$-basis decomposition \eqref{E:f-element-in-OK}, we allow extra freedom given by $n'$.  So, we have the freedom of writing $1+p$ as itself or as $1+1+\cdots+1$.  This is one of the place where the above ambiguity  comes from. Allowing this extra freedom in $n'$ is in fact not necessary except at Construction~\ref{C:psi-K*}, where we need the diagram~\eqref{E:KK_*-base-change} to \emph{commute}.
\end{remark}

\begin{definition} \label{D:approx-homo}
For two $\OK$-algebras $R_1$ and $R_2$ and an ideal $I$ of $R_2$, an \emph{approximate homomorphism modulo $I$} is a function $f: R_1 \rar R_2$ such that for $h_1 \in \pi_K^{a_1}R_1$ and $h_2 \in \pi_K^{a_2}R_2$ with $a_1, a_2 \in \ZZ_{\geq 0}$, $\psi(h_1h_2) - \psi(h_1)\psi(h_2) \in \pi_K^{a_1 + a_2}I$ and $\psi(h_1+h_2) - \psi(h_1) - \psi(h_2) \in \pi_K^{\min\{a_1, a_2\}}I$.

Moreover, if $R'_1$ and $R'_2$ are two $\calO_K$-algebras, a diagram of functions
\[
\xymatrix{
R'_1 \ar[r]^{f'} \ar[d]^g & R'_2 \ar[d]^{g'} \\
R_1 \ar[r]^{f} & R_2
}
\]
is called \emph{approximately commutative modulo $I$} if for $h \in \pi_K^aR'_1$, $g'(f'(h)) - f(g(h)) \in \pi_K^a I$.
\end{definition}

\begin{proposition}\label{P:psi-almost-hom}
For $h \in \OK$, we have $\psi(h) - h \in (\delta_{J^+})\cdot \calO_K \llbracket \delta_{J^+} \rrbracket$.  Modulo $I_K = p(\delta_0/\pi_K, \delta_J) \calR_K$, $\psi(h)$ does not depend on the choice of the compatible system of $p$-basis decompositions.  Moreover, $\psi$ is an approximate homomorphism modulo $I_K$.
\end{proposition}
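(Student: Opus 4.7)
For the first assertion, substituting $\delta_{J^+} = 0$ in the formula \eqref{E:psi-map} collapses each deformed factor to its undeformed counterpart, so the limit reduces to the $p$-basis decomposition \eqref{E:f-element-in-OK} of $h$; this gives $\psi(h)|_{\delta_{J^+}=0} = h$, and hence $\psi(h) - h \in (\delta_{J^+})\calO_K\llbracket \delta_{J^+}\rrbracket$.

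For the second assertion, the plan is to reduce to comparing single-step refinements: any two compatible systems of $p$-basis decompositions of $h$ can be interpolated by a sequence of passages from level $r$ to a compatible level $r+1$ decomposition. The core computation is the congruence
\[
(b + \delta)^{p^r} \equiv b^{p^r} + \delta^{p^r} \pmod{I_K},
\]
valid for $b \in \calO_K$ and $\delta \in \{\delta_1, \dots, \delta_m, \delta_0, \delta_0/\pi_K\}$, since $v_p\binom{p^r}{k} \geq 1$ for $1 \leq k \leq p^r-1$ places all intermediate binomial terms in $p\mathfrak{M}\calR_K = I_K$, where $\mathfrak{M} = (\delta_0/\pi_K, \delta_J)\calR_K$. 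Combined with the fact that for a $1$-level $p$-basis decomposition $\alpha_r = \sum_i X_i$ the multinomial expansion gives $\alpha_r^{p^r} \equiv \sum_i X_i^{p^r} \pmod{p}$, one sees that the level-$r$ partial sum $\psi_r(h)$ and the level-$(r+1)$ partial sum $\psi_{r+1}(h)$ differ by an element of $I_K + \mathfrak{M}^{p^r}\calR_K$. Since $\mathfrak{M}^{p^r} \to 0$ in the $\mathfrak{M}$-adic topology as $r \to \infty$, the sequence $\psi_r(h)$ stabilizes modulo $I_K$ and the limit $\psi(h) \bmod I_K$ is well-defined, independent of the chosen system.

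For the third assertion, the plan is to select $p$-basis decompositions of $h_1 + h_2$ and $h_1 h_2$ directly from those of $h_1$ and $h_2$, making the comparison transparent, and then apply the second assertion to pass to arbitrary systems. The additive property is automatic: combining the decompositions of $h_1$ and $h_2$ term by term (using the $n'$-freedom to concatenate sums of $p^r$-th powers) yields $\psi(h_1 + h_2) = \psi(h_1) + \psi(h_2)$ exactly. For multiplicativity, expanding the product gives terms $b_J^{e_J^1 + e_J^2}(\alpha_1 \alpha_2)^{p^r} \pi_K^{n_1 + n_2}$; when $e_j^1 + e_j^2 \geq p^r$ in some index $j$, one carries by transferring a $b_j^{p^r}$ into the coefficient. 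The deformed discrepancy between $\psi(h_1 h_2)$ and $\psi(h_1)\psi(h_2)$ then reduces to a sum of error terms proportional to $(b_j + \delta_j)^{p^r} - b_j^{p^r}$, which by the key congruence lies in $I_K + \mathfrak{M}^{p^r}\calR_K$. Normalizing $h_i = \pi_K^{a_i} h_i'$ and using the prescription $\psi(h_i) = (\pi_K + \delta_0)^{a_i} \psi'(h_i')$ pulls out the required $\pi_K^{a_1 + a_2}$ factor. The main obstacle is the careful bookkeeping required to show that errors from the multinomial carry and from the refinement genuinely accumulate in $I_K$ modulo arbitrarily high powers of $\mathfrak{M}$, uniformly in $r$, so that the $r \to \infty$ limits preserve the containment in $I_K$.
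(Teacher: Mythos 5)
Your treatment of the first assertion (specialize $\delta_{J^+} = 0$ and observe the formula collapses to the $p$-basis decomposition of $h$) is fine and matches the paper.

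For the second assertion there is a genuine gap. Your stabilization argument shows that within a single fixed compatible system the level-$r$ partial sums $\psi_r(h)$ converge, and controls $\psi_r(h) - \psi_{r+1}(h)$. But this does not address agreement across two \emph{different} compatible systems. The proposed ``interpolation by a sequence of passages from level $r$ to a compatible level $r+1$ decomposition'' cannot work: two compatible systems of the same $h$ are independent, and at a given level $r$ their $r$-th decompositions need not be refinements of one another, so there is no chain of refinements joining them. The missing ingredient, which the paper uses, is: once $p^r > \beta_K$, the coefficient $\sum_{n'} \alpha_{r,e_J,n,n'}^{p^r}$ attached to each monomial $b_J^{e_J}\pi_K^n$ in \emph{any} $r$-th $p$-basis decomposition of a unit $h$ is uniquely determined modulo $p$ (since for $p^r \ge \beta_K$ a $p^r$-th power in $\calO_K/p\calO_K$ only sees the residue in $k$). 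Given that, if $\beta_{e_J,n}$ and $\beta'_{e_J,n}$ are the coefficient sums in two level-$r$ decompositions with $p^r > \beta_K$, then $\beta_{e_J,n} - \beta'_{e_J,n} \in p\calO_K$, while both decompositions sum to $h$, so
\[
\psi_r(h) - \psi'_r(h) = \sum_{e_J, n}\bigl(\beta_{e_J,n} - \beta'_{e_J,n}\bigr)\Bigl[(b_J+\delta_J)^{e_J}(\pi_K + \delta_0)^n - b_J^{e_J}\pi_K^n\Bigr] \in p\,(\delta_0/\pi_K, \delta_J)\calR_K = I_K.
\]
This is the needed comparison across systems at a fixed level; passing to the limit (a closed condition) gives independence modulo $I_K$. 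Your binomial congruence $(b+\delta)^{p^r} \equiv b^{p^r} + \delta^{p^r} \pmod{I_K}$ is true but it is not the lemma that does the work here.

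Your outline of the third assertion is compatible with the paper's: the paper simply notes that the termwise formal sum (via the $n'$-freedom) and formal product (after carries) of compatible systems of $h_1,h_2$ are again compatible systems of $h_1+h_2$ and $h_1h_2$, and then invokes the second assertion; the extension to non-units via $h_i = \pi_K^{a_i}h_i'$ and $\psi(h_i) = (\pi_K+\delta_0)^{a_i}\psi'(h_i')$ pulls out the $\pi_K^{a_1+a_2}$ (resp.\ $\pi_K^{\min\{a_1,a_2\}}$) factor as in Definition~\ref{D:approx-homo}. Your fuller description of the multiplicative carry is a correct elaboration of the same idea, but note that the real content is again the well-definedness modulo $I_K$, so once the gap above is filled the bookkeeping you flag as an ``obstacle'' reduces to the observation that the product of compatible systems is compatible.
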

\begin{proof}
First, $\psi(h) - h \in (\delta_{J^+}) \cdot \calO_K \llbracket \delta_{J^+} \rrbracket$ is obvious from the construction.  Next, we observe that when $p^r >\beta_K$, in any $r$-th $p$-basis decomposition for $h \in \calO_K^\times$, the sum $\sum_{n'=0}^{\lambda_{(r), e_J, n}} \alpha_{(r), e_J, n, n'}^{p^r} \pi_K^n$ for any $e_J$ and $n$ in \eqref{E:f-element-in-OK} is well-defined modulo $p$.  So, the ambiguity of defining $\psi$ lies in $I_K$.

For $h_1, h_2 \in \calO_K^\times$, the formal sum or product of compatible systems of $p$-basis decompositions of $h_1$ and $h_2$ are just some compatible systems of $p$-basis decompositions of $h_1 + h_2$ or $h_1 h_2$.  Thus, $\psi(h_1) + \psi(h_2)$ and $\psi(h_1) \psi(h_2)$ are the same as $\psi(h_1+h_2)$ and $\psi(h_1h_2)$ modulo $I_K$.  The statement for general elements in $\OK$ follows from this.
\end{proof}

\begin{remark}  \label{R:beta_K=1-evil}
From Proposition~\ref{P:psi-almost-hom}, we see that the ideal case is when $\beta_K \gg 1$.  In contrast, when $\beta_K = 1$, $I_K = (\delta_0, p\delta_J)$.  The above proposition does not give us much information about $\psi$.  This is why we are not able to prove Conjecture~\ref{C:HA-conj} in the absolutely unramified and non-logarithmic case.  This reflects the restraints in \cite{AS-cond2} from a different point of view, where Abbes and Saito formulated the dichotomy as follows.
\[
\Omega^1_{\OK / \Zp} \otimes_{\OK} k = \left\{ \begin{array}{ll}
\bigoplus_{j \in J} k\cdot db_j & \textrm{if }\beta_K = 1, \\
\bigoplus_{j \in J} k\cdot db_j \oplus k \cdot d\pi_K & \textrm{if }\beta_K > 1.
\end{array}
\right.
\]
\end{remark}

\begin{hypo} \label{H:beta_K>1}
For the rest of the section, assume that $K$ is not absolutely unramified, i.e., $\beta_K \geq 2$.
\end{hypo}

\begin{lemma} \label{L:psi=diff}
Let $h \in \OK$.  Denote $dh = \bar h_0 d \pi_K + \bar h_1d b_1 + \cdots + \bar h_m d b_m$ when viewed as a differential in $\Omega^1_{\OK / \Zp} \otimes_{\OK} k$.  Then $\psi(h) - h \equiv \bar h_0 \delta_0 + \cdots + \bar h_m \delta_m$ modulo $(\pi_K) + (\delta_0 / \pi_K, \delta_J)^2$ in $\calR_K$.
\end{lemma}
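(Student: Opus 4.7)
The argument is a direct first-order Taylor expansion, with one conceptual step: identifying the ``formal partial derivatives'' of a $p$-basis decomposition with the de Rham coefficients $\bar h_{J^+}$. Since by Proposition~\ref{P:psi-almost-hom} the image $\psi(h)$ is determined modulo $I_K = p(\delta_0/\pi_K,\delta_J)\calR_K$ independently of the chosen compatible system, and since $I_K\subseteq(\pi_K)+(\delta_0/\pi_K,\delta_J)^2$, I may fix any particular $r$-th $p$-basis decomposition \eqref{E:f-element-in-OK} of $h$.

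I will treat first the case $h\in\calO_K^\times$. Expanding
\[
(b_J+\delta_J)^{e_J} = b_J^{e_J} + \sum_{j\in J} e_j\, b_J^{e_J-1_j}\,\delta_j + (\text{terms in }(\delta_J)^2),
\]
\[
(\pi_K+\delta_0)^n = \pi_K^n + n\pi_K^{n-1}\delta_0 + (\text{terms in }(\delta_0^2)),
\]
and observing that $\delta_j\delta_{j'}\in(\delta_J)^2$, $\delta_0\delta_j=\pi_K(\delta_0/\pi_K)\delta_j$ and $\delta_0^2=\pi_K^2(\delta_0/\pi_K)^2$ all lie in $(\delta_0/\pi_K,\delta_J)^2$, I obtain
\[
\psi(h)-h \;\equiv\; \sum_{j\in J}(\partial_{b_j}^{*} h)\,\delta_j \;+\; (\partial_{\pi_K}^{*} h)\,\delta_0 \pmod{(\delta_0/\pi_K,\delta_J)^2},
\]
where $\partial_{b_j}^{*}$ and $\partial_{\pi_K}^{*}$ are the formal derivatives of the decomposition, i.e. $\partial_{b_j}^{*} h = \sum_{e_J} e_j\, b_J^{e_J-1_j}\beta_{e_J}(\pi_K)$ and $\partial_{\pi_K}^{*} h=\sum_{e_J} b_J^{e_J}\beta_{e_J}'(\pi_K)$.

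Next, I would compute $dh\in\Omega^1_{\calO_K/\Zp}$ by Leibniz from the same decomposition. The inner coefficients $\gamma_{r,e_J,n}=\sum_{n'}\alpha^{p^r}_{r,e_J,n,n'}$ contribute via $d(\alpha^{p^r})=p^r\alpha^{p^r-1}d\alpha\in p^r\,\Omega^1_{\calO_K/\Zp}$, which by Hypothesis~\ref{H:beta_K>1} (and in fact already $\beta_K\geq 1$) sits inside $\pi_K\Omega^1_{\calO_K/\Zp}$ for $r\geq 1$. Consequently, modulo $\pi_K\Omega^1_{\calO_K/\Zp}$ Leibniz yields exactly
\[
dh \;\equiv\; \sum_{j\in J}(\partial_{b_j}^{*}h)\,db_j \;+\; (\partial_{\pi_K}^{*}h)\,d\pi_K,
\]
so $\bar h_j\equiv\partial_{b_j}^{*}h$ and $\bar h_0\equiv\partial_{\pi_K}^{*}h$ in $k$. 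This yields the lemma for $h\in\calO_K^\times$.

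Finally, for $h=\pi_K^s h_0$ with $h_0\in\calO_K^\times$ and $s\geq 1$, I would use $\psi(h)=(\pi_K+\delta_0)^s\psi'(h_0)$ and split
\[
\psi(h)-h \;=\; \big[(\pi_K+\delta_0)^s-\pi_K^s\big]\psi'(h_0) \;+\; \pi_K^s\big[\psi'(h_0)-h_0\big].
\]
When $s=1$ the first bracket is $\delta_0$, and since $\psi'(h_0)\equiv h_0\pmod{(\delta_0/\pi_K,\delta_J)}$ and $dh\equiv h_0 d\pi_K\pmod{\pi_K\Omega^1}$ (so $\bar h_0=\bar h_0^{\mathrm{top}}$, $\bar h_J=0$), the result matches $\bar h_0\delta_0$. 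When $s\geq 2$, every term on the right is in $(\pi_K)+(\delta_0/\pi_K,\delta_J)^2$ (using $\delta_0^k=\pi_K^k(\delta_0/\pi_K)^k$), and correspondingly $dh\equiv 0\pmod{\pi_K\Omega^1}$, so both sides of the lemma vanish. The case $h=0$ is trivial.

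The main obstacle is the identification of $\partial^{*}$ with the de Rham coefficients, which rests on the vanishing $d(\alpha^{p^r})\equiv 0\pmod{\pi_K\Omega^1}$; everything else is a routine but notation-heavy bookkeeping.
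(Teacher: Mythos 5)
Your proof is correct and follows essentially the same route as the paper: expand the $r$-th approximant of $\psi(h)$ to first order, identify the resulting linear coefficients with the de Rham coefficients $\bar h_{J^+}$ via Leibniz and $p^r\in\pi_K\calO_K$, and pass to the limit, noting the congruence is preserved. The paper carries out the computation only for the unit decomposition \eqref{E:f-element-in-OK} and leaves both the Leibniz identification and the reduction from $h=\pi_K^s h_0$ to units implicit; your spelling these out (and your observation that $\beta_K\geq 1$ already suffices here) is a correct and welcome elaboration, not a different argument.
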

\begin{proof}
For an $r$-th $p$-basis decomposition ($r \geq 1$) as in \eqref{E:f-element-in-OK}, we have, modulo the ideal $(\pi_K) + (\delta_{J^+})(\delta_0 / \pi_K, \delta_J)$, 
\begin{align*}
\psi(h) - h & \equiv \sum_{e_J = 0}^{p^r-1} \sum_{n=0}^\infty \sum_{n'=0}^{\lambda_{(r), e_J, n}} \Big(  (b_J+\delta_J)^{e_J} \alpha_{(r), e_J, n, n'}^{p^r} (\pi_K+\delta_0)^n - 
b_J^{e_J} \alpha_{(r), e_J, n, n'}^{p^r} \pi_K^n \Big) \\
& \equiv \sum_{e_J = 0}^{p^r-1} \sum_{n=0}^\infty \sum_{n'=0}^{\lambda_{(r), e_J, n}} \alpha_{(r), e_J, n, n'}^{p^r} b_J^{e_J}\pi_K^n 
\big( \frac {n\delta_0}{\pi_K} + \frac{e_1\delta_1}{b_1} + \cdots + \frac{e_m\delta_m}{b_m} \big)
\equiv \bar h_0 \delta_0 + \cdots + \bar h_m \delta_m.
\end{align*}
Taking limit does not break the congruence relation.
\end{proof}

\begin{definition}
\label{D:admissible}
Denote $\calS_K = \calR_K \langle u_{J^+} \rangle$.  For $\omega \in \frac 1e \NN \cap [1, \beta_K]$, we say a set of elements $(R_{J^+}) \subset (\delta_{J^+})\cdot \calS_K$ \emph{has error gauge} $\geq \omega$  if $R_0 \in (N^\omega\delta_0, N^{\omega +1} \delta_J)\cdot \calS_K$ and $R_j \in (N^{\omega-1}\delta_0, N^\omega \delta_J) \cdot \calS_K$ for all $j \in J$.  We say that $(R_{J^+})$ is \emph{admissible} if it has error gauge $\geq 1$.
\end{definition}

\begin{definition}\label{D:th-space}
Let $a\in \QQ_{>1}$.  We define the \emph{standard (non-logarithmic) thickening space (of level $a$)} $TS_{L/K, \psi}^a$ of $L/K$ to be the rigid space associated to
$$
\calO_{TS, L/K, \psi}^a = K \langle \pi_K^{-a}\delta_{J^+} \rangle \langle u_{J^+} \rangle \big/ \big(\psi(p_{J^+})\big).
$$
For $(R_{J^+}) \subset (\delta_{J^+}) \cdot \calS_K$ admissible, we define the \emph{(non-logarithmic) thickening space (of level $a$)} $TS^a_{L/K, R_{J^+}}$ to be the rigid space associated to
$$
\calO_{TS, L/K, R_{J^+}}^a = K \langle \pi_K^{-a}\delta_{J^+} \rangle \langle u_{J^+} \rangle \big/ \big(\psi(p_{J^+}) + R_{J^+} \big).
$$

Similarly, for $a\in \QQ_{>0}$, we define the \emph{standard logarithmic thickening space (of level $a$)} $TS_{L/K, \log, \psi}^a$ of $L/K$ to be the rigid space associated to
$$
\calO_{TS, L/K, \log, \psi}^a = K \langle \pi_K^{-a-1} \delta_0, \pi_K^{-a}\delta_J \rangle \langle u_{J^+} \rangle \big/ \big(\psi(p_{J^+})\big).
$$
For $(R_{J^+}) \subset (\delta_{J^+}) \cdot \calS_K$ admissible, we define the \emph{logarithmic thickening space (of level $a$)} $TS^a_{L/K, \log, R_{J^+}}$ to be the rigid space associated to
$$
\calO_{TS, L/K, \log, R_{J^+}}^a = K \langle \pi_K^{-a-1} \delta_0, \pi_K^{-a}\delta_J \rangle \langle u_{J^+} \rangle \big/ \big(\psi(p_{J^+}) + R_{J^+} \big).
$$

Denote $TS_{L/K, R_{J^+}} = \cup_{a\in \QQ_{>0}} TS_{L/K, \log, R_{J^+}}^a$.  Then we have the following natural Cartesian diagram for $a\in \QQ_{>0}$.
\[
\xymatrix{
TS_{L/K, R_{J^+}}^{a+1} \ar[d]^\Pi \ar@{^{(}->}[r] & 
TS_{L/K, \log, R_{J^+}}^a \ar[d]^\Pi \ar@{^{(}->}[r] &
TS_{L/K, R_{J^+}} \ar[d]^\Pi\\
A_K^{m+1}[0, \theta^{a+1}] \ar@{^{(}->}[r] &
 A_K^1[0, \theta^{a+1}] \times A_K^m[0, \theta^a] \ar@{^{(}->}[r] & A_K^1[0, \theta) \times A_K^m[0, 1)
}
\]
Here $\Pi$ denotes the natural projection to the polydiscs with coordinates $\delta_{J^+}$.
\end{definition}

\begin{remark}\label{R:gauge<beta_K}
Error gauge is supposed to measure how ``standard" a thickening space is.  Unfortunately, a standard thickening space itself depends on a very non-canonical function $\psi$.  The upshot is that, by Proposition~\ref{P:psi-almost-hom}, the notion of having error gauge $\geq \omega$ does not depend on the choice of $\psi$ if $\omega \in [1, \beta_K]$; note that the terms in $p_0$ are all divisible by $\pi_K$, except $u_0^e$.
\end{remark}

\begin{remark}
The reason of introducing non-standard thickening spaces (or rather thickening spaces which do not have error gauge $\geq \beta_K$) is, as we will show later, that adding a generic $p$-th root results in the error gauge of $(R_{J^+})$ dropping by one; the comparison Theorem~\ref{T:ts=as} guarantees that as long as $(R_{J^+})$'s are admissible (i.e., $\beta_K \geq 1$), the thickening spaces still compute the same ramification break.  On the same issue, if $\beta_K = 1$, we can not afford to drop the error gauge; this is why we are not able to prove Conjecture~\ref{C:HA-conj} in the absolutely unramified and non-logarithmic case (see also Remark~\ref{R:beta_K=1-evil}).
\end{remark}

\begin{notation}
\label{N:Delta-for-ts}
Let $(R_{J^+}) \subset (\delta_{J^+}) \cdot \calS_K$ be admissible.  By abuse of notation, we still use $\Delta$ to denote the composite
$$
\xymatrix{
\calS_K \big/ \big(\psi(p_{J^+}) + R_{J^+}\big) \ar[rr]^-{\mod (\delta_0/\pi_K, \delta_J)} & & \calO_K \langle u_{J^+} \rangle / (p_{J^+}) \ar[r]^-\Delta_-\simeq & \calO_L
}
$$
We remark that $\psi(p_{J^+}) - p_{J^+} + R_{J^+}$ are in fact contained in the ideal of $\calS_K$ generated by $\delta_{J^+}$.  We denote the composition of $\Delta$ and the reduction $\calO_L \surj l$ by $\overline\Delta$.
\end{notation}

\begin{lemma}\label{L:basis}
Let $(R_{J^+}) \subset (\delta_{J^+}) \cdot \calS_K$ be admissible.  Then
\begin{equation} \label{E:basis-simplification}
\big\{ u_{J^+}^{e_{J^+}} |e_j \in \{0, \dots, p^{r_j}-1\} \textrm{ for all } j\in J \textrm{, and } e_0 \in \{0, \dots, e-1\}\big\}
\end{equation}
is a basis of $\calS_K \big/ (\psi(p_{J^+}) + R_{J^+})$ over $\calR_K$.  As a consequence, it also gives a basis of $\calO_{TS, L/K, R_{J^+}}^a$ over $K \langle \pi_K^{-a}\delta_{J^+} \rangle$ for $a\in \QQ_{>1}$ and a basis of $\calO_{TS, L/K, \log, R_{J^+}}^a$ over $K \langle \pi_K^{-a-1} \delta_0, \pi_K^{-a}\delta_J \rangle$ for $a\in \QQ_{>0}$.  In particular, the morphism $\Pi: TS_{L/K, R_{J^+}} \rar A_K^1[0, \theta) \times A_K^m[0, 1)$ is finite and flat.
\end{lemma}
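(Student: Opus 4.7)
The plan is to establish that the $\calR_K$-linear map $\phi : \calR_K^N \to M := \calS_K / (\psi(p_{J^+}) + R_{J^+})$ (with $N = [L:K] = e \prod_{j \in J} p^{r_j}$), sending the standard $N$-tuple basis to the monomials listed in \eqref{E:basis-simplification}, is an isomorphism. The analytic versions and the finite-flatness of $\Pi$ then follow by inverting $\pi_K$ and base-changing to the appropriate polyannuli, so the integral statement is the whole content. The starting observation is that by Proposition~\ref{P:psi-almost-hom} applied termwise, $\psi(p_j) - p_j \in (\delta_{J^+})\calR_K[u_{J^+}]$, and by admissibility $R_j \in (\delta_{J^+})\calS_K$; hence $\tilde p_j := \psi(p_j) + R_j \equiv p_j$ modulo the ideal $(\delta_0/\pi_K, \delta_J)\calS_K$. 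Therefore $M / (\delta_0/\pi_K, \delta_J)M \cong \calO_K\langle u_{J^+}\rangle / (p_{J^+}) \cong \calO_L$, which by Construction~\ref{C:generators-of-calI} is free over $\calO_K$ of rank $N$ with basis exactly \eqref{E:basis-simplification}.

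Surjectivity of $\phi$ would be proved by Weierstrass-style successive reduction. Each $\tilde p_j$ is ``monic of degree $p^{r_j}$ in $u_j$'' in the strong sense that the $u_j^{p^{r_j}}$-coefficient is $1$, the non-leading terms of $\psi(p_j)$ already expand into the basis \eqref{E:basis-simplification} (because $\psi$ is applied coefficientwise), and the only monomials of higher $u_j$-degree come from $R_j$ and hence have coefficients in $(\delta_{J^+})\calR_K$; similarly $\tilde p_0$ is monic of degree $e$ in $u_0$. Given $f \in \calS_K$, iteratively apply the congruences $u_j^{p^{r_j}} \equiv -(\psi(p_j) - u_j^{p^{r_j}}) - R_j \pmod{\tilde p_j}$ to eliminate monomials outside the proposed basis. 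After one round the ``non-basis'' remainder picks up a factor of $(\delta_{J^+})$; after $n$ rounds it lies in $(\delta_{J^+})^n\calS_K \subset (\delta_0/\pi_K, \delta_J)^n\calS_K$. Since $\calR_K = \calO_K\llbracket \delta_0/\pi_K, \delta_J\rrbracket$ is $(\delta_0/\pi_K, \delta_J)$-adically complete, the $\calR_K$-coefficients of the successive partial reductions converge to an element in the $\calR_K$-span of \eqref{E:basis-simplification}.

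Injectivity I would obtain by a Krull-intersection argument. If $\sum_b \alpha_b \cdot b = \sum_j h_j \tilde p_j$ with $\alpha_b \in \calR_K$ and $h_j \in \calS_K$, reducing modulo $(\delta_0/\pi_K, \delta_J)$ forces $\bar\alpha_b = 0$ in $\calO_L$ by linear independence of the basis, so $\alpha_b \in (\delta_0/\pi_K, \delta_J)\calR_K$. Running the same argument on the truncated rings $\calR_K/(\delta_0/\pi_K, \delta_J)^{n}$ — where $\tilde p_j$ retains its monic-in-$u_j$ structure because the deformation from $p_j$ lives in the filtration ideal — inductively gives $\alpha_b \in (\delta_0/\pi_K, \delta_J)^n \calR_K$ for every $n$. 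Since $\calR_K$ is noetherian, Krull's intersection theorem applied to the finitely generated module $\calR_K^N$ then forces $\alpha_b = 0$.

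The main delicacy I anticipate is the topology bookkeeping: the natural ``error'' ideal produced by the admissibility hypothesis is $(\delta_{J^+})$, which is strictly smaller than the ideal $(\delta_0/\pi_K, \delta_J)$ in whose adic topology $\calR_K$ is complete — because $\delta_0 = \pi_K \cdot (\delta_0/\pi_K)$ but $\delta_0/\pi_K \notin (\delta_{J^+})$. The containment $(\delta_{J^+})^n \subset (\delta_0/\pi_K, \delta_J)^n$ bridges this gap and makes both the successive approximation and the inductive injectivity argument converge. This is precisely why admissibility (error gauge $\geq 1$) is the right hypothesis, echoing Remark~\ref{R:gauge<beta_K}: anything weaker would not keep the perturbation inside the adic topology in which $\calR_K$ is complete.
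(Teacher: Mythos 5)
Your spanning argument matches the paper's iterative reduction, and the explicit containment $(\delta_{J^+})^n \subset \gothn^n$ (writing $\gothn = (\delta_0/\pi_K,\delta_J)$) usefully explains why the reduction converges $\gothn$-adically, where the paper only says ``this procedure converges''. The gap is in the linear-independence half, which the paper also leaves implicit. Your Krull-intersection induction is circular at the inductive step: to pass from $\alpha_b\in\gothn^n$ to $\alpha_b\in\gothn^{n+1}$, you reduce $\sum_b\alpha_b b=\sum_j h_j(\psi(p_j)+R_j)$ modulo $\gothn^{n+1}$ and need the images of the basis monomials to be $\calR_K/\gothn^{n+1}$-linearly independent in $\calS_K/(\gothn^{n+1}\calS_K+(\psi(p_{J^+})+R_{J^+}))$; but that is exactly the content of the claim at level $n+1$, and the parenthetical appeal to ``monic-in-$u_j$ structure'' does not supply it without an actual Weierstrass-division argument over the nilpotent thickening $\calR_K/\gothn^{n+1}$ carried out simultaneously for the $m+1$ relations.

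The concrete obstruction is that your base case yields no control on the $h_j$. Since $\psi(p_j)+R_j - p_j\in\gothn\calS_K$, the low-$\gothn$-degree parts of the $h_j$ feed into the degree-$n$ slice of $\sum_j h_j(\psi(p_j)+R_j)$ in $\gr_{\gothn}\calS_K$, so one cannot isolate the leading term of $\alpha_b$ without first normalizing the $h_j$. One way to close this: after reducing modulo $\gothn$, the tuple $(\bar h_j)$ is a syzygy of $(p_{J^+})$ in $\calO_K\langle u_{J^+}\rangle$; since $p_{J^+}$ is a regular sequence (equivalently, $\calO_L$ is a relative complete intersection over $\calO_K$), the syzygy is Koszul and can be lifted and absorbed, replacing the $h_j$ by elements of $\gothn\calS_K$ without changing $\sum_j h_j(\psi(p_j)+R_j)$. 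After this normalization, comparing degree-$n$ homogeneous pieces in $\gr_{\gothn}\calS_K\cong\calO_K\langle u_{J^+}\rangle\otimes_{\calO_K}\calO_K[T_{J^+}]$ reduces the inductive step, component by component in $T_{J^+}$, back to the freeness of $\calO_K\langle u_{J^+}\rangle/(p_{J^+})\cong\calO_L$. Without this syzygy control (or an equivalent iterated Weierstrass division, which, once proved, would also render the Krull detour superfluous, since the same division applies over $\calR_K$ itself by $\gothn$-adic completeness), the argument as written does not close.
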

\begin{proof}
Given an element $h \in \calS_K \big/ (\psi(p_{J^+}) + R_{J^+})$, we first take a representative $\tilde h \in \calS_K$. Then we can simplify it by iteratively replacing $u_0^e$ and $u_j^{p^{r_j}}$ by $u_0^e - \psi(p_0) - R_0$ and $u_j^{p^{r_j}} - \psi(p_j) - R_j$ for $j \in J$, respectively.  This procedure converges and gives an element with the power of $u_0$ smaller than $e$ and power of $u_j$ smaller than $p^{r_j}$ for $j \in J$.
\end{proof}

\subsection{$AS=TS$ theorem}
\label{S:AS=TS}

In \cite{Me-condI}, the essential step which links the arithmetic conductors and the differential conductors is the comparison theorem (\cite[Theorem~4.3.6]{Me-condI}), which asserts that the lifted Abbes-Saito spaces are isomorphic to the thickening spaces.  In the mixed characteristic case, we do not have to lift the Abbes-Saito spaces.  Instead, in this subsection, we prove a (slightly general) comparison theorem over the base field $K$.

Remember that we continue to assume Hypotheses~\ref{H:J-finite-set} and \ref{H:beta_K>1}.  We start with a lemma.

\begin{lemma}\label{L:jacob-of-R_J+}
Let $(R_{J^+}) \subset (\delta_{J^+}) \cdot \calS_K$ be admissible.  We have
$$
\det\Big(\frac{\partial (\psi(p_i) - p_i + R_i)} {\partial \delta_j} \Big)_{i,j \in J^+} \Big| _{\delta_{J^+} = 0} \in  \big( \calO_K \langle u_{J^+}\rangle / (p_{J^+}) \big)^\times = \calO_L^\times.
$$
\end{lemma}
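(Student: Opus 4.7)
The plan is to show $\det M \in \calO_L^\times$ by verifying $\det M \not\equiv 0 \pmod{\gothm_L}$. Write $M_{ij} = \partial F_i/\partial \delta_j|_{\delta = 0} \in \calO_L$ for $F_i = \psi(p_i) - p_i + R_i$, and decompose $M = M^\psi + M^R$ according to the two summands, then reduce modulo $\gothm_L$ and apply Nakayama.

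First I would dispose of the $R$-contribution using the admissibility of $R_{J^+}$ from Definition~\ref{D:admissible} with $\omega=1$: the inclusion $R_0 \in (N^1\delta_0, N^2\delta_J)\calS_K$ shows $M^R_{0j} \equiv 0 \pmod{\gothm_L}$ for every $j \in J^+$, while for $i \in J$ the inclusion $R_i \in (N^0\delta_0, N^1\delta_J)\calS_K$ gives $M^R_{ij} \equiv 0 \pmod{\gothm_L}$ for $j \in J$, leaving only the first-column entries $M^R_{i,0}$ ($i \in J$) a priori unconstrained in $\calO_L$. Next I apply Lemma~\ref{L:psi=diff} coefficient-wise to the explicit shape of $p_{J^+}$ from Construction~\ref{C:generators-of-calI}. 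The leading term $-d(u_J)\pi_K$ of $p_0$ produces $M^\psi_{00} \equiv -\bar d(\bar c_J) \pmod{\gothm_L}$, which is a unit in $l^\times$ since $d(c_J) \in \calO_L^\times$; the remaining entries $M^\psi_{0j}$ for $j \in J$ inherit the extra factor of $\pi_K$ coming from differentiating coefficients of the form $-\pi_K d_\beta$, and hence reduce to $0$ modulo $\gothm_L$. For $i \in J$, the leading term $-\tilde b_i(u_1,\dots,u_{i-1})$ of $p_i$ contributes to the sub-block $(M^\psi_{ij})_{i,j \in J}$ the element $-\sum_\gamma \overline{(\partial \alpha_{i,\gamma}/\partial b_j)}\,\bar c^\gamma$; using $\sum_\gamma \bar\alpha_{i,\gamma}\bar c^\gamma = \bar c_i^{p^{r_i}}$ in $\bbk_{i-1}$, this is naturally identified with $-\partial \bar c_i^{p^{r_i}}/\partial \bar b_j$ in $l$, where the derivations on $k$ dual to $d\bar b_J$ are extended to $l$ by declaring $\partial \bar c_s/\partial \bar b_j = 0$.

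Thus modulo $\gothm_L$ the matrix $M$ is block-triangular,
\[
M \equiv \begin{pmatrix} -\bar d(\bar c_J) & 0 \\ * & N \end{pmatrix} \pmod{\gothm_L}, \qquad N = \bigl(-\partial \bar c_i^{p^{r_i}}/\partial \bar b_j\bigr)_{i,j \in J} \in \Mat_m(l),
\]
with top-left block a unit in $l^\times$, so the task reduces to proving $\det N \in l^\times$. This is the main obstacle. The key input is the hypothesis that $\bar c_J$ is a $p$-basis of $l$ and $\bar b_J$ is a $p$-basis of $k$: together, these force the relations $\bar c_i^{p^{r_i}} = \sum_\gamma \bar\alpha_{i,\gamma}\bar c^\gamma$ to be non-degenerate in a specific sense (and in particular rule out pathological identities such as $\bar c_i$ itself being a $p$-th power in $l$). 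I would argue by induction on $i$ along the filtration $\bbk_0 \subset \bbk_1 \subset \cdots \subset \bbk_m = l$ coming from Construction~\ref{C:generators-of-calI}: at each step $\bbk_{i-1} \hookrightarrow \bbk_i$ the transition $(\bar c_1,\ldots,\bar c_{i-1},\bar b_i,\ldots,\bar b_m) \leftrightarrow (\bar c_1,\ldots,\bar c_i,\bar b_{i+1},\ldots,\bar b_m)$ of $p$-bases must be valid, which forces the relevant Jacobian $\partial \bar c_i^{p^{r_i}}/\partial \bar b_i$ (computed in $\bbk_{i-1}$ relative to its own $p$-basis) to be non-zero, producing a triangular structure for $N$ with unit diagonal entries. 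Combining the two blocks gives $\det M \equiv -\bar d(\bar c_J)\cdot \det N \not\equiv 0 \pmod{\gothm_L}$, and hence $\det M \in \calO_L^\times$ by Nakayama.
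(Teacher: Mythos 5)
Your overall scheme matches the paper's: dispose of the $R$-contribution using admissibility, show that the $j=0$ row reduces modulo $\gothm_L$ to $(-\bar d(\bar c_J),0,\dots,0)$, and reduce to showing the $J\times J$ block $N$ has unit determinant. That part is fine. The gap is in your treatment of $N$.

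You describe the entries of $N$ as $-\partial\bar c_i^{p^{r_i}}/\partial\bar b_j$ for a derivation on $l$ ``extending $\partial/\partial\bar b_j$ from $k$ by $\partial\bar c_s/\partial\bar b_j=0$.'' No such extension exists: since $\bar c_J$ is a $p$-basis of $l$, the only $\FF_p$-derivation of $l$ killing every $\bar c_s$ is the zero derivation, which does not restrict to $\partial/\partial\bar b_j$ on $k$. The actual entries are the well-defined formal quantities $\bar\alpha_{ij}=\sum_e(\partial\bar\lambda_e/\partial\bar b_j)\,\bar c^e$ (with $\bar c_i^{p^{r_i}}=\sum_e\bar\lambda_e\bar c^e$, $\bar\lambda_e\in k$), and these differ in general from the Jacobian $D^{(i)}_i(\bar c_i^{p^{r_i}})$ computed with respect to the $p$-basis $\{\bar c_1,\dots,\bar c_{i-1},\bar b_i,\dots,\bar b_m\}$ of $\bbk_{i-1}$ that your nonvanishing observation actually refers to. Consequently $N$ need not be triangular and its diagonal entries need not be units. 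Concretely, take $m=2$, $\bar c_1^p=\bar b_1+\bar b_2$, $\bar c_2^p=\bar b_1$, so $\bbk_1=k(\bar c_1)$ has $p$-basis $\{\bar c_1,\bar b_2\}$ and $l=k(\bar c_1,\bar c_2)$ has $p$-basis $\{\bar c_1,\bar c_2\}$, as required by Construction~\ref{C:generators-of-calI}. One computes $(\bar\alpha_{ij})=\left(\begin{smallmatrix}1&1\\1&0\end{smallmatrix}\right)$: not triangular, and $\bar\alpha_{22}=0$ is not a unit, even though the Jacobian $D^{(2)}_2(\bar c_2^p)=\partial(\bar c_1^p-\bar b_2)/\partial\bar b_2=-1$ is indeed nonzero. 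The determinant happens to be $-1\in l^\times$, as the lemma predicts, but your argument does not establish it.

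The correct route, which is the paper's, is to interpret row $i$ as the class $\sum_j\bar\alpha_{ij}\,d\bar b_j\in\Omega^1_{\bbk_{i-1}/\FF_p}$ and observe that it is congruent to $d(\bar c_i^{p^{r_i}})\not\equiv 0$ modulo the $\bbk_{i-1}$-span of $d\bar c_1,\dots,d\bar c_{i-1}$, whereas rows $i'<i$ land in that span (because $\bar c_{i'}^{p^{r_{i'}}}$ becomes a $p$-th power in $\bbk_{i-1}$, hence $d(\bar c_{i'}^{p^{r_{i'}}})=0$ there). This filtration argument gives $l$-linear independence of the rows without any literal triangularity of the matrix.
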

\begin{proof}
The proof is quite similar to \cite[Lemmas~4.3.1~and~4.3.3]{Me-condI}.  We also remark that the proof is made very technical to salvage the appearance of $\tilde b_j(u_1, \dots,u_{j-1})$ and $d(u_1,\dots, u_m)$ and partially $R_{J^+}$ (see Remark~\ref{R:Ohkubo}).  If we could haven taken $\tilde b_j(u_1, \dots,u_{j-1})$ and $d(u_1,\dots, u_m)$ to be 1 and $R_{J^+}=0$, the lemma is almost immediate because the leading term in each $\psi(p_i)-p_i$ is just $\delta_i$, and the matrix becomes the identity matrix modulo $\pi_L$.

It is enough to prove that the matrix is of full rank modulo $\pi_L$.  By Lemma~\ref{L:psi=diff} and the admissibility of $R_{J^+}$, modulo $\pi_L$, the first row will be all zero except the first element which is $d(\bar c_1, \dots, \bar c_m) \in \kappa_L^\times$.  Hence, we need only to look at 
\begin{equation} \label{E:matrix}
\Big(\frac{\partial (\psi(p_i) -p_i)} {\partial \delta_j} \Big)_{i, j \in J} \mod (\pi_L, \delta_0/\pi_K, \delta_J)
 = \Big(\frac{\partial (\psi(\tilde b_i) - \tilde b_i)} {\partial \delta_j} \Big)_{i,j \in J} \mod (\pi_L, \delta_0/\pi_K, \delta_J),
\end{equation}
where $\tilde b_i = \tilde b_i(u_1, \dots, u_{i-1})$ in Construction~\ref{C:generators-of-calI}.
Let $\bar \alpha_{ij} \in l$ denote the entries in the matrix on the right hand side of \eqref{E:matrix}, where we identify $\calO_K \langle u_{J^+} \rangle / (p_{J^+}, u_0) \isom l$.  Under this identification,  $\tilde b_i$ will become $\bar c_i^{p^{r_i}}$ for all $i \in J$.  It suffices to show that the $i$-th row is $l$-linearly independent from the first $i-1$ rows for all $i$.  If we set 
\[
\bar c_i^{p^{r_i}} = \sum_{e_1=0}^{p^{r_0}-1} \cdots \sum_{e_{i-1} = 0}^{p^{r_{i-1}}-1} \bar
\lambda_{e_1, \dots, e_{i-1}} \bar c_1^{e_1} \cdots \bar c_{i-1}^{e_{i-1}},
\]
where $\bar \lambda_{e_1, \dots, e_{i-1}} \in k$, then we would have, modulo $\pi_K$,
\[
\tilde b_i(u_1, \dots, u_{j-1}) \equiv  \sum_{e_1=0}^{p^{r_0}-1} \cdots \sum_{e_{i-1} = 0}^{p^{r_{i-1}}-1} \bar
\lambda_{e_1, \dots, e_{i-1}}  u_1^{e_1} \cdots  u_{i-1}^{e_{i-1}}.
\]
Hence, if we set $d \bar \lambda_{e_1, \dots, e_{i-1}} = \bar \mu_{e_1, \dots, e_{i-1}, 1}d \bar b_1 + \cdots + \bar \mu_{e_1, \dots, e_{i-1}, m}d \bar b_m$, then by Lemma~\ref{L:psi=diff},
\begin{align*}
\bar \alpha_{i1} d\bar b_1 + \cdots + \bar \alpha_{im} d\bar b_m &= \sum_{e_1=0}^{p^{r_0}-1} \cdots \sum_{e_{i-1} = 0}^{p^{r_{i-1}}-1} 
u_1^{e_1} \cdots u_{i-1}^{e_{i-1}} \big( \bar \mu_{e_1, \dots, e_{i-1}, 1}d \bar b_1 + \cdots + \bar \mu_{e_1, \dots, e_{i-1}, m}d \bar b_m \big) \\
& \equiv d(\bar c_i^{p^{r_i}}) \textrm{ modulo } \big(d\bar c_1, \dots, d\bar c_{i-1} \big)
\end{align*}
in $\Omega^1_{\bbk_{i-1}/\Fp}$; it is in fact nontrivial because $d\bar c_1, \dots, d\bar c_m$ form a basis of $\Omega_{\kappa_L/\FF_p}^1$ and hence there should not be any auxiliary relation among $d\bar c_1, \dots, d\bar c_m$ in $\Omega_{\bbk_i/\FF_p}^1$.
But we know that the sums $\bar \alpha_{i'1} d\bar b_1 + \cdots + \bar \alpha_{i'm}d\bar b_m$ for $i'<i$ all lie in the subspace of $\Omega^1_{\bbk_{i-1}/\Fp}$ generated by $d\bar c_1, \dots, d\bar c_{i-1}$.  Hence the $i$-th row of the matrix in \eqref{E:matrix} is ($\bbk_{i-1}$-)linearly independent from the first $i-1$ rows.  The lemma follows.
\end{proof}

\begin{theorem}
\label{T:ts=as}
If $(R_{J^+}) \subset (\delta_{J^+})\cdot \calS_K$ is admissible, we have isomorphisms of $K$-algebras:
\begin{eqnarray*}
\calO^a_{AS, L/K} \simeq \calO^a_{TS, L/K, R_{J^+}} & \textrm{if} & a\in \QQ_{>1}, \\
\calO^a_{AS, L/K, \log} \simeq \calO^a_{TS, L/K, \log, R_{J^+}} & \textrm{if} & a\in \QQ_{>0}.
\end{eqnarray*}
\end{theorem}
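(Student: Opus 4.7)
The plan is to exhibit mutually inverse $K$-algebra maps between the two affinoid algebras, essentially trading the auxiliary variables $V_{J^+}$ of the Abbes-Saito ring for the auxiliary variables $\delta_{J^+}$ of the thickening ring by means of a non-archimedean implicit function theorem. The key observation is to introduce
\[
\Phi_j(u_{J^+}, \delta_{J^+}) := \psi(p_j)(u_{J^+}) + R_j - p_j(u_{J^+}) \in (\delta_{J^+})\cdot \calS_K \qquad (j \in J^+),
\]
which rewrites the TS defining relation $\psi(p_j) + R_j = 0$ as $p_j(u_{J^+}) = -\Phi_j(u_{J^+}, \delta_{J^+})$, exactly matching the AS relation $V_j = p_j(u_{J^+})$ under the formal identification $V_j \leftrightarrow -\Phi_j$. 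Moreover, $\Phi_j \in (\delta_{J^+})\calS_K$ together with $|\delta_j| \leq \theta^a$ gives $|\Phi_j| \leq \theta^a$, matching the AS norm bound. In the logarithmic case, the special form of $p_0 \in u_0^e - d\pi_K + \pi_K N^{1/e}\calO_K[u_{J^+}]$ (all coefficients except the leading one divisible by $\pi_K$), together with the shape of admissibility for $R_0$, upgrades this to $|\Phi_0| \leq \theta^{a+1}$, as required.

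First, I would write down the forward map $\calO^a_{AS, L/K(,\log)} \to \calO^a_{TS, L/K(,\log), R_{J^+}}$ by $u_j \mapsto u_j$ and $V_j \mapsto -\Phi_j(u_{J^+}, \delta_{J^+})$; well-definedness is immediate from the norm bounds above. For the inverse, I would invoke non-archimedean Newton iteration to solve the system $\Phi_j(u_{J^+}, \delta_{J^+}) = -V_j$ for $\delta_{J^+}$ as analytic functions $\delta_j = G_j(u_{J^+}, V_{J^+}) \in \calO^a_{AS}$ with $G_j(u_{J^+}, 0) = 0$. Lemma~\ref{L:jacob-of-R_J+} supplies precisely the needed invertibility: the Jacobian matrix $(\partial \Phi_i/\partial \delta_j)|_{\delta = 0}$ has unit determinant modulo $(p_{J^+})$, i.e.\ as a matrix over $\calO_L$. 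Since in $\calO^a_{AS}$ the ideal $(p_{J^+}) = (V_{J^+})$ consists of elements of norm at most $\theta^a < 1$, a geometric-series / Hensel argument promotes this to invertibility of the Jacobian over a suitable integral model of $\calO^a_{AS}$. Newton iteration starting from $\delta^{(0)} = 0$ (noting $\Phi_j|_{\delta = 0} = 0$) then produces the desired $G_j$; the reverse map $\delta_j \mapsto G_j$, $u_j \mapsto u_j$ is inverse to the forward map by uniqueness of the IFT solution together with the symmetric form of the identification $V_j + \Phi_j = 0$.

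The hard part will be establishing the convergence estimate $|G_j| \leq \theta^a$ (resp.\ $|G_0| \leq \theta^{a+1}$ in the log case) throughout the Newton iteration, and verifying that the limiting series genuinely lies in the affinoid algebra $\calO^a_{AS}$ rather than in some larger formal completion. This is where admissibility of $(R_{J^+})$ (error gauge $\geq 1$) is critical: it bounds the higher-order contributions of $R_{J^+}$ to $\Phi_j$ so that each Newton step preserves the target norm. For the logarithmic case the calibration is more delicate, as the unequal weights $\theta^{a+1}$ on $\delta_0$ versus $\theta^a$ on $\delta_J$, together with the prescribed shape of admissibility $R_0 \in (N^\omega \delta_0, N^{\omega+1}\delta_J)\calS_K$ and $R_j \in (N^{\omega-1}\delta_0, N^\omega\delta_J)\calS_K$ for $j \in J$, are matched precisely to the logarithmic weights in the relations $p_{J^+}$; this matching is exactly what is needed for the weighted Jacobian inverse to have the correct operator norm. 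Once the norm control is in place, the rest follows by a standard non-archimedean contraction-mapping / Newton convergence argument, parallel to (and simpler than) the ones carried out in \cite[Section~4.3]{Me-condI}.
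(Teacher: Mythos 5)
Your proposal takes essentially the same route as the paper: the forward map sends $u_j\mapsto u_j$ and $V_j\mapsto -\Phi_j = p_j-\psi(p_j)-R_j$, and the inverse is produced by solving $\Phi_j(u_{J^+},\delta_{J^+})=-V_j$ for $\delta_{J^+}$ in $\calO^a_{AS}$ via a non-archimedean fixed-point argument whose contractivity rests on Lemma~\ref{L:jacob-of-R_J+} and the norm calibration forced by admissibility (with the unequal weights $\theta^{a+1}$ on $\delta_0$ versus $\theta^a$ on $\delta_J$ in the log case). The paper's implementation is the ``simplified Newton'' variant you hedge toward at the end --- it fixes a lift $B$ of $\overline{A}^{-1}$ over $\calO_L$ so that $BA-I$ lands in the small ideal $(p_{J^+})$ and then runs the linear contraction $\bbF(\bbx)=\bbx-B\,(\psi(p_{J^+})+R_{J^+})(u_{J^+},\bbx)$ on the bounded set $\Lambda$ --- and it checks that $\chi_1,\chi_2$ are mutually inverse by density of $K[u_{J^+}]$ rather than IFT uniqueness, but these are presentational differences, not substantive ones.
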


\begin{example}
\label{E:AS-TS-example}
Before proving  the theorem, we try to illustrate the idea using an example.

Assume $p>2$.
Let $K$ be the completion of $\QQ_p(\zeta_p)(b)$ with respect to the $1$-Gauss norm on $b(=b_1)$; we take $\pi_K = \zeta_p-1$. (Rigorously speaking, Hypotheses~\ref{H:J-finite-set} requires $K$ to have separably closed residue field; but in fact Theorem~\ref{T:ts=as} holds without this assumption.)  Let $L = K ((b\pi_K)^{1/p})((b+\pi_K)^{1/p})$; it is a Galois extension with inseparable residue field extension and na\"ive ramification degree $p$.  We take the uniformizer of $L$ to be $\pi_L = (b\pi_K)^{1/p}$ and we take $c = (b+\pi_K)^{1/p}$; they generate the extension $\calO_L / \calO_K$ with relations $p_0(u_0, u_1) =p_0(u_0)= u_0^{p}-b\pi_K$ and $p_1(u_0, u_1) = p_1(u_1) = u_1^p - b-\pi_K$.  For $a>0$, the Abbes-Saito space is given by 
\[
\calO_{AS, L/K}^a = K \langle u_0, u_1, \pi_K^{-a}V_0, \pi_K^{-a}V_1 \rangle / (u_0^p-b\pi_K - V_0, u_1^p - b-\pi_K - V_1).
\]

We take the function $\psi: \calO_K \to \calO_K\llbracket\delta_0, \delta_1\rrbracket$ so that $\psi(b) = b+ \delta_1$ and $\psi(b\pi_K) = (b+\delta_1)(\pi_K + \delta_0)$.  Then the standard thickening space is given by
\[
\calO_{TS, L/K, \psi}^a = K \langle u_0, u_1, \pi_K^{-a}\delta_0, \pi_K^{-a}\delta_1 \rangle / (u_0^p - (b+\delta_1)(\pi_K + \delta_0), u_1^p - b-\delta_1-\pi_K - \delta_0).
\]

We will identify these two algebras by matching $u_0$ and $u_1$ from the two algebras.  For this, we first construct a (continuous) homomorphism $\chi_1:  \calO_{AS, L/K}^a \to \calO_{TS, L/K, \psi}^a$ such that $\chi_1(u_0) = u_0$ and $\chi_1(u_1) = u_1$; then we are forced to send $V_0$ to $\chi_1(u_0^p - b\pi_K) =\pi_K \delta_1 + b\delta_0 + \delta_0 \delta_1$, and send $V_1$ to $\chi_1(u_1^p - b-\pi_K) = \delta_0 + \delta_1$.  For $\chi_1$ to be well-defined, we need to check the convergence, which is quite obvious as the way it is written in this particular example.

Conversely, we want to construct the inverse (continuous) homomorphism $\chi_2: \calO_{TS, L/K, \psi}^a \to \calO_{AS, L/K}^a$.  Again, we need $\chi_2(u_1) =u_1$ and $\chi_2(u_2)=u_2$.  It is less obvious where we need to send $\delta_0$ and $\delta_1$.  But we know that the images  $\chi_2(\delta_0)$ and $\chi_2(\delta_1)$ must satisfy
\begin{align*}
b\chi_2(\delta_0)+ \pi_K \chi_2(\delta_1) &=\chi_2(u_0^p - b\pi_K -\delta_0\delta_1) =  V_0 - \chi_2(\delta_0) \chi_2(\delta_1), \textrm{ and }\\
\chi_2(\delta_0) + \chi_2(\delta_1) &= \chi_2(u_1^p-b-\pi_K) =  V_1.
\end{align*}
Thinking of two left hand sides as a system of linear equations, we have
\begin{equation}
\label{E:AS=TS-example}
\begin{pmatrix}
\chi_2(\delta_0)\\
\chi_2(\delta_1)
\end{pmatrix}
= \begin{pmatrix}
b&\pi_K\\1&1
\end{pmatrix}^{-1}
\begin{pmatrix}
V_0- \chi_2(\delta_0)\chi_2(\delta_1)\\
V_1
\end{pmatrix}.
\end{equation}
We can determine the value of $\chi_2(\delta_0)$ and $\chi_2(\delta_1)$ by iteratively plug in the left hand side of \eqref{E:AS=TS-example} to its right hand side.  In our special case, one can check by hand that this process will converge eventually to two elements of $\calO_{AS,L/K}^a$, which are the images of $\chi_2(\delta_0)$ and $\chi_2(\delta_1)$, respectively.  But for general case, it is better to employ a ``fixed-point theorem" argument.
\end{example}

We now prove Theorem~\ref{T:ts=as}.

\begin{proof}
The proof is similar to \cite[Theorem~4.3.6]{Me-condI}.  We will match up $u_{J^+}$ in both rings.

We first observe that
\begin{equation}
\label{E:basis-for-as-ts}
\big\{u_{J^+}^{e_{J^+}} |e_j \in \{0, \dots, p^{r_j}-1\} \textrm{ for all } j \in J \textrm{, and } e_0 \in \{0, \dots, e-1\}\big\}
\end{equation}
forms a basis of $\calO^a_{AS, L/K}$ (resp. $\calO^a_{AS, L/K, \log}$) over $K \langle \pi_K^{-a}V_{J^+} \rangle$ (resp. $K \langle \pi_K^{-a-1} V_0, \pi_K^{-a}V_J \rangle$) as a finite free module.  Given 
$$
h = \sum_{e_{J^+}, e'_{J^+}} \alpha_{e_{J^+}, e'_{J^+}}u_{J^+}^{e_{J^+}} V_{J^+}^{e'_{J^+}}  \in \calO^a_{AS, L/K} \textrm{ (resp. } \calO^a_{AS, L/K, \log})
$$
written in this basis, where $\alpha_{e_{J^+}, e'_{J^+}} \in K$, we define
\begin{eqnarray*}
&|h|_{AS, a} = \max_{e_{J^+}, e'_{J^+}} \big\{ |\alpha_{e_{J^+}, e'_{J^+}}| \cdot \theta^{ae'_0 + \cdots + ae'_m + e_0/e} \big\} \\
& \textrm{(resp. } |h|_{AS, \log, a} = \max_{e_{J^+}, e'_{J^+}} \big\{ |\alpha_{e_{J^+}, e'_{J^+}}| \cdot \theta^{(a+1)e'_0 + ae'_1 + \cdots + ae'_m + e_0/e} \big\} ).
\end{eqnarray*}
It is clear that $\calO_{AS, L/K}^a$ (resp. $\calO_{AS, L/K,\log}^a$) is complete and submultiplicative for this norm (i.e. $|h_1h_2|_{AS, a} \leq |h_1|_{AS,a}|h_2|_{AS,a}$ and $|h_1h_2|_{AS, \log, a} \leq |h_1|_{AS,\log,a}|h_2|_{AS,\log,a}$);  the requirement $a>1$ in the non-logarithmic case guarantees that when substituting $u_0^e$ by $u_0^e - p_0 - V_0$, the norm does not increase.

Similarly, by Lemma~\ref{L:basis}, \eqref{E:basis-for-as-ts} also forms a basis of $\calO^a_{TS, L/K, R_{J^+}}$ (resp. $\calO^a_{TS, L/K, \log, R_{J^+}}$) over $K \langle \pi_K^{-a}\delta_{J^+} \rangle$ (resp. $K \langle \pi_K^{-a-1}\delta_0, \pi_K^{-a}\delta_J \rangle$) as a finite free module.  Given 
$$
h = \sum_{e_{J^+}, e'_{J^+}} \alpha_{e_{J^+}, e'_{J^+}}u_{J^+}^{e_{J^+}} \delta_{J^+}^{e'_{J^+}}  \in \calO^a_{TS, L/K, R_{J^+}} \textrm{ (resp. } \calO^a_{TS, L/K,\log, R_{J^+}})
$$
written in this basis, where $ \alpha_{e_{J^+}, e'_{J^+}} \in K$, we define
\begin{eqnarray*}
& |h|_{TS, a} = \max_{e_{J^+}, e'_{J^+}} \big\{ |\alpha_{e_{J^+}, e'_{J^+}}| \cdot \theta^{ae'_0 + \cdots + ae'_m + e_0/e} \big\} \\
&\textrm{(resp. } |h|_{TS, \log, a} = \max_{e_{J^+}, e'_{J^+}} \big\{ |\alpha_{e_{J^+}, e'_{J^+}}| \cdot \theta^{(a+1)e'_0 + ae'_1 + \cdots + ae'_m + e_0/e} \big\} ).
\end{eqnarray*}
It is clear that $\calO^a_{TS, L/K, R_{J^+}}$ (resp. $\calO^a_{TS, L/K, \log, R_{J^+}}$) is complete and submultiplcative for this norm.  The requirement $a>1$ in the non-logarithmic case guarantees that when substituting $u_0^e$ by $u_0^e - \psi(p_0) - R_0$, the norm does not increase.

Define a continuous homomoprhism $\chi_1: \calO^a_{AS, L/K} \rar \calO^a_{TS, L/K, R_{J^+}}$ (resp. $\chi_1: \calO^a_{AS, L/K, \log} \rar \calO^a_{TS, L/K, \log, R_{J^+}}$) by sending $u_{J^+}$ to $u_{J^+}$ and hence $V_j$ to $p_j(u_{J^+}) = p_j(u_{J^+})- \psi(p_j(u_{J^+}))-R_j$ for all $j \in J^+$. We need to verify the convergence conditions for all $V_j$.  Indeed, Proposition~\ref{P:psi-almost-hom} and the admissibility of $R_{J^+}$ imply that
\begin{eqnarray*}
&|p_j - \psi(p_j)|_{TS, a} \leq \theta^a, \quad |R_j|_{TS, a} \leq \theta^a \textrm{ for all } j \in J^+\\
&\textrm{(resp. }|p_j - \psi(p_j)|_{TS, \log, a} \leq \left\{ \begin{array}{ll}
\theta^{a+1} & j =0 \\
\theta^a & j \in J
\end{array}\right., \  |R_j|_{TS, \log, a} \leq \left\{ \begin{array}{ll}
\theta^{a+1+1/e} & j =0 \\
\theta^{a+1/e} & j \in J
\end{array}\right. ).
\end{eqnarray*}

Now we define the inverse $\chi_2$ of $\chi_1$.  Obviously, one should send $u_{J^+}$ back to $u_{J^+}$.  We need to define $\chi_2(\delta_{J^+})$ properly.  Let $A=(A_{ij})_{i,j \in J^+}$ denote the unique matrix in $\calO_K\llbracket u_{J^+}\rrbracket$ such that
$$
A \equiv \Big(\frac{\partial (\psi(p_i) + R_i)}{\partial \delta_j}\Big)_{i,j \in J^+} \textrm{ mod } (\delta_{J^+}) \cdot \calS_K.
$$
By Lemma~\ref{L:jacob-of-R_J+}, the image of $A$, denoted by $\overline A$, in $\mathrm{Mat}_{m+1}\big( \calO_K \langle u_{J^+}\rangle / (p_{J^+})\big) = \mathrm{Mat}_{m+1}(\calO_L)$ is invertible.
Let $B$ denote the $(m+1) \times (m+1)$ matrix with coefficients in $\oplus_{e_0=0}^{e-1} \oplus_{e_1=0}^{p^{r_1}-1}\cdots \oplus_{e_m=0}^{p^{r_m}-1} \calO_K u_{J^+}^{e_{J^+}}$ whose image in $\mathrm{Mat}_{m+1}\big( \calO_K \langle u_{J^+}\rangle / (p_{J^+}) \big)$ is the \emph{inverse} of $\overline A$.  
Then, we have
\begin{equation}\label{E:A-times-A-1}
B A - I \in \mathrm{Mat}_{m+1} \big( (p_{J^+}) \cdot \calO_K\langle u_{J^+}\rangle \big),
\end{equation}
where $I$ is the $(m+1) \times (m+1)$ identity matrix. 

Define the
subset
\begin{align*}
\Lambda &= \{{}^t(x_0, \dots, x_m) \in (\calO_{AS, L/K}^{a})^{\oplus (m+1)}\,|\, |x_j|_{AS,a} \leq \theta^a, \forall j \in J^+\} \\
\textrm{(resp. }\Lambda &= \{{}^t(x_0, \dots, x_m) \in (\calO_{AS, L/K,\log}^{a})^{\oplus (m+1)}\,|\, |x_0|_{AS,\log,a} \leq \theta^{a+1}, |x_j|_{AS,\log,a} \leq \theta^a, \forall j \in J\} \ ).
\end{align*}
It carries a norm $|\cdot|_\Lambda$ by taking the maximum of $|\cdot|_{AS, a}$ (resp. $|\cdot|_{AS, \log, a}$) over its entries.
Consider the function $\bbF: \Lambda \to \Lambda$
given by
\begin{align}
\label{E:delta-in-terms-of-V1}
&\bbF\begin{pmatrix}
 x_0\\ \vdots\\ x_m
\end{pmatrix} = \begin{pmatrix}
x_0\\ \vdots\\ x_m
\end{pmatrix} -  B 
\begin{pmatrix}
(\psi(p_0)+R_0)(u_{J^+},x_{J^+})\\
\vdots \\
(\psi(p_m)+R_m)(u_{J^+},x_{J^+})
\end{pmatrix}
\\
\label{E:delta-in-terms-of-V}
=&\
(I-BA)\begin{pmatrix}
x_0\\ \vdots\\ x_m
\end{pmatrix} - B\left( \begin{pmatrix}
(\psi(p_0) + R_0)(u_{J^+},x_{J^+})-p_0 \\ \vdots \\ (\psi(p_m) + R_m)(u_{J^+},x_{J^+})-p_m \end{pmatrix} - A \vectzero{x_}m\right)  - B \vectzero{V_}m,
\end{align}
where $(\psi(p_j)+R_j)(u_{J^+},x_{J^+})$ is the formal substitution of $\delta_j$ by $x_j$ for any $j \in J^+$.

To see that $\bbF$ is well-defined, we need to bound the norms of each term in \eqref{E:delta-in-terms-of-V} when ${}^t(x_0, \dots,x_m) \in \Lambda$.
By \eqref{E:A-times-A-1}, $I - BA$ (viewed as an element in $\calO_{AS, L/K}^a$ (resp. $\calO_{AS, L/K, \log}^a$)) has norm $\leq \theta^a$.  Hence, in the non-logarithmic case, the first term of  \eqref{E:delta-in-terms-of-V} has norm $\leq \theta^{2a}$; in the logarithmic case the first term of  \eqref{E:delta-in-terms-of-V} has norm $\leq \theta^{2a}$, except for the first row, which has norm $\leq \theta^{2a+1}$.  By the definition of $A$, the second term  of \eqref{E:delta-in-terms-of-V} has entries in $(\delta_{J^+})^2\calS_K$, except for the first row, which is in $(\delta_{J^+})^2\calS_K \cap (x_0^2, \pi_K x_0)\calS_K$ (because of how $p_0$ is defined).  Hence, in the non-logarithmic case, this term has norm $\leq \theta^{2a-1}$; in the logarithmic case, this term  has norm $\leq \theta^{2a}$, except for the first row, which has norm $\leq \min\{\theta^{a+2}, \theta^{2a}\}\leq \theta^{a+2}$.

Hence, we clearly see that $\bbF$ does map $\Lambda$ into $\Lambda$.  Moreover, we observe that $\bbF$ is contractive, that is, there exists $\varepsilon \in (0,1)$ (in fact, $\varepsilon = \theta^{a-1}$ in the non-logarithmic case, and $\varepsilon = \theta^{\min\{a,1\}}$ in the logarithmic case), such that for $\bbx = {}^t(x_0, \dots,x_m), \bby ={}^t(y_0, \dots,y_m) \in \Lambda$, we have
\[
|\bbF(\bbx)- \bbF(\bby)|_\Lambda < \varepsilon|\bbx-\bby|_\Lambda \ \textrm{ (resp. }|\bbF(\bbx)- \bbF(\bby)| < \varepsilon|\bbx-\bby|_\Lambda).
\]

Therefore, $\bbF$ has a unique fixed-point in $\Lambda$, denoted by $\bbx = {}^t(x_0, \dots,x_m) \in \Lambda$.

Now, we define a continuous homomorphism $\tilde \chi_2: K\langle u_{J^+}, \pi_K^{-a}\delta_{J^+}\rangle \rar \calO_{AS, L/K}^a$ (resp. $\tilde\chi_2: K\langle u_{J^+}, \pi_K^{-a-1}\delta_0, \pi_K^{-a}\delta_{J}\rangle \rar \calO_{AS, L/K, \log}^a$) by $\tilde \chi_2(u_j) = u_j$ for $j \in J^+$ and $\tilde\chi_2 (\delta_j) = x_j$.

We now check that $\tilde \chi_2(\psi(p_j) +R_j) = 0$ for all $j \in J^+$.  Indeed, by \eqref{E:delta-in-terms-of-V1}, we have
\[
B\begin{pmatrix}
\tilde \chi_2(\psi(p_0)+R_0)\\
\vdots\\
\tilde \chi_2(\psi(p_m)+R_m)\\
\end{pmatrix}
=
B 
\begin{pmatrix}
(\psi(p_0)+R_0)(u_{J^+},x_{J^+})\\
\vdots \\
(\psi(p_m)+R_m)(u_{J^+},x_{J^+})
\end{pmatrix}
=
\begin{pmatrix}
 x_0\\ \vdots\\ x_m
\end{pmatrix} - \bbF\begin{pmatrix}
x_0\\ \vdots\\ x_m
\end{pmatrix} =\begin{pmatrix}0\\ \vdots\\ 0\end{pmatrix}.
\]
Hence, $\tilde \chi_2$ factors through a continuous homomorphism $\chi_2: \calO_{TS, L/K, R_{J^+}}^a \to \calO_{AS, L/K}^a$ (resp. $\chi_2: \calO_{TS, L/K,\log, R_{J^+}}^a\to \calO_{AS, L/K,\log}^a$). 

Finally, we claim that $\chi_2$ and $\chi_1$ are inverse to each other.  One may check this from the definition directly.  Alternatively, we observe that, by our definition, they are inverse to each other on a dense subset $K[u_{J^+}]$ (density proved in Lemma~\ref{L:density-u} below). Therefore, they have to be inverse to each other and give an isomorphism between the ring of functions on Abbes-Saito space and the ring of functions on thickening space.
\end{proof}

\begin{remark}
An alternative way to understand this theorem is to think of the thickening spaces as perturbations of the morphisms $AS_{L/K}^a \rar A_K^{m+1}[0, \theta^a]$ and $AS_{L/K, \log}^a \rar A_K^1[0, \theta^{a+1}] \times A_K^m[0, \theta^a]$.  Abbes-Saito spaces will behave better under base change using the new morphisms.
\end{remark}

\begin{lemma}
\label{L:density-u}
Let $(R_{J^+}) \subset (\delta_{J^+}) \cdot \calS_K$ be admissible.  Then $K[u_{J^+}]$ is dense in $\calO_{TS, L/K, R_{J^+}}^a$ and $\calO_{AS, L/K}^a$ for $a\in \QQ_{>1}$, and in  $\calO_{TS, L/K, \log, R_{J^+}}^a$  and $\calO_{AS, L/K, \log}^a$ for $a\in \QQ_{>0}$.
\end{lemma}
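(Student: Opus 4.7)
For the Abbes--Saito rings the density is essentially formal. The continuous surjection $K\langle u_{J^+}, \pi_K^{-a}V_{J^+}\rangle \surj \calO_{AS, L/K}^a$ has $K[u_{J^+}, V_{J^+}]$ as a dense subring of its source, so its image is dense in the target. In the quotient, each $V_j$ is identified with the polynomial $p_j(u_{J^+})$, so the images of $K[u_{J^+}, V_{J^+}]$ and $K[u_{J^+}]$ coincide, giving the claim. The logarithmic version is identical, only with the weight on $V_0$ adjusted.

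For the thickening rings, the same reasoning applied to $K\langle u_{J^+}, \pi_K^{-a}\delta_{J^+}\rangle \surj \calO_{TS, L/K, R_{J^+}}^a$ shows that $K[u_{J^+}, \delta_{J^+}]$ is dense. Thus, by continuity of polynomial evaluation, it suffices to show that each $\delta_j$ lies in the closure of $K[u_{J^+}]$ inside the thickening algebra. I plan to do this by executing the contraction-mapping construction of $\chi_2$ from the proof of Theorem~\ref{T:ts=as}, but directly inside $\calO_{TS, L/K, R_{J^+}}^a$ rather than inside the Abbes--Saito algebra. Specifically, with $B(u_{J^+})$ the polynomial matrix from that proof and $\Lambda_{TS}$ the norm-bounded subset of $(\calO_{TS, L/K, R_{J^+}}^a)^{\oplus(m+1)}$ cut out by $|y_j|_{TS,a} \leq \theta^a$ (respectively $|y_0|_{TS,\log,a}\leq\theta^{a+1}$ in the logarithmic case), consider
\[
\bbG(\vec y) = \vec y - B(u_{J^+}) \cdot \bigl( \psi(p_0)(u_{J^+}, \vec y) + R_0(u_{J^+}, \vec y),\, \ldots,\, \psi(p_m)(u_{J^+}, \vec y) + R_m(u_{J^+}, \vec y) \bigr)^{\!t}.
\]
The estimates used in Theorem~\ref{T:ts=as}, combined with Proposition~\ref{P:psi-almost-hom}, the admissibility of $R_{J^+}$, and the congruence $BA \equiv I \pmod{(p_{J^+})}$, carry over verbatim to show that $\bbG$ preserves $\Lambda_{TS}$ and is contractive with ratio strictly less than $1$. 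The tuple $\vec\delta = (\delta_0,\ldots,\delta_m)$ lies in $\Lambda_{TS}$ and satisfies $\bbG(\vec\delta) = \vec\delta$ because $\psi(p_{J^+}) + R_{J^+} = 0$ in the quotient; hence it is the unique fixed point.

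Iterating $\bbG$ from $\vec y^{(0)} = 0$ produces a sequence converging to $\vec\delta$ in the thickening norm. By induction every iterate lies in the closure of $K[u_{J^+}]$: the matrix $B$ has polynomial coefficients, and each substitution $\psi(p_j)(u_{J^+}, \vec y)$ and $R_j(u_{J^+}, \vec y)$ is an infinite sum of monomials of the form $\alpha(u_{J^+}) \cdot (y_0/\pi_K)^{a_0} y_J^{a_J}$ with $\alpha(u_{J^+}) \in \calO_K[u_{J^+}]$, which converges in $\overline{K[u_{J^+}]}$ whenever $\vec y$ does. Passing to the limit places each $\delta_j$ in $\overline{K[u_{J^+}]}$, completing the argument. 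The one point demanding care is the convergence of the substitutions $\psi(p_j)(u_{J^+}, \vec y)$ and $R_j(u_{J^+}, \vec y)$ in $\overline{K[u_{J^+}]}$, which holds because the defining bounds of $\Lambda_{TS}$, together with the hypothesis $a > 1$ (non-logarithmic case) or $a > 0$ (logarithmic case), force both $|y_0/\pi_K|$ and $|y_j|$ to be strictly less than $1$ inside the thickening algebra.
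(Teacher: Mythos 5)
Your proof is correct and takes essentially the same route as the paper's: both reduce to showing $\delta_{J^+} \in \overline{K[u_{J^+}]}$, and both iterate the very same map built from $B$ and $\psi(p_{J^+})+R_{J^+}$ (your $\bbG$ is precisely the right-hand side of the paper's equation \eqref{E:K[u]-approximation}, and starting from $\vec y^{(0)}=0$ reproduces the paper's successive approximations modulo increasing powers of $(\delta_0/\pi_K,\delta_J)$). The only point deserving a touch more care than ``carry over verbatim'' is that the paper's norm estimates are on $\calO_{AS}^a$ while you need them on $\calO_{TS}^a$; they do transfer because $p_j$ admits a representative in $(\delta_{J^+})\calS_K$ in the thickening quotient, which gives $|p_j|_{TS,a}\le\theta^a$ and hence the same bound on $|I-BA|_{TS,a}$.
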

\begin{proof}
Since $V_j = p_j(u_{J^+}) \in K[u_{J^+}]$ for all $j \in J^+$, the density of $K[u_{J^+}]$ in $\calO_{AS, L/K}^a$ and $\calO_{AS, L/K, \log}^a$ is obvious from the definition.  We now prove the density for the thickening spaces.  It is enough to show that $\delta_{J^+}$ can be well-approximated by elements of $K[u_{J^+}]$.  We keep the notation as in the proof of Theorem~\ref{T:ts=as}.  Consider a variant of \eqref{E:delta-in-terms-of-V}:
\begin{equation}
\label{E:K[u]-approximation}
\begin{pmatrix}
\delta_0\\ \vdots\\ \delta_m
\end{pmatrix} = 
(I-BA)\begin{pmatrix}
\delta_0\\ \vdots\\ \delta_m
\end{pmatrix} - B\left( \begin{pmatrix}
(\psi(p_0) + R_0)-p_0 \\ \vdots \\ (\psi(p_m) + R_m)-p_m \end{pmatrix} - A \vectzero{\delta_}m\right)  - B \vectzero{p_}m.
\end{equation} 
Now that $I-BA \in \Mat_{m+1}\big((p_{J^+})\cdot \calO_K\langle u_{J^+}\rangle\big)$ implies that the first term of the RHS of \eqref{E:K[u]-approximation} has representatives in $(\delta_0/\pi_K, \delta_{J})^2 \calS_K$ under the quotient $\calS_K \to \calS_K/(\psi(p_{J^+}) + R_{J^+})$.  The second term of the RHS of \eqref{E:K[u]-approximation} is already written in terms of elements in $(\delta_0/\pi_K, \delta_{J})^2 \calS_K$.  The third term of the RHS of \eqref{E:K[u]-approximation} is a vector of elements in $K[u_{J^+}]$.

So, this means that we can approximate $\delta_{J^+}$ using $K[u_{J^+}]$ up to elements in $(\delta_0/\pi_K, \delta_{J})^2 \calS_K$.  We can use the same approximation to approximate $\delta_j\delta_{j'}$ for $j, j'\in J$ in the previous approximation and hence get an approximation of $\delta_{J^+}$ by elements in $K[u_{J^+}]$ up to $(\delta_0/\pi_K, \delta_{J})^3 \calS_K$.  Iterating this construction, we see that $K[u_{J^+}]$ is dense in $\calO_{TS, L/K, R_{J^+}}^a$  for $a\in \QQ_{>1}$ and in  $\calO_{TS, L/K, \log, R_{J^+}}^a$   for $a\in \QQ_{>0}$.
\end{proof}

\subsection{\'Etaleness of  thickening spaces}
\label{S:etale}

In this subsection, we will study a variant of \cite[Theorem~7.2]{AS-cond1} and \cite[Corollary~4.12]{AS-cond2}.

Remember that Hypotheses~\ref{H:J-finite-set} and \ref{H:beta_K>1} are still in force.

\begin{definition}
\label{D:etale-locus}
Let $(R_{J^+}) \subset (\delta_{J^+})\cdot \calS_K$ be an admissible subset.  Let $ET_{L/K, R_{J^+}}$ be the rigid analytic subspace of $A_K^1[0, \eta) \times A_K^m[0, 1)$ over which the morphism $\Pi$ defined in Definition~\ref{D:th-space} is \'etale.   When there is no confusion on the choice of $R_{J^+}$ or the choice is not important, we abbreviate $ET_{L/K,R_{J^+}}$ to $ET_{L/K}$.
\end{definition}

\begin{theorem}\label{T:etaleness-nonlog}
Let $b(L/K)$ be the highest non-logarithmic ramification break of $L/K$.  There exists $\epsilon \in (0, b(L/K) - 1)$ such that $b(L/K)-\epsilon \in \QQ$ and, for any $(R_{J^+}) \subset (\delta_{J^+})\cdot \calS_K$ admissible, $A_K^{m+1}[0, \theta^{b(L/K) - \epsilon}] \subseteq ET_{L/K, R_{J^+}}$.
\end{theorem}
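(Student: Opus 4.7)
The plan is to reduce the $\Pi$-étaleness of the thickening space to the étaleness of the classical Abbes--Saito projection $\Pi'$, for which the key technical input is \cite[Theorem~7.2]{AS-cond1}; the bridge between the two settings is the $AS=TS$ isomorphism of Theorem~\ref{T:ts=as}. Concretely, I will show that the ``deformed'' Jacobian controlling $\Pi$-étaleness differs from the ``polynomial'' Jacobian controlling $\Pi'$-étaleness by a perturbation small enough to be absorbed via a nonarchimedean triangle inequality.

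First, I will invoke \cite[Theorem~7.2]{AS-cond1} to obtain a rational $\epsilon_0 \in (0, b(L/K)-1)$ with $a_0 := b(L/K) - \epsilon_0$ such that $\Pi': AS_{L/K}^{a_0} \to A_K^{m+1}[0,\theta^{a_0}]$ is finite and étale. In affinoid terms, this means the polynomial Jacobian
\[
J_0(u_{J^+}) := \det\big(\partial p_i/\partial u_j\big)_{i,j\in J^+}
\]
is a unit of $\calO_{AS,L/K}^{a_0}$, and hence its norm is bounded below on the affinoid by some $c>0$. Since the isomorphism $\chi_1: \calO_{AS,L/K}^{a_0} \isom \calO_{TS,L/K,R_{J^+}}^{a_0}$ of Theorem~\ref{T:ts=as} sends $u_{J^+}$ to $u_{J^+}$ and $J_0$ depends only on $u_{J^+}$, the same element $J_0$ is a unit with the same lower norm bound $c$ on the thickening side.

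Next, I will show that $\Pi$-étaleness is governed by the deformed Jacobian $J_R := \det(\partial(\psi(p_i)+R_i)/\partial u_j)_{i,j\in J^+}$ and estimate $J_R - J_0$. Since $\psi$ acts only on $\calO_K$-coefficients of $p_i$, it commutes with $\partial/\partial u_j$, so $\partial \psi(p_i)/\partial u_j = \psi(\partial p_i/\partial u_j)$; Proposition~\ref{P:psi-almost-hom} then gives $\psi(\partial p_i/\partial u_j) - \partial p_i/\partial u_j \in (\delta_{J^+})\calO_K\llbracket \delta_{J^+}\rrbracket[u_{J^+}]$. Admissibility of $R_{J^+}$ (Definition~\ref{D:admissible}) similarly forces $\partial R_i/\partial u_j \in (\delta_{J^+})\calS_K$ with norms uniformly controlled in $R_{J^+}$. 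Expanding the determinant entrywise and using the submultiplicativity of the sup-norm on $\calO_{TS,L/K,R_{J^+}}^{a_0}$ produces a uniform bound
\[
|J_R - J_0|_{\sup} \leq C\,\theta^{a_0}
\]
on the polydisc $A_K^{m+1}[0,\theta^{a_0}]$, where $C$ depends only on $p_{J^+}$ and on the admissibility data, \emph{not} on the particular $R_{J^+}$.

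Finally, I will pick $\epsilon \in \QQ \cap (0,\epsilon_0]$ small enough that $\theta^{b(L/K)-\epsilon} < c/C$, shrinking further if needed to keep $b(L/K)-\epsilon \in \QQ$. On the polydisc $A_K^{m+1}[0, \theta^{b(L/K)-\epsilon}]$, the nonarchimedean triangle inequality forces $|J_R(x)| = |J_0(x)| \geq c > 0$ at every point $x$ of the TS space above the polydisc, so $J_R$ is a unit there and $\Pi$ is étale, as required. The main obstacle is the \emph{uniformity} of both the lower bound $c$ (which must be extracted from \cite[Theorem~7.2]{AS-cond1} independently of $R_{J^+}$) and the upper bound $C$ on the perturbation (which requires exploiting the admissibility constraints of Definition~\ref{D:admissible} carefully, rather than just any bound on individual $R_i$); once these uniform bounds are in place, the conclusion is an immediate nonarchimedean perturbation argument.
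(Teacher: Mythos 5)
Your strategy — deduce $\Pi$-\'etaleness from the \'etaleness of $\Pi'$ in \cite[Theorem~7.2]{AS-cond1} by a nonarchimedean perturbation bound on determinants — runs into a genuine obstruction, and it is exactly the obstruction that forces the paper to work with the full Jacobian \emph{matrix} rather than its determinant.

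The issue is that your lower bound $c$ on $|J_0|$ is governed by the \emph{product} of the elementary divisors of the Jacobian matrix (i.e., the $v_K$-valuation of the different, $\tfrac1e\sum_i\alpha_i$), whereas the \'etaleness of $\Pi'$ gives control over each elementary divisor \emph{individually}, $\alpha_i < e\,a_0$. These are not the same when $\Omega^1_{\calO_L/\calO_K}$ has $r \ge 2$ cyclic summands, which is the generic case for imperfect residue fields. Concretely, suppose $L = K(b_1^{1/p}, b_2^{1/p})$ with $K$ the $(1,1)$-Gauss completion of $\QQ_p(\zeta_{p^n})(b_1,b_2)$, so $\beta_K = p^{n-1}(p-1)$, $e=1$. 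Then $p_1 = u_1^p - b_1$, $p_2 = u_2^p - b_2$, $J_0 = p^2u_1^{p-1}u_2^{p-1}$, and on $AS^{a}$ one has $|J_0| = \theta^{2\beta_K}$ at every point. Meanwhile $b(L/K) = \beta_K\,p/(p-1)$, so $a_0 = b(L/K)-\epsilon_0 < 2\beta_K$ for any $p\ge 3$. Thus $c = \theta^{2\beta_K} < \theta^{a_0} \leq C\theta^{a_0}$, and the triangle-inequality step never fires: $|J_R - J_0| \le C\theta^{a_0}$ can exceed $|J_0|$ everywhere. No shrinking of $\epsilon$ helps, since $c$ is a fixed constant (determined by the different of $L/K$) while $\theta^{b(L/K)-\epsilon}$ only tends to $\theta^{b(L/K)}$, which is still $> c$.

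The paper's proof circumvents this by not reducing to the determinant. It uses \cite[Proposition~7.3]{AS-cond1} to get $\Omega^1_{\calO_L/\calO_K} \cong \bigoplus_{i=1}^r\calO_L/\pi_L^{\alpha_i}\calO_L$ with each $\alpha_i < ea_0$, then runs a cokernel-chasing argument on the commutative diagram \eqref{E:etale-commutative-diagram}: at a hypothetical point $\bbx$ where $\det(\calJ)$ vanishes, the key congruence $\mathrm{ev}_\bbx(\calJ) \equiv (\partial p_i/\partial u_j)\ (\mathrm{mod}\ \pi_K^{a})$ shows the two matrices have the same cokernel modulo $\pi_K^a$; the right-hand one is $\bigoplus_i\calO_{K^\alg}/\pi_L^{\alpha_i}\calO_{K^\alg}$ (pure torsion, since each $\alpha_i/e < a$), while the left-hand one would have a free summand $\calO_{K^\alg}/\pi_K^a$ if the determinant vanished — contradiction. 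The essential point is that the condition ``each $\alpha_i/e < a$'' survives reduction mod $\pi_K^a$, whereas the condition ``$\sum_i\alpha_i/e < a$'' (which is what your determinant bound would require) need not hold at all. You would need to recover this matrix-level (Smith normal form) information to make your approach work, at which point the argument collapses back into the paper's.

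A smaller remark: your observation that $\psi$ commutes with $\partial/\partial u_j$ is correct and is indeed the informal reason the congruence $\mathrm{ev}_\bbx(\calJ) \equiv (\partial p_i/\partial u_j)\bmod\pi_K^a$ holds; that part of your reasoning is sound and parallels the paper's key computation.
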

\begin{proof}
This proof is essentially the same as \cite[Proposition~7.5]{AS-cond1}.  The essential point is the ``congruence" $\partial (\psi(p_i) + R_i) / \partial u_j \equiv \partial(p_i) / \partial u_j$ over the said locus.  For the convenience of readers, we include the proof.

Recall from \cite[Proposition~7.3]{AS-cond1} that
\begin{equation}\label{E:etale}
\Omega_{\calO_L/\calO_K}^1 = \oplus_{i = 1}^r \calO_L / \pi_L^{\alpha_i}\calO_L \textrm{ with }\alpha_i < e (b(L/K)-\epsilon)
\end{equation}
for some $\epsilon >0$ and $r \in \NN$.  It does not hurt to take $\epsilon < b(L/K) -1$ and $b(L/K)-\epsilon \in \QQ$.  Let $\calJ = \big(\partial (\psi(p_i) + R_i) / \partial u_j\big)_{i,j \in J^+}$ be the Jacobian matrix of $TS_{L/K, R_{J^+}}^a$ over $A_K^{m+1}[0, \theta^a]$, whose entries are elements in $\calO = \OK \langle u_{J^+}, \pi_K^{-a}\delta_{J^+}\rangle/ \big( \psi(p_i) + R_i \big)$.

Let $a = b(L/K) - \epsilon \in \QQ$.
Suppose that  $\bbx \in A_K^1[0, \theta^a]$ is a $K^\alg$-point at which $\mathrm{det} (\calJ)$ vanishes; it gives a homomorphism $\calO_{TS, L/K, R_{J^+}}^a \to K^\alg$.  We let $x_{J^+}$ and $\nu_{J^+}$ denote the images of $u_{J^+}$ and $\delta_{J^+}$, respectively; we have $x_j, \nu_j \in \calO_{K^\alg}$ and $|\nu_j|\leq \theta^a$, for all $j \in J^+$.  Hence, we have $|p_j(x_{J^+})|\leq \theta^a$ for all $j \in J^+$.

Now, we have two $\calO_K$-algebra \emph{homomorphisms}
\[
\xymatrix@=0pt@C=15pt{
\varphi: \calO_L = \calO_K[u_0, \dots, u_m]/(p_0, \dots, p_m) \ar[rr]&&\calO_{K^\alg} / \pi_K^a \calO_{K^\alg}\\
h(u_{J^+}) \ar@{|->}[rr]&& h(x_{J^+}).\\
\mathrm{ev}_\bbx: \calO = \OK\langle u_{J^+}, \pi_K^{-a}\delta_{J^+}\rangle  / \big( \psi(p_i) + R_i \big) \ar[rr]&&\calO_{K^\alg}\\
h(u_{J^+}, \delta_{J^+}) \ar@{|->}[rr]&& h(x_{J^+}, \nu_{J^+}).
}
\]
Here $\varphi$ is well-defined because $|p_j(x_{J^+})|\leq \theta^a$.

We consider the following commutative diagram of linear maps.
\begin{equation}
\label{E:etale-commutative-diagram}
\xymatrix{
\calO^{\oplus(m+1)} \ar[r]^{\mathrm{ev}_\bbx} \ar[d]^{\calJ} &
\calO_{K^\alg}^{\oplus(m+1)} \ar[d]^{\mathrm{ev}_\bbx(\calJ)}
\ar[rr]^-{\mod \pi_K^a} && \big(\calO_{K^\alg} / \pi_K^a \calO_{K^\alg}\big)^{\oplus(m+1)} \ar[d]^{(\partial p_i /\partial u_j)_{i,j\in J^+}\textrm{ mod }\pi_K^a} &&
\calO_L^{\oplus(m+1)}\ar[ll]_-\varphi \ar[d]^{(\partial p_i /\partial u_j)_{i,j\in J^+}}\\
\calO^{\oplus(m+1)} \ar[r]^{\mathrm{ev}_\bbx}  &
\calO_{K^\alg}^{\oplus(m+1)} \ar[rr]^-{\mod \pi_K^a} & & \big(\calO_{K^\alg} / \pi_K^a \calO_{K^\alg}\big)^{\oplus(m+1)} &&
\calO_L^{\oplus(m+1)}\ar[ll]_-\varphi 
}
\end{equation}
Here, the commutativity is clear except the middle one, which follows from the simple but key fact that $|\nu_{J^+}|\leq \pi_K^a\Rightarrow \mathrm{ev}_\bbx(\calJ) \equiv (\partial p_i /\partial u_j)_{i,j\in J^+}\textrm{ mod }\pi_K^a$.

Now, on one hand, \eqref{E:etale} implies that the cokernel of the right vertical arrow in \eqref{E:etale-commutative-diagram} is isomorphic to $\oplus_{i = 1}^r \calO_L/ \pi_L^{\alpha_i}\calO_L$.  Since $ea > \alpha_i$ for any $i$, the cokernel of the third vertical arrow in \eqref{E:etale-commutative-diagram} is isomorphic to $\oplus_{i = 1}^r \calO_{K^\alg}/ \pi_L^{\alpha_i}\calO_{K^\alg}$.

On the other hand, we have assumed that $\det(\mathrm{ev}_\bbx(\calJ)) = 0$; this implies that the cokernel of the second vertical arrow in \eqref{E:etale-commutative-diagram} has a torsion free constituent.  Therefore, we know that the the cokernel of the third arrow must have a direct summand isomorphic to $\calO_{K^\alg} / \pi_K^a \calO_{K^\alg}$; this contradicts the claim in previous paragraph.  We have the \'etaleness as stated.
\end{proof}

\begin{remark}
Theorem~\ref{T:etaleness-nonlog} (as well as Theorem~\ref{T:etaleness-log} later) states that the \'etale locus $ET_{L/K, R_{J^+}}$ is a bit larger than the locus where $TS_{L/K, R_{J^+}}^a$ (resp. $TS_{L/K, \log, R_{J^+}}^a$) becomes a geometrically disjoint union of $[L:K]$ discs.  This is crucial for the proof of Corollary~\ref{C:AS-break=spec-norms}.
\end{remark}

The following lemma is an easy fact about logarithmic relative differentials.  This is not a good place to introduce the whole theory of logarithmic structure.  For a systematic account of logarithmic structures and log-schemes, one may consult \cite[Section~4]{Kato-Saito-Bloch-cond} and \cite{Kato-log-schemes}.

\begin{lemma}\label{L:log-dif-OL-over-OK}
If we provide $\calO_L$ and $\calO_K$ with the canonical log-structures $\pi_L^\NN \inj \calO_L$ and $\pi_K^\NN \inj \calO_K$, respectively, then the logarithmic relative differentials
$$
\Omega^1_{\calO_L / \calO_K}(\log / \log) = \bigoplus_{j \in J} \calO_L du_j \oplus \calO_L \frac {du_0} {u_0} \Big/ \big(d(p_J), \frac {d(p_0)}{\pi_K}, \frac {d\pi_K}{\pi_K}, dx \textrm{ for } x \in \calO_K\big).
$$
\end{lemma}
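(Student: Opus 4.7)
My approach is to identify both sides of the claimed equality as corepresenting the functor of logarithmic relative derivations. Recall that $\Omega^1_{\calO_L/\calO_K}(\log/\log)$ is characterized by the property that $\calO_L$-linear maps to an $\calO_L$-module $N$ are in bijection with pairs $(D,\delta)$, where $D\colon \calO_L \to N$ is an $\calO_K$-derivation and $\delta\colon M_L^{\mathrm{gp}} \to N$ is a group homomorphism satisfying $D(\alpha_L(m)) = \alpha_L(m)\,\delta(m)$ for all $m \in M_L$ and vanishing on the image of $M_K^{\mathrm{gp}}$. After passing to the associated log structures, $M_L^{\mathrm{gp}}$ is generated by $\calO_L^\times$ and $\pi_L = u_0$; since the values of $\delta$ on units are already forced by $D$ via $\delta(v) = D(v)/v$, the pair $(D,\delta)$ is entirely determined by $\{D(u_j) : j \in J\}$ together with $\delta(u_0)$, subject to $D(u_0) = u_0\,\delta(u_0)$.

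Using the presentation $\calO_L = \calO_K\langle u_{J^+}\rangle/(p_{J^+})$ from Construction~\ref{C:generators-of-calI}, I will realize this datum as an $\calO_L$-linear map out of the free module $F := \bigoplus_{j \in J}\calO_L\,du_j \oplus \calO_L\,\tfrac{du_0}{u_0}$, with the convention $du_0 = u_0 \cdot \tfrac{du_0}{u_0}$. The constraints on $F$ are $D(p_j) = 0$ for $j \in J$ (giving $d(p_j) = 0$), together with $D(p_0) = 0$ and the log-compatibility $\delta(\pi_K) = 0$, and finally $dx = 0$ for $x \in \calO_K$ (built into interpreting $d$ as the differential of polynomials in $u_{J^+}$, which in particular forces $d\pi_K = 0$).

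The core of the argument is to show that the two constraints $D(p_0) = 0$ and $\delta(\pi_K) = 0$ collapse to the single relation $d(p_0)/\pi_K = 0$. From $p_0 = u_0^e - d(u_J)\pi_K + \pi_K\cdot(\mathrm{small})$ with $\mathrm{small} \in N^{1/e}\cdot\calO_K[u_{J^+}]$, using $d\pi_K = 0$, I compute $dp_0 = e u_0^{e-1}du_0 - \pi_K\bigl(d(d(u_J)) - d(\mathrm{small})\bigr)$ in $F$. Substituting $du_0 = u_0\tfrac{du_0}{u_0}$ and applying the identity $u_0^e = \pi_K \cdot w$ in $\calO_L$---where $w := d(u_J) - \mathrm{small}$ is a unit by $p_0 = 0$ together with the choice of $d$ in Construction~\ref{C:generators-of-calI}---this rewrites as $dp_0 = \pi_K\bigl(ew\tfrac{du_0}{u_0} - dw\bigr)$ in $F$ (with $dw$ computed from the natural lift $w = d(u_J) - \mathrm{small}$). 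Hence $d(p_0)/\pi_K = ew\tfrac{du_0}{u_0} - dw$ is a well-defined element of $F$, and its vanishing simultaneously yields $dp_0 = 0$ (multiply by $\pi_K$) and the relation $dw = ew\tfrac{du_0}{u_0}$, which is exactly $\delta(\pi_K) = 0$ via $\pi_K = u_0^e w^{-1}$ in the associated log structure. The converse implication is immediate.

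The main obstacle I expect is the clean justification of the divisibility of $dp_0 \in F$ by $\pi_K$ prior to any quotient: this relies on the shape of $p_0$ from Construction~\ref{C:generators-of-calI} (every non-leading monomial carries an explicit $\pi_K$) together with the fact that, as an element of $\calO_L$ (the coefficient ring of $F$), $u_0^e$ itself lies in $\pi_K\calO_L$. Once this is established, the relation $d\pi_K/\pi_K = 0$ listed in the statement becomes automatic, since $d\pi_K$ already vanishes in $F$, and the identification of both sides as corepresenting the same functor completes the proof.
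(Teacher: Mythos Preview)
The paper does not give a proof of this lemma: it is stated as ``an easy fact about logarithmic relative differentials'' with a pointer to \cite{Kato-Saito-Bloch-cond} and \cite{Kato-log-schemes}, and no argument is supplied. Your approach via the universal property of log-derivations is the natural one, and your computation is correct. In particular, the key identity
\[
dp_0 \;=\; \pi_K\Bigl(e\,w\,\tfrac{du_0}{u_0} - dw\Bigr) \quad \text{in } F
\]
is exactly right: the term $e u_0^{e-1}\,du_0 = e u_0^e\,\tfrac{du_0}{u_0}$ becomes $e\pi_K w\,\tfrac{du_0}{u_0}$ after using $u_0^e = \pi_K w$ in $\calO_L$, and the remaining terms visibly carry a factor of $\pi_K$. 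Your observation that the single relation $d(p_0)/\pi_K = 0$ simultaneously encodes $D(p_0)=0$ and $\delta(\pi_K)=0$ (the latter via $\pi_K = u_0^e w^{-1}$, hence $\delta(\pi_K) = e\,\tfrac{du_0}{u_0} - dw/w$) is precisely the content of the lemma.

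Two minor remarks. First, the phrase ``subject to $D(u_0)=u_0\,\delta(u_0)$'' is not an extra constraint on the data but rather the \emph{definition} of $D(u_0)$ in terms of $\delta(u_0)$; you might rephrase to avoid suggesting a condition. Second, the relations $d\pi_K/\pi_K$ and $dx$ for $x\in\calO_K$ in the lemma's statement are, as you note, redundant under your interpretation (since $d$ is the $\calO_K$-linear polynomial differential and hence already kills constants); they are listed there only to make the comparison with the general log-differential formalism transparent, not because they impose new conditions.
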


\begin{theorem}\label{T:etaleness-log}
Let $b_\log(L/K)$ be the highest logarithmic ramification break of $L/K$.  Then there exists $\epsilon \in (0, b_\log(L/K))$ such that $b_\log(L/K) -\epsilon \in \QQ$, and for any $(R_{J^+}) \subset (\delta_{J^+}) \cdot \calS_K$ admissible, $A_K^1[0, \theta^{b_\log(L/K)+1 - \epsilon}] \times A_K^m[0, \theta^{b_\log(L/K) - \epsilon}] \subseteq ET_{L/K, R_{J^+}}$.
\end{theorem}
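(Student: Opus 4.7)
The plan is to adapt the argument of Theorem~\ref{T:etaleness-nonlog} by replacing K\"ahler differentials by their logarithmic variants throughout. The structural input is the logarithmic refinement of \cite[Proposition~7.3]{AS-cond1}, namely \cite[Corollary~4.12]{AS-cond2}:
$$
\Omega^1_{\calO_L/\calO_K}(\log/\log) \cong \bigoplus_{i=1}^{r'} \calO_L/\pi_L^{\alpha_i}, \qquad \alpha_i < e(b_\log(L/K) - \epsilon),
$$
for some rational $\epsilon \in (0, b_\log(L/K))$; set $a = b_\log(L/K) - \epsilon$.

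Following Lemma~\ref{L:log-dif-OL-over-OK}, the module $\Omega^1_{\calO_L/\calO_K}(\log/\log)$ is generated over $\calO_L$ by $du_J$ together with $du_0/u_0$, with relations $dp_J$ and $dp_0/\pi_K$. Accordingly I define the log Jacobian
$$
M^{\log} = D_1 \cdot \Big(\frac{\partial p_i}{\partial u_j}\Big)_{i,j \in J^+} \cdot D_2, \qquad \calJ^{\log} = D_1 \cdot \calJ \cdot D_2,
$$
where $D_1 = \mathrm{diag}(\pi_K^{-1}, 1, \ldots, 1)$ and $D_2 = \mathrm{diag}(u_0, 1, \ldots, 1)$. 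Although $\pi_K^{-1}$ appears formally, every entry of $M^{\log}$ actually lies in $\calO_L$ once one uses the relation $u_0^e \equiv \pi_K d(u_J)$ modulo $(p_0)$ plus a small error; the same applies to $\calJ^{\log}$ modulo $\psi(p_0) + R_0$. The cokernel of $M^{\log}\colon \calO_L^{\oplus(m+1)} \to \calO_L^{\oplus(m+1)}$ is precisely $\Omega^1_{\calO_L/\calO_K}(\log/\log)$.

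Now suppose, for contradiction, that at some $K^\alg$-point $\bbx = (x_{J^+}, \nu_{J^+})$ in the polydisc $A_K^1[0, \theta^{a+1}] \times A_K^m[0, \theta^a]$ one has $\det \mathrm{ev}_\bbx(\calJ) = 0$; then also $\det \mathrm{ev}_\bbx(\calJ^{\log}) = 0$. The key step is to establish
$$
\mathrm{ev}_\bbx(\calJ^{\log}) \equiv M^{\log}\big|_{u_{J^+} = x_{J^+}} \pmod{\pi_K^a \calO_{K^\alg}}.
$$
This is precisely where the logarithmic radii enter: the $\nu_0$-contributions to the first row of $\calJ$ are amplified by $1/\pi_K$ but compensated by $|\nu_0| \leq \theta^{a+1}$; the $\nu_0$-contributions to the first column are absorbed by the extra factor $u_0$ of valuation $1/e$; and the $\nu_j$-contributions for $j \in J$ are controlled by $|\nu_j| \leq \theta^a$ together with the admissibility of $R_{J^+}$ and Proposition~\ref{P:psi-almost-hom}.

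With this congruence in hand, I assemble the analog of diagram~\eqref{E:etale-commutative-diagram} using $M^{\log}$ and the evaluation $\varphi\colon \calO_L \to \calO_{K^\alg}/\pi_K^a$; the latter is well defined because $|p_j(x_{J^+})| \leq \theta^a$ for $j \in J$ and $|p_0(x_{J^+})| \leq \theta^{a+1}$, each deduced from the defining equations $\psi(p_{J^+})(\bbx) + R_{J^+}(\bbx) = 0$ via Proposition~\ref{P:psi-almost-hom} and the radii bounds. The conclusion then follows as in Theorem~\ref{T:etaleness-nonlog}: the cokernel on the right of the diagram is $\bigoplus_i \calO_L/\pi_L^{\alpha_i}$ with $\alpha_i < ea$, while $\det \mathrm{ev}_\bbx(\calJ^{\log}) = 0$ forces the central cokernel to contain a direct summand isomorphic to $\calO_{K^\alg}/\pi_K^a$, a contradiction. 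The main technical hurdle will be the careful entry-by-entry verification of the congruence above, most delicately at the $(0,0)$ slot, where one must use $\psi(p_0)(\bbx) + R_0(\bbx) = 0$ to express $x_0^e$ as $\pi_K d(x_J)$ plus errors absorbable into $\pi_K^a \calO_{K^\alg}$.
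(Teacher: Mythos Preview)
Your proposal is correct and follows essentially the same approach as the paper: the paper's proof is a one-sentence sketch that says to rerun the argument of Theorem~\ref{T:etaleness-nonlog} with $\Omega^1_{\calO_L/\calO_K}(\log/\log)$ in place of $\Omega^1_{\calO_L/\calO_K}$, invoking the bound from \cite{AS-cond2} and the presentation of Lemma~\ref{L:log-dif-OL-over-OK}; you have spelled out exactly how that adaptation goes, correctly identifying the row/column scaling $D_1,D_2$ that implements the passage from $du_0$ to $du_0/u_0$ and from $dp_0$ to $dp_0/\pi_K$, and correctly flagging the $(0,0)$ entry as the place where one must invoke the relation $\psi(p_0)+R_0=0$ to see integrality. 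The only cosmetic discrepancy is that the paper cites \cite[Proposition~4.11(2)]{AS-cond2} for the bound on the log differentials where you cite \cite[Corollary~4.12]{AS-cond2}; these are adjacent results and either suffices.
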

\begin{proof}
The proof is similar to Theorem~\ref{T:etaleness-nonlog} except that we need to invoke \cite[Proposition~4.11(2)]{AS-cond2} to give a bound on $\Omega^1_{\calO_L/ \calO_K}(\log / \log)$; the explicit description of $\Omega^1_{\calO_L/ \calO_K}(\log / \log)$ in Lemma~\ref{L:log-dif-OL-over-OK} singles out $\delta_0$ and gives rise to the smaller radius $\theta^{a+1}$.
\end{proof}

\subsection{Construction of differential modules}\label{S:diff-eqn}

In this subsection, we set up the framework of interpreting ramification filtrations by differential modules.

As a reminder, we keep Hypotheses~\ref{H:J-finite-set} and \ref{H:beta_K>1}.

\begin{construction}\label{C:diff-eqns}
Let $(R_{J^+}) \subset (\delta_{J^+}) \cdot \calS_K$ be admissible.   By Lemma~\ref{L:basis}, $\Pi: \Pi^{-1}(ET_{L/K}) \rar ET_{L/K}$ is finite and \'etale.  We call $\calE = \Pi_*(\calO_{\Pi^{-1}(ET_{L/K})})$ \emph{a differential module associated to $L/K$}; it is defined over $ET_{L/K}$ and the differential module structure is given by
$$
\nabla: \calE \rar \Pi_*\big( \Omega^1_{\Pi^{-1}(ET_{L/K})/ K}\big) \simeq \calE \otimes_{\calO_{ET_{L/K}}} \Omega^1_{ET_{L/K} / K} = \calE \otimes_{\calO_{ET_{L/K}}} \Big(\bigoplus_{j \in J^+} \calO_{ET_{L/K}} d\delta_j \Big).
$$
Thus, we can define the actions of differential operators $\partial_j = \partial / \partial \delta_j$ for $j \in J^+$ on $\calE$ and talk about intrinsic radii $IR(\calE; s_{J^+})$ as in Notation~\ref{N:diff-mod} if $A_K^1[0, \theta^{s_0}] \times \cdots \times A_K^1[0, \theta^{s_m}] \subseteq ET_{L/K}$.
\end{construction}

\begin{proposition}\label{P:AS-break=spec-norms}
The following statements are equivalent for $a\in \QQ_{>1}$ (resp. $a\in \QQ_{>0}$):

(1) The highest non-logarithmic (resp., logarithmic) ramification break satisfies $b(L/K) \leq a$ (resp. $b_\log(L/K) \leq a$);

(2) For any (some) admissible $(R_{J^+}) \subset \calS_K$ and any rational number $a' > a$, 
$$ 
\# \pi_0^\geom (TS_{L/K, R_{J^+}}^{a'}) = [L:K] \textrm{ (resp. } \# \pi_0^\geom (TS_{L/K, \log, R_{J^+}}^{a'}) = [L:K]\ ).
$$

(3) For any (some) admissible $(R_{J^+}) \subset \calS_K$, $A_K^{m+1}[0, \theta^a] \subseteq ET_{L/K, R_{J^+}}$ (resp. $A_K^1[0, \theta^{a+1}] \times A_K^m[0, \theta^a] \subseteq ET_{L/K, R_{J^+}}$) and the intrinsic radius of $\calE$ over $A_K^{m+1}[0, \theta^a]$ (resp. $A_K^1[0, \theta^{a+1}] \times A_K^m[0, \theta^a]$) is maximal:
$$
IR(\calE; \underline a) = 1 \textrm{ (resp. } IR(\calE; a+1, \underline a) = 1).
$$
\end{proposition}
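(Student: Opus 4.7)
The strategy is to establish $(1) \Leftrightarrow (2)$ first, which is essentially formal, and then to bridge both to $(3)$ through the dictionary linking trivial finite étale covers with trivial differential modules.

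\textbf{Equivalence of $(1)$ and $(2)$.} For any admissible $(R_{J^+})$, Theorem~\ref{T:ts=as} provides a $K$-algebra isomorphism $\calO^{a'}_{AS,L/K} \simeq \calO^{a'}_{TS,L/K,R_{J^+}}$, and analogously in the logarithmic case. These induce bijections on sets of geometric connected components, so $\# \pi_0^\geom(TS^{a'}_{L/K,R_{J^+}})$ is independent of the choice of admissible $(R_{J^+})$ and equals $\# \pi_0^\geom(AS^{a'}_{L/K})$. The equivalence then follows directly from the definition of $b(L/K)$ (resp.\ $b_\log(L/K)$) in Definitions~\ref{D:AS-space} and~\ref{D:AS-space-log}; this also handles both the ``for any'' and ``for some'' formulations in $(2)$ simultaneously.

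\textbf{Implication $(1) \Rightarrow (3)$.} Assume $b(L/K) \leq a$ (resp.\ $b_\log(L/K) \leq a$). Theorem~\ref{T:etaleness-nonlog} (resp.\ Theorem~\ref{T:etaleness-log}) supplies an $\epsilon > 0$ with $a - \epsilon$ still above the break and the enlarged polydisc $A_K^{m+1}[0,\theta^{a-\epsilon}]$ (resp.\ $A_K^1[0,\theta^{a+1-\epsilon}]\times A_K^m[0,\theta^{a-\epsilon}]$) contained in $ET_{L/K,R_{J^+}}$. Over this enlarged polydisc $\Pi$ is finite étale of degree $[L:K]$, and by the already-established $(1) \Leftrightarrow (2)$ its preimage has $[L:K]$ geometric connected components, so after a suitable finite base change (say to $L$ or its Galois closure) $\Pi$ becomes a trivial covering. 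Consequently $\calE = \Pi_*\calO$ acquires a basis of horizontal sections on the enlarged polydisc, and Lemma~\ref{L:sp-norms-vs-gen-rad} forces every intrinsic $\partial_j$-radius to equal $1$ there. Monotonicity (Proposition~\ref{P:prop-diff-eqns}(b)) then propagates this to the original polydisc, giving $IR(\calE;\underline a) = 1$ (resp.\ $IR(\calE;a+1,\underline a)=1$).

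\textbf{Implication $(3) \Rightarrow (2)$.} Conversely, from $IR(\calE;\underline a)=1$ (resp.\ $IR(\calE;a+1,\underline a)=1$), Lemma~\ref{L:sp-norms-vs-gen-rad} applied in each coordinate gives that, after passing to the generic point, $\calE$ is a trivial differential module on every open polydisc of radii strictly smaller than $(\theta^a,\dots,\theta^a)$ (resp.\ $(\theta^{a+1},\theta^a,\dots,\theta^a)$). For any rational $a' > a$, the closed polydisc $A_K^{m+1}[0,\theta^{a'}]$ (resp.\ $A_K^1[0,\theta^{a'+1}]\times A_K^m[0,\theta^{a'}]$) sits inside $ET_{L/K,R_{J^+}}$ and $\Pi_*\calO$ is trivial as a differential module there; the $[L:K]$ linearly independent horizontal sections single out $[L:K]$ disjoint geometric sheets of the finite étale cover $\Pi^{-1}$, yielding $\# \pi_0^\geom(TS^{a'}_{L/K,R_{J^+}}) = [L:K]$.

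\textbf{Main obstacle.} The technical crux is making rigorous the dictionary identifying ``geometrically trivial finite étale covering'' with ``trivial differential module $\Pi_*\calO$''. Horizontal sections of $\Pi_*\calO$ should correspond to functions on the geometric cover that are locally constant on its connected components, but writing this down carefully requires base changing to $L$ (or its Galois closure) so that all geometric components become rational before Lemma~\ref{L:sp-norms-vs-gen-rad} can be invoked uniformly. The mild asymmetry in the logarithmic case between the uniformizer direction (radius $\theta^{a+1}$) and the $p$-basis directions (radii $\theta^a$) introduces only cosmetic bookkeeping in the intrinsic-radius notation $IR(\calE; a+1,\underline a)$.
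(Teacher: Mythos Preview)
Your overall strategy matches the paper's, but two points need correction.

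\textbf{The boundary case in $(1)\Rightarrow(3)$.} You require $\epsilon>0$ with ``$a-\epsilon$ still above the break'' so that $(1)\Leftrightarrow(2)$ can be applied at level $a-\epsilon$. When $a=b(L/K)$ this is impossible: condition~(2) only controls levels $a'>a$, never $a'<a$. The paper therefore reverses direction: it works at rational $a'>a$, where (2) forces the cover to split geometrically into $[L:K]$ copies of the polydisc (so $\calE\otimes_K K'$ is trivial and $IR(\calE;\underline{a'})=1$), and then invokes \emph{continuity} (Proposition~\ref{P:prop-diff-eqns}(a)) to pass to the limit $a'\to a^+$. Monotonicity alone goes the wrong way here: knowing $IR(\calE;\underline{a'})=1$ for $a'>a$ only yields $IR(\calE;\underline a)\leq 1$, which is vacuous.

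\textbf{The mechanism in $(3)\Rightarrow(2)$.} Lemma~\ref{L:sp-norms-vs-gen-rad} concerns triviality of the one-variable pullback $f^*_{\gen,j}(\calE\otimes F_{a_{J^+}})$ over the generic-point field, not triviality of $\calE$ itself on sub-polydiscs over $K$; invoking it ``in each coordinate'' does not assemble into the global statement you need. The paper's precise argument is the multi-variable Taylor expansion: the hypothesis $IR(\calE;\underline a)=1$ makes the evaluation map
\[
H^0_\nabla\big(A_K^{m+1}[0,\theta^{a'}],\,\calE\big)\ \longrightarrow\ \calE|_{\delta_{J^+}=0}\ \simeq\ L
\]
bijective for each rational $a'>a$, with inverse given by convergent Taylor series in $\delta_{J^+}$. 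The step your sketch omits is that this bijection is a \emph{ring} isomorphism; after tensoring with $L$, the idempotents of $L\otimes_K L\simeq\prod_{g\in G_{L/K}}L$ then lift to idempotents in $\calO_{TS,L/K,R_{J^+}}^{a'}\otimes_K L$, which is exactly what produces the $[L:K]$ geometric components. Your phrase ``horizontal sections single out sheets'' is the right intuition, but the ring-isomorphism-plus-idempotent-lifting is the rigorous content, and it does not route through Lemma~\ref{L:sp-norms-vs-gen-rad}.
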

\begin{proof}
The proof is similar to \cite[Theorem~3.4.5]{Me-condI}.

$(1) \LRar (2)$ is immediate from Theorem~\ref{T:ts=as}.

$(2) \Rar (3)$: For any rational number $a' > a$, (2) implies that for some finite extension $K'$ of $K$, $TS_{L/K, R_{J^+}}^{a'} \times_K K'$ (resp. $TS_{L/K, \log, R_{J^+}}^{a'} \times_K K'$) has $[L:K]$ connected components and is hence force to be $[L:K]$ copies of $A_{K'}^{m+1}[0, \theta^{a'}]$ (resp. $A_{K'}^1[0, \theta^{a'+1}] \times A_{K'}^m[0, \theta^{a'}]$) because $\Pi$ is finite and flat; in particular, $\Pi$ is \'etale there.  Therefore, $\calE \otimes_K K'$ is a trivial differential module over $A_{K'}^{m+1}[0, \theta^{a'}]$ (resp. $A_{K'}^1[0, \theta^{a'+1}] \times A_{K'}^m[0, \theta^{a'}]$).  As a consequence,
$$
IR(\calE; \underline {a'}) = IR(\calE \otimes K'; \underline{a'}) = 1 \textrm{ (resp. }
IR(\calE; a'+1, \underline{a'}) = IR(\calE \otimes_K K'; a'+1, \underline {a'}) = 1 \ ).
$$
Statement (3) follows from the continuity of intrinsic radii in Proposition~\ref{P:prop-diff-eqns}(a), by taking $a'$ sufficiently close to $a$.

$(3) \Rar (2)$: (3) implies that, for any  rational number $a' > a$, $\calE$ is a trivial differential module on $A_K^{m+1}[0, \theta^{a'}]$ (resp. $A_K^1[0, \theta^{a'+1}] \times A_K^m[0, \theta^{a'}]$).  Indeed, we have a bijection
\begin{equation}\label{E:break=IR}
H^0_{\nabla} (A_K^{m+1}[0, \theta^{a'}], \calE) \stackrel \cong \lrar \calE|_{\delta_{J^+} = 0} \textrm{ (resp. } H^0_{\nabla} (A_K^1[0, \theta^{a'+1}] \times A_K^m[0, \theta^{a'}], \calE) \stackrel \cong \lrar \calE|_{\delta_{J^+} = 0} \ ),
\end{equation}
whose inverse is given by Taylor series.  (The convergence of Taylor series is guaranteed by the condition on intrinsic radii.)  This is in fact a ring isomorphism by basic properties of Taylor series.  The left hand side of \eqref{E:break=IR} is a subring of $\calO_{TS, L/K, R_{J^+}}^{a'}$ (resp. $\calO_{TS, L/K, \log, R_{J^+}}^{a'}$); the right hand side is just $K\langle u_{J^+}\rangle /(p_{J^+}) \simeq L$.  Thus, after the extension of scalars from $K$ to $L$, we can lift the idempotent elements in $L \otimes_K L \simeq \prod_{g \in G_{L/K}} L_g$ to idempotent elements in $\calO_{TS, L/K, R_{J^+}}^{a'} \otimes_K L$ (resp. $\calO_{TS, L/K, \log, R_{J^+}}^{a'} \otimes_K L$).  This proves (2).
\end{proof}

\begin{corollary}\label{C:AS-break=spec-norms}
Given the differential module $\calE$ over $ET_{L/K, R_{J^+}}$ with respect to some admissible subset $(R_{J^+}) \subset (\delta_{J^+}) \cdot \calS_K$, we have
\begin{eqnarray*}
b(L/K) & = & \min \big\{ s \, \big|\; A_K^{m+1}[0, \theta^s] \subseteq ET_{L/K,R_{J^+}} \textrm{ and } IR(\calE; \underline s) = 1 \big\}, \textrm{ and}\\
b_\log(L/K) & = & \min \big\{ s\, \big|\; A_K^1[0, \theta^{s+1}] \times A_K^m[0, \theta^s] \subseteq ET_{L/K,R_{J^+}} \textrm{ and } IR(\calE; s+1, \underline s) = 1 \big\}.
\end{eqnarray*}

In other words, $b(L/K)$ (resp. $b_\log(L/K)$) corresponds to the intersection of the boundary of $Z(\calE)$ (cf. Proposition~\ref{P:prop-diff-eqns}(c))  with the line defined by $s_0 = \cdots = s_m$ (resp. $s_0 - 1 = s_1 = \cdots = s_m$).
\end{corollary}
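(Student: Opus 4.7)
The corollary is essentially a packaging of the equivalence $(1)\Leftrightarrow(3)$ in Proposition~\ref{P:AS-break=spec-norms}, with two minor points to take care of: extending the rational hypothesis $a\in\QQ$ there to arbitrary real $s$, and verifying that the infimum is attained (so that we may write $\min$). My plan is to handle both in a single short argument that combines that proposition with the \'etaleness results (Theorems~\ref{T:etaleness-nonlog} and~\ref{T:etaleness-log}) and the continuity/monotonicity of intrinsic radii (Proposition~\ref{P:prop-diff-eqns}(a)--(b)).

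Write $S$ for the set appearing on the right-hand side of the non-logarithmic equality. By Theorem~\ref{T:etaleness-nonlog} there exists $\epsilon>0$ with $A_K^{m+1}[0,\theta^{b(L/K)-\epsilon}]\subseteq ET_{L/K,R_{J^+}}$; since enlarging $s$ shrinks the polydisc, the \'etaleness condition holds for every $s\geq b(L/K)-\epsilon$. Thus, on this range, membership in $S$ reduces to the condition $IR(\calE;\underline s)=1$. For rational $s$ with $b(L/K)-\epsilon<s$ (in particular $s>1$ by Hypothesis~\ref{H:J-finite-set}), Proposition~\ref{P:AS-break=spec-norms} gives $s\in S\Leftrightarrow s\geq b(L/K)$. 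For real $s\in(b(L/K)-\epsilon,b(L/K))$, the monotonicity $IR(\calE;\underline s)\leq IR(\calE;\underline{s'})$ for rational $s'\in(s,b(L/K))$ (Proposition~\ref{P:prop-diff-eqns}(b)) combined with $IR(\calE;\underline{s'})<1$ forces $s\notin S$; for real $s>b(L/K)$, choosing a rational $s''\in(b(L/K),s)$ and applying monotonicity in the other direction yields $s\in S$. Hence $\inf S=b(L/K)$.

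That the infimum is attained is a direct application of the continuity of $\log_\theta IR(\calE;s_{J^+})$ (Proposition~\ref{P:prop-diff-eqns}(a)): letting rational $s\searrow b(L/K)$, the identity $IR(\calE;\underline s)=1$ passes to the limit, giving $IR(\calE;\underline{b(L/K)})=1$. The \'etaleness at $s=b(L/K)$ is automatic from Theorem~\ref{T:etaleness-nonlog}, so $b(L/K)\in S$ as needed. The logarithmic case is identical modulo notation: Theorem~\ref{T:etaleness-log} supplies the \'etale neighborhood $A_K^1[0,\theta^{a+1}]\times A_K^m[0,\theta^a]$ for $a$ slightly below $b_\log(L/K)$, and the logarithmic half of Proposition~\ref{P:AS-break=spec-norms} plays the role of the non-log half.

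Finally, the geometric reformulation is immediate from Proposition~\ref{P:prop-diff-eqns}(c): the zero locus $Z(\calE)$ is a transrational polyhedral subset of $[a_0,+\infty)\times\cdots\times[a_m,+\infty)$ that contains a neighborhood of $+\infty$, so its intersection with the diagonal line $\{s_0=\cdots=s_m\}$ (resp.\ with the shifted diagonal $\{s_0-1=s_1=\cdots=s_m\}$) is a closed ray whose endpoint has all coordinates equal to $b(L/K)$ (resp.\ whose endpoint is $(b_\log(L/K)+1,b_\log(L/K),\dots,b_\log(L/K))$). There is no genuine obstacle in this argument; the only care required is keeping track of the fact that Proposition~\ref{P:AS-break=spec-norms} is a priori stated only for rational $a$, which is bridged by the continuity and monotonicity of $IR$.
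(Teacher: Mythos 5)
Your argument is correct and is essentially the expansion of the paper's two-line proof, which just cites Theorems~\ref{T:etaleness-nonlog}, \ref{T:etaleness-log} together with Propositions~\ref{P:AS-break=spec-norms} and~\ref{P:prop-diff-eqns}; you have correctly filled in the bridging details (passing from rational $a$ to real $s$ via continuity and monotonicity, and verifying that the infimum is attained), which is exactly the content the paper leaves implicit.
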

\begin{proof}
By Theorem~\ref{T:etaleness-nonlog} and Theorem~\ref{T:etaleness-log}, $ET_{L/K,R_{J^+}}$ is large enough for use to pin down the exact boundary of $Z(\calE)$.
The corollary follows from  Propositions~\ref{P:AS-break=spec-norms} and \ref{P:prop-diff-eqns} immediately.
\end{proof}

\subsection{Recursive thickening spaces}
\label{S:recursive-TS}

In this subsection, we introduce a generalization of thickening spaces.  This will give us some freedom when changing the base field.

In this subsection, we continue to assume Hypotheses~\ref{H:J-finite-set} and \ref{H:beta_K>1}.

\begin{construction}\label{C:small-D-sp}
This is a variant of Construction~\ref{C:generators-of-calI}.  First, filter the (inseparable) extension $l/k$ by elementary $p$-extensions
$$
k = k_0 \subsetneq k_1 \subsetneq \cdots \subsetneq k_r = l,
$$
where for each $\lambda = 1, \dots, r$, $k_\lambda = k_{\lambda-1}(\bar \gothc_\lambda)$ with $\bar \gothc_\lambda^p = \bar \gothb_\lambda \in k_{\lambda-1}$.  Denote $\Lambda = \{ \serie{}r \}$.  Pick lifts $\gothc_\Lambda$ of $\bar \gothc_\Lambda$ in $\calO_L$.  Let $e = \seriezero{e_}{r_0} = 1$ be a strictly decreasing sequence of integers such that $e_i \mid e_{i-1}$ for $1 \leq i \leq r_0$.  Set $I = \{\serie{}r_0\}$.  For each $i \in I$, pick an element $\pi_{L,i}$ in $\calO_L$ with valuation $e_i$; in particular, we take $\pi_{L, r_0} = \pi_L$.  It is easy to see that $(\gothc_\Lambda, \pi_{L,I})$ generate $\calO_L$ over $\calO_K$.  So we have an isomorphism
$$
\Delta: \calO_K \langle \gothu_{0,I}, \gothu_\Lambda \rangle / \gothI \isom \calO_L,
$$
sending $\gothu_{0, i} \mapsto \pi_{L, i}$ for $i \in I$ and $\gothu_\lambda \mapsto \gothc_\lambda$ for $\lambda \in \Lambda$, where $\gothI$ is some proper ideal and we use the same $\Delta$ as in Construction~\ref{C:generators-of-calI}.  Moreover,
\begin{equation}  \label{E:basis-recursive}
\Big\{\gothu_{0, I}^{\gothe_{0, I}} \gothu_\Lambda^{\gothe_\Lambda} \Big| \gothe_{0, i} \in \{\seriezero{} {\frac {e_{i-1}}{e_i}} - 1\} \textrm{ for all } i \in I \textrm{ and } \gothe_\lambda \in \{\seriezero{}p-1\} \textrm{ for all } \lambda \in \Lambda \Big\}
\end{equation}
forms a basis of $\calO_K \langle \gothu_{0,I}, \gothu_\Lambda \rangle / \gothI$ as a free $\calO_K$-module, which we refer later as the \emph{standard basis}.

We provide $\calO_K [ \gothu_{0,I}, \gothu_\Lambda ]$ with the following norm: for $h = \sum_{\gothe_{0, I}, \gothe_\Lambda} \alpha_{\gothe_{0, I}, \gothe_\Lambda} \gothu_{0, I}^{\gothe_{0, I}} \gothu_\Lambda^{\gothe_\Lambda}$ with $\alpha_{\gothe_{0, I}, \gothe_\Lambda} \in \calO_K$, we set
$$
|h| = \max_{\gothe_{0, I}, \gothe_\Lambda} \{ |\alpha_{\gothe_{0, I}, \gothe_\Lambda}| \cdot \theta^{(\gothe_{0, 1} \cdot e_1 + \cdots + \gothe_{0, r_0} \cdot e_{r_0})/ e}\}.
$$
For $a \in \frac 1e\ZZ_{\geq 0}$, we use $\gothN^{a}$ to denote the set consisting of elements in $\calO_K[ \gothu_{0,I}, \gothu_\Lambda ]$ with norm $\leq \theta^a$; it is in fact an ideal.

In $\calO_K \langle \gothu_{0,I}, \gothu_\Lambda \rangle / \gothI$, we can write $\gothu_{0,i}^{e_{i-1} / e_i}$ for $i \in I$ and $\gothu_\Lambda^p$ in terms of the basis \eqref{E:basis-recursive}.  This gives a set of generators of $\gothI$:
\begin{eqnarray*}
\gothp_{0,1} & \in & \gothu_{0,1}^{e / e_1} - \gothd_1\pi_K + \gothN^{1+1/e} \cdot \calO_K [\gothu_{0,I}, \gothu_\Lambda],\\
\gothp_{0,i} & \in & \gothu_{0,i}^{e_{i-1} / e_i} - \gothd_i \gothu_{0,i-1} + \gothN^{(e_{i-1}+1)/e} \cdot \calO_K [\gothu_{0,I}, \gothu_\Lambda],\ i \in I \backslash \{1\}, \\
\gothp_\lambda & \in & \gothu_\lambda^p - \tilde \gothb_\lambda + \gothN^{1/e} \cdot \calO_K [\gothu_{0,I}, \gothu_\Lambda],
\end{eqnarray*}
where $\gothd_I$ are some elements in $\calO_K[\gothu_{0,I}, \gothu_\Lambda]$ whose images under $\Delta$ are invertible in $\calO_L$, and for each $\lambda$, $\tilde \gothb_\lambda$ is some element in $\calO_K [\gothu_1, \dots, \gothu_{\lambda -1}]$ whose image under $\Delta$ reduces to $\bar \gothb_\lambda \in k_{\lambda -1}$ modulo $\pi_L$.

We say that $\gothp_\lambda$ \emph{corresponds} to the extension $k_\lambda / k_{\lambda - 1}$.
\end{construction}

\begin{definition}\label{D:error-gauge-rec}
As in Definition~\ref{D:admissible}, we define $\gothS_K = \calR_K\langle \gothu_{0, I}, \gothu_\Lambda \rangle = \calO_K \llbracket \delta_0 / \pi_K, \delta_J \rrbracket \langle \gothu_{0, I}, \gothu_\Lambda\rangle$.  For $\omega \in \frac 1e \NN \cap [1, \beta_K]$, we say that a set of elements $(\gothR_{0,I}, \gothR_\Lambda) \subset (\delta_{J^+}) \cdot \gothS_K$ \emph{has error gauge} $\geq \omega$ if $\gothR_{0, i} \in (\gothN^{\omega -1+e_i/e}\delta_0, \gothN^{\omega + e_i/e}\delta_J) \cdot \gothS_K$ for $i \in I$ and $\gothR_\lambda \in (\gothN^{\omega -1}\delta_0, \gothN^{\omega}\delta_J) \cdot \gothS_K$ for $\lambda \in \Lambda$.  The subset $(\gothR_{0,I}, \gothR_\Lambda) \subset (\delta_{J^+}) \cdot \gothS_K$ is \emph{admissible} if it has error gauge $\geq 1$.

Let $(\gothR_{0,I}, \gothR_\Lambda) \subset (\delta_{J^+}) \cdot \gothS_K$ be admissible.  For $a\in \QQ_{>1}$, we define the \emph{(non-logarithmic) recursive thickening space (of level $a$)} $TS_{L/K, \gothR_{0, I}, \gothR_\Lambda}^a$ to be the rigid space associated to
$$
\calO_{TS, L/K, \gothR_{0,I}, \gothR_\Lambda}^a = K \langle \pi_K^{-a} \delta_{J^+} \rangle \langle \gothu_{0,I}, \gothu_\Lambda\rangle \big/ \big( \psi(\gothp_{0, I}) + \gothR_{0, I}, \psi(\gothp_\Lambda) + \gothR_\Lambda \big).
$$

For $a\in \QQ_{>0}$, we define the \emph{logarithmic recursive thickening space (of level $a$)} $TS_{L/K, \log, \gothR_{0, I}, \gothR_\Lambda}^a$ to be the rigid space associated to
$$
\calO_{TS, L/K, \log, \gothR_{0,I}, \gothR_\Lambda}^a = K \langle \pi_K^{-a-1} \delta_0, \pi_K^{-a} \delta_J \rangle \langle \gothu_{0,I}, \gothu_\Lambda\rangle \big/ \big( \psi(\gothp_{0, I}) + \gothR_{0, I}, \psi(\gothp_\Lambda) + \gothR_\Lambda\big).
$$

We still use $\Delta$ to denote the natural homomorphism
$$
\xymatrix{
\gothS_K \big/ \big( \psi(\gothp_{0, I}) + \gothR_{0, I}, \psi(\gothp_\Lambda) + \gothR_\Lambda \big) \ar[rr]^-{\mod (\delta_0/\pi_K, \delta_J)} & & \calO_K \langle \gothu_{0,I}, \gothu_\Lambda \rangle / (\gothp_{0, I}, \gothp_\Lambda) \ar[r]^-\Delta_-\simeq & \calO_L;
}
$$
we use $\overline \Delta$ to denote the composition with the reduction $\calO_L \rar l$.
\end{definition}

\begin{lemma} \label{L:basis-recursive}
Let $(\gothR_{0, I}, \gothR_\Lambda)\subset (\delta_{J^+}) \cdot \gothS_K$ be admissible.  Then \eqref{E:basis-recursive} forms a basis of $\gothS_K / \big(\psi(\gothp_{0, I}) + \gothR_{0, I}, \psi(\gothp_\Lambda) + \gothR_\Lambda \big)$ as a free $\calR_K$-module, which we refer later as the \emph{standard basis}.  As a consequence, they form a basis of $\calO_{TS, L/K, \gothR_{0,I}, \gothR_\Lambda}^a$ (resp. $\calO_{TS, L/K, \log, \gothR_{0,I}, \gothR_\Lambda}^a$) as a free module over $K \langle \pi_K^{-a} \delta_{J^+} \rangle$ (resp. $K \langle \pi_K^{-a-1} \delta_0, \pi_K^{-a} \delta_J \rangle$).
\end{lemma}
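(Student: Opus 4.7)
The proof should closely mirror that of Lemma~\ref{L:basis}, which is its non-recursive counterpart; the only complication is handling the $r + r_0$ distinct types of relations simultaneously rather than just the $m+1$ relations corresponding to a single $p$-basis layer. I would split the argument into a spanning step and an independence step.

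For the spanning step, I would start from an arbitrary element $h \in \gothS_K$, take a representative in the polynomial ring $\calR_K[\gothu_{0,I}, \gothu_\Lambda]$, and iteratively eliminate forbidden monomials. Whenever a monomial contains $\gothu_\lambda^p$ for some $\lambda \in \Lambda$, replace that factor by $\gothu_\lambda^p - \bigl(\psi(\gothp_\lambda) + \gothR_\lambda\bigr)$; whenever it contains $\gothu_{0,i}^{e_{i-1}/e_i}$ for some $i \in I$, replace that factor by $\gothu_{0,i}^{e_{i-1}/e_i} - \bigl(\psi(\gothp_{0,i}) + \gothR_{0,i}\bigr)$. Using the explicit shapes of $\gothp_{0,I}, \gothp_\Lambda$ recorded in Construction~\ref{C:small-D-sp} together with Proposition~\ref{P:psi-almost-hom} and the admissibility of $(\gothR_{0,I}, \gothR_\Lambda)$, each such substitution strictly decreases a suitable lexicographic invariant on the monomials (e.g.\ first compare the exponent of $\gothu_{0, r_0}$, then of $\gothu_{0, r_0 - 1}$, and so on up through $\gothu_\Lambda$), while any higher-order error term produced lies in the ideal $(\delta_{J^+}) \cdot \gothS_K$. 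Hence the process converges in the $(\delta_{J^+})$-adic topology on $\gothS_K$ (which is complete) and lands in the $\calR_K$-span of \eqref{E:basis-recursive}.

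For linear independence, suppose $\sum_\gothe \alpha_\gothe\, \gothu_{0,I}^{\gothe_{0,I}} \gothu_\Lambda^{\gothe_\Lambda} \in \bigl(\psi(\gothp_{0,I}) + \gothR_{0,I},\ \psi(\gothp_\Lambda) + \gothR_\Lambda\bigr) \cdot \gothS_K$ with coefficients $\alpha_\gothe \in \calR_K$, where the sum ranges over the standard basis. Reducing modulo the ideal $\gothm = (\delta_0/\pi_K, \delta_J)\calR_K$, the relations collapse to $(\gothp_{0,I}, \gothp_\Lambda)$, and the resulting identity inside $\calO_K\langle \gothu_{0,I}, \gothu_\Lambda\rangle/(\gothp_{0,I}, \gothp_\Lambda) \cong \calO_L$ forces $\alpha_\gothe \equiv 0 \pmod{\gothm}$ because \eqref{E:basis-recursive} is a genuine $\calO_K$-basis there. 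Writing $\alpha_\gothe = \sum_{j \in J^+} \delta_j \alpha_\gothe^{(j)}$ (with $\delta_0$ divided by $\pi_K$) and reinserting, the same argument applied iteratively (using the $\gothm$-adic completeness of $\calR_K$) shows $\alpha_\gothe \in \bigcap_n \gothm^n = 0$.

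The freeness of $\calO_{TS, L/K, \gothR_{0,I}, \gothR_\Lambda}^{a}$ over $K\langle \pi_K^{-a}\delta_{J^+}\rangle$ (and the logarithmic variant) is then a formal consequence of the freeness over $\calR_K$, since each Tate algebra in question is obtained from $\calR_K$ by inverting $\pi_K$ and imposing the relevant affinoid conditions on the $\delta_j$. The main obstacle I anticipate is purely bookkeeping: verifying that the iterative substitution in the spanning step really terminates at each order in the $(\delta_{J^+})$-adic filtration, which requires carefully using the error-gauge bounds on $\gothR_{0,I}, \gothR_\Lambda$ together with the fact that $\psi(\gothp_{0,i}) - \gothp_{0,i}$ and $\psi(\gothp_\lambda) - \gothp_\lambda$ both lie in $(\delta_{J^+}) \cdot \gothS_K$ (Proposition~\ref{P:psi-almost-hom}). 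No new idea beyond Lemma~\ref{L:basis} is required.
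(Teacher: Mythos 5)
Your proof is correct and matches the paper's approach: the paper's own proof of this lemma is literally the one line ``Same as Lemma~\ref{L:basis},'' and the printed proof of Lemma~\ref{L:basis} only sketches the spanning step (iterative substitution of $u_j^{p^{r_j}}$ and $u_0^e$, with $(\delta_{J^+})$-adic convergence) and leaves independence and the Tate-algebra consequence implicit. Your reduction-modulo-$(\delta_0/\pi_K,\delta_J)$ argument for independence and your base-change argument for the affinoid rings are exactly the omitted details, and the lexicographic bookkeeping is the natural recursive generalization of the termination argument used there.
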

\begin{proof}
Same as Lemma~\ref{L:basis}.
\end{proof}

\begin{example}\label{Ex:D=>recursive-D}
The construction of the thickening spaces in Definition~\ref{D:th-space} is a special case of the above construction.  If we start with a uniformizer $\pi_L$, a $p$-basis $c_J$, and relations $p_{J^+}$ in Construction~\ref{C:generators-of-calI}, the following dictionary translates the information to fit in Construction~\ref{C:small-D-sp}.
\begin{eqnarray*}
\pi_{L, I} & \longleftrightarrow & \pi_L \quad (I = \{1\}), \\
\gothc_\Lambda & \longleftrightarrow & c_1, c_1^p, \dots, c_1^{p^{r_1-1}}, c_2, c_2^p, \dots, c_m^{p^{r_m-1}}, \\
\gothp_{0, I}, \gothp_\Lambda & \longleftrightarrow & \textrm{the ones determined by $\gothc_\Lambda$ and $\pi_{L, I}$}, \\
\gothR_{0, I} & \longleftrightarrow & R_0, \\
\gothR_\lambda & \longleftrightarrow & R_j \textrm{ when } \lambda \textrm{ corresponds to some } c_j^{p^{r_j-1}} \textrm{, and 0 otherwise}.
\end{eqnarray*}

Moreover, this construction preserves the error gauge.
\end{example}

Conversely, we have the following.

\begin{proposition}\label{P:recursive-ts=ts}
Let $(\gothR_{0,I}, \gothR_\Lambda) \subset (\delta_{J^+}) \cdot \gothS_K$ be admissible with error gauge $\geq \omega \in \frac 1e\NN \cap [1, \beta_K]$.  Then, for any choices of $c_J$ and $\pi_L$ as in Construction~\ref{C:generators-of-calI}, there exists an $\calR_K$-isomorphism
\begin{equation}\label{E:small-D=D}
\Theta: \calS_K \big/ \big(\psi(p_{J^+}) + R_{J^+}\big) \isom \gothS_K \big/ \big( \psi(\gothp_{0, I}) + \gothR_{0, I}, \psi(\gothp_\Lambda) + \gothR_\Lambda \big),
\end{equation}
for some admissible $R_{J^+}$ with error gauge $\geq \omega$, such that $\Theta \mod (\delta_0 / \pi_K, \delta_J)$ induces the identity map if we identify both sides (modulo $(\delta_0 / \pi_K, \delta_J)$) with $\calO_L$ via $\Delta$.  This gives rise to isomorphisms between the recursive thickening spaces and thickening spaces.
\[
TS_{L/K, \gothR_{0, I}, \gothR_\Lambda}^a  \simeq TS_{L/K, R_{J^+}}^a \ (a\in \QQ_{>1}) \quad \textrm{and} \quad TS_{L/K, \log, \gothR_{0, I}, \gothR_\Lambda}^a \simeq TS_{L/K, \log, R_{J^+}}^a \ (a\in \QQ_{>0}).
\]
\end{proposition}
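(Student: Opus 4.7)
The plan is to construct the admissible $R_{J^+}$ and the isomorphism $\Theta$ simultaneously, by lifting the identity on $\calO_L$ through the $(\delta_0/\pi_K, \delta_J)$-adic filtration and using the freeness of both quotient algebras as $\calR_K$-modules to conclude bijectivity. First, fix polynomial lifts $\tilde c_j \in \calO_K[\gothu_{0, I}, \gothu_\Lambda]$ of $c_j \in \calO_L$ (viewing $c_j$ in $\calO_L$ via $\Delta$), and define a continuous $\calR_K$-algebra homomorphism $\tilde\Theta : \calS_K \rar \gothS_K$ by $u_0 \mapsto \gothu_{0, r_0}$ and $u_j \mapsto \tilde c_j$ for $j \in J$. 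Setting $\mathfrak{A} = \gothS_K / (\psi(\gothp_{0, I}) + \gothR_{0, I}, \psi(\gothp_\Lambda) + \gothR_\Lambda)$, let $\Theta$ denote the composition $\calS_K \rar \gothS_K \rar \mathfrak{A}$. By construction, $\Theta$ reduces modulo $(\delta_0/\pi_K, \delta_J)$ to the natural map $\calO_K\langle u_{J^+}\rangle \rar \calO_L$, identified via $\Delta$ with the identity on $\calO_L$; consequently $\Theta(\psi(p_j)) \in (\delta_0/\pi_K, \delta_J) \mathfrak{A}$ for every $j \in J^+$, since $\psi(p_j) \equiv p_j \pmod{(\delta_0/\pi_K, \delta_J)}$ and $p_j(\pi_L, c_J) = 0$ in $\calO_L$.

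Next I would define each $R_j$ by a lift-back procedure. Since $\Theta$ modulo $(\delta_0/\pi_K, \delta_J)$ is the identity on $\calO_L$, successive approximation yields $R_j \in (\delta_0/\pi_K, \delta_J)\calS_K$ with $\Theta(R_j) = -\Theta(\psi(p_j))$: at each stage expand the current residue in $\mathfrak{A}$ in the standard basis \eqref{E:basis-recursive}, lift each basis element through $\Delta^{-1}$ and express it in the basis \eqref{E:basis-simplification} as a polynomial in $u_{J^+}$ of bounded degree, multiply by the $\calR_K$-coefficient, and reduce to a residue in $(\delta_0/\pi_K, \delta_J)^{n+1}\mathfrak{A}$. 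Because the $u_{J^+}$-degree of each correction $h_n$ is bounded independently of $n$, the sum $R_j = \sum_n h_n$ converges in $\calS_K$. The admissibility and error gauge $\geq \omega$ conditions on $R_{J^+}$ then follow from tracking the valuations at each stage, using Lemma~\ref{L:psi=diff} and Proposition~\ref{P:psi-almost-hom} to bound the $\delta_{J^+}$-behavior of $\psi(p_j) - p_j$, together with the error-gauge hypothesis on $\gothR_{0, I}, \gothR_\Lambda$ and the explicit leading-term shape of $p_{J^+}, \gothp_{0, I}, \gothp_\Lambda$ from Constructions~\ref{C:generators-of-calI} and~\ref{C:small-D-sp}.

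With $R_{J^+}$ in hand, $\Theta$ factors through $\bar\Theta: \calS_K/(\psi(p_{J^+}) + R_{J^+}) \rar \mathfrak{A}$. Both sides are free $\calR_K$-modules of the same rank $[L:K]$ by Lemma~\ref{L:basis} and Lemma~\ref{L:basis-recursive} (the numerical check $e \cdot p^{\sum_j r_j} = \prod_{i \in I}(e_{i-1}/e_i) \cdot p^r$ holds since both sides equal $[L:K]$). Surjectivity of $\bar\Theta$ follows from surjectivity modulo the maximal ideal of $\calR_K$ by Nakayama's lemma; since a surjection between finitely generated free modules of equal finite rank over a commutative ring is automatically an isomorphism, $\bar\Theta$ is the desired $\calR_K$-algebra isomorphism. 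The induced isomorphisms of thickening spaces follow by completed base change to the relevant Tate algebras $K\langle \pi_K^{-a}\delta_{J^+}\rangle$ and $K\langle \pi_K^{-a-1}\delta_0, \pi_K^{-a}\delta_J\rangle$.

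The main obstacle will be the valuation bookkeeping in the second paragraph: one must verify that the successive-approximation corrections retain error gauge $\geq \omega$ through the interaction of $\psi$ with the specific recursive structure $\gothp_{0, i} \in \gothu_{0, i}^{e_{i-1}/e_i} - \gothd_i \gothu_{0, i-1} + \gothN^{(e_{i-1}+1)/e}\cdot \calO_K[\gothu_{0, I}, \gothu_\Lambda]$ and $\gothp_\lambda \in \gothu_\lambda^p - \tilde\gothb_\lambda + \gothN^{1/e}\cdot \calO_K[\gothu_{0, I}, \gothu_\Lambda]$, accounting for the extra factor of $\pi_K$ appearing in $\gothp_{0,1}$ and the analogous factor in $p_0$. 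This analysis is of the same flavor as the delicate matrix computation in the proof of Lemma~\ref{L:jacob-of-R_J+}, and is what forces the admissibility (rather than a stronger divisibility) hypothesis.
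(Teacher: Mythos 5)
Your high-level strategy matches the paper's: lift the identity on $\calO_L$ to a continuous $\calR_K$-algebra map $\Theta$, construct $R_{J^+}$ by successive approximation, and conclude via the freeness of both quotients over $\calR_K$ (a surjection between free modules of equal finite rank is an isomorphism). That last reduction is fine. But there is a substantive gap in how you define $\Theta$, and it lands exactly on the step you flag as the main obstacle.

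The paper sets $\widetilde\Theta(u_j) = \psi(\tilde\gothc_j)$ for $j \in J$, \emph{not} $\widetilde\Theta(u_j) = \tilde c_j$. This $\psi$-twist is structurally essential, not cosmetic. With it, one may use the approximate-homomorphism property (Proposition~\ref{P:psi-almost-hom}) to rewrite $\widetilde\Theta(\psi(p_{j_0})) = \psi(p_{j_0})(\gothu_{0,r_0},\psi(\tilde\gothc_J))$ as $\psi\big(p_{j_0}(\gothu_{0,r_0},\tilde\gothc_J)\big)$ up to an error in $I_K = p(\delta_0/\pi_K,\delta_J)\calR_K$, then use $p_{j_0}(\gothu_{0,r_0},\tilde\gothc_J)=\sum_i\gothh_{0,i}\gothp_{0,i}+\sum_\lambda\gothh_\lambda\gothp_\lambda$ together with the relations $\psi(\gothp_{0,i})=-\gothR_{0,i}$, $\psi(\gothp_\lambda)=-\gothR_\lambda$ in the target. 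Every error term is then either a multiple of some $\gothR$ (controlled by the hypothesis that $(\gothR_{0,I},\gothR_\Lambda)$ has error gauge $\geq\omega$) or lies in $I_K$ (controlled because $p=\pi_K^{\beta_K}\cdot(\textrm{unit})$ and $\omega\le\beta_K$), so the resulting $R_{J^+}$ has error gauge $\geq\omega$.

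With your $\Theta(u_j)=\tilde c_j$, this reduction breaks. The discrepancy $\psi(p_{j_0})(\gothu_{0,r_0},\tilde c_J)-\psi(p_{j_0})(\gothu_{0,r_0},\psi(\tilde c_J))$ is, to first order in $\delta_{J^+}$, $-\sum_{j\in J}\frac{\partial p_{j_0}}{\partial u_j}(\gothu_{0,r_0},\tilde c_J)\cdot\big(\psi(\tilde c_j)-\tilde c_j\big)$. By Lemma~\ref{L:psi=diff}, the linear $\delta_{j'}$-coefficient of $\psi(\tilde c_j)-\tilde c_j$ is, modulo $\pi_K$, the $b_{j'}$-component of $d$(coefficients of $\tilde c_j$), which is a unit whenever $\tilde c_j$ has a non-unit $\calO_K$-coefficient — and this happens as soon as the recursive generators $\gothc_\Lambda$ differ from $c_J$ by constants in $\calO_K$ (the proposition asserts the isomorphism for \emph{any} choice of $c_J$ and $\pi_L$). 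Meanwhile $\partial p_{j_0}/\partial u_j$ for $j<j_0$ contains unit coefficients coming from $\tilde b_{j_0}$. The product contributes a unit to the $\delta_{j'}$-coefficient of $R_{j_0}$, so $R_{j_0}\notin(N^{\omega-1}\delta_0,N^\omega\delta_J)\calS_K$ — in fact this term is not even in $N^1\delta_J\calS_K$, so admissibility itself fails. No amount of "tracking valuations at each stage" fixes this; the defect is introduced by the very first approximation step. The repair is to replace $u_j\mapsto\tilde c_j$ by $u_j\mapsto\psi(\tilde c_j)$ in the definition of $\tilde\Theta$, after which your bookkeeping plan goes through as in the paper.
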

\begin{proof}
For each $j \in J$, we express $c_j$ as a polynomial $\tilde \gothc_j$ in $\gothu_{0, I}$ and $\gothu_\Lambda$ with coefficients in $\OK$ via $\Delta^{-1}: \calO_L \isom \OK \langle \gothu_{0, I}, \gothu_\Lambda \rangle / (\gothp_{0, I}, \gothp_\Lambda)$.  We define a continuous homomorphism $\widetilde \Theta: \calS_K \to \gothS_K \big/ \big( \psi(\gothp_{0, I}) + \gothR_{0, I}, \psi(\gothp_\Lambda) + \gothR_\Lambda \big)$ by setting $\widetilde\Theta(u_j) = \psi(\tilde \gothc_j)$ for $j \in J$ and $\widetilde \Theta(u_0) = \gothu_{0, r_0}$.  It is then obvious that for $a \in \frac 1e \ZZ_{\geq 0}$, $\widetilde\Theta(N^a\cdot \calS_K) \subset \gothN^a \cdot \gothS_K$.

We need to determine $R_{J^+}$.  For each fixed $j_0 \in J^+$, since $\Delta(p_{j_0}(u_{J^+}))=0$, we can write
$$
p_{j_0}(\gothu_{0, r_0}, \tilde \gothc_J) = \sum_{i \in I}\gothh_{0, i}\gothp_{0, i} + \sum_{\lambda \in \Lambda} \gothh_\lambda\gothp_\lambda, \quad \textrm{ in } \calO_K \langle \gothu_{0, I}, \gothu_\Lambda \rangle
$$
for some $\gothh_{0, i}, \gothh_\lambda \in \calO_K \langle \gothu_{0, I}, \gothu_\Lambda \rangle$ for $i \in I$ and $\lambda \in \Lambda$.  Moreover, when $j_0 = 0$, we can require $\gothh_{0, i} \in \gothN^{1-e_{i-1}/e}\cdot \calO_K \langle \gothu_{0, I}, \gothu_\Lambda \rangle,$ and $\gothh_\lambda \in \gothN^1\cdot \calO_K \langle \gothu_{0, I}, \gothu_\Lambda \rangle$ for $i \in I$ and $\lambda \in \Lambda$.  Thus, we expect to define $R_{j_0}$ so that, under $\widetilde\Theta$, it is mapped to
\begin{eqnarray*}
-\psi(p_{j_0})(\widetilde\Theta(u_{J^+}))
&=& -\sum_{i \in I} \psi(\gothh_{0, i}) \psi(\gothp_{0, i}) - \sum_{\lambda \in \Lambda} \psi(\gothh_\lambda) \psi(\gothp_\lambda) + \gothE\\
&=& -\sum_{i \in I} \psi(\gothh_{0, i}) (-\gothR_{0, i}) - \sum_{\lambda \in \Lambda} \psi(\gothh_\lambda) (-\gothR_\lambda) + \gothE\\
&\in& \left\{ \begin{array}{ll}
(\gothN^{\omega }\delta_0, \gothN^{\omega +1}\delta_J) \cdot \gothS_K & j_0 =0\\
(\gothN^{\omega -1}\delta_0, \gothN^{\omega }\delta_J) \cdot \gothS_K & j_0 \in J
\end{array}\right. ,
\end{eqnarray*}
where $\gothE \in (\gothN^{\beta_K}\delta_0, \gothN^{(\beta_K+1)}\delta_J) \cdot \gothS_K$ if $j_0=0$ and $\gothE \in (\gothN^{(\beta_K-1)}\delta_0, \gothN^{\beta_K}\delta_J) \cdot \gothS_K$ if $j_0 \in J$; they correspond to the error terms coming from $\psi$ failing to be a homomorphism (See Proposition~\ref{P:psi-almost-hom}).

Thus, we can find polynomials $q_0, \dots, q_m \in \calO_K [u_{J^+}]$ such that
\begin{eqnarray*}
q_0 \in \left\{ \begin{array}{ll}
N^{\omega} \cdot \calS_K & j_0=0\\
N^{\omega -1} \cdot \calS_K & j_0 \in J
\end{array}\right. & &
q_1, \dots, q_m \in \left\{ \begin{array}{ll}
N^{\omega +1} \cdot \calS_K & j_0=0\\
N^{\omega } \cdot \calS_K & j_0 \in J
\end{array}\right., \textrm{ and}\\
-\psi(p_{j_0})(\widetilde \Theta(u_{J^+})) -\widetilde \Theta(q_0\delta_0 +\cdots + q_m\delta_m) &\in& \left\{ \begin{array}{ll}
(\delta_0/\pi_K, \delta_J) (\gothN^{\omega }\delta_0, \gothN^{\omega +1}\delta_J) \cdot \gothS_K & j_0=0\\
(\delta_0/\pi_K, \delta_J) (\gothN^{\omega-1}\delta_0, \gothN^{\omega}\delta_J) \cdot \gothS_K & j_0 \in J
\end{array}\right..
\end{eqnarray*}

Further, we can similarly find approximation of the coefficients for $\delta_j\delta_{j'}$ for $j , j' \in J^+$.  Iterating this approximation gives the expressions for $R_{J^+}$; they clearly have error gauge $\geq \omega$.

By the construction, $\widetilde \Theta$ factors through the quotient by $\psi(p_{J^+})+R_{J^+}$; we then obtain the homomorphism $\Theta$ as in \eqref{E:small-D=D}.
The surjectivity of $\Theta$ follows from the surjectivity modulo $(\delta_0 / \pi_K, \delta_J)$, which is the identity via $\Delta$.  Moreover, a surjective morphism between two finite free modules of the same rank over a noetherian base ring is automatically an isomorphism.  The theorem is proved.
\end{proof}

\begin{remark}
The isomorphism $\Theta$ is not unique.  Basically, $\Theta(u_0) \mod (\gothN^{\omega }\delta_0, \gothN^{\omega +1}\delta_J) \cdot \gothS_K$ and $\Theta(u_j) \mod (\gothN^{\omega -1}\delta_0, \gothN^{\omega }\delta_J) \cdot \gothS_K$ for $j \in J$ are fixed; any lifts of them will give a desired isomorphism (with different $(R_{J^+})$).
\end{remark}

\begin{lemma}\label{L:inv-elts-on-D}
Let $(\gothR_{0, I}, \gothR_\Lambda) \subset (\delta_{J^+}) \cdot \gothS_K$ be admissible.  Then an element
$$
h \in \gothS_K \big/ \big(\psi(\gothp_{0, I}) + \gothR_{0, I}, \psi(\gothp_\Lambda) + \gothR_\Lambda \big)
$$
is invertible if and only if $\Delta(h) \in \calO_L^\times$.  In particular, $\gothu_{0, r_0}^e / \pi_K$ is invertible.
\end{lemma}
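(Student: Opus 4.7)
The plan is to denote $A = \gothS_K/(\psi(\gothp_{0,I}) + \gothR_{0,I}, \psi(\gothp_\Lambda) + \gothR_\Lambda)$, the ring in which $h$ lives. By Lemma~\ref{L:basis-recursive}, $A$ is finite free over $\calR_K = \calO_K\llbracket \delta_0/\pi_K, \delta_J\rrbracket$, so if I set $\gothm = (\delta_0/\pi_K, \delta_J) \subset \calR_K$, then $A$ is $\gothm A$-adically complete; moreover $\ker(\Delta) = \gothm A$ by construction of $\Delta$. The forward direction is immediate, since $\Delta$ is a ring homomorphism.

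For the converse, given $\Delta(h) \in \calO_L^\times$, I would lift $\Delta(h)^{-1}$ to some $h_0 \in A$ using the standard basis~\eqref{E:basis-recursive} (lift the $\calO_L$-coefficients to $\calO_K$ and write the same expression in $A$). Then $\epsilon := 1 - h h_0$ lies in $\ker(\Delta) = \gothm A$, and the geometric series $\sum_{n \geq 0}\epsilon^n$ converges in the $\gothm A$-adic topology to an inverse of $h h_0$, so $h$ is itself invertible.

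For the ``in particular'' claim, the strategy is to first show $\gothu_{0, r_0}^e \in \pi_K A$ and then appeal to the main part. Once this containment is established, $v := \gothu_{0, r_0}^e/\pi_K$ is a well-defined element of $A$ (since $\pi_K$ is a non-zero-divisor in $A$, as $\calR_K$ is a regular local domain over which $A$ is free) satisfying $\Delta(v) = \pi_L^e/\pi_K \in \calO_L^\times$, forcing $v$ to be a unit by the first part. To verify $\gothu_{0, r_0}^e \in \pi_K A$, I would telescope through the relations $\psi(\gothp_{0,i}) + \gothR_{0,i} = 0$. For $i = 1$, the displayed form of $\gothp_{0,1}$ rewrites $\gothu_{0,1}^{e/e_1}$ as $\psi(\gothd_1)(\pi_K + \delta_0) + \psi(\text{tail}) - \gothR_{0,1}$; a norm count on $\gothN^{1 + 1/e}$, using that the maximum monomial weight $\sum_i \gothe_{0,i}e_i/e$ in the standard basis is $(e-1)/e$, combined with $\delta_0 = \pi_K \cdot (\delta_0/\pi_K)$ and the admissibility bound on $\gothR_{0,1}$, shows each summand lies in $\pi_K A$. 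For $i \geq 2$, the relations $\gothu_{0,i}^{e_{i-1}/e_i} = \psi(\gothd_i) \gothu_{0, i-1} + (\text{error in } \gothm A)$ propagate the $\pi_K$-factor from $\gothu_{0,1}^{e/e_1}$ upward through the chain; the product of exponent ratios $\prod_{i=1}^{r_0}(e_{i-1}/e_i) = e/e_{r_0} = e$ is exactly the total power needed on $\gothu_{0,r_0}$.

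The hard part will be the bookkeeping in this last step: when one iteratively substitutes via the recursive relations and expands powers like $(\psi(\gothd_i) \gothu_{0, i-1} + \mathrm{error})^{e_{i-2}/e_{i-1}}$, the cross-terms arising from the tails of the $\gothp_{0,i}$'s and from the $\gothR_{0,i}$'s must be checked to land in $\pi_K A$, not merely in $\gothm A$. The required bookkeeping should follow from combining the norm estimates on the $\gothN^a$-ideals (where an element of $\gothN^a$ forces its $\calO_K$-coefficients into $\pi_K \calO_K$ whenever $a > (e-1)/e$) with the admissibility hypothesis on $\gothR_{0,I}$, so that every cross-term absorbs an appropriate factor of $\pi_K$.
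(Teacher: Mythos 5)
Your proof of the main equivalence is correct and takes exactly the paper's approach: lift $\Delta(h)^{-1}$ to the ring using the standard basis, observe that $1 - (\text{lift})\cdot h \in \ker\Delta$, and invert via the geometric series converging in the $(\delta_0/\pi_K, \delta_J)$-adic topology.

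For the \emph{in particular} clause, which the paper does not prove, your telescoping strategy is the right one, but the parenthetical auxiliary claim --- that an element of $\gothN^a$ has all its $\calO_K$-coefficients in $\pi_K\calO_K$ whenever $a > (e-1)/e$ --- is false as stated. That deduction only works for polynomials already written in the standard basis, where the monomial weight is bounded by $(e-1)/e$; a general element of $\gothN^a$ may have unit coefficients on high-weight monomials (for instance $\gothu_{0,1}^{2e/e_1} \in \gothN^2$ has a unit coefficient, yet $2 > (e-1)/e$). Since the error terms $\gothR_{0,I}$ are only constrained to lie in ideals of the form $\gothN^\bullet \delta_\bullet \cdot \gothS_K$ with \emph{no} standard-basis restriction, your ``norm count on $\gothN^{1+1/e}$'' does not close the argument in a single pass. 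What saves it is that every offending monomial is divisible by some $\gothu_{0,i}^{e_{i-1}/e_i}$, hence can be rewritten again via the relations, producing a fixed-point iteration: one obtains $\gothu_{0,r_0}^e \in \pi_K A + \gothm^n A$ for all $n$, and since $A$ is free over $\calR_K$ and $\bigcap_n(\pi_K\gothS_K + \gothm^n\gothS_K) = \pi_K\gothS_K$, this yields $\gothu_{0,r_0}^e \in \pi_K A$. Your write-up gestures at this iteration in the surrounding text, but the parenthetical as phrased would mislead a reader into thinking a single norm estimate suffices.
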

\begin{proof}
The necessity is obvious.  To see the sufficiency, we construct the inverse of $h$ directly.  Let $h^{(-1)}$ be a lift of $\Delta(h^{-1}) \in \calO_L^\times$ in $\calO_K \langle \gothu_{0, I}, \gothu_\Lambda \rangle$.  We have $\Delta(1 - h^{(-1)} h) = 0$ and hence $1 - h^{(-1)} h = g \in (\delta_{J^+}) \cdot \gothS_K$.  Thus,
$$
\frac 1h = \frac {h^{(-1)}}{1 - g} = h^{(-1)} \cdot (1+ g + g^2 + \cdots).
$$
The series converges to the inverse of $h$.
\end{proof}

\section{Hasse-Arf Theorems}
\setcounter{equation}{0}

\subsection{Generic $p$-th roots}
\label{S:dummy-var}

The notion of generic $p$-th roots was first (implicitly) introduced by Borger in \cite{Borger-conductor}.  Kedlaya \cite{KSK-Swan1} realized that in the equal characteristic case, adding generic $p$-th roots into the field extension will not change the (differential) non-logarithmic ramification filtration; hence, one can prove the non-logarithmic Hasse-Arf theorem by reducing to the perfect residue field case.

In this subsection, we continue to assume Hypotheses \ref{H:J-finite-set} and \ref{H:beta_K>1}, except for Proposition~\ref{P:gen-rot=>HA-thm}.

\begin{notation}\label{N:K'}
Let $x$ be transcendental over $K$.  Define $K(x)^\wedge$ to be the completion of $K(x)$ with respect to the $1$-Gauss norm and define $K'$ to be the completion of the maximal unramified extension of $K(x)^\wedge$.  Set $L' = K' L$. 
\end{notation}

\begin{lemma}\label{L:L'}
Let $L(x)^\wedge$ be the completion with respect to the $1$-Gauss norm.  Then, $L'$ is the completion of the maximal unramified extension of $L(x)^\wedge$.  In particular, the residue field of $L'$ is $l' = k(x)^\sep \cdot l$, which is separably closed.
\end{lemma}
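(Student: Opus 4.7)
The plan is to identify both $L'$ and $L^\circ := \widehat{(L(x)^\wedge)^\unr}$ as complete discrete valuation fields with the same residue field and value group, then match them directly. First I would record the basic invariants: under the $1$-Gauss norm, the residue field of $K(x)^\wedge$ is $k(x)$, so $K'$ has residue field $k(x)^\sep$ and value group $\ZZ$ with $\pi_K$ a uniformizer; analogously $L(x)^\wedge$ has residue field $l(x)$, value group $\tfrac{1}{e}\ZZ$, and $\pi_L$ as a uniformizer. Since $L' = K'L$ is a finite extension of the complete DVF $K'$, it is itself a complete DVF.

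The crux of the proof is the algebraic identity $l(x)^\sep = l\cdot k(x)^\sep$. The inclusion $l\cdot k(x)^\sep \subseteq l(x)^\sep$ follows because the minimal polynomial over $l(x)$ of an element of $k(x)^\sep$ divides its (separable) minimal polynomial over $k(x)$. For the reverse, I would show $l\cdot k(x)^\sep$ is separably closed: since $k$ is separably closed, $l/k$ is purely inseparable, whence $(l\cdot k(x)^\sep)/k(x)^\sep$ is purely inseparable; and a purely inseparable extension $F'/F$ of a separably closed $F$ is always separably closed, since for an irreducible separable monic $g\in F'[T]$ of degree $n$ with roots $\alpha_i$, raising the coefficients of $g$ to a sufficiently high $p^e$-th power lands them in $F$, so the $\alpha_i^{p^e}$ are the $n$ distinct roots of a polynomial of degree $n$ in $F[T]$ and hence lie in $F$; each $\alpha_i$ is then simultaneously separable and purely inseparable over $F'$, forcing $\alpha_i\in F'$ and $n=1$.

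Next I would compare ramification and residue data. Since $L(x)^\wedge = L\cdot K(x)^\wedge$ has degree $[L:K] = e[l:k]$ over $K(x)^\wedge$ by a standard Gauss-norm computation, and since the separable extension $k(x)^\sep/k(x)$ is linearly disjoint from the purely inseparable $l(x)/k(x)$ (giving $[l(x)^\sep:k(x)^\sep] = [l:k]$), comparison of the towers $L' \supseteq L(x)^\wedge \supseteq K(x)^\wedge$ and $L' \supseteq K' \supseteq K(x)^\wedge$ yields $[L':K'] = [L:K]$ and identifies the residue field of $L'$ as $l(x)^\sep = l\cdot k(x)^\sep$, which is separably closed by the previous paragraph.

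Finally, to show $L' = L^\circ$, I would check both inclusions. For $L'\subseteq L^\circ$: any finite unramified extension of $K(x)^\wedge$, with residue field $F/k(x)$ finite separable, remains unramified upon base change to $L(x)^\wedge$ (its residue extension $l(x)\cdot F/l(x)$ is still separable), so $K' \subseteq L^\circ$; combined with $L\subseteq L(x)^\wedge \subseteq L^\circ$ this gives $L' = K'L \subseteq L^\circ$. For $L^\circ \subseteq L'$: every finite separable extension of $l(x)$ sits inside $l(x)^\sep = l\cdot k(x)^\sep$ and is hence of the form $l\cdot F$ for some finite separable $F/k(x)$, so every finite unramified extension of $L(x)^\wedge$ lies in $K_0'\cdot L(x)^\wedge \subseteq K'\cdot L(x)^\wedge \subseteq L'$; taking the union and completing yields $L^\circ\subseteq L'$. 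The main technical obstacle is the residue-field identity in the second paragraph; the rest is careful bookkeeping with ramification indices, residue fields, and completions of unramified extensions.
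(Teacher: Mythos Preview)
Your proof is correct and reaches the same conclusion as the paper, but the paper's argument is considerably more compressed. The paper simply observes that $L(x)^\wedge = L\cdot K(x)^\wedge$, then notes that $L' = K'L$ is complete (being finite over the complete field $K'$) and that its residue field is separably closed because it is a \emph{finite} extension of the separably closed field $k(x)^\sep$; together with the (implicit) fact that $K'/K(x)^\wedge$ is unramified, this characterizes $L'$ as the completion of the maximal unramified extension of $L(x)^\wedge$.

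Your approach unpacks precisely what the paper leaves implicit: you prove the residue identity $l(x)^\sep = l\cdot k(x)^\sep$ by hand (your second paragraph is exactly the ``finite purely inseparable extension of separably closed is separably closed'' lemma that the paper invokes without comment), and you replace the characterization argument with an explicit two-inclusion check $L'\subseteq L^\circ$ and $L^\circ\subseteq L'$. The bookkeeping with degrees and linear disjointness in your third paragraph is a careful justification of what the paper takes for granted. So your route is not genuinely different in spirit---it is the same proof with all the steps written out---but it has the advantage of being self-contained, whereas the paper's two-line argument requires the reader to supply the characterization of $\widehat{(\,\cdot\,)^\unr}$ and the fact about finite extensions of separably closed fields.
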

\begin{proof}
First, $L(x)^\wedge = LK(x)^\wedge$ because the latter is complete and is dense in the former.  So, it suffices to prove that $L'$ is complete and has separable residue field.  Since $L'/K'$ is finite, $L'$ is complete.  Moreover, the residue field $l'$ of $L'$ is separably closed because it is a finite extension of a separably closed field $k(x)^\sep$.
\end{proof}

\begin{proposition}
\label{P:dummy-var}
The highest ramification breaks do not change if we make a base change from $K$ to $K'$.  In other words, $b(L/K) = b(L' /K')$ and $b_\log(L/K) = b_\log(L'/K')$.
\end{proposition}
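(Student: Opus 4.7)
The plan is to invoke Proposition~\ref{P:AS-space-properties}(4') for the non-logarithmic break and to run a parallel direct argument on the log Abbes-Saito spaces for the logarithmic one. The two hypotheses on the base change $K \rar K'$ to check are: \emph{(i)} $K'/K$ is a complete extension of discrete valuation fields with the same value group, and \emph{(ii)} $L$ and $K'$ are linearly disjoint over $K$ with $\calO_{L'} = \calO_L \otimes_{\OK} \calO_{K'}$.

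Condition (i) is immediate: the $1$-Gauss norm on $K(x)$ is discrete with uniformizer $\pi_K$, and passing to $K(x)^\wedge$, the maximal unramified extension, and the final completion all preserve the value group, so $\pi_K$ remains a uniformizer of $K'$. For (ii), Hypothesis~\ref{H:J-finite-set} makes $k$ separably closed, so $l/k$ is purely inseparable. By Lemma~\ref{L:L'}, $k' = k(x)^\sep$ and $l' = l\cdot k'$, and the linear disjointness of the purely inseparable $l/k$ from the separable $k'/k$ gives $[l':k']=[l:k]$. Combined with $e_{L'/K'} = e_{L/K}$ from Lemma~\ref{L:L'}, this produces $[L':K']=[L:K]$ and $L \otimes_K K' \isom L'$. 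The generators $c_J, \pi_L$ of $\calO_L$ over $\calO_K$ then also generate $\calO_{L'}$ over $\calO_{K'}$: indeed, $\pi_L$ remains a uniformizer of $L'$, and the reductions $\bar c_J$ generate $l' = l \cdot k'$ over $k'$. Hence $\calO_{L'} = \calO_{K'}[c_J, \pi_L] = \calO_L \otimes_{\OK} \calO_{K'}$.

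With (i) and (ii) in place, Proposition~\ref{P:AS-space-properties}(4') yields $b(L/K)=b(L'/K')$. For the logarithmic equality, I would repeat the argument directly on the log Abbes-Saito spaces of Definition~\ref{D:AS-space-log}: with $Z=\{c_J, \pi_L\}$, $P=\{\pi_L\}$ as in Construction~\ref{C:generators-of-calI}, the relations $p_{J^+}$, and the auxiliary lifts $g_j, h_{i,j}$ chosen once in $\calO_K[u_J]$, the identity $\calO_{L'} = \calO_L \otimes_{\OK} \calO_{K'}$ shows that these data also present $\calO_{L'}$ over $\calO_{K'}$ with the same polynomials. Therefore $AS^a_{L'/K',\log,Z,P} = AS^a_{L/K,\log,Z,P} \times_K K'$ as rigid spaces, and embedding $K^\alg$ and $K'^\alg$ into a common algebraically closed overfield gives a bijection on geometric connected components for each $a \in \QQ_{>0}$, whence $b_\log(L/K)=b_\log(L'/K')$.

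The main obstacle will be (ii), specifically the identification $\calO_{L'} = \calO_L \otimes_{\OK} \calO_{K'}$, which relies essentially on the purely inseparable nature of $l/k$ coming from the separably closed hypothesis on $k$ in Hypothesis~\ref{H:J-finite-set}. Without this, the residue extension could have a nontrivial separable part and one would first need to enlarge $K$ by an unramified extension to absorb it.
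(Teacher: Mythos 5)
Your proof is correct and takes essentially the same route as the paper: establish $\calO_{L'} = \calO_L \otimes_{\calO_K} \calO_{K'}$ from Lemma~\ref{L:L'} (you via $[l':k']=[l:k]$ by linear disjointness of the purely inseparable $l/k$ against the separable $k'/k$; the paper via $\pi_L$ remaining a uniformizer and the tensor surjecting onto $l'$), then reduce to base-change invariance of the Abbes--Saito spaces. The one place you add genuine value is the logarithmic break: the paper simply invokes Proposition~\ref{P:AS-space-properties}(4') for both conclusions even though its own summary of (4') only states $b(L/K)=b(L'/K')$, whereas your direct observation that the same generators, relations, and auxiliary lifts $g_j, h_{i,j}$ present $\calO_{L'}$ over $\calO_{K'}$, so that $AS^a_{L'/K',\log,Z,P} \cong AS^a_{L/K,\log,Z,P} \times_K K'$, supplies the logarithmic half explicitly (it is of course also what the cited [AM--Lemme~2.1.5] proves). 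The only step you leave as an assertion is that $\pi_0^\geom$ is insensitive to replacing $\widehat{K^\alg}$ by the larger $\widehat{K'^\alg}$; this is standard (and is already implicitly used throughout the paper), but a phrase acknowledging it would make the argument fully self-contained.
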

\begin{proof}
Since $\pi_L$ is a uniformizer of $L'$ and $\calO_L \otimes_{\calO_K} \calO_{K'}$ surjects onto $l'$ by previous lemma, we have $\calO_{L'} = \calO_L \otimes_{\calO_K} \calO_{K'}$.  The result follows from Proposition~\ref{P:AS-space-properties}(4').
\end{proof}

\begin{definition}\label{D:generic-pth-root}
Let $b_{j_0}$ be an element in a $p$-basis of $K$.  We will often need to make a base change $K \inj \widetilde K = K' ((b_{j_0} + x \pi_K)^{1/p})$, a process which we shall refer to as \textit{adding a generic $p$-th root (of $b_{j_0}$)}.  It is clear that the absolute ramification degree $\beta_{\widetilde K}$ equals $\beta_K$.  If we begin with a finite field extension $L/K$, adding a generic $p$-th root will mean considering the extension $\widetilde L =L\widetilde K / \widetilde K$.  We have $G_{\widetilde L / \widetilde K} = G_{L/K}$ as $\widetilde K$ is linearly independent from $L$ over $K$.  By convention, we take $\pi_{\widetilde K} = \pi_K$ as $\widetilde K / K$ is unramified.  We provide $\widetilde K$ with a $p$-basis $\{b_{J\backslash \{j_0\}}, (b_{j_0} + x \pi_K)^{1/p}, x\}$, which has one more element than the original $p$-basis.
\end{definition}

\begin{proposition}\label{P:finite-gen-pth-roots}
Let $L/K$ be as in Hypothesis~\ref{H:J-finite-set}.  Then after finitely many operations of adding generic $p$-th roots, the field extension we begin with becomes a non-fiercely ramified extension, namely, the residue field extension is trivial.
\end{proposition}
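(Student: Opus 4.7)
The plan is to reduce the claim to a short induction on the inseparability degree of $l/k$. Since $k$ is separably closed by Hypothesis~\ref{H:J-finite-set}, the residue field extension $l/k$ is purely inseparable of some finite degree $p^s$, and in particular $l \subseteq k^{1/p^s}$. The strategy is to perform $s$ successive \emph{rounds} of generic $p$-th root operations, where each round consists of adding a generic $p$-th root of every element of the then-current $p$-basis of the base field, one at a time. After $s$ such rounds, the new residue field will contain $k^{1/p^s}$ and hence contain $l$; this makes the residue extension of $\widetilde L/\widetilde K$ trivial, since the residue field of $\widetilde L = L\widetilde K$ is $l \cdot \widetilde k = \widetilde k$.

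The key input is that adding a generic $p$-th root of $b_{j_0}$ adjoins the residue $\bar b_{j_0}^{1/p}$ to the new residue field: by Definition~\ref{D:generic-pth-root} and Lemma~\ref{L:L'}, the element $(b_{j_0}+x\pi_K)^{1/p}$ reduces to $\bar b_{j_0}^{1/p}$. I would then invoke the standard identity that if $B$ is a $p$-basis of a characteristic-$p$ field $F$ then $F^{1/p} = F(B^{1/p})$; this follows from the unique expansion $f = \sum_{e} \alpha_{e}^{p} B^{e}$, with $e$ a multi-index in $\{0,\dots,p-1\}^{B}$ of finite support and $\alpha_{e} \in F$, whose $p$-th root is $\sum_{e} \alpha_{e} B^{e/p} \in F(B^{1/p})$. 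Consequently, after a full round treating every current $p$-basis element in sequence, the new residue field contains the $p$-th root closure of the old one; iterating, after $n$ rounds it contains $k^{1/p^n}$, and $n=s$ suffices.

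Along the way, I would verify that Hypotheses~\ref{H:J-finite-set} and \ref{H:beta_K>1} persist under each operation, so that the induction step is legitimate. Concretely, the new residue field $k(x)^\sep(\bar b_{j_0}^{1/p})$ is still separably closed, because a purely inseparable extension of a separably closed field is separably closed (any element separable over the extension has a $p^{e}$-th power separable over the base, hence lying in the base, and uniqueness of $p$-th roots forces the element itself to lie in the extension); the new $p$-basis $\{b_{J\setminus\{j_0\}},(b_{j_0}+x\pi_K)^{1/p},x\}$ given in Definition~\ref{D:generic-pth-root} is again finite; the identity $\beta_{\widetilde K} = \beta_K$ preserves Hypothesis~\ref{H:beta_K>1}; and $\widetilde L/\widetilde K$ remains wildly ramified Galois because $\widetilde K/K$ is unramified (so $e_{\widetilde L/\widetilde K}=e_{L/K}$) and $G_{\widetilde L/\widetilde K}=G_{L/K}$.

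There is no substantive obstacle: the whole argument is bookkeeping of how the residue field grows in a tower of generic $p$-th root operations, combined with the one-line fact that a $p$-basis generates the $p$-th root closure.
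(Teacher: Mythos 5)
The step where you pass from $l \subseteq \tilde k$ to $\tilde l = l\cdot\tilde k = \tilde k$ is the gap, and it is precisely where the substance of the proposition lies. The residue field of $\widetilde L = L\widetilde K$ is in general \emph{not} $l\cdot\tilde k$: when $\bar b_{j_0}^{1/p} \notin l$ the identity $\calO_{\widetilde L}=\calO_{\widetilde K}\otimes_{\calO_K}\calO_L$ does give $\tilde l = l\tilde k$, but when $\bar b_{j_0}^{1/p}\in l$ (which is the case you are trying to eliminate) the integral closure $\calO_{\widetilde L}$ strictly contains $\calO_{\widetilde K}\otimes_{\calO_K}\calO_L$ and typically contributes a new inseparable residue element in the direction of the fresh transcendental. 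Concretely, take $K$ with separably closed residue field $k$ of $p$-basis $\{\bar b\}$, $\zeta_p\in K$, $\beta_K\geq 2$, and $L=K(\pi_K^{1/p},c)$ with $c^p=b+\pi_K$, so that $l=k(\bar b^{1/p})$, $[l:k]=p$, and your $s=1$. After adding the single generic $p$-th root (one full round), set $\tilde c = \pi_K^{-1/p}\big(c-(b+x\pi_K)^{1/p}\big)$. One computes $\big(c-(b+x\pi_K)^{1/p}\big)^p = (1-x)\pi_K + p\cdot(\text{unit combination})$; since $\beta_K\geq 2$ the term $(1-x)\pi_K$ dominates, so $\tilde c$ is a unit of $\calO_{\widetilde L}$ whose residue has $p$-th power $1-\bar x$, and $1-\bar x$ is not a $p$-th power in $\tilde k = k(\bar x)^\sep(\bar b^{1/p})$. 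Thus $\tilde l\supsetneq\tilde k$ even though $l\subseteq\tilde k$: the inseparable residue extension did not vanish after your predicted $s=1$ round; it merely rotated into the transcendental direction.

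This shows that $[l:k]$ is not a monotone invariant under generic $p$-th roots, so an induction on it cannot close; one needs a strictly decreasing integer-valued quantity attached to the extension itself, not to the residue fields alone. The paper's proof filters $L/K$ by $\ZZ/p$-subextensions and, for the first fierce step $K_{i_0+1}/K_{i_0}$, tracks $\gamma = \min_{z\in\calO_{K_{i_0+1}}}v_{K_{i_0+1}}(g(z)-z)$. The observation you isolated---that a full round forces the new base residue field to contain the $p$-th root closure of the old one---is exactly what lets one rewrite the constant term of the generating equation as $\alpha^p+\beta$ with $\beta$ in the maximal ideal; shifting the generator by $\alpha$ then forces $\gamma$ to drop by at least one, and since $\gamma\geq 1$ for a fierce $\ZZ/p$-extension, the process terminates. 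So the ingredient you identified is genuine and is used in the paper, but the termination must be measured by a valuation-theoretic invariant of the extension, which your argument does not supply.
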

\begin{proof}
This proof is almost identical to \cite[Proposition~5.2.3]{Me-condI}, which is stated for equal characteristic complete discrete valuation field and for adding $p^{\infty}$-th roots (see \cite[Definition~5.2.2]{Me-condI}).

First, the tamely ramified part is always preserved under these operations.  So, we can assume that $L/K$ is totally wildly ramified and hence the Galois group $G_{L/K}$ is a $p$-group.  We can filter the extension $L/K$ as $K = K_0 \subset \cdots \subset K_n = L$, where $K_i / K_{i-1}$ is a (wildly ramified) $\ZZ / p\ZZ$-Galois extension and $K_i / K$ is Galois for each $i = \serie{}n$.  Each of these subextensions 

(a) either has inseparable residue field extension (and hence has na\"ive ramification degree 1),

(b) or has trivial residue field extension (and hence has na\"ive ramification degree $p$).

Let $i_0$ be the maximal number such that $K_i / K_{i-1}$ has trivial residual extension for $i = \serie{}i_0$.  Obviously adding a generic $p$-th root does not decrease $i_0$ because after adding a generic $p$-th root, the na\"ive ramification degree of $\widetilde K_{i_0} / \widetilde K$ still equals to the degree $p^{i_0}$.  Now, it suffices to show that after finitely many operations of adding generic $p$-th roots, $K_{i_0+1} / K_{i_0}$ has trivial residue field extension (if $i_0 < n$); this would suffice to imply the proposition.  Suppose the contrary.

Let $g \in G_{K_{i_0+1}/K_{i_0}} \simeq \ZZ / p\ZZ$ be a generator.  We claim that $\gamma = \min_{x \in \calO_{K_{i_0+1}}} \big(v_{K_{i_0+1}}(g(x) - x) \big)$ decreases by at least 1 after adding  generic $p$-th roots of each of the elements in the $p$-basis.  This would suffice to conclude, because $\gamma$ is always a nonnegative integer.

Let $z$ be a generator of $\calO_{K_{i_0+1}}$ as an $\calO_{K_{i_0}}$-algebra.  It satisfies an equation
\begin{equation}\label{E:k_i_0-over-k_i_0+1}
z^p + a_1 z^{p-1} + \cdots  + a_p = 0
\end{equation}
where $\serie{a_}{p-1} \in \gothm_{K_{i_0}}$ and $a_p \in \calO_{K_{i_0}}^\times$ with $\bar a_p \in k_{i_0}^\times \backslash (k_{i_0}^\times)^p = k^\times \backslash (k^\times)^p$.  It is easy to see that $\gamma = v_{K_{i_0}}(g(z) - z)$.

Adding generic $p$-th roots of each of the element in the $p$-basis gives us a field $\widehat K$.  Now, the field extension $\widehat KK_{i_0+1} / \widehat KK_{i_0}$ is also generated by $z$ as above.  But we can write $a_p = \alpha^p +\beta$ for $\alpha \in \calO_{\widehat KK_{i_0}}$ and $\beta \in \gothm_{\widehat KK_{i_0}}$.  Hence if we substitute $z' = z+\alpha$ into \eqref{E:k_i_0-over-k_i_0+1}, we get
$z'^p + a'_1 z'^{p-1} + \cdots + a'_p = 0$, with $\serie{a'_}p \in \gothm_{\widehat KK_{i_0}}$.  Hence, $v_{\widehat KK_{i_0+1}}(z') > 0$.  By assumption that the extension $\widehat KK_{i_0+1} / \widehat KK_{i_0}$ has na\"ive ramification degree $1$, $\pi_{K_{i_0}}$ is a uniformizer for $\widehat KK_{i_0+1}$ and hence $z' / \pi_{K_{i_0}}$ lies in $\calO_{\widehat KK_{i_0+1}}$.  Thus,
$$
\gamma' = \min_{x \in \calO_{\widehat KK_{i_0+1}}} \big(v_{\widehat KK_{i_0+1}}(g(x) - x) \big) \leq v_{\widehat KK_{i_0+1}} \big(g(z' / \pi_{K_{i_0}}) - z' / \pi_{K_{i_0}}\big) = v_{K_{i_0+1}}\big(g(z) - z\big) - 1 = \gamma - 1.
$$

This proves the claim and hence the proposition.
\end{proof}

\begin{remark}
It is worth to point out that, after these operations, the number of elements in the $p$-basis of the resulting field will be more than that of the original field.
\end{remark}

For the following theorem, we do not assume either of Hypotheses \ref{H:J-finite-set} and \ref{H:beta_K>1}.

\begin{proposition}\label{P:gen-rot=>HA-thm}
Fix $\beta_K \in \NN_{>1}$.  Assume that, for any complete discrete valuation field $K$ of mixed characteristic and with absolute ramification degree $\beta_K$, and any field extension $L/K$ satisfying Hypothesis~\ref{H:J-finite-set}, the highest non-logarithmic ramification break is invariant under the operation of adding a generic $p$-th root. Then, for all complete discrete valuation field $K$ of mixed characteristic and with absolute ramification degree $\beta_K$, we have:

(1) $\Art(\rho)$ is a non-negative integer for any representation $\rho: G_K \rar GL(V_\rho)$ with finite monodromy;

(2) the subquotients $\Fil^a G_K / \Fil^{a+}G_K$ are trivial if $a \notin \QQ$ and are abelian groups killed by $p$ if $a \in \QQ_{>1}$.
\end{proposition}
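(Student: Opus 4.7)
The plan is to reduce the statements (1) and (2) to the classical perfect residue field case via a chain of base changes that preserves the non-logarithmic ramification filtration on $G_{L/K}$ for each finite Galois $L/K$ through which the representation $\rho$ factors; Proposition~\ref{P:classical-HA-thm} then settles everything. A key observation is that the filtration on the finite group $G_{L/K}$ is completely determined by the collection $\{b(M/K)\}_M$ ranging over intermediate Galois extensions $K \subseteq M \subseteq L$, via
\[
\Fil^a G_{L/K} = \bigcap_{M:\,b(M/K) \leq a} G_{L/M},
\]
so it suffices to arrange that all these breaks are preserved by each base change. Both assertions---integrality of $\Art(\rho)$ and the structural statement on $\Fil^a G_K / \Fil^{a+}G_K$---can be tested against finite Galois $L/K$ and hence reduce to this setting.

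First I replace $K$ by the completion of its maximal unramified extension, which by Proposition~\ref{P:AS-space-properties}(4) leaves the entire non-logarithmic filtration on $G_K$ intact; the residue field becomes separably closed and $\beta_K$ is unchanged. Any tamely ramified subextension contributes only the break $1$, so I restrict to wildly ramified $L/K$. To arrive at Hypothesis~\ref{H:J-finite-set}, I descend $L/K$ to $L_0/K_0$ over a complete discrete valuation subfield $K_0 \subseteq K$ admitting a finite $p$-basis (and again complete along the maximal unramified extension of $K_0$ to restore separably closed residue field); Proposition~\ref{P:AS-space-properties}(4') gives $b(M_0/K_0) = b(M/K)$ for every intermediate Galois $M$, so the filtration on $G_{L/K} = G_{L_0/K_0}$ is identified throughout. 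Next I invoke Proposition~\ref{P:finite-gen-pth-roots} to produce $\widetilde K$ by finitely many generic $p$-th root operations so that $\widetilde L = L_0 \widetilde K$ is non-fiercely ramified over $\widetilde K$; the hypothesis of Proposition~\ref{P:gen-rot=>HA-thm}, applied to each intermediate Galois $M_0/K_0$ successively at each stage (each of which continues to satisfy Hypothesis~\ref{H:J-finite-set} after routine unramified completion), yields $b(M_0/K_0) = b(\widetilde M/\widetilde K)$ for every such $M_0$. A final base change passes from $\widetilde K$ to a complete extension $\widetilde K'$ with perfect residue field and the same value group; since $\widetilde L/\widetilde K$ is non-fiercely ramified, $\calO_{\widetilde L}$ is monogenic over $\calO_{\widetilde K}$, so $\calO_{\widetilde L \widetilde K'} = \calO_{\widetilde L} \otimes_{\calO_{\widetilde K}}\calO_{\widetilde K'}$, and Proposition~\ref{P:AS-space-properties}(4') once more preserves all breaks.

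At the end, the residue field of $\widetilde K'$ is perfect, and Proposition~\ref{P:classical-HA-thm} delivers both $\Art(\rho|_{G_{\widetilde K'}}) \in \ZZ_{\geq 0}$ and the triviality (resp.\ abelian-killed-by-$p$ structure) of the subquotients $\Fil^a G_{\widetilde K'} / \Fil^{a+} G_{\widetilde K'}$ for irrational $a$ (resp.\ $a \in \QQ_{>1}$). These statements transport back through the preserved filtration to give (1) and (2) for the original $K$. The main obstacle is the preservation of the whole filtration---rather than merely the single number $b(L/K)$---under each base change: this rests on applying the hypothesis of Proposition~\ref{P:gen-rot=>HA-thm} to every intermediate Galois subextension in turn, together with careful bookkeeping of the identifications $G_{L/K} = G_{L_0/K_0} = G_{\widetilde L/\widetilde K} = G_{\widetilde L \widetilde K'/\widetilde K'}$ and of the correspondence of intermediate fields. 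A secondary technicality is the descent to a subfield with finite $p$-basis, which proceeds by a standard argument analogous to \cite[Section~5]{Me-condI}.
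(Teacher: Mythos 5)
Your overall strategy agrees with the paper's: pass to the completion of the maximal unramified extension, reduce to finite $p$-basis, use Proposition~\ref{P:finite-gen-pth-roots} and the hypothesized invariance under generic $p$-th roots to arrive at a non-fiercely ramified extension, then perfectify the residue field and invoke Proposition~\ref{P:classical-HA-thm}, tracking the breaks $b(M/K)$ of all intermediate Galois $M$ so as to preserve the whole filtration. Your display $\Fil^a G_{L/K} = \bigcap_{b(M/K)\le a} G_{L/M}$ makes explicit an idea the paper uses silently in part (2), so that is fine.

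The step I would flag as a genuine gap is the reduction to finite $p$-basis. You propose to \emph{descend} $L/K$ to a complete discretely valued subfield $K_0 \subseteq K$ with finite $p$-basis, citing a ``standard argument.'' The paper goes the other way: it chooses a finite $J_0 \subset J$ such that $k(\bar b_j^{1/p})$ is linearly disjoint from $l$ for $j \notin J_0$ and \emph{enlarges} $K$ to $K_1 = K\big(b_j^{1/p^n};\, j\in J\setminus J_0,\, n\in\NN\big)^{\wedge}$. In that direction one gets $[L_1:K_1]=[L:K]$, and the inequalities $e_{L_1/K_1}\ge e_{L/K}$ and $[l_1:k_1]\ge[l:k]$, together with $ef=[L:K]$, immediately \emph{force} $\calO_{L_1}=\calO_L\otimes_{\calO_K}\calO_{K_1}$, which is precisely the hypothesis of Proposition~\ref{P:AS-space-properties}(4'). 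Your downward descent provides no such free counting argument. To apply Proposition~\ref{P:AS-space-properties}(4') you would need $\calO_L = \calO_{L_0}\otimes_{\calO_{K_0}}\calO_K$, and to get that you must first manufacture a complete DVF subfield $K_0$ whose ring of integers already contains generators and relations for $\calO_L$ over $\calO_K$, then argue (e.g.\ by faithfully flat descent of integral closure) that the resulting $\calO_{K_0}$-subalgebra really is $\calO_{L_0}$, and finally restore the separably closed residue field while staying inside $K$. None of this is done; it is simply asserted, and ``descend to a subfield'' is in fact the opposite of what the paper (and the equal-characteristic precursor it follows) does. Once that step is replaced by the paper's upward construction, the rest of your argument coincides with the paper's proof.
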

\begin{proof}
(1) Since the conductor is additive and is invariant when base change to the completion of the maximal unramified extension of $K$ (Proposition~\ref{P:AS-space-properties}(4)), we may assume that $\rho$ is irreducible and exactly factors through the Galois group of a totally ramified Galois extension $L/K$.  We may also assume that the residue field $k$ is imperfect and the extension is wildly ramified since the classical case is well-known (Propositions~\ref{P:AS-space-properties}(7) and \ref{P:classical-HA-thm}).  We need only to show that $\Art(\rho) = b(L/K) \cdot \dim \rho \in \ZZ$.

Now we reduce to the finite $p$-basis case.  Choose a finite subset $J_0 \subset J$ such that $k(\bar b_j^{1/p})$ is linearly independent from $l$ for any $j \in J\backslash J_0$.  Pick lifts $b_j \in \calO_K$ of $\bar b_j$ for each $j \in J \bs J_0$.  Define $K_1 = K\Big( b_j^{1/p^n}; j \in J \backslash J_0, n \in \NN \Big)^\wedge$ and $L_1 = K_1L$.  It is easy to see that $[L_1: K_1] = [L:K]$,  $e_{L_1 / K_1} \geq e_{L/K}$, and $[l_1: k_1] \geq [l:k]$, where $k_1$ and $l_1$ are the residue fields of $K_1$ and $L_1$, respectively.  Thus, all the inequalities are forced to be equalities.  This implies $G_{L_1 / K_1} = G_{L/K}$ and $\calO_{L_1} = \calO_L \otimes_{\calO_K} \calO_{K_1}$.  By Proposition~\ref{P:AS-space-properties}(4'), $b(L_1/ K_1) = b(L/K)$.  Therefore, we may reduce to the case when Hypothesis~\ref{H:J-finite-set} holds.

Since adding generic $p$-th roots does not change $\beta_K$, the condition of this proposition says that $b(L/K)$ is invariant under the operation of adding generic $p$-th roots.  By Proposition~\ref{P:finite-gen-pth-roots}, we may assume that $L/K$ is non-fiercely ramified as the base changes do not change the conductor.  In this case, Proposition~\ref{P:AS-space-properties}(4') implies that replacing $K$ by $K\Big( b_j^{1/p^n}; j \in J, n \in \NN \Big)^\wedge$ does not change the conductor.  Hence, we reduce to the classical case; the statement follows from Proposition \ref{P:classical-HA-thm}.

Now we prove (2), following the idea of \cite[Theorem~3.5.13]{KSK-Swan1}.  Let $L$ be a finite Galois extension of $K$ with Galois group $G_{L/K}$; then we obtain an induced filtration on $G_{L/K}$.  It suffices to check that $\Fil^a G_{L/K} / \Fil^{a+}G_{L/K}$ is abelian and killed by $p$; moreover, we may quotient further
to reduce to the case where $\Fil^{a+}G_{L/K}$ is the trivial group but $\Fil^a G_{L/K}$ is not.  As above, we may reduce to the classical case because the ramification break of any intermediate extension between $L$ and $K$ is also preserved under the operations above.  The statement follows from Proposition~\ref{P:classical-HA-thm}.
\end{proof}

\subsection{Base change for generic $p$-th roots}
\label{S:base-change}

In this subsection, we prove the key technical Theorem~\ref{T:base-change}.  We retain Hypotheses~\ref{H:J-finite-set} and \ref{H:beta_K>1}.  When proving the main theorem, we will assume a technical Hypothesis~\ref{H:gen-rotation}, which is satisfied by any recursive thickening space coming from a thickening space by Example~\ref{Ex:D=>recursive-D}.

\begin{notation}  \label{N:base-change}
For this subsection, fix $j_0 \in J$ and $n \in \NN$ coprime to $p$.  As in Definition~\ref{D:generic-pth-root}, let $K(x)^\wedge$ be the completion of $K(x)$ with respect to the $1$-Gauss norm and let $K'$ be the completion of the maximal unramified extension of $K(x)^\wedge$.  Let $\widetilde K = K'((b_{j_0} + x \pi_K^n)^{1/p})$ and $\widetilde L = L \widetilde K$.  Denote $\beta_{j_0} = (b_{j_0} + x \pi_K^n)^{1/p}$ for simplicity.  Denote the residue fields of $\widetilde K$ and $\widetilde L$ by $\tilde k$ and $\tilde l$, respectively.
\end{notation}

\begin{lemma}
If $\bar b_{j_0}^{1/p} \notin l$, we have the ramification break $b(\widetilde L / \widetilde K) = b(L/K)$.
\end{lemma}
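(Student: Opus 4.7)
The plan is to apply Proposition~\ref{P:AS-space-properties}(4') to the base change $K \subset \widetilde K$, $L \subset \widetilde L = L\widetilde K$, which requires verifying: (i) $\widetilde K$ and $K$ share the same value group, (ii) $L$ and $\widetilde K$ are linearly disjoint over $K$, i.e.\ $[\widetilde L:\widetilde K]=[L:K]$, and (iii) $\calO_{\widetilde L}=\calO_L\otimes_{\calO_K}\calO_{\widetilde K}$.

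For (i) and (ii), I would analyze $\widetilde K/K'$ and $\widetilde L/L'$ separately. Since $K'/K$ is unramified with residue field $k'=k(x)^{\mathrm{sep}}$ separable over $k$, and since $\bar b_{j_0}^{1/p}$ is purely inseparable over $k$ and by hypothesis lies outside $l\supseteq k$, a separable-vs-purely-inseparable linear-disjointness argument shows $\bar b_{j_0}^{1/p}\notin k'$. Hence $\widetilde K=K'(\beta_{j_0})$ has degree $p$ over $K'$ with residue field extension $\tilde k=k'(\bar b_{j_0}^{1/p})$ of degree $p$, forcing $e_{\widetilde K/K'}=1$; combined with the unramifiedness of $K'/K$, this gives (i). The identical argument applied to the separable extension $l'/l$ (with $l'=l(x)^{\mathrm{sep}}$ by Lemma~\ref{L:L'}) shows $\bar b_{j_0}^{1/p}\notin l'$, whence $\widetilde L=L'(\beta_{j_0})$ satisfies $[\widetilde L:L']=p$ and $e_{\widetilde L/L'}=1$. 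Via the diamond formed by $K',\widetilde K,L',\widetilde L$ together with $[L':K']=[L:K]$, one concludes $[\widetilde L:\widetilde K]=[L:K]$, which is (ii).

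For (iii), the natural map $\calO_L\otimes_{\calO_K}\calO_{\widetilde K}\hookrightarrow\calO_{\widetilde L}$ is an injection of free $\calO_{\widetilde K}$-modules of the same rank $[L:K]$. The $e=1$ facts above yield $v(\widetilde L^\times)=v(L^\times)$, so $\pi_L$ remains a uniformizer of $\widetilde L$. Reducing modulo $\pi_L$, the map becomes $l\otimes_k\tilde k\to\tilde l$ by multiplication, and a dimension count ($[\tilde l:\tilde k]=[\widetilde L:\widetilde K]/e_{\widetilde L/\widetilde K}=[L:K]/e=[l:k]$ matches $\dim_{\tilde k}(l\otimes_k\tilde k)$) forces it to be surjective, hence an isomorphism. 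Nakayama then gives (iii), and Proposition~\ref{P:AS-space-properties}(4') closes the argument to produce $b(\widetilde L/\widetilde K)=b(L/K)$. The one delicate point throughout is ensuring that the separability-based linear-disjointness conclusion passes from $k'/k$ to $l'/l$, which is precisely why the hypothesis is stated as $\bar b_{j_0}^{1/p}\notin l$ rather than merely $\notin k$.
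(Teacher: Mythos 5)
Your proposal follows the same route as the paper's (very terse) proof: reduce to Proposition~\ref{P:AS-space-properties}(4') by establishing $\calO_{\widetilde L}=\calO_L\otimes_{\calO_K}\calO_{\widetilde K}$, the key fact being that $\bar b_{j_0}^{1/p}$ does not already live in the residue field of $L'$. Steps (i) and (ii) are fine (the "separability" reasoning for $l'/l$ should really be phrased as: $l$ is algebraically closed inside $l'=l(x)^{\mathrm{sep}}$, so any element of $l'$ purely inseparable over $l$ lies in $l$ already — but this is the idea you intend).

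There is, however, a genuine logical slip in step (iii). After reducing modulo $\pi_L$ you obtain the multiplication map $l\otimes_k \tilde k \to \tilde l$ between $\tilde k$-vector spaces of the same dimension $[l:k]$, and you then assert that the dimension count ``forces it to be surjective.'' Equality of dimensions does not by itself force surjectivity (or injectivity) of a linear map. What is actually needed — and what the paper singles out as the crux with the phrase $\tilde l=\tilde k l$ — is either surjectivity (show $l\cdot\tilde k=\tilde l$) or injectivity (show $l$ and $\tilde k$ are linearly disjoint over $k$); only then does the dimension count upgrade it to an isomorphism. The cheapest fix, using identifications you have already established, is surjectivity: $\tilde k=k'(\bar b_{j_0}^{1/p})$ and $\tilde l=l'(\bar b_{j_0}^{1/p})$ with $l'=lk'$, so the compositum $l\cdot\tilde k$ inside $\tilde l$ contains $l$, $k'$, and $\bar b_{j_0}^{1/p}$, hence contains $l'(\bar b_{j_0}^{1/p})=\tilde l$. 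With that inserted, the Nakayama argument closes, and the rest of your proof goes through.
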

\begin{proof}
Since $\tilde l = \tilde k l$, we have $\calO_{\widetilde L} = \calO_{\widetilde K} \otimes_{\calO_K} \calO_L$; the lemma follows from Proposition~\ref{P:AS-space-properties}(4').
\end{proof}

So we need to deal with the non-trivial case when $\bar b_{j_0}^{1/p} \in l$.  We record an elementary lemma first.

\begin{lemma}
\label{L:stoper}
Assume $s \in \ZZ_{\geq0}$ and $\beta_K > s/e+1$.  Let $\pi \in \calO_L$ be such that $\pi/\pi_L^s \in \calO_L^\times$.   Then, there is no $\mu \in \calO_{L'}$ and $b \in \calO_L$ such that $\mu^p - b - x\pi \in \pi_L^{s+1} \calO_{L'}$.
\end{lemma}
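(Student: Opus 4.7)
The plan is to derive a contradiction by differentiating the hypothesized relation with respect to the transcendental variable $x$, exploiting the assumption that $\beta_K$ is large enough to force $p$ itself into $\pi_L^{s+1}\calO_{L'}$. The formal derivation $\partial/\partial x$ on $K(x)$ has operator norm at most $1$ with respect to the $1$-Gauss norm (since $|n|\leq 1$ for $n\in\ZZ$), so it extends continuously to $K(x)^\wedge$, then to the maximal unramified extension, and then by completion to a continuous derivation $\partial_x\colon \calO_{L'}\to \calO_{L'}$ annihilating $L$ and satisfying $\partial_x x = 1$. At the étale step, integrality is preserved because if $y$ is the root of a lifted monic polynomial $P(Y)$ over $\calO_{L(x)^\wedge}$ whose reduction is separable over $l(x)$, then $P'(y)\in\calO^\times$ by Hensel's lemma, so $\partial_x y$ can be solved from $P(y)=0$ as an integral element.

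Suppose for contradiction that $\mu\in\calO_{L'}$, $b\in\calO_L$ and $\alpha\in\calO_{L'}$ satisfy
\[
\mu^p - b - x\pi = \pi_L^{s+1}\alpha.
\]
Applying $\partial_x$ and using that $b,\pi,\pi_L\in L$ are annihilated by $\partial_x$ while $\partial_x x = 1$, we obtain
\[
p\mu^{p-1}\partial_x\mu - \pi \;=\; \pi_L^{s+1}\partial_x\alpha \;\in\; \pi_L^{s+1}\calO_{L'}.
\]
From $\beta_K > s/e + 1$ we get $v_L(p) = e\beta_K \geq s+e+1 \geq s+2$, so $p\in \pi_L^{s+1}\calO_L$ and hence $p\mu^{p-1}\partial_x\mu\in\pi_L^{s+1}\calO_{L'}$. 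Consequently $\pi\in\pi_L^{s+1}\calO_{L'}$. But by Lemma~\ref{L:L'} the extension $L'/L$ has ramification index $1$, so $v_{L'}(\pi)=v_L(\pi)=s$, giving the required contradiction.

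The only nontrivial point in this plan is the construction of $\partial_x$ as a continuous derivation on $\calO_{L'}$ preserving integrality; this rests on the transcendence of $x$ over $L$, the contractivity of the Gauss-norm derivation, and the étaleness of the maximal unramified extension. Once this preliminary is set up, the body of the proof is purely formal: differentiate, treat $L$ as constants, and read the contradiction directly from the absolute ramification hypothesis.
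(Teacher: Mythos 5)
Your proof is correct, and it takes a genuinely different route from the paper's. The paper proves the lemma by induction on $s$: the base case $s=0$ reduces to the transcendence of $\bar x$ over $l$ in the residue field, and the inductive step writes $\mu = \mu_0 + \pi_L\mu_1$ with $\mu_0 \in \calO_L$, reduces modulo $p$, and passes to a smaller $s$. Your argument instead constructs a continuous derivation $\partial_x$ on $\calO_{L'}$ killing $L$ and sending $x$ to $1$ (integrality being preserved at each stage: formal differentiation is contractive for the Gauss norm, and on a finite unramified extension $\partial_x y = -P^{\partial_x}(y)/P'(y)$ with $P'(y)$ a unit), then applies it once to the hypothetical relation to isolate $\pi$ and derives a valuation contradiction. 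Where the paper's induction spends its hypothesis $\beta_K > s/e + 1$ piecemeal across the $\lfloor s/p \rfloor$ inductive steps, you use it in one shot to force $p \in \pi_L^{s+2}\calO_L$, killing the troublesome $p\mu^{p-1}\partial_x\mu$ term. What your approach buys is brevity and transparency: the entire content is the derivation-theoretic fact that $x$ is ``differentially transcendental'' over $L$ inside $L'$, and the arithmetic hypothesis enters exactly once and for an obvious reason. What the paper's approach buys is self-containment at a more elementary level: it does not require setting up the derivation $\partial_x$ on $\calO_{L'}$ and checking integrality through the unramified tower, relying instead only on residue-field computations. Both proofs use that $L'/L$ is unramified (the student via $v_{L'}(\pi) = v_L(\pi)$, the paper implicitly in the residue-field steps). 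The one place to be slightly careful in your writeup is to record explicitly that $e\beta_K$ and $s+1$ are integers, so $\beta_K > s/e+1$ gives $v_L(p) = e\beta_K \geq s+2$, which is what makes $p\mu^{p-1}\partial_x\mu$ land strictly inside $\pi_L^{s+1}\calO_{L'}$; you do note $v_L(p) \geq s+2$, so this is fine.
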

\begin{proof}
We use induction on $s$. When $s=0$, this statement is equivalent to $x \notin \tilde l^p + l$, which is true.  Assume that the statement is true for $s<s_0$ with $s_0\in \ZZ_{>0}$.  Suppose that, for $\pi \in \pi_L^{s_0}\calO_L^\times$, we can find $\mu\in \calO_{L'}$ and $b\in \calO_L$, such that  $\mu^p + b - x\pi \in \pi_L^{s_0+1} \calO_{L'}$.  We must have $\mu^p \equiv b \mod \pi_L$. Since $\tilde l^p \cap l = l^p$, we may write $\mu = \mu_0 + \pi_L\mu_1$ with $\mu_0 \in \calO_L$ and $\mu_1 \in \pi_L \calO_{L'}$ such that $b \equiv \mu_0^p \mod \pi_L$.  So, 
\[
\mu^p - b - x\pi \equiv \pi_L^p\mu_1^p + (\mu_0^p-b) +x\pi \mod p
\]
Since $\beta_K > s_0/e+1$ and $x$ is transcendental over $L$, we must have $\mu_0^p-b \in \pi_L^p\calO_L$ and $s_0\geq p$.  We would then have
\[
\mu_1^p + \frac{\mu_0^p-b}{\pi_L^p} + x \frac{\pi}{\pi_L^p} \in \pi_L^{s-p+1}\calO_{L'},
\]
which should not exist by inductive hypothesis.  Contradiction.
\end{proof}

\begin{notation}
From now on, we use $\psi_K$ instead of $\psi$ as we will consider the $\psi$-functions for different fields.
\end{notation}

\begin{notation}
\label{N:R-tilde-K}
Denote $\calR_{\widetilde K} = \calO_{\widetilde K} \llbracket \eta_0 /\pi_K, \eta_J, \eta_{m+1} \rrbracket$.  Applying Construction~\ref{C:psi-map} to $\widetilde K$ gives a function $\psi_{\widetilde K}: \calO_{\widetilde K} \rar \calR_{\widetilde K}$, which is an approximate homomorphism modulo the ideal $I_{\widetilde K} = p(\eta_0/\pi_K, \eta_{J\cup\{m+1\}}) \cdot \calR_K$.
\end{notation}

\begin{lemma}\label{L:base-change}
There exists a unique continuous $\calO_K$-homomorphism $f^*: \calR_K \rar \calR_{\widetilde K}$
such that
$f^*(\delta_j) = \eta_j$ for $j \in J^+ \backslash \{j_0\}$ and  $f^*(\delta_{j_0}) = (\beta_{j_0} + \eta_{j_0})^p - (x + \eta_{m+1}) (\pi_K + \eta_0)^n - b_{j_0}$.  It gives an approximately commutative diagram modulo $I_{\widetilde K}$.
\begin{equation}\label{Diag:base-change}
\xymatrix{
\calO_K \ar@{_{(}->}[d] \ar[r]^-{\psi_K} & \calO_K \llbracket \delta_0 / \pi_K, \delta_J \rrbracket = \calR_K \ar[d]^{f^*}\\
\calO_{\widetilde K} \ar[r]^-{\psi_{\widetilde K}} & \calO_{\widetilde K} \llbracket \eta_0 / \pi_K, \eta_{J \cup \{m+1\}} \rrbracket = \calR_{\widetilde K}
}
\end{equation}

For $a>1$, $f^*$ gives a morphism $f: A_{\widetilde K}^{m+2}[0, \theta^a] \rar A_K^{m+1}[0, \theta^a]$.
\end{lemma}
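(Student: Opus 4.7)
The plan breaks naturally into three parts: existence/uniqueness of $f^*$, approximate commutativity of the square, and the polydisc estimate.

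For existence and uniqueness, the ring $\calR_K$ is the formal power series ring over $\calO_K$ on the topologically nilpotent generators $\delta_0/\pi_K, \delta_1, \dots, \delta_m$. Thus a continuous $\calO_K$-algebra homomorphism to the complete ring $\calR_{\widetilde K}$ is uniquely determined by its values on these generators, provided the specified images are topologically nilpotent. For $j \ne j_0$ the image $\eta_j$ (resp.\ $\eta_0/\pi_K$) is visibly nilpotent; the only nontrivial check is at $j_0$, where setting $\eta_0 = \eta_J = \eta_{m+1} = 0$ in the prescribed $f^*(\delta_{j_0})$ yields $\beta_{j_0}^p - x\pi_K^n - b_{j_0}$, which vanishes by the very definition of $\beta_{j_0}$. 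Hence $f^*$ exists uniquely.

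For the approximate commutativity, I would first verify that the diagram commutes \emph{exactly} on the distinguished generators $\pi_K$ and $b_J$. Direct calculation yields $f^*\psi_K(\pi_K) = \pi_K + \eta_0 = \psi_{\widetilde K}(\pi_K)$ and $f^*\psi_K(b_j) = b_j + \eta_j = \psi_{\widetilde K}(b_j)$ for $j \in J \setminus \{j_0\}$. At $j = j_0$,
\[
f^*\psi_K(b_{j_0}) = b_{j_0} + f^*(\delta_{j_0}) = (\beta_{j_0}+\eta_{j_0})^p - (x+\eta_{m+1})(\pi_K+\eta_0)^n,
\]
which is precisely the value of $\psi_{\widetilde K}$ applied to the $1$-st $p$-basis decomposition $b_{j_0} = \beta_{j_0}^p - x \pi_K^n$ in $\widetilde K$ (where $\beta_{j_0}$ and $x$ are new $p$-basis elements). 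To pass to arbitrary $h \in \calO_K$, fix a compatible system of $r$-th $p$-basis decompositions of $h$ in $K$ and take the limit defining $\psi_K(h)$. Since $\psi_K(h)$ is built multiplicatively from $(\pi_K+\delta_0)^n$, $(b_J+\delta_J)^{e_J}$ and $p^r$-th powers of elements of $\calO_K$, $f^*\psi_K(h)$ assembles from the corresponding $\psi_{\widetilde K}$-values by the above exact agreement. On the other side, $\psi_{\widetilde K}(h)$ is assembled from an $r$-th $p$-basis decomposition of $h$ in $\widetilde K$, which can be obtained from the $K$-decomposition by substituting $b_{j_0} = \beta_{j_0}^p - x\pi_K^n$. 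The two resulting expressions for the single element $h$ differ only by the reordering ambiguity intrinsic to the non-homomorphism property of $\psi_{\widetilde K}$, which lies in $I_{\widetilde K}$ by Proposition~\ref{P:psi-almost-hom}.

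For the polydisc estimate with $a > 1$ and $|\eta_\ast| \leq \theta^a$, I bound $|f^*(\delta_j)|$. The case $j \ne j_0$ is immediate. For $j = j_0$ the constant term of $f^*(\delta_{j_0})$ vanishes as above, so we may expand
\[
f^*(\delta_{j_0}) = \sum_{i=1}^{p} \binom{p}{i} \beta_{j_0}^{p-i}\eta_{j_0}^i \;-\; \eta_{m+1}(\pi_K+\eta_0)^n \;-\; x\big[(\pi_K+\eta_0)^n - \pi_K^n\big].
\]
Since $|\binom{p}{i}| \leq |p| = \theta^{\beta_K}$ for $1 \leq i \leq p-1$, each such binomial term has norm $\leq \theta^{\beta_K+a} \leq \theta^a$ (using $\beta_K \geq 1$); the $i = p$ term has norm $\leq \theta^{pa} \leq \theta^a$; the middle term has norm $\leq \theta^{a+n}$; and the last has norm $\leq \theta^{a+n-1}$. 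Since $n \geq 1$, all are $\leq \theta^a$, yielding the desired morphism of polydiscs. The main obstacle is the commutativity step: because $\psi_K$ is genuinely non-canonical and only an approximate homomorphism, one cannot hope for a strictly commutative square; the argument succeeds because the defining identity $b_{j_0} = \beta_{j_0}^p - x \pi_K^n$ is itself a legitimate $p$-basis expression in $\widetilde K$ using only $\beta_{j_0}, x$ and a power of $\pi_K$, so the substitution from the $K$-decomposition to the $\widetilde K$-decomposition is compatible with the construction of $\psi$, and the discrepancy is forced into $I_{\widetilde K}$ by Proposition~\ref{P:psi-almost-hom}.
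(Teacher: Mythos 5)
Your proof is correct, and it follows the same underlying idea as the paper's terse one-line proof (which just cites Proposition~\ref{P:psi-almost-hom}), except that you have actually filled in the details of how that proposition applies. Two remarks worth making. First, the claim that the square commutes \emph{exactly} on the generators $\pi_K$ and $b_J$ is an overstatement: Construction~\ref{C:psi-map} fixes \emph{some} compatible system of $p$-basis decompositions for each element, and there is no guarantee that the fixed choice yields $\psi_K(\pi_K)=\pi_K+\delta_0$, $\psi_K(b_j)=b_j+\delta_j$, $\psi_{\widetilde K}(\beta_{j_0})=\beta_{j_0}+\eta_{j_0}$, etc., on the nose --- these hold only modulo $I_K$ (resp.\ $I_{\widetilde K}$) by Proposition~\ref{P:psi-almost-hom}. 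Since the lemma only asks for approximate commutativity modulo $I_{\widetilde K}$, this does not affect the conclusion, but phrasing it as an exact identity is misleading. Second, when passing from generators to general $h$ it is worth making explicit the easy but structurally important observation that $f^*$ carries $I_K$ into $I_{\widetilde K}$ (this follows from the constant term of $f^*(\delta_{j_0})$ vanishing, as you already note in Part 1), so that $f^*\circ\psi_K$ is itself an approximate homomorphism modulo $I_{\widetilde K}$; together with the approximate multiplicativity and additivity of $\psi_{\widetilde K}$, this is precisely what lets one distribute $\psi_{\widetilde K}$ over the substituted expression and absorb the discrepancy into $I_{\widetilde K}$. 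Your Part 3 norm estimate is correct as stated.
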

\begin{proof}
It follows immediately from Proposition~\ref{P:psi-almost-hom}.
\end{proof}

\begin{hypo}\label{H:gen-rotation}
For the next theorem, we assume that in Construction~\ref{C:small-D-sp}, there exists $\lambda_0 \in \Lambda$ such that the field extension $k_{\lambda_0} / k_{\lambda_0-1}$ is given by $k_{\lambda_0} = k_{\lambda_0 -1}(\bar b_{j_0}^{1/p})$ and $\bar \gothc_{\lambda_0} = \bar b_{j_0}^{1/p}$.
\end{hypo}

\begin{theorem}\label{T:base-change}
Assume Hypothesis~\ref{H:gen-rotation} and keep the notation as above.  Moreover, assume that $\beta_K \geq n+1$.  Let $a\in \QQ_{>1}$ and $\omega \geq n +1$.  Let $TS_{L/K, \gothR_{0, I}, \gothR_\Lambda}^a$ be a recursive thickening space with error gauge $\geq \omega$.  Then $TS_{L/K, \gothR_{0, I}, \gothR_\Lambda}^a \times_{A_K^{m+1}[0, \theta^a], f} A_{\widetilde K}^{m+2}[0, \theta^a]$ is a recursive thickening space for $\widetilde L / \widetilde K$ with error gauge $\geq \omega - n$.
\end{theorem}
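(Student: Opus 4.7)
The plan is to exhibit the pullback as a recursive thickening space for $\widetilde L / \widetilde K$ by choosing the right recursive data on the target side and then eliminating one variable.

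First I would set up the new recursive data. Since $\bar\gothc_{\lambda_0}^p = \bar b_{j_0} = \bar\beta_{j_0}^p$ and Frobenius is injective on residue fields, Hypothesis~\ref{H:gen-rotation} forces $\bar\gothc_{\lambda_0} = \bar\beta_{j_0}$ in $\tilde l$, so the step indexed by $\lambda_0$ in the filtration $k_\lambda$ of $l/k$ becomes trivial after base change to $\tilde l/\tilde k$. I therefore take $\tilde\Lambda = \Lambda\setminus\{\lambda_0\}$, $\tilde\pi_{\widetilde L, I} = \pi_{L, I}$, and $\tilde\gothc_\lambda = \gothc_\lambda$ for $\lambda \in \tilde\Lambda$, together with the corresponding relations $\tilde\gothp_{0,I}, \tilde\gothp_{\tilde\Lambda}$ supplied by Construction~\ref{C:small-D-sp}. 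This yields a candidate recursive thickening space for $\widetilde L/\widetilde K$ whose variable count is exactly one fewer than the pullback.

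Next I would eliminate the extra variable $\gothu_{\lambda_0}$. Using the approximate commutativity of Lemma~\ref{L:base-change} and $\beta_{j_0}^p = b_{j_0}+x\pi_K^n$, the relation $f^*\psi_K(\gothp_{\lambda_0}) + f^*\gothR_{\lambda_0} = 0$ takes the form
$$\gothu_{\lambda_0}^p = (\beta_{j_0}+\eta_{j_0})^p - (x+\eta_{m+1})(\pi_K+\eta_0)^n + E,$$
where $E$ aggregates $f^*\gothR_{\lambda_0}$, the $I_{\widetilde K}$-defect of $\psi$, and the $\gothN^{1/e}$-error inside $\gothp_{\lambda_0}$. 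Writing $\gothu_{\lambda_0} = \beta_{j_0}+\eta_{j_0}+\xi$, I would apply a fixed-point iteration in the spirit of the proof of Theorem~\ref{T:ts=as}, solving uniquely for $\xi$ in the appropriate complete ring; the hypothesis $\beta_K \geq n+1$ (this is the mechanism underlying Lemma~\ref{L:stoper}) guarantees convergence and uniqueness. Substituting this expression for $\gothu_{\lambda_0}$ into the remaining relations $f^*\psi_K(\gothp_{0,I})$ and $f^*\psi_K(\gothp_\lambda)$ for $\lambda\in\tilde\Lambda$, and decomposing each $f^*\psi_K(\gothp)$ as $\psi_{\widetilde K}(\tilde\gothp)$ plus a discrepancy, identifies the pullback with a recursive thickening space built from the data of the previous paragraph, with explicit error terms $\tilde\gothR_{0,I}, \tilde\gothR_{\tilde\Lambda}$ assembled from the three discrepancy sources.

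The main obstacle is the final step: verifying that the new error gauge is $\geq \omega - n$. The loss of exactly $n$ comes from the expansion of $f^*(\delta_{j_0})$, whose dominant low-order terms are $p\beta_{j_0}^{p-1}\eta_{j_0} \in \gothN^{\beta_K}\cdot\eta_{j_0}$, $-nx\pi_K^{n-1}\eta_0 \in \gothN^{n-1}\cdot\eta_0$, and $-\pi_K^n\eta_{m+1} \in \gothN^n\cdot\eta_{m+1}$. Consequently, an error coefficient of $\delta_{j_0}$ of size $\gothN^\omega$ contributes an $\eta_0$-coefficient of size $\gothN^{\omega+n-1}$, precisely saturating the new error gauge bound. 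A term-by-term accounting, more intricate than but structurally parallel to Proposition~\ref{P:recursive-ts=ts}, shows that the other two discrepancy sources (the $\psi$-defect in $I_{\widetilde K}\subseteq\gothN^{\beta_K}$ and the $\xi$-substitution correction, controlled by the asymptotic $|\xi|\leq\theta^{n/p}$ from the iteration) never exceed this $n$-unit loss. The admissibility threshold $\omega\geq n+1$ ensures $\omega-n\geq 1$, so the output is genuinely a recursive thickening space in the sense of Definition~\ref{D:error-gauge-rec}.
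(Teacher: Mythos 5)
Your proposal contains a fundamental misconception about the structure of $\widetilde L/\widetilde K$, which makes the elimination step fail in exactly the regime $\beta_K \geq n+1$ where the theorem applies.

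You claim that because $\bar\gothc_{\lambda_0} = \bar\beta_{j_0}$ in $\tilde l$ (correct), ``the step indexed by $\lambda_0$ becomes trivial after base change,'' and you drop the generator $\gothu_{\lambda_0}$ by taking $\tilde\Lambda = \Lambda\setminus\{\lambda_0\}$. But $\widetilde K$ and $L$ are linearly disjoint over $K$, so $[\widetilde L:\widetilde K] = [L:K]$; a recursive thickening space with one fewer $\lambda$-variable has rank $[L:K]/p$ over $\calR_{\widetilde K}$ and thus cannot be isomorphic to the pullback, which is finite free of rank $[L:K]$. The step $\lambda_0$ does not disappear — it \emph{transmutes}. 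Either the residue extension persists with a new generator whose reduction in $\tilde l$ is genuinely transcendental over $l(\bar x)^{\mathrm{sep}}$ (Case A of the paper's proof), or the extension becomes na\"{\i}vely ramified of index $p$ (Case B). The correct move is to replace $\gothc_{\lambda_0}$ by $\tilde\gothc_{\lambda_0} = \pi_L^{-\alpha}(\beta_{j_0}-\mu)$ for a best approximant $\mu\in\calO_{L'}$ and $\alpha = v_{L'}(\beta_{j_0}-\mu) \leq en/p$, keeping the number of generators and relations fixed, and showing the new relation $\tilde\gothq$ has the right error gauge.

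Your fixed-point iteration is where the error surfaces concretely. Writing $\gothu_{\lambda_0} = \beta_{j_0}+\eta_{j_0}+\xi$ turns $f^*\psi_K(\gothp_{\lambda_0})+f^*\gothR_{\lambda_0}=0$ into
\[
\xi^p + \sum_{i=1}^{p-1}\binom{p}{i}(\beta_{j_0}+\eta_{j_0})^{p-i}\xi^i = -(x+\eta_{m+1})(\pi_K+\eta_0)^n + E,
\]
with the right side of norm $\approx\theta^n$, forcing $|\xi|\approx\theta^{n/p}$ (you note this yourself). For your iteration to converge you would need the linear term, of norm $\leq |p|\cdot|\xi| = \theta^{\beta_K+n/p}$, to dominate $|\xi^p| = \theta^n$; this requires $\beta_K + n/p < n$, i.e.\ $\beta_K < n(p-1)/p < n$. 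The theorem's hypothesis $\beta_K\geq n+1$ says exactly the opposite: $\xi^p$ dominates, the map is not a contraction in $\xi$, and the $p$ solutions of the equation do not lie in the base ring. You also cite Lemma~\ref{L:stoper} as ``the mechanism guaranteeing convergence,'' but that lemma asserts precisely that no $\mu\in\calO_{L'}$ gives $\mu^p - b - x\pi \in \pi_L^{s+1}\calO_{L'}$ — it is the \emph{obstruction} to your elimination, not its justification. In the paper it is used to bound $\alpha\leq en/p$ and to show the new generator $\tilde\gothc_{\lambda_0}$ is nontrivial modulo the old ring; you have inverted its role. The error-gauge accounting in your last paragraph is premised on the elimination going through, so it cannot be salvaged without the Case A/Case B dichotomy and the change-of-generator $\chi$ from the paper's Steps 1--2.
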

The reader may skip this proof when reading this paper for the first time, but one may get some feeling of the proof by understanding Example~\ref{E:base-change-example}. 

\begin{example}
\label{E:base-change-example}
We continue with Example~\ref{E:AS-TS-example} and use the notation from there.   As in Notation~\ref{N:base-change}, we set $K'$ be the completion of $K(x) = \QQ_p(\zeta_p)(b, x)^\wedge$ with respect to the $1$-Gauss norm.  (It turns out that $K'$ having separably closed residue field is not important for this example, so we ignore this minor point.)  Let $\widetilde K = K'((b + x \pi_K)^{1/p})$ and $\widetilde L = L \widetilde K$.  Denote $\beta = (b+ x \pi_K)^{1/p}$ for simplicity.  Denote the residue fields of $\widetilde K$ and $\widetilde L$ by $\tilde k = \FF_p(x,b)$ and $\tilde l$, respectively.

We first try to understand the extension $\widetilde L / \widetilde K$ in terms of generators and relations.  Recall that the extension $\calO_L / \calO_K$ is generated by $c = (b+\pi_K)^{1/p}$ and $\pi_L = (b\pi_K)^{1/p}$ with relations $p_0 = u_0^p - b\pi_K$ and $p_1 = u_1^p - b- \pi_K$.  These relations do generate $\widetilde L / \widetilde K$, but they may not generate the extension on the level of rings of integers.  In particular, we need to modify $p_1$ to be 
\[
u_1^p - \beta^p + x\pi_K - \pi_K = 
(u_1 - \beta)^p + x\pi_K - \pi_K + p(u_1^{p-1}\beta- \cdots - \beta^{p-1}u_1).
\]
So, to get a proper relation, we should use the generator $\gothc = \frac{c-\beta}{\pi_L}$ with the proxy $\gothv$.  The relation then becomes
\[
\gothq = \gothv^p + \frac{x-1}{b} + \frac p{b\pi_K}\big( (\beta+ u_0\gothv)^{p-1}\beta - \cdots - (\beta+ u_0\gothv)\beta^{p-1}\big)
\]
Hence $\gothv$ generates an extension of $\widetilde K (\pi_L)$ of degree $p$ with inseparable residue field extension.  The upshot here is that \textit{the introduction of transcendental element $x$ guaranteed that we only divide the relation $p_1$ by an element of norm $|\pi_K|$ but not any further}.

Now, we try to understand the base change $TS_{L/K, \psi}^a \times_{A_K^{2}[0, \theta^a], f}A_{\widetilde K}^3[0, \theta^a]$.  Its ring of functions is just
\begin{equation}
\label{E:base-change-example-eqn}
K\langle u_0, u_1, \pi_K^{-a}\delta_0, \pi_K^{-a}\delta_1\rangle \big/ 
\big(\psi(p_0), \psi(p_1)\big) \otimes_{K\langle \pi_K^{-a}\delta_0, \pi_K^{-a}\delta_1\rangle, f^*}K\langle \pi_K^{-a}\eta_0, \pi_K^{-a}\eta_1, \pi_K^{-a}\eta_2\rangle,
\end{equation}
where $f^*(\delta_0) = \eta_0$ and $f^*(\delta_1) = (\beta+\eta_1)^p - (x+\eta_2)(\pi_K + \eta_0) - b$.

By substituting $u_1$ by $\beta+\eta_1+ u_0 \gothv$, we see that \eqref{E:base-change-example-eqn} becomes
\begin{align*}
K &\langle u_0, \beta+\eta_1 + u_0\gothv, \pi_K^{-a}\eta_0, \pi_K^{-a}\eta_1, \pi_K^{-a}\eta_2 \rangle \big/ (q_1, q_2),\textrm{ where}\\
q_1 &= u_0^p - (\pi_K+\eta_0)(\beta+ \eta_1)^p - (\pi_K + \eta_0)^2(x+\eta_2)\\
q_2 &= (\beta+\eta_1+u_0\gothv)^p - (\beta+\eta_1)^p+ (\pi_K+\eta_0)(x+\eta_2) - (\pi_K + \eta_0) .
\end{align*}
With the help of $q_1$, $q_2$ may be replaced by 
\[
q'_2 = 
\big((\beta+\eta_1)^p -(\pi_K + \eta_0)(x+\eta_2)\big)\gothv^p + p(\cdots) / (\pi_K + \eta_0) + x+\eta_2 - 1.
\]
It may not be too easy to see immediately that $K \langle u_0, \beta + u_0\gothv, \pi_K^{-a}\eta_0, \pi_K^{-a}\eta_1, \pi_K^{-a}\eta_2 \rangle \big/ (q_1, q'_2)$ gives a thickening space for $\widetilde L / \widetilde K$ of error gauge $\leq \beta_K - 1 =p-2$.  But at least $q_1$ is just $\psi_{\widetilde K}( u_0^p - \beta^p\pi_K - x\pi_K^2)$ and the major terms $\big((\beta+\eta_1)^p -(\pi_K + \eta_0)(x+\eta_2)\big)\gothv^p + x+\eta_2 - 1$ of $q'_2$ is close to $\psi(b\gothq)$.
\end{example}

\begin{proof} of Theorem~\ref{T:base-change}. 

\textbf{\underline{Step 1:}} Find the generators of $\calO_{\widetilde L} / \calO_{\widetilde K}$.

The difficulty comes from that $\pi_{L, I}, \gothc_\Lambda$ do not generate $\calO_{\widetilde L}$ over $\calO_{\widetilde K}$ (although they do generate $\widetilde L$ over $\widetilde K$).  We need to change the generator $\gothc_{\lambda_0}$ to an element which either gives

\underline{\textit{Case A:}} the inseparable extension $\tilde l$ of $l(\bar x)^\sep$ which happens when $\widetilde L / \widetilde K$ has na\"ive ramification degree $e$; or

\underline{\textit{Case B:}} a ramified extension of na\"ive ramification degree $p$ which happens when $\widetilde L / \widetilde K$ has na\"ive ramification degree $ep$, in which case, this generator is a uniformizer of $\widetilde L$.

Denote $L' = LK'$, which has residue field $l'=l(\bar x)^\sep$.  Then, we have $\calO_{L'} = \calO_{K'} \otimes_{\OK} \calO_L$.  Hence, $ \calO_{\widetilde K} \otimes_{\calO_K} \calO_L\cong \calO_{\widetilde K} \otimes_{\calO_{K'}} \calO_{L'} \subseteq \calO_{\widetilde L}$.  We may extend the valuation $v_{L'}(\cdot)$ to $\widetilde L$ by allowing rational valuations in Case B.  Let $\beta_{j_0} - \mu$ for $\mu \in \calO_{L'}$ be an element achieving the maximal valuation under $v_{L'}(\cdot)$ among $\beta_{j_0} + \calO_{L'}$.

\textbf{Claim:} we have $\alpha = v_{L'}(\beta_{j_0} - \mu) \leq en/p$ and

in case A, the reduction of $\tilde \gothc_{\lambda_0} = \pi_L^{-\alpha}(\beta_{j_0} - \mu)$ in $\tilde l$ generate $\tilde l$ over $l'$ (we also set $d=1$ by convention);

in case B, $v_{\widetilde L} (\pi_L^{-[\alpha]}(\beta_{j_0} - \mu)) = d/p$ for some $d \in \{1, \dots, p-1\}$, in which case, we fix a $d$-th root $\pi_{\widetilde L, r_0+1}$ of $\pi_L^{-[\alpha]}(\beta_{j_0} - \mu)$, which generates the na\"ively ramified extension $\calO_{\widetilde L} / \calO_{L'}$.

Proof of the Claim: We have the norm $\bbN_{\widetilde L/ L'}(\mu - \beta_{j_0}) = \mu^p - (b_{j_0} + x\pi_K^n)$.  By Lemma~\ref{L:stoper}, there is no $\mu \in \calO_{L'}$ whose $p$-th power can cancel with the $x\pi_K^n$ term , $v_{L'}(\bbN_{\widetilde L/ L'}(\beta_{j_0} - \mu)) \leq en$ and the first statement of the claim follows.  When $\alpha \notin \NN$, we are forced to fall in Case B and the claim is obvious.  Assume for contradiction that $\alpha \in \NN$ and the reduction of $\tilde \gothc_{\lambda_0}$ lies in $l'$.  Then there exists $\mu' \in \calO_{L'}$ such that $\mu' / \pi_L^{\alpha} \equiv \tilde \gothc_{\lambda_0} \pmod{\gothm_{\widetilde L}}$.  But then $\beta_{j_0} - \mu - \mu'$ will have bigger valuation, which contradicts our choice of $\mu$.  This proves the claim.

\vspace{5pt}
\textbf{\underline{Step 2:}}  Find the generating relations.

By previous step, we can write
\[
\calO_{\widetilde K} \langle \tilde \gothu_{0, I}, \tilde \gothu_{\Lambda \bs \lambda_0}, \tilde \gothv \rangle \big/ \big( \tilde \gothp_{0, I}, 
\tilde \gothp_{\Lambda \bs \lambda_0}, \tilde \gothq \big) \simeq \calO_{\widetilde L} .
\]
by sending $\tilde \gothu_{0, I}$ to $\gothc_{0, I}$, $\tilde \gothu_{\Lambda \bs \lambda_0}$ to $\gothc_{\Lambda \bs \lambda_0}$, and $\tilde \gothv$ to $\tilde \gothc_{\lambda_0}$ in Case A and $\pi_{\widetilde L, r_0+1}$ in Case B, where the relations $\gothp_{0, I}, 
\tilde \gothp_{\Lambda \bs \lambda_0}, \tilde \gothq$ corresponding to $\tilde \gothu_{0, I}, \tilde \gothu_{\Lambda \bs \lambda_0}, \tilde \gothv$ can be obtained using Construction~\ref{C:small-D-sp}.  Now, we link these relations to the relations $\gothp_{0, I}, \gothp_{\Lambda}$ for $\calO_L / \calO_K$.  We first lift the isomorphism 
\[
{\bar \chi}: 
\widetilde K \langle \tilde \gothu_{0, I}, \tilde \gothu_{\Lambda \bs \lambda_0}, \tilde \gothv \rangle \big/ \big( \tilde \gothp_{0, I}, 
\tilde \gothp_{\Lambda \bs \lambda_0}, \tilde \gothq \big)
\simeq \widetilde L \cong \widetilde K \otimes_{\OK} \calO_L  \simeq
\widetilde K \langle \gothu_{0, I}, \gothu_\Lambda \rangle \big/ \big(\gothp_{0, I}, \gothp_\Lambda \big) 
\]
to a homomorphism $\chi: \calO_{\widetilde K} \langle \tilde \gothu_{0, I}, \tilde \gothu_{\Lambda \bs \lambda_0}, \tilde \gothv \rangle \rar \calO_{\widetilde K} \langle \gothu_{0, I}, \gothu_\Lambda \rangle [\frac 1{\gothu_{0, r_0}}]$ sending
$\tilde \gothu_{0, I}$ to $\gothu_{0, I}$,
$\tilde \gothu_{\Lambda \bs \lambda_0}$ to $\gothu_{\Lambda \bs \lambda_0}$,
and $\tilde \gothu_{0, r_0}^{[\alpha]} \tilde \gothv$ to the lift of $\bar \chi(\tilde \gothu_{0, r_0}^{[\alpha]} \tilde \gothv)$ using the standard basis defined in Construction~\ref{C:small-D-sp}. Then  $\gothu_{0, r_0}^{(p-1)[\alpha]}\chi(\tilde \gothp_{0, I}), \gothu_{0, r_0}^{(p-1)[\alpha]}\chi(\tilde \gothp_{\Lambda \bs \lambda_0})$ and $\gothu_{0, r_0}^{p[\alpha]} \chi(\tilde \gothq)$ are contained in the ideal $(\gothp_{0, I}, \gothp_\Lambda) \calO_K \langle \gothu_{0, I}, \gothu_\Lambda \rangle$, because the maximal powers of $\tilde \gothv$ in the equations are $p-1, p-1$ and $p$, respectively.

\vspace{5pt}
\textbf{\underline{Step 3:}}  Explain the goal.

We are going to establish an $\calR_{\widetilde K}$-isomorphism $\boldsymbol \chi: \widetilde \calA \isom \calA$, where
\begin{align}
\label{E:base-change-calA}
\calA &= \gothS_K \big/ \big( \psi_K(\gothp_{0, I}) + \gothR_{0, I}, \psi_K(\gothp_\Lambda) + \gothR_\Lambda \big) \otimes_{\calR_K, f^*} \calR_{\widetilde K} \big[\frac 1p \big], \\
\label{E:base-change-calA'}
\widetilde \calA &= \gothS_{\widetilde K} \big[\frac 1p \big] \big/ \big( \psi_{\widetilde K}(\tilde \gothp_{0, I}) + \widetilde \gothR_{0, I}, 
\psi_{\widetilde K}(\tilde \gothp_{\Lambda \bs \lambda_0}) + \widetilde \gothR_{\Lambda \bs \lambda_0}, 
\psi_{\widetilde K}(\tilde \gothq) + \widetilde \gothR_{\tilde \gothq} \big).
\end{align}
Here, $\gothS_{\widetilde K} = \calR_{\widetilde K} \langle \tilde \gothu_{0, I}, \tilde \gothu_{\Lambda \bs \lambda_0}, \tilde \gothv \rangle$ and we can define $\gothN_{\widetilde K}^a$ for $a \in \frac 1{ep} \NN$ similarly to Construction~\ref{C:small-D-sp}.
We first define a ring homomorphism $\widetilde{\boldsymbol\chi}: \gothS_{\widetilde K}[\frac 1p] \to \calA$ by $\widetilde{\boldsymbol\chi}(\tilde \gothu_{0, I}) = \gothu_{0, I}$, $\widetilde{\boldsymbol\chi}(\tilde \gothu_{\Lambda \bs \lambda_0}) = \gothu_{\Lambda \bs \lambda_0}$, and $\widetilde{\boldsymbol\chi}(\tilde \gothv) = \psi_{\widetilde K}(\chi(\tilde \gothv))$; the set $\widetilde \gothR_{0, I}, \widetilde \gothR_{\Lambda \bs \lambda}, \widetilde \gothR_{\tilde\gothq}$ will be admissible with error gauge $\geq \omega - n$ so that $\widetilde{\boldsymbol\chi}$ factors through $\widetilde \calA$.

\vspace{5pt}
\textbf{\underline{Step 4:}} Bound the error gauge.
We first determine $\widetilde \gothR_{0, I}, \widetilde \gothR_{\Lambda \bs \lambda_0}, \widetilde \gothR_{\tilde\gothq}$.  We proceed similarly to Proposition~\ref{P:recursive-ts=ts}.  To write this argument uniformly, we first divide into the following four cases.

Case (a): Denote $\tilde \gothp = \gothu_{0, r_0}^{(p-1)[\alpha]}\tilde \gothp_{0, i_0}$ for some $i_0 \in I$ and $\widetilde \gothR=\gothu_{0, r_0}^{(p-1)[\alpha]} \widetilde \gothR_{0, I}$;

Case (b): Denote $\tilde \gothp = \gothu_{0, r_0}^{(p-1)[\alpha]}\tilde \gothp_{\lambda}$ for $\lambda \in \Lambda \bs \{\lambda_0\}$ and $\widetilde \gothR = \gothu_{0, r_0}^{(p-1)[\alpha]}\widetilde \gothR_\lambda$; 

Case (c): Denote $\tilde \gothp = \gothu_{0, r_0}^{p[\alpha]}\tilde \gothq$ and $\widetilde \gothR = \gothu_{0, r_0}^{p[\alpha]}\widetilde \gothR_{\tilde \gothq}$, assuming we are in Case A;

Case (d): Denote $\tilde \gothp = \gothu_{0, r_0}^{p[\alpha]}\tilde \gothq$ and $\widetilde \gothR = \gothu_{0, r_0}^{p[\alpha]}\widetilde \gothR_{\tilde \gothq}$, assuming we are in Case B;

By Step 2,
$$
\bar \chi(\tilde \gothp) = \sum_{i \in I}\gothh_{0, i}\gothp_{0, i} + \sum_{\lambda \in \Lambda} \gothh_\lambda\gothp_\lambda,
$$
for some $\gothh_{0, i}, \gothh_\lambda \in \calO_{\widetilde K} \langle \gothu_{0, I}, \gothu_\Lambda \rangle$ for $i \in I, \lambda \in \Lambda$.  Moreover, in Case (a) for some $i_0 \in I$, we can require $\gothh_{0, i} \in \gothN_K ^{\max\{(e_{i_0-1}-e_{i-1})/e, 0\}}\cdot \calO_{\widetilde K} \langle \gothu_{0, I}, \gothu_\Lambda \rangle,$ and $\gothh_\lambda \in \gothN_K^{e_{i_0-1}/e}\cdot \calO_{\widetilde K} \langle \gothu_{0, I}, \gothu_\Lambda \rangle$ for $i \in I, \lambda \in \Lambda$; in Case (d), we can require $\gothh_\lambda \in \gothN_K^{1/e}\cdot \calO_{\widetilde K} \langle \gothu_{0, I}, \gothu_\Lambda \rangle$ for $\lambda \in \Lambda$.  Thus, we want to define $\widetilde \gothR \in \gothS_{\widetilde K}$ so that $-\widetilde{\boldsymbol\chi}(\widetilde \gothR)$ equals to
\begin{align*}
\widetilde{\boldsymbol\chi} \big(\psi_{\widetilde K}(\tilde \gothp) \big)
&= \sum_{i \in I} \psi_{\widetilde K}(\gothh_{0, i}) \psi_{\widetilde K}(\gothp_{0, i}) + \sum_{\lambda \in \Lambda} \psi_{\widetilde K}(\gothh_\lambda) \psi_{\widetilde K}(\gothp_\lambda) + \gothE\\
&= \sum_{i \in I} \psi_{\widetilde K}(\gothh_{0, i}) (-\gothR_{0, i}) + \sum_{\lambda \in \Lambda} \psi_{\widetilde K}(\gothh_\lambda) (-\gothR_\lambda) + \gothE\\
&\in \left\{ \begin{array}{ll}
(\gothN^{\omega -1 + e_{i_0-1}/e}\eta_0, \gothN^{\omega +e_{i_0-1}/e }\eta_{J\cup\{m+1\}}) \cdot \gothS_K \otimes_{\calR_K} \calR_{\widetilde K} & \textrm{ case (a),}\\
(\gothN^{\omega -1}\eta_0, \gothN^{\omega}\eta_{J\cup\{m+1\}}) \cdot \gothS_K \otimes_{\calR_K} \calR_{\widetilde K}
 & \textrm{ case (b) or (c),}\\
(\gothN^{\omega -1+1/e}\eta_0, \gothN^{\omega +1/e}\eta_{J\cup\{m+1\}}) \cdot \gothS_K \otimes_{\calR_K} \calR_{\widetilde K} & \textrm{ case (d).} \\
\end{array}\right.
\end{align*}
where the error term $\gothE$ coming from $\psi$ failing to be a homomorphism (See Proposition~\ref{P:psi-almost-hom}) can be bounded as 
\[
\gothE \in \left\{
\begin{array}{ll}
(\gothN^{\beta_K}\eta_0, \gothN^{\beta_K+1}\eta_{J
\cup \{m+1\}}) \cdot \gothS_K \otimes_{\calR_K} \calR_{\widetilde K}  & \textrm{ case (a),}\\
(\gothN^{\beta_K-1}\delta_0, \gothN^{\beta_K}\delta_J) \cdot \gothS_K \otimes_{\calR_K} \calR_{\widetilde K}  & \textrm{ case (b) or (c),}\\
(\gothN^{\beta_K}\eta_0, \gothN^{\beta_K+1}\eta_{J
\cup \{m+1\}}) \cdot \gothS_K \otimes_{\calR_K} \calR_{\widetilde K} & \textrm{ case (d).}
\end{array}\right.
\]

Thus, we can find polynomials $\seriezero{\tilde \gothr_}{m+1} \in \calO_{\widetilde K} [\tilde \gothu_{0, I}, \tilde \gothu_{\Lambda \bs \lambda_0}, \tilde \gothu_{0, r_0}^{[\alpha]} \tilde \gothv] \surj \calO_{\widetilde K} \otimes_{\calO_K} \calO_L$ such that
\begin{align*}
 \tilde \gothr_0 &\in \left\{ \begin{array}{ll}
\tilde \gothu_{0, r_0}^{\omega e-e+e_{i_0-1}} \cdot \calO_{\widetilde K} [\tilde \gothu_{0, I}, \tilde \gothu_{\Lambda \bs \lambda_0}, \tilde \gothu_{0, r_0}^{[\alpha]} \tilde \gothv] & \textrm{ case (a),} \\
\tilde \gothu_{0, r_0}^{\omega e -e} \cdot \calO_{\widetilde K} [\tilde \gothu_{0, I}, \tilde \gothu_{\Lambda \bs \lambda_0}, \tilde \gothu_{0, r_0}^{[\alpha]} \tilde \gothv]
& \textrm{ case (b) or (c),}\\
\tilde \gothu_{0, r_0}^{\omega e -e + 1} \cdot \calO_{\widetilde K} [\tilde \gothu_{0, I}, \tilde \gothu_{\Lambda \bs \lambda_0}, \tilde \gothu_{0, r_0}^{[\alpha]} \tilde \gothv] & \textrm{ case (d);}
\end{array}\right. \\
\tilde \gothr_1, \dots, \tilde \gothr_{m+1}  &\in \left\{ \begin{array}{ll}
\tilde \gothu_{0, r_0}^{\omega e+ e_{i_0-1}} \cdot \calO_{\widetilde K} [\tilde \gothu_{0, I}, \tilde \gothu_{\Lambda \bs \lambda_0}, \tilde \gothu_{0, r_0}^{[\alpha]} \tilde \gothv] & \textrm{ case (a),}\\
\tilde \gothu_{0, r_0}^{\omega e} \cdot \calO_{\widetilde K} [\tilde \gothu_{0, I}, \tilde \gothu_{\Lambda \bs \lambda_0}, \tilde \gothu_{0, r_0}^{[\alpha]} \tilde \gothv]
& \textrm{ case (b) or (c),} \\
\tilde \gothu_{0, r_0}^{\omega e+ 1} \cdot \calO_{\widetilde K} [\tilde \gothu_{0, I}, \tilde \gothu_{\Lambda \bs \lambda_0}, \tilde \gothu_{0, r_0}^{[\alpha]} \tilde \gothv] & \textrm{ case (d);}
\end{array}\right.
\end{align*}
\begin{align*}
&-\widetilde{\boldsymbol\chi}
\big(\psi_{\widetilde K}(\tilde \gothp)\big)-
\widetilde{\boldsymbol\chi}
( \tilde \gothr_0\eta_0 + \cdots + \tilde \gothr_{m+1}\eta_{m+1}) \\
\in & \left\{ \begin{array}{ll}
(\eta_0/\pi_K, \eta_{J\cup\{m+1\}}) (\gothN^{\omega -1 + e_{i_0-1}/e}\eta_0, \gothN^{\omega + e_{i_0-1}/e}\eta_{J\cup \{m+1\}}) \cdot \big(\gothS_K \otimes_{\calR_K} \calR_{\widetilde K} \big) & \textrm{ case (a),}\\
(\eta_0/\pi_K, \eta_{J\cup\{m+1\}}) (\gothN^{\omega -1}\eta_0, \gothN^{\omega }\eta_{J
\cup\{m+1\}}) \cdot \big(\gothS_K \otimes_{\calR_K} \calR_{\widetilde K} \big) & \textrm{ case (b) or (c),} \\
(\eta_0/\pi_K, \eta_{J\cup\{m+1\}}) (\gothN^{\omega -1+1/e}\eta_0, \gothN^{\omega +1/e}\eta_{J
\cup\{m+1\}}) \cdot \big(\gothS_K \otimes_{\calR_K} \calR_{\widetilde K} \big) & \textrm{ case (d).}
\end{array}\right.
\end{align*}

Further, we can similarly approximate the coefficients of $\eta_j \eta_{j'}$ for $j, j' \in J^+ \cup \{m+1\}$.  Repeating this approximation gives the expression of $\widetilde \gothR \in \gothS_{\widetilde K}$.  From this and $\alpha \leq en/p$, we can obtain $\widetilde \gothR_{0, I}, \widetilde \gothR_{\Lambda \bs \lambda_0}, \widetilde \gothR_{\tilde\gothq} \in (\eta_{J^+ \cup \{m+1\}}) \cdot \gothS_{\widetilde K}$ such that
\begin{align*}
\widetilde \gothR_{0, i_0} &\in (\tilde \gothu_{0, r_0}^{\omega e - e + e_{i_0-1} - en}\eta_0, \tilde \gothu_{0, r_0}^{\omega e + e_{i_0-1} - en}\eta_{J \cup \{m+1\}}) \cdot \gothS_{\widetilde K}, \quad i_0 \in I,\\
\widetilde \gothR_{\lambda} &\in (\tilde \gothu_{0, r_0}^{\omega e - e - en}\eta_0, \tilde \gothu_{0, r_0}^{\omega e - en}\eta_{J \cup \{m+1\}}) \cdot \gothS_{\widetilde K}, \quad \lambda \in \Lambda \bs \lambda_0 \\
\widetilde \gothR_{\tilde \gothq} &\in \left \{
\begin{array}{ll}
(\tilde \gothu_{0, r_0}^{\omega e - e - en}\eta_0, \tilde \gothu_{0, r_0}^{\omega e - en}\eta_{J \cup \{m+1\}}) \cdot \gothS_{\widetilde K} & \textrm{in Case A}\\
(\tilde \gothu_{0, r_0}^{\omega e - e - en +1}\eta_0, \tilde \gothu_{0, r_0}^{\omega e - en + 1}\eta_{J \cup \{m+1\}}) \cdot \gothS_{\widetilde K} & \textrm{in Case B};
\end{array} \right.
\end{align*}
they have error gauge $\geq \omega -n$.  Moreover, $\widetilde{\boldsymbol\chi}$ induces a continuous homomorphism $\boldsymbol \chi: \widetilde\calA \to  \calA$.

\textbf{\underline{Step 5:}} Prove that $\boldsymbol \chi$ is an isomorphism.

To prove that $\boldsymbol \chi$ is an isomorphism, it suffices to show the surjectivity, as both $\widetilde \calA$ and $\calA$ are finite free modules over $\calR_{\widetilde K} [\frac 1p]$ of the same rank.  Since \eqref{E:basis-recursive} forms a basis of $\calA$ over $\calR_{\widetilde K} [\frac 1p]$, we need only to show that $\gothu_{0, I}$ and $\gothu_{\Lambda}$ are in the image of $\boldsymbol \chi$.  This is obvious for $\gothu_{0, I}$ and $\gothu_{\Lambda \bs \lambda_0}$.  For $\gothu_{\lambda_0}$, we first find an element in $\calO_{\widetilde K} [\tilde \gothu_{0, I}, \tilde \gothu_{\Lambda \bs \lambda_0}, \tilde \gothu_{0, r_0}^{[\alpha]} \tilde \gothv] \surj \calO_{\widetilde K} \otimes_{\OK} \calO_L$ whose image under $\bar \chi$ is $\gothu_{\lambda_0}$.  Then we use the similar approximation in Step 4 to find an element in $\widetilde \calA$ whose image under $\boldsymbol \chi$ is exactly $\gothu_{\lambda_0}$.  This finishes the proof.
\end{proof}

\begin{remark}\label{R:gen-p-infty-not-work}
We expect that when $\omega$ and hence $\beta_K$ is ``large" compared to $[L:K]$, Theorem~\ref{T:base-change} is also valid if we add a generic $p^\infty$-th root (defined in \cite[Definition~5.2.2]{Me-condI}); this amounts to control the discrepancy between $\calO_{\widetilde L}$ and $\calO_{\widetilde K} \otimes_{\calO_K} \calO_L$.  Hence, in this case, one can obtain a comparison theorem between the arithmetic Artin conductor and Borger's Artin conductor \cite{Borger-conductor} as in \cite[Subsection~5.4]{Me-condI}.
\end{remark}

\subsection{Non-logarithmic Hasse-Arf theorem}
\label{S:non-log-HA-thm}
In this subsection, we apply Theorem~\ref{T:base-change} to obtain the Hasse-Arf Theorem~\ref{T:main-thm-nonlog} for non-logarithmic ramification filtrations.

We assume Hypotheses~\ref{H:J-finite-set} until stating the last theorem.  As a reminder, Hypothesis~\ref{H:gen-rotation} is no longer assumed till the end of the paper.

\begin{notation}
Keep the notation as in Construction~\ref{C:generators-of-calI}.
Fix $j_0 \in J$ and $n \in \NN$.  Let $\widetilde K = K' ((b_{j_0} + x \pi_K^n)^{1/p})$ as in Notation~\ref{N:base-change}.  Denote $\beta_{j_0} = (b_{j_0} + x \pi_K^n)^{1/p}$ for simplicity.
\end{notation}

\begin{lemma}\label{L:base-change-is-isometric}
Assume $p \nmid n$ and $\beta_K \geq n$.  Let $a_{J^+} \subset \RR_{>0}$ and $a_0 = a_{j_0} = a_{m+1} > \max\{\frac{n-1}{p-1}, 1\}$.  Define $a'_j = a_j$ for $j \in J^+ \bs \{j_0\}$ and $a'_{j_0} = a_{j_0} +n-1$.  The morphism $f^*$ defined in Lemma~\ref{L:base-change} restricts to a morphism $$
f: A_{\widetilde K}^1[\theta^{a_0}, \theta^{a_0}] \times \cdots \times A_{\widetilde K}^1[\theta^{a_{m+1}}, \theta^{a_{m+1}}] \rar A_K^1[\theta^{a'_0}, \theta^{a'_0}] \times \cdots \times A_K^1[\theta^{a'_m}, \theta^{a'_m}].
$$
In other words, we change the $j_0$-th radius from $a_{j_0}$ to $a_{j_0}+n-1$.
\end{lemma}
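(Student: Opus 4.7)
The plan is a direct computation: expand $f^*(\delta_{j_0})$ explicitly, then check which term dominates on the polyannulus where $|\eta_j|=\theta^{a_j}$. The other coordinates are trivial since $f^*(\delta_j)=\eta_j$ gives $|f^*(\delta_j)|=\theta^{a_j}=\theta^{a'_j}$ automatically for $j\in J^+\setminus\{j_0\}$.

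First I would use the defining identity $\beta_{j_0}^p = b_{j_0}+x\pi_K^n$ to cancel the constant term. Expanding binomially gives
\[
f^*(\delta_{j_0}) = \sum_{k=1}^p \binom{p}{k}\beta_{j_0}^{p-k}\eta_{j_0}^k \;-\; \sum_{k=1}^n \binom{n}{k}x\,\pi_K^{n-k}\eta_0^k \;-\; \eta_{m+1}(\pi_K+\eta_0)^n,
\]
where the ``$b_{j_0}$'' and ``$x\pi_K^n$'' pieces cancel. Since $|\beta_{j_0}|=|x|=1$ (as $x$ has Gauss norm $1$ and $|x\pi_K^n|<|b_{j_0}|=1$), and since $a_0>1$ forces $|\pi_K+\eta_0|=\theta$, the term $nx\pi_K^{n-1}\eta_0$ has valuation exactly $\theta^{n-1+a_0}$, using $|n|=1$ by $p\nmid n$. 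I claim this term strictly dominates.

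Writing $a=a_0=a_{j_0}=a_{m+1}$, the valuations of the remaining terms are:
\[
\binom{p}{k}\beta_{j_0}^{p-k}\eta_{j_0}^k:\ \theta^{\beta_K+ka}\ (1\le k\le p-1),\qquad \eta_{j_0}^p:\ \theta^{pa},
\]
\[
\binom{n}{k}x\pi_K^{n-k}\eta_0^k:\ \geq\theta^{n+k(a-1)}\ (2\le k\le n),\qquad \eta_{m+1}(\pi_K+\eta_0)^n:\ \theta^{n+a}.
\]
Checking $n-1+a$ is strictly less than each of these: versus $\beta_K+ka$ uses $\beta_K\ge n$; versus $pa$ uses $a>(n-1)/(p-1)$; versus $n+k(a-1)$ for $k\ge 2$ uses $a>1$; versus $n+a$ is immediate. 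So the minimum is achieved uniquely by $nx\pi_K^{n-1}\eta_0$, yielding $|f^*(\delta_{j_0})|=\theta^{n-1+a}=\theta^{a'_{j_0}}$ at every point of the polyannulus.

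There is no real obstacle here, only careful bookkeeping: the three numerical hypotheses $p\nmid n$, $\beta_K\ge n$, and $a_0>\max\{(n-1)/(p-1),1\}$ are each used exactly to kill one family of competitors (the leading coefficient being a $p$-adic unit, the $p\beta_{j_0}^{p-1}\eta_{j_0}$ term, the $\eta_{j_0}^p$ term, and higher powers $\eta_0^k$ respectively). Once $|f^*(\delta_{j_0})|$ is established as a constant on the polyannulus, the claim about the restriction of $f$ follows from the fact that the Gauss norm of an analytic function on a polyannulus is determined by the individual $|\eta_j|$, so $f$ maps the specified polyannulus into (indeed, onto a subset of) $A_K^1[\theta^{a'_0},\theta^{a'_0}]\times\cdots\times A_K^1[\theta^{a'_m},\theta^{a'_m}]$ as required.
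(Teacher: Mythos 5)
Your proof is correct and follows essentially the same approach as the paper: both compute the Gauss norm of $f^*(\delta_{j_0})$ on the polyannulus by expanding and identifying the linear term $nx\pi_K^{n-1}\eta_0$ as the dominant one, using exactly the hypotheses $p\nmid n$, $\beta_K\geq n$, and $a_0 > \max\{(n-1)/(p-1),1\}$ to suppress the competitors. Your version merely spells out the full binomial expansion and the termwise valuation comparisons that the paper leaves implicit.
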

\begin{proof}
It suffices to verify that if $|\eta_0| = |\eta_{j_0}| = |\eta_{m+1}| = \theta^{a_0}$, then $|\delta_j| = \theta^{a_0+n-1}$; indeed
$$
\delta_{j_0} = \big( (\beta_{j_0} + \eta_{j_0})^p - \beta_{j_0}^p \big) - x\big( (\pi_K + \eta_0)^n - \pi_K^n \big) + \eta_{m+1} (\pi_K + \eta_0)^n,
$$
which has norm $\theta^{a_0+n-1}$ because the second term does and other terms have bigger norms.
\end{proof}

\begin{lemma}\label{L:IR-under-gen-rot}
Keep the notation and assumption as in the previous lemma.  Let $\calE$ be a differential module over $A_K^1[0, \theta^{a'_0}] \times \cdots \times A_K^1[0, \theta^{a'_m}]$, then $IR(f^*\calE; a_{J^+}) = IR(\calE; a'_{J^+ \cup \{m+1\}})$.
\end{lemma}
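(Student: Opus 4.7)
The plan is to reduce the claim to a direction-by-direction comparison of intrinsic radii via the generic-point characterization of Lemma~\ref{L:sp-norms-vs-gen-rad}, and then to model each direction after the proofs of Propositions~\ref{P:off-center-tame} and \ref{P:sp-norm-under-Frob}. For $j \in J \setminus \{j_0\}$ the pullback $f^{*}$ acts as the identity $\delta_j \mapsto \eta_j$ (the remainder of $f^{*}$ leaves the $\delta_j$-coordinate alone), and the Gauss norms on $F_{a'_{J^+}}$ extend isometrically to $F'_{a_{J^+\cup\{m+1\}}}$; hence $IR_{\partial_{\eta_j}}(f^{*}\calE;a_\cdot)=IR_{\partial_{\delta_j}}(\calE;a'_\cdot)$ by the base-change remark after Definition~\ref{D:spectral-norms}. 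Consequently, the whole assertion reduces to
\[
\min\bigl\{IR_{\partial_{\eta_0}},\,IR_{\partial_{\eta_{j_0}}},\,IR_{\partial_{\eta_{m+1}}}\bigr\}(f^{*}\calE;a_\cdot)\;=\;\min\bigl\{IR_{\partial_{\delta_0}},\,IR_{\partial_{\delta_{j_0}}}\bigr\}(\calE;a'_\cdot).
\]

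For each of the remaining three directions, I would erect a commutative square of generic-point embeddings as in Propositions~\ref{P:off-center-tame} and \ref{P:sp-norm-under-Frob}:
\[
\xymatrix{
F_{a'_{J^+}} \ar[r]^-{f_{\gen,j^{*}}^{*}} \ar[d]^{f^{*}} & F_{a'_{J^+}} \llbracket \pi_K^{-a'_{j^{*}}} T \rrbracket_0 \ar[d]^{\tilde f^{*}_j} \\
F'_{a_\cdot} \ar[r]^-{f'^{*}_{\gen,j}} & F'_{a_\cdot} \llbracket \pi_K^{-a_j} T' \rrbracket_0
}
\]
where the upper row is a generic shift in the direction $j^{*}$ on the $K$-side and the lower row is a generic shift in the direction $j$ on the $\widetilde K$-side. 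Chain rule computations give the formula for $\tilde f^{*}_j(T)$:
(a) $j=j_0$: $\tilde f^{*}_{j_0}(T)=(\beta_{j_0}+\eta_{j_0}+T')^{p}-(\beta_{j_0}+\eta_{j_0})^{p}$, with $j^{*}=j_0$; by Lemma~\ref{L:radius-p-th-power} and the hypotheses $a_0>(n-1)/(p-1)$ and $\beta_K\geq n$, $|T'|<r\theta^{a_0}$ implies $|\tilde f^{*}_{j_0}(T')|<r\theta^{a_0+n-1}=r\theta^{a'_{j_0}}$.
(b) $j=m+1$: $\tilde f^{*}_{m+1}(T)=-T'(\pi_K+\eta_0)^{n}$, with $j^{*}=j_0$; hence $|T'|<r\theta^{a_0}$ forces a shift of $\delta_{j_0}$ of norm $\theta^{n}|T'|<r\theta^{a_0+n}\le r\theta^{a'_{j_0}}$.
(c) $j=0$: here $\eta_0\mapsto \eta_0+T'$ shifts simultaneously $\delta_0\mapsto\delta_0+T'$ (the matching $j^{*}=0$ piece) and $\delta_{j_0}\mapsto \delta_{j_0}-(x+\eta_{m+1})\bigl[(\pi_K+\eta_0+T')^{n}-(\pi_K+\eta_0)^{n}\bigr]$, whose norm is at most $\theta^{n-1}|T'|$, again inside the allowed $\delta_{j_0}$-shift.

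Each square yields an inclusion of convergence discs that, by Lemma~\ref{L:sp-norms-vs-gen-rad}, translates into $IR_{\partial_{\eta_j}}(f^{*}\calE;a_\cdot)\geq \min\{IR_{\partial_{\delta_0}},IR_{\partial_{\delta_{j_0}}}\}(\calE;a'_\cdot)$, giving the inequality $\geq$.

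For the reverse inequality $\leq$, I would argue that the three $\widetilde K$-side directions jointly cover the two $K$-side directions surjectively on the level of generic discs: the $\eta_0$ shift produces the $\delta_0$ shift (while yielding a strictly subsidiary shift in $\delta_{j_0}$), and the pair $(\eta_{j_0},\eta_{m+1})$ together produces the generic shift of $\delta_{j_0}$ (via the linear factor $-(\pi_K+\eta_0)^{n}$ of norm $\theta^{n}$, which is why the extra dummy variable $\eta_{m+1}$ was introduced). Because triviality of a differential module on a polydisc factors as triviality on a product of one-dimensional discs (Taylor series in each variable), horizontal sections on the $\widetilde K$ side can be assembled into horizontal sections on the $K$ side of radius at least $IR(f^{*}\calE;a_\cdot)$.

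The principal obstacle will be bookkeeping in step (c): the shift in $\eta_0$ produces simultaneous perturbations of both $\delta_0$ and $\delta_{j_0}$, so one has to show that the combined perturbation lies in the convergence polydisc of $\calE$ whenever the radii are appropriate. This is exactly where the hypothesis $a_0>(n-1)/(p-1)$ (which ensures the $p$-th-power leading term dominates the $p$-linear correction in (a)) and $\beta_K\geq n$ interact; the numerology is designed so that the maximum of the two bounds from Lemma~\ref{L:radius-p-th-power} is $\leq r\theta^{a'_{j_0}}$. Once these estimates are in hand, the equality of intrinsic radii follows by taking minima and invoking Proposition~\ref{P:prop-diff-eqns}(a) to convert strict/non-strict inequalities.
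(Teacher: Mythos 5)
Your framework — reduce to direction-by-direction comparison via Lemma~\ref{L:sp-norms-vs-gen-rad}, then model each of the three ``new'' directions $\eta_0,\eta_{j_0},\eta_{m+1}$ on Propositions~\ref{P:off-center-tame} and \ref{P:sp-norm-under-Frob} — is the same as the paper's, and your estimates in (a), (b), (c) correctly establish the inequality $IR(f^{*}\calE;a_\cdot)\geq IR(\calE;a'_\cdot)$.

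The reverse inequality is where the proposal has a genuine gap. You attribute the upper bound on $IR(f^{*}\calE)$ to ``the pair $(\eta_{j_0},\eta_{m+1})$ together producing the generic shift of $\delta_{j_0}$.'' Neither direction does this. The $\eta_{j_0}$-shift is an off-centered Frobenius, and Frobenius pullback can only \emph{increase} the intrinsic radius (Proposition~\ref{P:sp-norm-under-Frob} gives $IR_{j_0}(f^{*}\calE)\geq IR_{j_0}(\calE)$, not $\leq$). The $\eta_{m+1}$-shift is linear with coefficient $(\pi_K+\eta_0)^n$ of norm $\theta^{n}$, so the $\eta_{m+1}$-disc of radius $\theta^{a_{m+1}}=\theta^{a_0}$ maps onto a disc of radius $\theta^{a_0+n}$ strictly inside the $\delta_{j_0}$-disc of radius $\theta^{a'_{j_0}}=\theta^{a_0+n-1}$; unwinding the normalizations gives $IR_{m+1}(f^{*}\calE)=\min\{1,\theta^{-1}IR_{j_0}(\calE)\}\geq IR_{j_0}(\calE)$. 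So these two directions can only witness $\geq$. This also means ``which is why the extra dummy variable $\eta_{m+1}$ was introduced'' is a misattribution: $\eta_{m+1}$ is there because $\widetilde K$ has one more $p$-basis element, not to sweep out $\delta_{j_0}$.

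The other misstep is calling the $\eta_0$-induced shift in $\delta_{j_0}$ ``strictly subsidiary.'' Its raw norm is $\theta^{n-1}|T'|$, but the disc radius in $\delta_{j_0}$ is $\theta^{a_0+n-1}$ rather than $\theta^{a_0}$, and after rescaling both shifts from $\eta_0$ (into $\delta_0$ and into $\delta_{j_0}$) are at exactly the \emph{same} relative scale: $|T'|<r\theta^{a_0}$ forces a $\delta_0$-shift below $r\theta^{a'_0}$ and a $\delta_{j_0}$-shift below $r\theta^{a'_{j_0}}$, with neither direction dominating. This is precisely where the binding constraint for the $\leq$ inequality comes from. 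The correct argument is that $IR_0(f^{*}\calE;a_\cdot)=\min\{IR_0(\calE;a'_\cdot),IR_{j_0}(\calE;a'_\cdot)\}$, and proving the $\leq$ half of this equality requires showing that the two contributions cannot cancel along the $\eta_0$-generic curve; that is where the transcendence of $x$ (so that $x+\eta_{m+1}$ is a generic unit) enters, via Proposition~\ref{P:off-center-tame}. Your proposal never invokes this no-cancellation mechanism for the $\eta_0$-direction, and the sweeping argument it substitutes does not work for the reasons above.
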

\begin{proof}
The morphism $f^*$ induces the homomorphism on the differentials: $d\delta_j \mapsto d\eta_j$ for $j \in J^+ \backslash \{j_0\}$ and $d\delta_{j_0} \mapsto p(\beta_{j_0} + \eta_{j_0})^{p-1}d\eta_{j_0} + (\pi_K + \eta_0)^n d\eta_{m+1} + n(x + \eta_{m+1})(\pi_K + \eta_0)^{n-1} d\eta_0$.  Thus,
\begin{eqnarray*}
\partial'_j|_{f^*\calE} & = & \partial_j|_\calE, \quad j \in J\backslash \{j_0\}, \\
\partial'_{j_0}|_{f^*\calE} &=& p(\beta_{j_0} + \eta_{j_0})^{p-1} \partial_{j_0}|_{\calE}, \\
\partial'_{m+1} |_{f^*\calE} &=& (\pi_K + \eta_0)^n \cdot \partial_{j_0}|_\calE, \\
\partial'_0 |_{f^*\calE} & = & \partial_0 |_\calE + n(x+\eta_{m+1})(\pi_K + \eta_0)^{n-1} \cdot \partial_{j_0}|_\calE,
\end{eqnarray*}
where $\partial'_j = \partial / \partial \eta_j$ for $j = 0, \dots, m+1$.  Thus,
\begin{eqnarray*}
IR_j(f^*\calE; a_{J^+ \cup \{m+1\}}) & = & IR_j(\calE; a'_{J^+}) \quad \forall j \in J\backslash \{j_0\}, \\
IR_{j_0}(f^*\calE; a_{J^+ \cup \{m+1\}}) &\leq& IR_{j_0}(\calE; a'_{J^+}), \\
IR_{m+1} (f^*\calE; a_{J^+ \cup \{m+1\}}) &=& \theta^n \cdot IR_{j_0} (\calE; a'_{J^+}), \\
IR_0 (f^*\calE; a_{J^+ \cup \{m+1\}}) & = & \min \big\{ IR_0 (\calE, a'_{J^+}), IR_{j_0} (\calE; a'_{J^+}) \big\},
\end{eqnarray*}
where the second inequality follows from Proposition~\ref{P:sp-norm-under-Frob} and the last equality holds by Proposition~\ref{P:off-center-tame} because $x$ is transcendental over $K$.  It follows that $IR(\calE; a'_{J^+}) = IR(f^*\calE; a_{J^+ \cup \{m+1\}})$.
\end{proof}

\begin{theorem}\label{T:AS-inv-gen-rot}
Let $L/K$ be a finite Galois extension satisfying Hypotheses~\ref{H:J-finite-set} and \ref{H:beta_K>1}.  The highest non-logarithmic ramification break of $L/K$ is invariant under the operation of adding a generic $p$-th root.
\end{theorem}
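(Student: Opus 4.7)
The plan is to deduce the invariance from a differential-module computation on thickening spaces, using Theorem~\ref{T:base-change} to identify the base change and Lemma~\ref{L:IR-under-gen-rot} to compare intrinsic radii. First, we may dispose of the easy case: if $\bar b_{j_0}^{1/p} \notin l$, then $\widetilde l = \tilde k \cdot l$ and $\calO_{\widetilde L} = \calO_L \otimes_{\calO_K} \calO_{\widetilde K}$, so the invariance $b(L/K) = b(\widetilde L/\widetilde K)$ follows directly from Proposition~\ref{P:AS-space-properties}(4') (this is the unlabeled lemma in Subsection~\ref{S:base-change}). Hence we assume $\bar b_{j_0}^{1/p} \in l$.

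Next, I would select a recursive thickening space for $L/K$ that fits the hypotheses of Theorem~\ref{T:base-change}. Since $\bar b_{j_0}^{1/p} \in l$, we may filter $l/k$ by elementary $p$-extensions starting with $k_1 = k(\bar b_{j_0}^{1/p})$, so Construction~\ref{C:small-D-sp} yields data with $\lambda_0 = 1$ and $\bar\gothc_{\lambda_0} = \bar b_{j_0}^{1/p}$; this is exactly Hypothesis~\ref{H:gen-rotation}. Starting from the standard thickening space $TS^a_{L/K,\psi}$ (which has error gauge $\geq \beta_K$) and passing through Example~\ref{Ex:D=>recursive-D}, we obtain a recursive thickening space $TS^a_{L/K,\gothR_{0,I},\gothR_\Lambda}$ with error gauge $\geq \beta_K \geq 2$. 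Applying Theorem~\ref{T:base-change} with $n=1$ and $\omega = \beta_K \geq n+1 = 2$ gives a Cartesian identification of the base change $TS^a_{L/K,\gothR_{0,I},\gothR_\Lambda} \times_{A_K^{m+1}[0,\theta^a], f} A_{\widetilde K}^{m+2}[0,\theta^a]$ with a recursive thickening space for $\widetilde L/\widetilde K$ of error gauge $\geq \omega - 1 \geq 1$, hence admissible. Via Proposition~\ref{P:recursive-ts=ts}, this is (isomorphic to) an honest thickening space $TS^a_{\widetilde L/\widetilde K, \widetilde R_{J^+\cup\{m+1\}}}$.

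Because the identification above is Cartesian and the projection $\Pi$ remains finite \'etale over the relevant \'etale loci (Theorem~\ref{T:etaleness-nonlog} applied to both extensions guarantees the needed overlap for $a$ slightly below the breaks), the associated differential module $\widetilde\calE$ for $\widetilde L/\widetilde K$ is exactly $f^*\calE$, where $\calE$ is the differential module for $L/K$ (Construction~\ref{C:diff-eqns}). Now apply Lemma~\ref{L:IR-under-gen-rot} with $n=1$: for any $s > 1$, one has $a'_{j_0} = a_{j_0} + n - 1 = a_{j_0}$, so taking the uniform tuple $a_{J^+\cup\{m+1\}} = (\underline s, s)$ yields
\[
IR(f^*\calE; \underline s) = IR(\calE; \underline s).
\]
Combining with Corollary~\ref{C:AS-break=spec-norms},
\[
b(\widetilde L/\widetilde K) = \min\{s : IR(\widetilde\calE; \underline s) = 1\} = \min\{s : IR(\calE; \underline s) = 1\} = b(L/K),
\]
modulo the auxiliary \'etaleness condition from Theorem~\ref{T:etaleness-nonlog}, which is automatic slightly inside the break. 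The main obstacle is the bookkeeping in the second step: one must arrange the recursive structure so that Hypothesis~\ref{H:gen-rotation} holds and simultaneously the starting error gauge exceeds $n = 1$, so that after the drop of $1$ in Theorem~\ref{T:base-change} the output remains admissible; this is precisely where Hypothesis~\ref{H:beta_K>1} ($\beta_K \geq 2$) is essential, and where the absolutely unramified case is excluded.
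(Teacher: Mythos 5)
Your proposal is correct and takes essentially the same route as the paper: convert the standard thickening space to a recursive one via Example~\ref{Ex:D=>recursive-D}, apply Theorem~\ref{T:base-change} with $n=1$, then use Lemma~\ref{L:IR-under-gen-rot} with $n=1$ (so $a'_{j_0}=a_{j_0}$) to identify intrinsic radii, and finish with Proposition~\ref{P:AS-break=spec-norms}/Corollary~\ref{C:AS-break=spec-norms}. You spell out two points the paper leaves implicit — the trivial case $\bar b_{j_0}^{1/p}\notin l$ (which the paper handles by the unlabeled lemma right after Notation~\ref{N:base-change}) and the arrangement of the filtration of $l/k$ so that Hypothesis~\ref{H:gen-rotation} holds — but these are just the explicit versions of what the paper asserts in passing; the only small imprecision is that the intrinsic-radius equality holds where $\calE$ is actually defined, namely for $s\geq b(L/K)-\epsilon$ with $\epsilon$ as in Theorem~\ref{T:etaleness-nonlog}, rather than for all $s>1$, which you in any case acknowledge at the end.
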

\begin{proof}
Adding a generic $p$-th root corresponds to setting $n=1$ in the notation in this subsection.
Fix a choice of $\psi_K$ in Construction~\ref{C:psi-map}.  Let $TS_{L/K, \psi_K}^a$ be the standard thickening space for $L/K$.  By Example~\ref{Ex:D=>recursive-D}, we can turn this standard thickening space into a recursive thickening space (with error gauge $\geq \beta_K$).  By Theorem~\ref{T:base-change}, $TS_{L/K, \psi_K}^a \times_{A_K^{m+1}[0, \theta^a], f} A_{\widetilde K}^{m+2}[0, \theta^a]$ is a recursive thickening space for $\widetilde L/\widetilde K$ with error gauge $\geq \beta_K - 1$, which is isomorphic to some thickening space for $\widetilde L / \widetilde K$ by Proposition~\ref{P:recursive-ts=ts}.

Let $\calE$ be the differential module over $A_K^{m+1}[0, \theta^a]$ coming from $TS_{L/K, \psi_K}^a$.  Then the differential module $f^* \calE$ is associated to $\widetilde L / \widetilde K$.  Applying Lemma~\ref{L:IR-under-gen-rot} (to the case $n=1$) gives $IR(f^*\calE; \underline s) = IR(\calE; \underline s)$ for $s \geq b(L/K) - \epsilon$ with $\epsilon>0$ as in Theorem~\ref{T:etaleness-nonlog}.  The theorem follows from  Proposition~\ref{P:AS-break=spec-norms}.
\end{proof}

Combining Theorem~\ref{T:AS-inv-gen-rot} and Proposition~\ref{P:gen-rot=>HA-thm}, we have the following.

\begin{theorem}\label{T:main-thm-nonlog}
Let $K$ be a complete discrete valuation field of mixed characteristic $(0, p)$ which is not absolutely unramified.  Let $\rho: G_K \rar GL(V_\rho)$ be a representation with finite monodromy.  Then,

(1) $\Art(\rho)$ is a non-negative integer;

(2) the subquotients $\Fil^a G_K / \Fil^{a+}G_K$ are trivial if $a \notin \QQ$ and are abelian groups killed by $p$ if $a \in \QQ_{>1}$.
\end{theorem}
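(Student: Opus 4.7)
The proof plan is essentially to assemble the two key results already established earlier. Theorem~\ref{T:AS-inv-gen-rot} provides the invariance of the highest non-logarithmic ramification break $b(L/K)$ under adding a generic $p$-th root, which is exactly the hypothesis needed in Proposition~\ref{P:gen-rot=>HA-thm}. Since we assume $K$ is not absolutely unramified, Hypothesis~\ref{H:beta_K>1} holds (at least after unramified base change, which does not affect the conductor by Proposition~\ref{P:AS-space-properties}(4)), so Theorem~\ref{T:AS-inv-gen-rot} applies to every finite Galois extension $L/K$ satisfying Hypothesis~\ref{H:J-finite-set}.

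First, I would verify that the hypotheses of Proposition~\ref{P:gen-rot=>HA-thm} are satisfied. That proposition asks for gen-$p$-th-root invariance of $b(L/K)$ for every complete discretely valued field of mixed characteristic with a fixed absolute ramification degree $\beta_K \geq 2$, for every $L/K$ satisfying Hypothesis~\ref{H:J-finite-set} (finite $p$-basis, separably closed residue field, wildly ramified totally ramified Galois extension). This is precisely the content of Theorem~\ref{T:AS-inv-gen-rot}, which in turn rests on: (i) Theorem~\ref{T:base-change}, the technical base-change comparison of recursive thickening spaces that controls the drop in error gauge when adding a generic $p$-th root; (ii) Proposition~\ref{P:recursive-ts=ts}, which converts the resulting recursive thickening space with error gauge $\geq \beta_K - 1 \geq 1$ back into an ordinary (admissible) thickening space for $\widetilde L/\widetilde K$; and (iii) the differential-module translation of ramification breaks (Corollary~\ref{C:AS-break=spec-norms}) combined with Lemma~\ref{L:IR-under-gen-rot}, which shows that the intrinsic radii are preserved by the base-change map $f^*$.

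Second, I would invoke Proposition~\ref{P:gen-rot=>HA-thm} directly. Its proof shows how the invariance statement implies both parts of the theorem: for (1), reduce via additivity and unramified base change to the case where $\rho$ is irreducible and factors exactly through a totally ramified wild Galois quotient $G_{L/K}$; reduce further to the finite $p$-basis case by adjoining $p$-power roots of elements outside the finite $p$-basis (preserving the conductor by Proposition~\ref{P:AS-space-properties}(4')); then apply Proposition~\ref{P:finite-gen-pth-roots} to pass to a non-fiercely ramified extension by iterating the gen-$p$-th-root operation (using Theorem~\ref{T:AS-inv-gen-rot} to see the conductor is preserved); finally reduce to the classical perfect residue field case where Proposition~\ref{P:classical-HA-thm} applies. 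For (2), the same reduction applied to each intermediate extension between $K$ and the fixed point field of $\Fil^{a+}G_{L/K}$ reduces to the classical Hasse-Arf theorem for perfect residue fields.

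No serious obstacle remains at this stage, since both Theorem~\ref{T:AS-inv-gen-rot} and Proposition~\ref{P:gen-rot=>HA-thm} are already proved in the excerpt. The only care needed is to note explicitly that the exclusion of the absolutely unramified case is built into Hypothesis~\ref{H:beta_K>1}, which is invoked through Theorem~\ref{T:base-change} (we need $\beta_K \geq n+1 = 2$ for $n=1$), and to observe that after the initial reduction to the imperfect wildly ramified case, an unramified base change inside $K$ preserves the absolute ramification degree, so $\beta_K \geq 2$ is maintained throughout the reductions in Proposition~\ref{P:gen-rot=>HA-thm}.
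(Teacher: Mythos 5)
Your proof is correct and is exactly the paper's approach: the paper's proof of Theorem~\ref{T:main-thm-nonlog} is literally the one-line deduction ``combining Theorem~\ref{T:AS-inv-gen-rot} and Proposition~\ref{P:gen-rot=>HA-thm},'' which you have reproduced with additional detail tracing the hypotheses. Your supplementary observations (that Hypothesis~\ref{H:beta_K>1} enters via the $\beta_K \geq n+1=2$ condition in Theorem~\ref{T:base-change}, and that $\beta_K$ is preserved under the unramified base changes used in the reduction) are accurate and match the paper's intent.
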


\subsection{Application to finite flat group schemes}
\label{S:applications}

This subsection is an analogue of \cite[Section~4.1]{Me-condI} in the mixed characteristic case.

We first recall the definition \cite{AM-sous-groupes} of Abbes-Saito ramification filtration on finite flat group schemes.

\begin{convention}
All finite flat group schemes are commutative.
\end{convention}

\begin{definition}
Let $A$ be a finite flat $\calO_K$-algebra.  Write $A = \calO_K[\serie{x_}n] / \calI$ with $\calI$ an ideal generated by $\serie{f_}r$.  For $a \in \QQ_{\geq0}$, define the rigid space
\[
X^a = \big\{ (\serie{x_}n) \in A_K^n[0,1]  \big| 
|f_i(\serie{x_}n)| \leq \theta^a,\ i = \serie{}r \big\}.
\]
The \emph{highest break} $b(A / \calO_K)$ of $A$ is the smallest number such that for all $a > b(A / \calO_K)$, $\# \pi_0^\geom(X^a) = \rank_{\calO_K}A$.  This is the same as Definition~\ref{D:AS-space} if $A = \calO_L$; but in notation, we use the ring of integers instead of the fields themselves.
\end{definition}

\begin{definition}
Now we specialize to the case when $G = \Spec A$ is a finite flat group scheme.  We have a natural map of points $G(K^\alg) \inj X^a(K^\alg)$.  Further composing with the map for geometric connected components, we obtain
$$
\sigma^a: G(K^\alg) \inj X^a(K^\alg) \rar \pi_0^\geom(X^a).
$$
By functoriality of $\sigma^a$, one see that $\pi_0^\geom(X^a)$ has a natural group structure and $\sigma^a$ is a homomorphism (\cite[2.3]{AM-sous-groupes}). Define $G^a$ to be the Zariski closure of $\ker \sigma^a$.  Also, put $G^{a+} = \varinjlim_{b>a}G^b$.
\end{definition}

\begin{lemma}\label{L:AS-base-change}
\emph{\cite[Lemme~2.1.5]{AM-sous-groupes}} Let $K' / K$ be a (not necessarily finite) extension of complete discrete valuation fields of na\"ive ramification index $e$.  Let $A$ be a finite flat $\calO_K$-algebra which is a complete intersection relative to $\calO_K$.  Put $A' = A \otimes_{\calO_K} \calO_{K'}$; then $b(A'/ \calO_{K'}) = e\cdot b(A / \calO_K)$.
\end{lemma}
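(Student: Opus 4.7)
The strategy is to identify, up to a rescaling of the radius by the na\"ive ramification index $e$, the Abbes-Saito space of $A'$ over $K'$ with the base change to $K'$ of the Abbes-Saito space of $A$ over $K$, and then to exploit the insensitivity of $\pi_0^{\geom}$ under this base change.

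Since $A$ is a complete intersection over $\calO_K$, I would pick a presentation $A \simeq \calO_K[x_1, \dots, x_n]/(f_1,\dots,f_r)$. Tensoring with $\calO_{K'}$ then gives $A' \simeq \calO_{K'}[x_1, \dots, x_n]/(f_1,\dots,f_r)$, where each $f_i$ is viewed as an element of $\calO_{K'}[x_1,\dots,x_n]$ via $\calO_K \hookrightarrow \calO_{K'}$. Set $\theta = |\pi_K|$ and $\theta' = |\pi_{K'}|$; the ramification-index assumption gives $\theta = (\theta')^{e}$. Writing $X^a$ for the Abbes-Saito space of $A$ and $Y^{a'}$ for that of $A'$ (with respect to the presentations above), the defining inequality $|f_i(x)| \leq \theta^{a}$ coincides verbatim with $|f_i(x)| \leq (\theta')^{ea}$, so I obtain a canonical isomorphism of rigid analytic spaces over $K'$:
\[
Y^{ea} \;\cong\; X^{a} \cpltotimes_{K} K'.
\]

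Next, I would show that this base change preserves the number of geometric connected components. Fixing compatible embeddings of algebraic closures $K^{\alg} \hookrightarrow (K')^{\alg}$ and passing to completions, one finds
\[
Y^{ea} \cpltotimes_{K'} \widehat{(K')^{\alg}} \;\simeq\; (X^{a} \cpltotimes_{K} \widehat{K^{\alg}}) \cpltotimes_{\widehat{K^{\alg}}} \widehat{(K')^{\alg}},
\]
and base change of an affinoid space between two algebraically closed complete valued fields preserves the (finite) set of idempotents in the coordinate ring, hence the set of connected components. Consequently $\#\pi_0^{\geom}(Y^{ea}) = \#\pi_0^{\geom}(X^{a})$. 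Combined with the equality $\rank_{\calO_{K'}} A' = \rank_{\calO_K} A$ (which follows from finite flatness), the defining condition $\#\pi_0^{\geom}(X^a) = \rank_{\calO_K} A$ becomes equivalent to $\#\pi_0^{\geom}(Y^{ea}) = \rank_{\calO_{K'}} A'$. Taking the infimum in $a$ yields the desired equality $b(A'/\calO_{K'}) = e \cdot b(A/\calO_K)$.

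The main technical point will be the invariance of $\pi_0^{\geom}$ under base change between algebraically closed complete valued fields, particularly when $K'/K$ is infinite or transcendental; this can be reduced to finitely generated subextensions of $K$ and handled by standard arguments on idempotents in affinoid algebras. The complete intersection hypothesis enters to ensure both that the Abbes-Saito space construction is independent of the presentation (via Proposition~\ref{P:AS-space-properties}(1)) and that the chosen presentation of $A$ passes cleanly to one of $A'$ under the base change.
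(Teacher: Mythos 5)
The paper does not prove this lemma; it is imported verbatim from \cite[Lemme~2.1.5]{AM-sous-groupes}, so there is no proof in this paper to compare against. On its own merits your proposal is the correct and standard argument: since $|\pi_K| = |\pi_{K'}|^e$, the defining condition $|f_i| \leq |\pi_K|^a$ of the Abbes--Saito space $X^a$ for $A$ is literally the condition $|f_i| \leq |\pi_{K'}|^{ea}$ for the induced presentation of $A'$, giving the identification $Y^{ea} \cong X^a \,\widehat{\otimes}_K\, K'$; combining this with $\rank_{\calO_{K'}} A' = \rank_{\calO_K} A$ and the invariance of the number of geometric connected components of an affinoid under complete extension of the base field yields $b(A'/\calO_{K'}) = e\cdot b(A/\calO_K)$ as claimed.

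Two points deserve adjustment. First, Proposition~\ref{P:AS-space-properties}(1) only covers the case $A = \calO_L$ for $L/K$ a finite separable extension; the independence of the construction from the chosen presentation of a general finite flat complete-intersection $\calO_K$-algebra is a separate (if analogous) statement, established in \cite{AM-sous-groupes}, and should be cited from there rather than from the field case. Second, the claim that "base change of an affinoid space between two algebraically closed complete valued fields preserves the set of idempotents" is a genuine theorem, not a formal consequence of the definitions, and carries essentially all of the weight of the lemma. You correctly flag it as the main technical point, but the sketched reduction to finitely generated subextensions is delicate because taking completed tensor products does not commute naively with passing to such subextensions; it would be cleaner to invoke directly a known geometric-connectedness result for rigid or Berkovich spaces (e.g., Conrad's results on irreducible and connected components of rigid spaces). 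With those caveats addressed, the structure of your argument is sound.
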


\begin{definition}
We say the finite flat group scheme $G$ is \emph{generically trivial} if $G \times_{\calO_k} K$ is disjoint union of copies of $\Spec K$, with some abelian group structure.
\end{definition}

\begin{theorem}
Let $G = \Spec A$ be a generically trivial finite flat group scheme over $\calO_K$.  Then $b(A / \OK)$ is a non-negative integer.
\end{theorem}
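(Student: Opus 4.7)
My plan is to deduce the theorem from Theorem~\ref{T:main-thm-nonlog} via an argument due to Hattori, which relates the Abbes-Saito break of a finite flat group scheme to the Artin conductor of an associated Galois representation. Since Theorem~\ref{T:main-thm-nonlog} requires $\beta_K \geq 2$, I first reduce to that case. If $\beta_K = 1$, I would pass to the totally ramified base change $K_n = K(\pi_K^{1/n})$ of absolute ramification degree $n$; Lemma~\ref{L:AS-base-change} gives $b(A \otimes_{\OK} \calO_{K_n} / \calO_{K_n}) = n \cdot b(A/\OK)$. Granting the conclusion in the $\beta \geq 2$ setting, the left-hand side lies in $\ZZ$ for every $n \geq 2$, so $b(A/\OK) \in \frac{1}{n}\ZZ$; intersecting over two coprime values of $n \geq 2$ yields $b(A/\OK) \in \ZZ$.

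For the core case $\beta_K \geq 2$, I would consider the Cartier dual $G^D = \Spec A^D$, which is again a finite flat commutative group scheme of the same $\OK$-rank. Since $G$ is generically trivial with $G \times K \simeq \underline{H}_K$ for a finite abelian group $H$, the generic fiber of $G^D$ is $\mu_H$, whose $K^{\alg}$-points carry the natural $G_K$-action via cyclotomic characters; this action is in general nontrivial, precisely because $G^D$ is now generically \emph{\'etale but non-split}. This produces a representation $\rho: G_K \to GL(V_\rho)$ with finite monodromy, to which Theorem~\ref{T:main-thm-nonlog} applies, yielding $\Art(\rho) \in \ZZ_{\geq 0}$.

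The main technical step is to identify $b(A/\OK)$ with (a controlled rational multiple of) $\Art(\rho)$, by combining two inputs: (i) a self-duality of the Abbes-Saito break under Cartier duality, giving $b(A/\OK) = b(A^D/\OK)$; and (ii) Hattori's identification of $b(A^D/\OK)$ with $\Art(\rho)$ (up to an explicit integer factor), obtained by a careful comparison of the rigid-analytic space $X^a$ attached to $G^D$ with the Abbes-Saito space attached to the fixed field $L$ of $\ker \rho$. Combining these with the integrality of $\Art(\rho)$ then forces $b(A/\OK)$ to land in $\ZZ_{\geq 0}$; any remaining denominator issue can be cleared by a further tame base change as in the first paragraph.

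I expect the main obstacle to be step (ii), namely the precise comparison between the ramification of $G^D$ as an $\OK$-scheme and the Artin conductor of its generic fiber viewed as a Galois representation. This requires a delicate analysis of how Cartier duality interacts with the Abbes-Saito filtration, together with explicit descriptions of the ideals defining the relevant Abbes-Saito spaces; it is the core of Hattori's argument, and is the only place where the hypothesis that $G$ be generically trivial is used in an essential way.
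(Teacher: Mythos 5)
Your tame base change reduction to $\beta_K \geq 2$ is the same as the paper's, and it is correct: Lemma~\ref{L:AS-base-change} gives $b(A \otimes_{\OK}\calO_{K_n}/\calO_{K_n}) = n\,b(A/\calO_K)$ since $A$ is a relative complete intersection, and taking two coprime $n\geq 2$ clears any denominator.

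However, the core of the argument rests on a false assertion: the Abbes--Saito break is \emph{not} invariant under Cartier duality. Take $K = \QQ_p(\zeta_p)$ and $G = \mu_p$, which is generically trivial since $\zeta_p \in K$. Here $A = \calO_K[x]/(x^p - 1)$, $\pi_K = \zeta_p - 1$ is a uniformizer, and the $p$-th roots of unity are at mutual valuation $1$. A direct computation shows that $X^a = \{|x|\leq 1,\ |x^p-1|\leq\theta^a\}$ is a single disc for $a \leq p$ (namely $|x-1|\leq\theta^{a/p}$) and splits into $p$ disjoint discs of radius $\theta^{a-p+1}$ for $a > p$; hence $b(A/\calO_K) = p$. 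On the other side, $G^D \cong \ZZ/p\ZZ$ is the constant, hence \'etale, group scheme, so $b(A^D/\calO_K) = 0$; likewise the Galois action on $G^D(K^\alg) = \Hom(\mu_p(K),\mu_p(K^\alg))$ is trivial (because $\zeta_p\in K$), so $\Art(\rho) = 0$. Thus your steps (i) and (ii) together would predict $b(A/\calO_K) = 0$, contradicting the actual value $p$. The error is entirely in step (i): there is no such self-duality of breaks; Cartier duality turns a generically constant group scheme into a generically diagonalizable one, and the resulting $\calO_K$-algebras $A$ and $A^D$ have genuinely different ramification.

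The paper's proof, following the argument attributed to Hattori and carried out in \cite[Proposition~5.1.7]{Me-condI}, does not go through Cartier duality at all. After the tame reduction to $\beta_K\geq 2$, it identifies $b(A/\calO_K)$ directly with the highest non-logarithmic ramification break of a finite Galois extension of $K$ built from the generically trivial group scheme $G$ (and with the group structure on $\pi_0^\geom(X^a)$ used to control the comparison), and then cites Theorem~\ref{T:main-thm-nonlog}. That direct identification --- not a duality statement and not the representation on $G^D(K^\alg)$, which only sees the cyclotomic character --- is the substantive content that your proposal leaves unaddressed, and the duality claim that is supposed to stand in for it is false.
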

\begin{proof}
Let $\gcd(n_1, n_2) = 1$ and let $K_{n_1}$ and $K_{n_2}$ be two tamely ramified extensions of $K$ with ramification degree $n_1$ and $n_2$, respectively.  By Lemma~\ref{L:AS-base-change}, it suffices to prove the theorem for $G \times_{\calO_K} \calO_{K_{n_1}} / \calO_{K_{n_1}}$ and $G \times_{\calO_K} \calO_{K_{n_2}} / \calO_{K_{n_2}}$, respectively.  Thus, we may assume that $\beta_K \geq 2$.  The theorem follows from Theorem~\ref{T:main-thm-nonlog} and the same argument as in \cite[Proposition~5.1.7]{Me-condI}.
\end{proof}

\subsection{Integrality for Swan conductors}
\label{S:tame}

In this subsection, we will deduce the integrality of Swan conductors from that of Artin conductors (Theorem~\ref{T:main-thm-nonlog}).  We will use the fact that the logarithmic ramification breaks behave well under tame base changes.

We will keep Hypotheses~\ref{H:J-finite-set} and \ref{H:beta_K>1} until we state Theorem~\ref{T:log-HA-thm}.

\begin{notation}\label{N:tame-extension}
Let $n \in \NN$ such that $n \equiv 1 (\mod ep)$.  Define $K_n = K(\pi_K^{1/n})$ and $L_n = LK_n$.  Since $K_n$ and $L$ are linearly independent over $K$, $\Gal(L_n/K_n) = \Gal(L/K)$.  We take the uniformizer of $K_n$ and $L_n$ to be $\pi_{K_n} = \pi_K^{1/n}$ and $\pi_{L_n} = \pi_L / \pi_{K_n}^{(n-1)/e}$, respectively.
\end{notation}

\begin{notation}
Denote $\calR_{K_n} = \calO_{K_n} \llbracket \eta_0 / \pi_{K_n}, \eta_J \rrbracket$.  Applying Construction~\ref{C:psi-map} to $K_n$ gives an approximate homomorphism $\psi_{K_n}: \calO_{K_n} \rar \calO_{K_n} \llbracket \eta_0 / \pi_{K_n}, \eta_J \rrbracket$.
\end{notation}

\begin{lemma}\label{L:tame-diag}
There exists a unique continuous $\calO_K$-homomorphism $f_n^*: \calR_K \rar \calR_{K_n}$ sending $\delta_0$ to $(\pi_{K_n} + \eta_0)^n - \pi_K$ and $\delta_j$ to $\eta_j$ for $j \in J$.  This gives an approximately commutative diagram modulo $I_{K_n} = p(\eta_0/\pi_{K_n}, \eta_J) \cdot \calR_{K_n}$:
$$
\xymatrix{
\calO_K \ar@{_{(}->}[d] \ar[r]^-{\psi_K} & \calO_K \llbracket \delta_0 / \pi_K, \delta_J \rrbracket \ar[d]^{f_n^*}\\
\calO_{K_n} \ar[r]^-{\psi_{K_n}} & \calO_{K_n} \llbracket \eta_0 / \pi_{K_n}, \eta_J \rrbracket
}
$$
\end{lemma}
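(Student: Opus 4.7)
\medskip

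\noindent\textbf{Proof plan.} The existence and uniqueness of $f_n^*$ reduces to checking that the prescribed images of the topological generators of $\calR_K$ land in $\calR_{K_n}$. The only nontrivial case is $\delta_0/\pi_K$: using $\pi_K = \pi_{K_n}^n$, the binomial expansion gives
\[
f_n^*\!\left(\frac{\delta_0}{\pi_K}\right) \,=\, \frac{(\pi_{K_n}+\eta_0)^n - \pi_{K_n}^n}{\pi_{K_n}^n} \,=\, \sum_{i=1}^{n} \binom{n}{i} \!\left(\frac{\eta_0}{\pi_{K_n}}\right)^{\!i},
\]
which lies in $\calO_{K_n}[\eta_0/\pi_{K_n}] \subset \calR_{K_n}$. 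Continuity then determines $f_n^*$ uniquely.

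\smallskip

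\noindent For the approximate commutativity modulo $I_{K_n}$, the plan is to exploit that $K$ and $K_n$ share the residue field $k$ and hence the same $p$-basis $b_J$; only the uniformizer changes from $\pi_K$ to $\pi_{K_n}$, related by $\pi_K = \pi_{K_n}^n$. Given a compatible system of $r$-th $p$-basis decompositions of $h \in \calO_K^\times$ as in \eqref{E:f-element-in-OK} used to define $\psi_K(h)$, the very same formal sums, read through $\pi_K^\ell = \pi_{K_n}^{n\ell}$, constitute a compatible system of $p$-basis decompositions of $h$ over $\calO_{K_n}$; call the resulting value $\psi_{K_n}^\circ(h)$. Because $f_n^*(\pi_K+\delta_0) = (\pi_{K_n}+\eta_0)^n$ and $f_n^*(b_j+\delta_j) = b_j + \eta_j$, applying $f_n^*$ term by term to the defining limit \eqref{E:psi-map} for $\psi_K(h)$ gives exactly $\psi_{K_n}^\circ(h)$. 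Hence for this distinguished choice, the square commutes on the nose.

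\smallskip

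\noindent By Proposition~\ref{P:psi-almost-hom}, any other compatible system of $p$-basis decompositions used to define $\psi_{K_n}(h)$ produces a value differing from $\psi_{K_n}^\circ(h)$ by an element of $I_{K_n}$; consequently $f_n^*(\psi_K(h)) - \psi_{K_n}(h) \in I_{K_n}$ for arbitrary $h \in \calO_K^\times$. To upgrade this to the full statement of approximate commutativity in the sense of Definition~\ref{D:approx-homo}, I use the construction of $\psi$ for non-units: for $h = \pi_K^a h_0$ with $h_0 \in \calO_K^\times$, one has $\psi_K(h) = (\pi_K+\delta_0)^a \psi_K'(h_0)$ while $\psi_{K_n}(h) = (\pi_{K_n}+\eta_0)^{na} \psi_{K_n}'(h_0)$. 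Since $f_n^*((\pi_K+\delta_0)^a) = (\pi_{K_n}+\eta_0)^{na}$ and this element lies in $\pi_{K_n}^{na}\calR_{K_n} = \pi_K^a \calR_{K_n}$, multiplying the previous congruence by this factor gives $f_n^*(\psi_K(h)) - \psi_{K_n}(h) \in \pi_K^a I_{K_n}$ as required.

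\smallskip

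\noindent The only subtle point — and the one that needs the most care — is verifying that ``the same formal sums'' really do form a \emph{compatible} system of decompositions in the sense of Construction~\ref{C:psi-map} when the uniformizer is rescaled from $\pi_K$ to $\pi_{K_n}^n$; this is a bookkeeping check of the compatibility condition for $r' > r$, using that the inner $(r'-r)$-th $p$-basis decomposition of each $\alpha_{r,e_J,\ell,n'}$ over $K$ works equally well over $K_n$. Everything else is a direct term-by-term computation, and no new ideas beyond Proposition~\ref{P:psi-almost-hom} are needed.
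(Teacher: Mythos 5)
Your proof is correct and follows precisely the approach the paper sketches in its one-line proof (``Follows from Proposition~\ref{P:psi-almost-hom}.\ In fact, one can carefully choose $\psi_K$ and $\psi_{K_n}$ so that the above diagram commutes.''): you build the distinguished $\psi_{K_n}^\circ$ by transporting the $p$-basis decompositions through $\pi_K = \pi_{K_n}^n$ to obtain exact commutativity, then invoke the well-definedness modulo $I_{K_n}$ from Proposition~\ref{P:psi-almost-hom} to pass to an arbitrary $\psi_{K_n}$, with the correct normalization factor $(\pi_{K_n}+\eta_0)^{na} \in \pi_K^a\calR_{K_n}$ handling non-units. No gaps.
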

\begin{proof}
Follows from Proposition~\ref{P:psi-almost-hom}.  In fact, one can carefully choose $\psi_K$ and $\psi_{K_n}$ so that the above diagram \emph{commutes}.  But we do not need this here.
\end{proof}

\begin{proposition}\label{P:tame-BC-for-TS}
Fix $a\in \QQ_{>0}$.  Let $TS_{L/K, \log, \psi_K}^a$ be the standard logarithmic thickening space.  Then the space
$$
X = TS_{L/K, \log, \psi_K}^a \times_{(A_K^1[0, \theta^{a+1}] \times A_K^m[0, \theta^a]), f_n} \big( A_{K_n}^1[0, \theta^{a+1/n}] \times A_{K_n}^m[0, \theta^a] \big)
$$
is a logarithmic thickening space for $L_n/K_n$ with error gauge $\geq n\beta_K - (n-1)$; in particular, it is admissible.
\end{proposition}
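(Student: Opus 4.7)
The plan is to carry out the base change at the level of rings, identify the target structure, and then bound the resulting error term.

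First I would write down explicit generators and relations for $\calO_{L_n}/\calO_{K_n}$. Because $n \equiv 1 \pmod{ep}$ the tame extension $K_n/K$ preserves residue fields, so $b_J$ is still a $p$-basis of $K_n$, $c_J$ still a $p$-basis of $L_n$, and $\pi_{L_n} = \pi_L/\pi_{K_n}^{(n-1)/e}$ is a uniformizer. Applying Construction~\ref{C:generators-of-calI} with the generators $(c_J,\pi_{L_n})$ gives relations $p_{J^+}^{(n)}$ with $p_0^{(n)} \in \tilde u_0^e - d(u_J)\pi_{K_n} + \pi_{K_n} N_{K_n}^{1/e}\cdot\calO_{K_n}[\tilde u_0,u_J]$ and $p_j^{(n)} \equiv p_j$ up to rewriting of lower-order terms. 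Under the change of variable $u_0 = \pi_{K_n}^{(n-1)/e}\tilde u_0$ the relation $p_0$ is transformed into $\pi_{K_n}^{n-1}\tilde u_0^e - d(u_J)\pi_K+\cdots$, which divided through by $\pi_{K_n}^{n-1}$ exactly recovers $p_0^{(n)}$ modulo its built-in remainder in $\pi_{K_n}N_{K_n}^{1/e}$.

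Next, I would perform the same substitution on the fiber product ring. Note that $|\eta_0|\le\theta^{a+1/n}=|\pi_{K_n}|^{na+1}$ and $|\eta_j|\le\theta^a = |\pi_{K_n}|^{na}$, so that $X$ is exactly coordinatized by $K_n\langle\pi_{K_n}^{-na-1}\eta_0,\pi_{K_n}^{-na}\eta_J\rangle\langle \tilde u_0,u_J\rangle$ modulo the ideal generated by $\pi_{K_n}^{-(n-1)} f_n^*\psi_K(p_0)|_{u_0=\pi_{K_n}^{(n-1)/e}\tilde u_0}$ and $f_n^*\psi_K(p_j)|_{u_0=\pi_{K_n}^{(n-1)/e}\tilde u_0}$ for $j\in J$. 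I then define $R_{J^+}^{(n)}$ as the difference between these normalized generators and $\psi_{K_n}(p_{J^+}^{(n)})$. The claim is that this is admissible with error gauge $\geq n\beta_K-(n-1)$.

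The bound on $R_{J^+}^{(n)}$ has two sources that I track separately. \emph{(A)} For each coefficient $\alpha\in\pi_K^v\calO_K$ of $p_{J^+}$, the approximate commutativity of the diagram in Lemma~\ref{L:tame-diag} gives $f_n^*\psi_K(\alpha)-\psi_{K_n}(\alpha)\in \pi_K^v I_{K_n}$, where $I_{K_n}=p(\eta_0/\pi_{K_n},\eta_J)\calR_{K_n}$; writing $p=\pi_{K_n}^{n\beta_K}$ in $K_n$ gives contributions of the form $\pi_K^{v}\cdot(\pi_{K_n}^{n\beta_K-1}\eta_0,\pi_{K_n}^{n\beta_K}\eta_J)$. \emph{(B)} The tame discrepancy $\pi_{K_n}^{-(n-1)}\psi_{K_n}(\pi_K)-(\pi_{K_n}+\eta_0)=(n-1)\eta_0+\binom{n}{2}\eta_0^2/\pi_{K_n}+\cdots+\eta_0^n/\pi_{K_n}^{n-1}$ is $ep$-divisible in its leading coefficient (as $ep\mid n-1$) and subsequent terms gain at least one extra factor of $\eta_0/\pi_{K_n}$ of norm $<1$, so the whole expression has norm $\leq|p|\cdot|\eta_0|$ and contributes to $R_0^{(n)}$ after multiplication by $\psi_{K_n}(d)(u_J)$. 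For the $p_j$ ($j\in J$) the normalization step is absent, so source (A) alone gives error gauge $\ge n\beta_K$; for $p_0$ the division by $\pi_{K_n}^{n-1}$ degrades sources (A) and (B) uniformly by $n-1$ units of valuation, giving the stated bound $n\beta_K-(n-1)$.

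The main obstacle is the bookkeeping in the last step: one must verify that the norm reductions caused by dividing $p_0$ by $\pi_{K_n}^{n-1}$, combined with the different weights that sources (A) and (B) place on $\eta_0$ versus $\eta_J$, still fit into the asymmetric conditions of Definition~\ref{D:admissible} (where the $\eta_0$-coefficient of $R_0$ lives in $N^\omega$ but the $\eta_J$-coefficient lives in $N^{\omega+1}$, and the roles are shifted for $R_j$). The worst-case contribution comes from the remainder term $\pi_K q$ in $p_0$ with $q\in N^{1/e}\calO_K[u_{J^+}]$, and a careful count there is what pins down the exact value $\omega=n\beta_K-(n-1)$; admissibility then follows at once from $\beta_K\ge 2$.
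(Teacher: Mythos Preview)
Your overall strategy matches the paper's: pull back the relations through $f_n^*$, rewrite via $u_0=\pi_{K_n}^{(n-1)/e}\tilde u_0$, and compare with $\psi_{K_n}(p_{J^+}^{(n)})$. The gap is in your bound on ``source (B)''.

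The error-gauge condition $R_0^{(n)}\in (N_{K_n}^\omega\eta_0,\,N_{K_n}^{\omega+1}\eta_J)\calS_{K_n}$ is an ideal-theoretic statement in $\calS_{K_n}=\calO_{K_n}\llbracket\eta_0/\pi_{K_n},\eta_J\rrbracket\langle\tilde u_0,u_J\rangle$; equivalently it bounds the Gauss norm on $\calS_{K_n}$ that assigns the generator $\eta_0/\pi_{K_n}$ weight~$1$. Your argument that ``subsequent terms gain at least one extra factor of $\eta_0/\pi_{K_n}$ of norm $<1$'' is using the \emph{affinoid} norm, where $|\eta_0/\pi_{K_n}|=\theta^a$; that norm is irrelevant to error gauge. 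Concretely, the last term $\eta_0^n/\pi_{K_n}^{n-1}=\eta_0\cdot(\eta_0/\pi_{K_n})^{n-1}$ of your source~(B), after multiplication by $\psi_{K_n}(d)(u_J)$, contributes to $R_0^{(n)}$ a summand congruent to $d(u_J)\cdot\eta_0\cdot(\eta_0/\pi_{K_n})^{n-1}$ modulo higher terms. Since $d(u_J)\cdot(\eta_0/\pi_{K_n})^{n-1}$ has nonzero image in $\calS_{K_n}/(\pi_{K_n},\tilde u_0)$, it does not lie in $N_{K_n}^\omega\calS_{K_n}$ for any $\omega>0$, so your $R_0^{(n)}$ fails to be admissible, let alone to have error gauge $\geq n\beta_K-(n-1)$.

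The paper's remedy is to normalize by $(\pi_{K_n}+\eta_0)^{n-1}$ rather than $\pi_{K_n}^{n-1}$. Every coefficient of $p_0$ other than that of $u_0^e$ lies in $\pi_K\calO_K$, so after applying $\psi_K$ and $f_n^*$ each such term is divisible by $f_n^*(\pi_K+\delta_0)=(\pi_{K_n}+\eta_0)^n$; dividing by $(\pi_{K_n}+\eta_0)^{n-1}$ then leaves exactly one factor $(\pi_{K_n}+\eta_0)=\psi_{K_n}(\pi_{K_n})$, so ``source (B)'' disappears for those terms and only the approximate-commutativity error from Lemma~\ref{L:tame-diag} (your source~(A)) remains. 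This is what produces the bound in the paper's displayed estimate~\eqref{E:tame-base-change-3}, and the loss of $n-1$ in the error gauge comes entirely from the division by something of valuation $n-1$, not from any additional tame discrepancy.
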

\begin{proof}
First, we have
$$
\calS_K \otimes_{\calO_K} K_n \cong \calO_{K_n} \llbracket \eta_0/\pi_{K_n}, \eta_J \rrbracket [\frac 1p] \langle u_{J^+}\rangle \big/ \big( f_n^*(\psi_K(p_{J^+})) \big).
$$

Now we consider a construction of the logarithmic thickening space of $L_n / K_n$, using the same $c_J$ as the ones for $L/K$ and $\pi_{L_n}$ in Notation~\ref{N:tame-extension}.  Therefore, the ideal $\calI_{L_n / K_n}$ is generated by $p'_{J^+}$ and $p'_0 / \pi_{K_n}^{n-1}$, where the prime means to substitute $u_0$ with $\pi_{K_n}^{(n-1)/e} u'_0$.

Lemma~\ref{L:tame-diag} implies that
\begin{equation}\label{E:tame-base-change-3}
\psi_{K_n}(p'_0 / \pi_{K_n}^{n-1}) -f_n^*(\psi_K(p'_0)) / (\pi_{K_n}+ u'_0)^{n-1} \in \pi_{K_n}^{-n+1}(\pi_{K_n}^{n\beta_K-1}\eta_0, p\eta_J) \cdot\calS_{K_n},
\end{equation}
where $\calS_{K_n} = \calO_{K_n} \llbracket \eta_0 / \pi_{K_n}, \eta_J \rrbracket \langle u'_0, u_J\rangle$.  Hence,
\begin{align*}
\calS_K \otimes_{\calO_K} K_n & \cong \calO_{K_n} \llbracket \eta_0/\pi_{K_n}, \eta_J \rrbracket [\frac 1p] \langle u'_0, u_J\rangle \big/ \big( f_n^*(\psi_K(p'_0)), f_n^*(\psi_K(p'_J)) \big) \\
& = \calS_{K_n} [\frac 1p] \big/ \big( f_n^*(\psi_K(p'_0)) / (\pi_{K_n} + \eta_0)^{n-1}, f_n^*(\psi_K(p'_J)) \big)
\end{align*}
gives rise to logarithmic thickening spaces for $L_n / K_n$ with error gauge $\geq n\beta_K - (n-1)$; note that $K_n/K$ being tamely ramified of ramification degree $n$ gives a different normalization on error gauge.
\end{proof}

\begin{proposition}\label{P:tame-base-change-1}
There exists $N \in \NN$ and $\alpha_{L/K} \in [0, 1]$ such that, for all integers $n > N$ congruent to $1$ modulo $ep$, we have
$$
n \cdot b_\log(L/K) = b(L_n/K_n) - \alpha_{L/K}.
$$
\end{proposition}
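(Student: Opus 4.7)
The plan is to express both $b_\log(L/K)$ and $b(L_n/K_n)$ as entry points of parallel lines into the zero locus $Z(\calE)$ of a single differential module, and then to read off $\alpha_{L/K}$ from the transrational polyhedral structure of $Z(\calE)$ near the logarithmic entry point.

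Fix $\psi_K$ as in Construction~\ref{C:psi-map} and let $\calE$ be the differential module on $ET_{L/K}$ associated, via Construction~\ref{C:diff-eqns}, to the standard logarithmic thickening space for $L/K$. By Proposition~\ref{P:tame-BC-for-TS}, the base change of this space along $f_n$ is an admissible logarithmic thickening space for $L_n/K_n$, whose associated differential module is the pullback $f_n^*\calE$. Proposition~\ref{P:off-center-tame} (applied in the $\delta_0$-direction with $x_0 = \pi_K$, $b = 1$), combined with the identification $f_n^*\delta_j = \eta_j$ for $j \in J$, yields
$$
IR(f_n^*\calE;\, s'_0,\, s'_J)\ =\ IR(\calE;\, s'_0 + (n-1)/n,\, s'_J),
$$
where throughout the radii are measured as exponents of $\theta = |\pi_K|$. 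Combining this identity with Corollary~\ref{C:AS-break=spec-norms} (whose \'etaleness hypotheses are met by Theorems~\ref{T:etaleness-nonlog} and~\ref{T:etaleness-log} together with the admissibility from Proposition~\ref{P:tame-BC-for-TS}) identifies $\beta := b_\log(L/K)$ as the smallest $s$ with $(\beta+1, \beta, \ldots, \beta) \in Z(\calE)$, and shows that $b(L_n/K_n) = n \gamma_n$, where $\gamma_n$ is the smallest $s$ with $(\gamma_n + (n-1)/n, \gamma_n, \ldots, \gamma_n) \in Z(\calE)$.

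Hence both breaks correspond to entry points of lines of direction $(1, \ldots, 1)$ into $Z(\calE)$, with $\ell_n = \{(s + (n-1)/n, s, \ldots, s)\}$ parallel to the log line $\ell_\infty = \{(s+1, s, \ldots, s)\}$ and shifted from it by $-1/n$ in the $s_0$-coordinate. By Proposition~\ref{P:prop-diff-eqns}(b)--(c), $Z(\calE)$ is transrational polyhedral and upward closed, hence cut out by finitely many half-spaces of the form $a_0 s_0 + \sum_{j \in J} a_j s_j \geq c$ with $(a_0, a_J) \in \ZZ_{\geq 0}^{m+1} \setminus \{0\}$. Taking the maximum of the linear equations $a_0(s + (n-1)/n) + s \sum_j a_j \geq c$ over the finitely many facets through $(\beta+1, \beta, \ldots, \beta)$ yields
$$
n(\gamma_n - \beta)\ =\ \max_{F}\, \frac{(a_0)_F}{(a_0)_F + \sum_{j \in J}(a_j)_F}\ \in\ [0, 1],
$$
which is visibly independent of $n$. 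Setting $\alpha_{L/K}$ to this maximum gives $b(L_n/K_n) - n \cdot b_\log(L/K) = \alpha_{L/K}$ for every $n > N$, with $N$ chosen large enough that, for $n > N$, the entry point of $\ell_n$ lies in the local tangent cone of $Z(\calE)$ at the log entry point.

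The main obstacle will be verifying that, for all sufficiently large $n$, the entry point of $\ell_n$ is governed only by the finite family of facets through $(\beta+1, \beta, \ldots, \beta)$. When that point is interior to a facet, this is immediate by continuity; when it is a vertex, several facets contribute, and one must use the one-sidedness of the shift (always to smaller $s_0$) to confirm that the binding half-spaces for $\ell_n$ are precisely those facets passing through the log entry point, independent of $n > N$. This reduces to an elementary finite linear-programming exercise on the local polyhedral structure afforded by Proposition~\ref{P:prop-diff-eqns}(c), after which the bound $\alpha_{L/K} \in [0,1]$ follows automatically from $(a_0)_F, (a_j)_F \geq 0$.
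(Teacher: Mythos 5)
Your proposal is correct and takes essentially the same route as the paper: both start from the standard logarithmic thickening space, use Proposition~\ref{P:tame-BC-for-TS} and Proposition~\ref{P:off-center-tame} to transfer intrinsic radii under $f_n$, and then read off the breaks via Corollary~\ref{C:AS-break=spec-norms} from the entry points of the parallel lines $\ell_n$ and $\ell_\infty$ into the polyhedral set $Z(\calE)$. The only cosmetic difference is that the paper restricts to the $2$-dimensional slice $s_1 = \cdots = s_m$ and expresses $\alpha_{L/K}$ as $1/(1-\alpha')$ with $\alpha' \in [-\infty, 0]$ the slope of the relevant boundary segment, whereas you compute the same quantity as a maximum of $(a_0)_F/((a_0)_F + \sum_j (a_j)_F)$ over facets through the log entry point; expanding $1-\alpha' = (a_0 + \sum_j a_j)/a_0$ shows the two formulas agree.
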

\begin{proof}
By Construction~\ref{C:off-center-tame}, $f_n^*$ gives a finite \'etale morphism $f_n: A_{K_n}^1[0, \theta^{1/n}) \times A_{K_n}^m[0, 1) \rar A_K^1[0, \theta) \times A_K^m[0, 1)$ for $a>0$.  Let $\calE$ denote the differential module associated to $L/K$ coming from a standard logarithmic thickening space.  By Proposition~\ref{P:tame-BC-for-TS}, $f_n^*\calE$ is a differential module associated to $L_n/K_n$ given by the thickening space $X$ therein (for some admissible subset of error gauge $\leq \beta_K n - (n-1)$).  In particular,
\[
ET_{L_n/K_n} \supseteq ET_{L/K} \times_{A_K^1[0, \theta) \times A_K^m[0, 1), f_n} A_{K_n}^1[0, \theta^{1/n}) \times A_{K_n}^m[0, 1) =: f_n^*(ET_{L/K}),
\]
where $ET_{L_n/K_n}$ is the \'etale locus with respect to this chosen admissible subset.

The morphism $f_n$ is an off-centered tame base change, as discussed in Subsection~\ref{S:pDE}.  By Proposition~\ref{P:off-center-tame}, for $s_{J^+} \subset \RR$ such that $A_K^1[0, \theta^{s_0}] \times \cdots \times A_K^1[0, \theta^{s_m}] \subset ET_{L/K}$, we have $IR(f_n^*\calE; s_{J^+}) = IR(\calE; s_0 + \frac{n-1}n, s_J)$.  Thus, by Corollary~\ref{C:AS-break=spec-norms},
\begin{align}
\nonumber b(L_n/K_n) & = n \cdot \min \big\{ s \, \big|\; A_{K_n}^{m+1}[0, \theta^s] \subseteq ET_{L_n / K_n} \textrm{ and } IR(f_n^*\calE; \underline s) = 1 \big\} \\
\label{E:tame-base-change}
&= n \cdot \min \big\{ s \, \big|\; A_{K_n}^{m+1}[0, \theta^s] \subseteq f_n^*(ET_{L/K}) \textrm{ and } IR(f_n^*\calE; \underline s) = 1 \big\} \\
\nonumber
&= n\cdot \min \big\{ s\, \big|\; A_K^1[0, \theta^{s+(n-1)/n}] \times A_K^m[0, \theta^s] \subseteq ET_{L/K} \textrm{ and } IR(\calE; s + (n-1)/n, \underline s) = 1 \big\},
\end{align}
where the second equality holds because we will see in a moment that the minimal of $s$ can be achieved inside $ET_{L/K}$.  (Here, we have an extra $n$ in the equation because we are supposed to use $|\pi_{K_n}| = \theta^{1/n}$ as the ``base scale" in Corollary~\ref{C:AS-break=spec-norms}.)

Applying Proposition~\ref{P:prop-diff-eqns}(c) to  $\calE$, we know the locus $Z(\calE) =\{(s_{J^+})|IR(\calE; s_{J^+}) = 1\}$ is transrational polyhedral in a neighborhood of $[b_\log(L/K), +\infty)^{m+1}$, namely, where $\calE$ is defined.  Hence, in a neighborhood of $s_1 = b_\log(L/K)$, the intersection of the boundary of $Z$ with the surface defined by $s_1 = \cdots = s_m$ is of the form
$$
s_0 - \alpha' s_1 = b_\log(L/K) +1 - \alpha' b_\log(L/K),
$$
where $\alpha'$ is the slope;
$\alpha' \in [-\infty, 0]$ by the monotonicity Proposition~\ref{P:prop-diff-eqns}(c).  When $n \gg 0$, it is clear that the line $s \mapsto (s+ \frac{n-1}n, s, \dots, s)$ hits the boundary of $Z$ at $s = b_\log(L/K) + 1/(n(1-\alpha'))$.  This justifies the (second (typesetting?)) equality in \eqref{E:tame-base-change}.  It follows that
$$
b(L_n/K_n) = n \cdot b_\log(L/K) + 1/(1-\alpha');
$$
the different normalizations for ramification filtrations on $G_K$ and $G_{K_n}$ give the extra factor $n$.
\end{proof}

\begin{remark}
With more careful calculation, one may prove the above proposition and Proposition~\ref{P:tame-base-change-2} below for any $n$ sufficiently large and coprime to $p$.
\end{remark}

\begin{notation}\label{N:tilde-K_n}
Assume $p>2$.  Let $(b_J)$ be a $p$-basis of $K$; it naturally gives a $p$-basis of $K_n$.  Let $K_n(x_J)^\wedge$ denote the completion of $K_n(x_J)$ with respect to the $(1, \dots, 1)$-Gauss norm, and let $K_n'$ denote the completion of the maximal unramified extension of $K_n(x_J)^\wedge$.  Set
$$
\widetilde K_n = K'_n\big((b_J + x_J \pi_{K_n}^2)^{1/p} \big), \quad \widetilde L_n = \widetilde K_nL.
$$
Denote $\beta_j = (b_j + x_j \pi_{K_n}^2)^{1/p}$ for $j \in J$.  By Lemma~\ref{L:base-change}, we have a continuous $\calO_{K_n}$-homomorphism $\tilde f: \calO_{K_n} \llbracket \eta_0/\pi_{K_n}, \eta_J \rrbracket \rar \calO_{\widetilde K_n} \llbracket \xi_0/\pi_{K_n}, \xi_J, \xi'_J \rrbracket$ such that $\tilde f^*(\eta_0) = \xi_0$ and $\tilde f^*(\eta_j) = (\beta_j + \xi_j)^p - (x_j + \xi'_j)(\pi_{K_n} + \xi_0)^2 - b_j$ for $j \in J$.  For $a>1$, it gives rise to $\tilde f: A_{\widetilde K_n}^{2m+1}[0, \theta^a] \rar A_{K_n}^{m+1}[0, \theta^a] \inj A_{K_n}^1[0, \theta^a] \times A_{K_n}^m[0, \theta^{a-1/n}]$, where the last morphism is the natural inclusion of affinoid subdomain.
\end{notation}

\begin{proposition}\label{P:tame+generic-rotation}
Assume $p>2$, $\beta_K \geq \frac{2m+n}n$, and $a\in \QQ_{>1}$.  Let $X$ be as in Proposition~\ref{P:tame-BC-for-TS}.  Then the space
$$
X \times_{(A_{K_n}^1[0, \theta^{a+1/n}] \times A_{K_n}^m[0, \theta^a]), \tilde f} A_{\widetilde K_n}^{2m+1}[0, \theta^{a+1/n}]
$$
is a thickening space for $\widetilde L_n/ \widetilde K_n$ with error gauge $\geq n\beta_K - 2m - n + 1$; in particular, it is admissible.
\end{proposition}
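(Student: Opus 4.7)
The plan is to view the base change by $\tilde f$ as the composition of $m$ single-variable generic-$p$-th-root base changes, one for each $j \in J$, and then invoke Theorem~\ref{T:base-change} at each step with parameter $n = 2$ (the exponent of $\pi_{K_n}$ in the perturbation $(b_j + x_j \pi_{K_n}^2)^{1/p}$). Concretely, factor $K_n \subset \widetilde K_n$ through intermediate fields $K_n = K_n^{(0)} \subset K_n^{(1)} \subset \cdots \subset K_n^{(m)} = \widetilde K_n$, where $K_n^{(i)}$ is obtained from $K_n^{(i-1)}$ by first completing with respect to the $1$-Gauss norm on $x_i$ and passing to the maximal unramified extension, then adjoining $(b_i + x_i \pi_{K_n}^2)^{1/p}$. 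The map $\tilde f^*$ factors correspondingly into $m$ steps, each an instance of the setup in Lemma~\ref{L:base-change} with $n = 2$; this is precisely where the hypothesis $p > 2$ enters, to ensure $n = 2$ is coprime to $p$.

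I would first convert $X$ into an equivalent recursive thickening space via Proposition~\ref{P:recursive-ts=ts}, arranging the residue-field filtration $k_n = k_0 \subsetneq \cdots \subsetneq k_r = l_n$ of Construction~\ref{C:small-D-sp} so that at iteration step $i$ the next element $\bar b_i^{1/p}$ (whenever it lies in $l_n$) appears as some $\bar{\gothc}_{\lambda_0}$, placing Hypothesis~\ref{H:gen-rotation} in force. Successively applying Theorem~\ref{T:base-change} for $i = 1, \ldots, m$ then reduces the error gauge by $n = 2$ at each step. Starting from $\omega \geq n\beta_K - (n-1)$ (the bound provided by Proposition~\ref{P:tame-BC-for-TS}) and iterating $m$ times, the final error gauge is $\geq n\beta_K - (n-1) - 2m = n\beta_K - 2m - n + 1$. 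Converting the result back to a standard thickening space via Proposition~\ref{P:recursive-ts=ts} preserves the error gauge, yielding the stated claim.

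The hypothesis $\beta_K \geq (2m+n)/n$ is precisely equivalent to $n\beta_K - 2m - n + 1 \geq 1$, which gives the admissibility of the final error gauge. The same inequality ensures the intermediate error gauges remain $\geq n + 1 = 3$: at the start of the $i$-th step the error gauge is $\geq n\beta_K - (n-1) - 2(i-1) \geq n\beta_K - n - 2m + 3 \geq 3$, matching the hypothesis $\omega \geq n + 1 = 3$ in Theorem~\ref{T:base-change}. Likewise $\beta_{K_n^{(i)}} = n\beta_K \geq n + 2m \geq 3 = n + 1$, so the absolute-ramification condition of Theorem~\ref{T:base-change} holds throughout.

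The chief technical difficulty is twofold. First, Theorem~\ref{T:base-change} is formulated for non-logarithmic recursive thickening spaces, whereas $X$ is logarithmic over $K_n$; the resolution is that in the scaling of $\widetilde K_n$ (with $\pi_{\widetilde K_n} = \pi_{K_n}$) the target polydisc $A_{\widetilde K_n}^{2m+1}[0, \theta^{a+1/n}]$ has uniform radius $|\pi_{\widetilde K_n}|^{na+1}$, which fits the non-logarithmic convention of level $na + 1$, so the logarithmic ``$+1$'' in the uniformizer direction of $X$ is absorbed after base change; nonetheless one must carefully track the intermediate ``mixed-radius'' spaces, requiring a mild extension of Steps~4 and~5 in the proof of Theorem~\ref{T:base-change}. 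Second, arranging Hypothesis~\ref{H:gen-rotation} at each step requires attention to whether $\bar b_i^{1/p}$ lies in the residue field of $L_n^{(i-1)}$: if it does, one arranges the recursive filtration to place it as some $\bar{\gothc}_{\lambda_0}$; if it does not, the base change from $L_n^{(i-1)}$ to $L_n^{(i)}$ is trivial on the level of rings of integers ($\calO_{L_n^{(i)}} = \calO_{L_n^{(i-1)}} \otimes_{\calO_{K_n^{(i-1)}}} \calO_{K_n^{(i)}}$), and one substitutes a simpler variant of the Theorem~\ref{T:base-change} argument, again with error-gauge reduction by $2$.
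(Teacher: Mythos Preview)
Your proposal is correct and follows essentially the same route as the paper: start from $X$ with error gauge $\geq n\beta_K-(n-1)$ from Proposition~\ref{P:tame-BC-for-TS}, then apply Theorem~\ref{T:base-change} $m$ times with parameter $2$, losing $2$ from the error gauge at each step. The paper's proof is a single sentence to this effect; your write-up supplies the hypothesis checks (the arithmetic for $\omega\geq 3$ and $\beta_{K_n^{(i)}}\geq 3$ at each stage) and flags the log/non-log bookkeeping and the need to arrange Hypothesis~\ref{H:gen-rotation}, all of which the paper leaves implicit. One small correction: to pass from a thickening space to a recursive one you should cite Example~\ref{Ex:D=>recursive-D} rather than Proposition~\ref{P:recursive-ts=ts}; the latter goes in the opposite direction, as you correctly use it at the end.
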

\begin{proof}
It immediately follows from Proposition~\ref{P:tame-base-change-1} and applying Theorem~\ref{T:base-change} $m$ times.
\end{proof}

\begin{proposition}\label{P:tame-base-change-2}
Assume $p>2$.  There exists $N \in \NN$ such that, for all integers $n > N$ congruent to $1$ modulo $ep$, we have
\begin{equation}\label{E:tame-BC}
n \cdot b_\log(L/K) - 1 = b(\widetilde L_n/ \widetilde K_n) - 2\alpha_{L/K},
\end{equation}
where $\alpha_{L/K}$ is the \emph{same} as in Proposition~\ref{P:tame-base-change-1}.
\end{proposition}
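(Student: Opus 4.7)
The plan is to adapt the argument of Proposition~\ref{P:tame-base-change-1} by composing the tame base change $f_n$ with the $m$-fold generic $p$-th root base change $\tilde f$ of Notation~\ref{N:tilde-K_n}. Let $\calE$ be the differential module associated to $L/K$ arising from the standard logarithmic thickening space $TS^a_{L/K,\log,\psi_K}$. By Proposition~\ref{P:tame-BC-for-TS}, $f_n^*\calE$ is the differential module associated to $L_n/K_n$; by Proposition~\ref{P:tame+generic-rotation}, its further pullback $\widetilde\calF := \tilde f^*f_n^*\calE$ is the differential module associated to $\widetilde L_n/\widetilde K_n$ (this requires $p>2$ and $n$ large enough that $\beta_K\geq (2m+n)/n$). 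Then Corollary~\ref{C:AS-break=spec-norms}, combined with $|\pi_{\widetilde K_n}|=\theta^{1/n}$, gives
\[
b(\widetilde L_n/\widetilde K_n)=n\cdot\min\{s\mid A_{\widetilde K_n}^{2m+1}[0,\theta^s]\subseteq ET_{\widetilde L_n/\widetilde K_n}\text{ and }IR(\widetilde\calF;\underline s)=1\}.
\]

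The heart of the computation is to express $IR(\widetilde\calF;\underline s)$ in terms of $IR(\calE;\cdot)$. I would do this in two steps. First, iterated application of Lemma~\ref{L:IR-under-gen-rot} (one generic $p$-th root at a time along a tower $K_n\subset\widetilde K_n^{(1)}\subset\cdots\subset\widetilde K_n^{(m)}=\widetilde K_n$) relates intrinsic radii on $\widetilde K_n$ to those on $K_n$. Each step uses $\pi_{K_n}^2$, so the exponent ``$n$'' in Lemma~\ref{L:IR-under-gen-rot} equals $2$ and the shift $a_{j_0}'=a_{j_0}+1$ is measured in units of $|\pi_{K_n}|=\theta^{1/n}$, hence equals $1/n$ in units of $\theta$. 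The net effect is that a polydisc of radius $\theta^s$ in all $2m+1$ directions of $\widetilde K_n$ translates to radii $(s,s+1/n,\dots,s+1/n)$ over $K_n$ (with $s$ for $\eta_0$ and $s+1/n$ for each $\eta_j$, $j\in J$). Second, Proposition~\ref{P:off-center-tame} applied to the off-centered tame base change $f_n$ shifts the $\eta_0$-direction by a further $(n-1)/n$, yielding
\[
IR(\widetilde\calF;\underline s)=IR(\calE;\,s+(n-1)/n,\,s+1/n,\,\dots,\,s+1/n).
\]

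By Proposition~\ref{P:prop-diff-eqns}(c), $Z(\calE)$ is transrational polyhedral near $(b_\log(L/K)+1,b_\log(L/K),\dots,b_\log(L/K))$, so for $n\geq N$ sufficiently large the line $s\mapsto(s+(n-1)/n,s+1/n,\dots,s+1/n)$ meets the same linear piece of $\partial Z(\calE)$ (restricted to $s_1=\cdots=s_m$) that was used in Proposition~\ref{P:tame-base-change-1}, namely $s_0-\alpha's_1=b_\log(L/K)+1-\alpha' b_\log(L/K)$. Substituting and simplifying yields $s(1-\alpha')=b_\log(L/K)(1-\alpha')+(1+\alpha')/n$, so $s=b_\log(L/K)+(1+\alpha')/(n(1-\alpha'))$ and therefore
\[
b(\widetilde L_n/\widetilde K_n)=ns=n\cdot b_\log(L/K)+\tfrac{1+\alpha'}{1-\alpha'}=n\cdot b_\log(L/K)-1+\tfrac{2}{1-\alpha'}=n\cdot b_\log(L/K)-1+2\alpha_{L/K},
\]
where $\alpha_{L/K}=1/(1-\alpha')$ is literally the same constant extracted in Proposition~\ref{P:tame-base-change-1}, because both propositions read off the same slope $\alpha'$ of $\partial Z(\calE)$ at the breaking point.

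The main technical obstacle I foresee is keeping the normalization scales straight: the shift from each generic $p$-th root step is $1$ in the current base field's uniformizer scale but $1/n$ in the common $\theta$-scale, and mixing these up yields an answer off by a factor of $n$. A subsidiary issue is verifying that $ET_{\widetilde L_n/\widetilde K_n}$ contains the relevant polydisc; this should follow by pulling back the containment guaranteed by Theorem~\ref{T:etaleness-log} through $\tilde f\circ f_n$, since for $n\gg 0$ the target point $(s+(n-1)/n,s+1/n,\dots,s+1/n)$ lies arbitrarily close to $(b_\log(L/K)+1,b_\log(L/K),\dots,b_\log(L/K))\in ET_{L/K}$.
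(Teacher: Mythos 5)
Your proof is correct and follows the paper's own argument step for step: pull back $\calE$ along the composite $\tilde f \circ f_n$, use Proposition~\ref{P:tame+generic-rotation} to identify the pullback as a differential module for $\widetilde L_n/\widetilde K_n$, apply Lemma~\ref{L:IR-under-gen-rot} $m$ times and then Proposition~\ref{P:off-center-tame} to get $IR(\tilde f^*f_n^*\calE;\underline s)=IR(\calE;s+\tfrac{n-1}{n},\underline{s+\tfrac 1n})$, and then intersect the line with the same facet $s_0-\alpha's_1=b_\log(L/K)(1-\alpha')+1$ of $\partial Z(\calE)$ that Proposition~\ref{P:tame-base-change-1} used. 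The paper leaves the final linear computation as ``the same argument,'' which you have spelled out; the normalization bookkeeping (each generic $p$-th root shifts by $1$ in $|\pi_{K_n}|$-scale, hence $1/n$ in $\theta$-scale, and the final answer is rescaled by $n$ because $e_{\widetilde K_n/K}=n$) is exactly as in the paper.
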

\begin{proof}
We continue with the notation from Proposition~\ref{P:tame-base-change-1}.  Previous proposition implies that $\tilde f^* f_n^* \calE$ is a differential module associated to $\widetilde L_n / \widetilde K_n$ when $n>m$.  By applying Lemma~\ref{L:IR-under-gen-rot} $m$ times, we have $IR(\tilde f^* f_n^*\calE; \underline s) = IR(f_n^*\calE; s, \underline {s+\frac 1n})$.  By Proposition~\ref{P:off-center-tame}, it further equals $IR(\calE; s+\frac{n-1}n, \underline {s+\frac 1n})$.  By the same argument as in Theorem~\ref{P:tame-base-change-1}, we deduce our result with the same $\alpha_{L/K}$.
\end{proof}

\begin{remark}\label{R:p=2}
When $p=2$, we study $\widetilde K_n = K'_n\big((b_J + x_J \pi_{K_n}^3)^{1/p} \big)$ instead; the same argument above proves the proposition with \eqref{E:tame-BC} replaced by
$$
n \cdot b_\log(L/K) - 2 = b(L_n/K_n) - 3\alpha_{L/K}.
$$
\end{remark}

For the following theorem, we do not impose any supplementary hypothesis on $K$.

\begin{theorem}\label{T:log-HA-thm}
Let $K$ be a complete discrete valuation field of mixed characteristic $(0, p)$ and let $\rho: G_K \rar GL(V_\rho)$ be a representation with finite monodromy.  Then $\Swan(\rho)$ is a non-negative integer if $p \neq 2$ and is in $\frac 12 \ZZ$ if $p=2$.
\end{theorem}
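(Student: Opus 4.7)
The plan is to bootstrap integrality of $\Swan(\rho)$ from the non-logarithmic Hasse--Arf Theorem~\ref{T:main-thm-nonlog} using the tame base-change comparison formulas of Subsection~\ref{S:tame}. The crucial observation is that Propositions~\ref{P:tame-base-change-1} and~\ref{P:tame-base-change-2} (resp.\ Remark~\ref{R:p=2} for $p=2$) share the same slope $\alpha_{L/K}$; solving the resulting linear system in the two unknowns $\alpha_{L/K}$ and $nb_\log(L/K)$ expresses $nb_\log(L/K)\cdot\dim\rho$ as an integer linear combination of $b(L_n/K_n)\dim\rho$ and $b(\widetilde L_n/\widetilde K_n)\dim\rho$, which are integers by Theorem~\ref{T:main-thm-nonlog}.

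\textbf{Reductions.} By additivity of $\Swan$ we may assume $\rho$ is irreducible. Proposition~\ref{P:AS-space-properties}(4) lets us replace $K$ by the completion of its maximal unramified extension, so we assume the residue field $k$ is separably closed and $\rho$ factors exactly through $G_{L/K}$ for a totally ramified Galois extension $L/K$. If $L/K$ is at most tame then $\Swan(\rho)=0$; if $k$ is perfect we invoke Proposition~\ref{P:classical-HA-thm}. Otherwise we reduce to the finite $p$-basis setting as in the proof of Proposition~\ref{P:gen-rot=>HA-thm}. If moreover $\beta_K = 1$, we pass to two tame extensions $K^{(e_i)} = K(\pi_K^{1/e_i})$ with $e_1, e_2 \geq 2$ coprime and prime to $p$: there $\beta_{K^{(e_i)}} = e_i \geq 2$, and $\Swan_{K^{(e_i)}}(\rho) = e_i \Swan_K(\rho)$ by Proposition~\ref{P:AS-space-properties}(4), so establishing the theorem over both $K^{(e_i)}$ and applying Bezout will transfer the conclusion to $K$. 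Hence we may assume Hypotheses~\ref{H:J-finite-set} and~\ref{H:beta_K>1}.

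\textbf{Main computation.} For an irreducible $\rho$ factoring exactly through $G_{L/K}$, and any complete discretely valued extension $F/K$ linearly independent from $L$ and inducing an isomorphism of Galois groups, we have $\Art_F(\rho) = b(L_F/F)\dim\rho$. We pick two coprime integers $n_1, n_2 \equiv 1 \pmod{ep}$ large enough that Propositions~\ref{P:tame-base-change-1} and~\ref{P:tame-base-change-2} (resp.\ Remark~\ref{R:p=2}) apply; for instance $n_i = 1 + (N+i)ep$ with $N$ large satisfies $\gcd(n_1,n_2)=1$. Since $\beta_{K_n} = \beta_{\widetilde K_n} = n\beta_K \geq 2$, Theorem~\ref{T:main-thm-nonlog} forces $b(L_n/K_n)\dim\rho,\, b(\widetilde L_n/\widetilde K_n)\dim\rho \in \ZZ$ for $n\in\{n_1,n_2\}$. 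For $p>2$, eliminating $\alpha_{L/K}$ from Propositions~\ref{P:tame-base-change-1} and~\ref{P:tame-base-change-2} gives
\begin{equation*}
n\,b_\log(L/K) \;=\; 2\,b(L_n/K_n) - b(\widetilde L_n/\widetilde K_n) - 1,
\end{equation*}
so $n\Swan_K(\rho) = 2\,\Art_{K_n}(\rho) - \Art_{\widetilde K_n}(\rho) - \dim\rho \in \ZZ$. For $p=2$ the analogous elimination using Remark~\ref{R:p=2} produces $2n\,b_\log(L/K) = 3\,b(L_n/K_n) - b(\widetilde L_n/\widetilde K_n) - 2$, hence $n\Swan_K(\rho) \in \tfrac12\ZZ$.

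\textbf{Bezout and main obstacle.} Applying the previous step to the coprime pair $(n_1,n_2)$ and using a Bezout relation $u_1 n_1 + u_2 n_2 = 1$, we conclude $\Swan_K(\rho) = u_1(n_1\Swan_K(\rho)) + u_2(n_2\Swan_K(\rho)) \in \ZZ$ for $p>2$, and $\in \tfrac12\ZZ$ for $p=2$, which together with the obvious non-negativity gives the theorem. The main subtlety is that the slope $\alpha_{L/K}$ has \emph{a priori} no rationality properties; the successful elimination relies on the two tame base-change formulas sharing the same $\alpha_{L/K}$ and having distinct integer coefficients for it. The obstruction in characteristic $2$ is precisely that the argument of Remark~\ref{R:p=2} is forced to use $(b_J + x_J\pi_{K_n}^3)^{1/p}$ in place of the square of Notation~\ref{N:tilde-K_n}: this changes the coefficients of $\alpha_{L/K}$ from $(1,2)$ to $(1,3)$ in the two formulas, so the elimination produces a factor of $2$ on $n b_\log(L/K)$ that we cannot remove, yielding only $\tfrac12\ZZ$-integrality.
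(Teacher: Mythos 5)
Your proposal is correct and follows essentially the same route as the paper: reduce to $\rho$ irreducible factoring through $G_{L/K}$ under Hypotheses~\ref{H:J-finite-set} and~\ref{H:beta_K>1}, then combine Propositions~\ref{P:tame-base-change-1} and~\ref{P:tame-base-change-2} (resp.\ Remark~\ref{R:p=2}) for two coprime $n$'s with Theorem~\ref{T:main-thm-nonlog} and a Bezout step. The only cosmetic difference is that you eliminate $\alpha_{L/K}$ explicitly to get the formula $n\,b_\log(L/K) = 2\,b(L_n/K_n) - b(\widetilde L_n/\widetilde K_n) - 1$, whereas the paper phrases the same algebra as a two-step deduction (first $\alpha_{L/K}\dim\rho \in \ZZ$, then $n_i\Swan(\rho) \in \ZZ$); the two are equivalent.
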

\begin{proof}
First, as in the proof of Proposition~\ref{P:gen-rot=>HA-thm}, we may reduce to the case when $\rho$ is irreducible and factors through a finite Galois extension $L/K$, for which Hypothesis~\ref{H:J-finite-set} hold.  In this case, $\Swan(\rho) = b_\log(L/K) \cdot \dim \rho$.

By Proposition~\ref{P:AS-space-properties}(4), we have $\Swan(\rho|_{K_n}) = n\cdot \Swan(\rho)$ for any $K_n = K(\pi_K^{1/n})$ with $\gcd (n, ep) = 1$.  We need only to prove $\Swan(\rho|_{K_n}) \in \ZZ$ for two coprime $n$'s satisfying $\gcd (n, ep) = 1$, and the statement for $\Swan(\rho)$ will follow immediately.  In particular, we may assume that $\beta_K \geq 2$.

When $p>2$, we repeat the same argument again.  There exist $n_1, n_2$ satisfying the condition of Propositions~\ref{P:tame-base-change-1} and \ref{P:tame-base-change-2} and $\gcd(n_1, n_2) = 1$.  Thus, by the non-logarithmic Hasse-Arf Theorem~\ref{T:main-thm-nonlog}, 
\begin{eqnarray*}
n_1 \Swan(\rho) + \alpha_{L/K} \dim \rho \in \ZZ, & & n_1 \Swan(\rho) + 2\alpha_{L/K} \dim \rho \in \ZZ; \\
n_2 \Swan(\rho) + \alpha_{L/K} \dim \rho \in \ZZ, & & n_2 \Swan(\rho) + 2\alpha_{L/K} \dim \rho \in \ZZ.
\end{eqnarray*}
This implies immediately that $\alpha_{L/K} \dim \rho \in \ZZ$; hence, $\Swan(\rho) \in \ZZ$.

When $p=2$, a similar argument using Remark~\ref{R:p=2} gives $\Swan(\rho) \in \frac 12 \ZZ$.
\end{proof}

\begin{remark}
When $p=2$, we expect the integrality of Swan conductors in the case $K$ is the composition of a discrete completely valued field with perfect residue field and an absolutely unramified complete discrete valuation field.  In this case, we can factor $\psi_K$ as $\calO_K \rar \calO_K \llbracket \delta_0/ \pi_K \rrbracket \rar \calO_K \llbracket \delta_0 / \pi_K, \delta_J \rrbracket$ with the second map a \emph{homomorphism}.  This fact may allow us to show that $\alpha_{L/K}$ is either 0 or 1 depending on whether $\partial_0$ dominates.

We do not know if the integrality of $\Swan(\rho)$ might fail for $p=2$ in general.
\end{remark}

\subsection{An example of wildly ramified base change}
\label{S:example}

In this subsection, we explicitly calculate an example, which we will use in the next subsection.  This example was first introduced in \cite[Proposition~2.7.11]{KSK-Swan1}.  We retain Hypotheses~\ref{H:J-finite-set} and \ref{H:beta_K>1}.

\begin{lemma}\label{L:K*-over-K}
Let $K_*$ be the finite extension of $K$ generated by a root of
\begin{equation}\label{E:swan=1-ext}
T^p + \pi_K T^{p-1} = \pi_K.
\end{equation}
Then $K_*$ is Galois over $K$.  Moreover the logarithmic ramification break $b_\log(K_*/ K) = 1$.
\end{lemma}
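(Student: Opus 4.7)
The plan is to describe all roots of $f(T) = T^p + \pi_K T^{p-1} - \pi_K$ explicitly (establishing the Galois property), and then read off $b_\log(K_*/K)$ from the logarithmic Abbes--Saito space. The Newton polygon of $f$ is a single segment from $(0,1)$ to $(p,0)$ of slope $-1/p$ (the intermediate point $(p-1,1)$ lies strictly above the segment), so every root has $v_K$-valuation $1/p$; fixing one root $\alpha$, the extension $K_* = K(\alpha)$ is totally ramified of degree $p$ with uniformizer $\pi_* := \alpha$ and $\calO_{K_*} = \calO_K[\alpha]$. To locate the other roots I would write $\beta = \alpha w$ and use $\alpha^p = \pi_K(1-\alpha^{p-1})$ to reduce $f(\beta)=0$ to $(w-1)\bigl[\sum_{i=0}^{p-1} w^{-i} - \alpha^{p-1}\bigr] = 0$; substituting $s = w^{-1} = 1+t$ and expanding $\sum_{i=0}^{p-1}(1+t)^i = p + \binom{p}{2}t + \cdots + t^{p-1}$ turns the nontrivial factor into
\[
t^{p-1} + p\cdot g(t) = \alpha^{p-1}, \qquad g \in \ZZ[t],\ g(0) = 1.
\]
Since $v_{K_*}(\alpha^{p-1}) = p-1 < p \leq p\beta_K = v_{K_*}(p)$, the balance forces $v_{K_*}(t) = 1$. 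Writing $t = \alpha u$ and dividing by $\alpha^{p-1}$ converts the equation into $u^{p-1} - 1 = -(p/\alpha^{p-1})\, g(\alpha u)$, whose right side has strictly positive $v_{K_*}$-valuation; the reduction $\bar u^{p-1} = 1$ has $p-1$ distinct simple roots in the separably closed residue field $k$, so Hensel's lemma lifts each to a unit $u \in \calO_{K_*}^\times$. This exhibits all $p$ roots of $f$ in $K_*$, proving $K_*/K$ is Galois.

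For the ramification break, the parametrization gives $\beta - \alpha = -\alpha t/(1+t)$, so $v_K(\beta - \alpha) = 2/p$; by Galois symmetry $|\alpha_i - \alpha_j| = \theta^{2/p}$ for every pair of distinct roots. Take $Z = P = \{\alpha\}$ with the relation $p_0 = f(u_0)$ and the lift $g_0 = 1 - u_0^{p-1}$ of $\alpha^p/\pi_K$; then $u_0^p - \pi_K g_0 = f(u_0)$, so all conditions of Definition~\ref{D:AS-space-log} collapse to the single constraint $|f(u_0)| \leq \theta^{a+1}$. Factoring $f(u_0) = \prod_i(u_0 - \alpha_i)$, for $u_0$ within distance $\theta^{2/p}$ of some $\alpha_i$ the remaining $p-1$ factors each have norm $\theta^{2/p}$, giving $|f(u_0)| = |u_0 - \alpha_i|\cdot\theta^{2(p-1)/p}$ and thus $|u_0 - \alpha_i| \leq \theta^{a-1+2/p}$; points $u_0$ outside all such neighborhoods have $|f(u_0)| \geq \theta$, which violates the bound for every $a > -1$. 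The discs around distinct $\alpha_i$ are strictly disjoint precisely when $a > 1$, which is exactly when the logarithmic Abbes--Saito space has $p = [K_*:K]$ geometric connected components. Hence $b_\log(K_*/K) = 1$.

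The main obstacle will be the Hensel step: after normalizing $t = \alpha u$ one must confirm that $(p/\alpha^{p-1})\, g(\alpha u)$ has strictly positive $v_{K_*}$-valuation, so that the reduced equation really is $u^{p-1} = 1$; this uses the standing assumption $\beta_K \geq 2$ (in fact $\beta_K \geq 1$ suffices to make $p\beta_K > p-1$). Once this is verified, the Newton polygon estimates and the Abbes--Saito bookkeeping are routine.
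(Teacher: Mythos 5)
Your proof is correct and runs on the same mechanism as the paper's: change variables so the reduced polynomial becomes a separable equation over the prime field, use Hensel's lemma in $\calO_{K_*}$ to split $h$ completely, then read $b_\log(K_*/K)=1$ from $v_{K_*}(\alpha_i-\alpha_j)=2$. The only real difference is the normal form: the paper shifts additively $T \mapsto \varpi + \varpi^2 T$ to land on $T^p - T$ (Artin--Schreier flavor, $p$ roots at once), whereas you parametrize multiplicatively via $\beta = \alpha w$, $w^{-1} = 1+t$, $t = \alpha u$ to land on $u^{p-1}-1$ (Kummer flavor, giving the $p-1$ nontrivial conjugates); both require only $p\beta_K > p-1$. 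You also spell out the Abbes--Saito bookkeeping, which the paper compresses into one sentence. One tiny imprecision: where you estimate $|f(u_0)| \geq \theta$ off the union of neighborhoods, the sharper bound is $|f(u_0)| > \theta^2$ and the threshold at which outside points are excluded and the $p$ discs become disjoint is $a > 1$, not $a > -1$; this does not affect the conclusion $b_\log(K_*/K)=1$.
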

\begin{proof}
Let $h(T) = T^p + \pi_K T^{p-1} - \pi_K$ and $\varpi$ a root of $h$.  It is clear that $\varpi$ is a uniformizer of $K_*$.
\begin{eqnarray*}
h(\varpi+T) &=& (\varpi+T)^p + \pi_K (\varpi +T)^{p-1} - \pi_K\\
& = & T^p + \sum_{i=1}^{p-1} \binom p i\varpi^i T^{p-i} + \pi_K \sum_{i=1}^{p-1}\binom{p-1}i \varpi^{p-1-i}T^i, \\
h(\varpi + \varpi^2 T) &=& \varpi^{2p} T^p + \pi_K \sum_{i=1}^{p-1}\binom{p-1}i \varpi^{p-1+i}T^i + \sum_{i=1}^{p-1} \binom p i\varpi^{2p-i} T^{p-i}\\
&=& \pi_K^2(1-\varpi^{p-1})^2 T^p + \pi_K^2 (1-\varpi^{p-1}) (p-1)T \\
&&+ \pi_K^2(1-\varpi^{p-1}) \sum_{i=2}^{p-1}\binom{p-1}i \varpi^{i-1}T^i + \sum_{i=1}^{p-1} \binom p i\varpi^{2p-i} T^{p-i}.
\end{eqnarray*}
Here, we organized so that the terms written in the summation are small terms.
We see that $h(\varpi + \varpi^2 T) / \pi_K^2$ is congruent to $T^p - T$ modulo $\varpi$.  By Hensel's lemma, it splits completely in $K_*$.  Hence, $K_* / K$ is Galois.  Moreover, the valuation of the difference between two distinct roots is 2.  This implies that $b_\log(K_* / K) = 1$.
\end{proof}

\begin{notation}
Denote the roots of $h(T) = T^p + \pi_K T^{p-1} - \pi_K$ by $\varpi = \serie{\varpi_}p$.

For $a>0$, the standard logarithmic thickening space $TS_{K_* / K, \log, \psi_K}^a$ for $K_* / K$ is given by
$$
\calO_{TS, K_* / K, \log, \psi_K}^{a+1} = K \langle \pi_K^{-a-1}\delta_0, \pi_K^{-a}\delta_J, z \rangle \big/ \big(z^p + (\pi_K + \delta_0)z^{p-1} - (\pi_K + \delta_0) \big).
$$
\end{notation}

\begin{lemma}\label{L:Dwork-sp-K_*-over-K}
Assume $a\in \QQ_{>1}$.  The standard logarithmic thickening space $TS_{K_*/K, \log, \psi_K}^a \times_K K_*$ is isomorphic to the product of $A_{K_*}^m[0, \theta^a]$ with the disjoint union of $p$ discs $|z-\varpi_\gamma| \leq \theta^{a-(p-2)/p}$ for $\gamma = \serie{}p$.
\end{lemma}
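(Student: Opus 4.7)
The plan is to peel off the free parameters and reduce to a one-variable Hensel/contraction analysis for each of the $p$ roots. Since the defining relation $h(z,\delta_0):=z^p+(\pi_K+\delta_0)z^{p-1}-(\pi_K+\delta_0)$ is independent of $\delta_J$, the thickening space over $K_*$ factors as $A_{K_*}^m[0,\theta^a]\times Y$, where $Y = \Spm K_*\langle\pi_K^{-a-1}\delta_0, z\rangle/\bigl(h(z,\delta_0)\bigr)$; so it suffices to identify $Y$ with the disjoint union $\bigsqcup_{\gamma=1}^p D_\gamma$ of closed discs $D_\gamma = \{|z-\varpi_\gamma|\leq\theta^{a-(p-2)/p}\}$. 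Two structural facts set up the analysis. First, the calculation $h(\varpi+\varpi^2T)/\pi_K^2 \equiv T^p-T \pmod{\varpi}$ from Lemma~\ref{L:K*-over-K} yields $|\varpi_\gamma-\varpi_{\gamma'}|=\theta^{2/p}$ for $\gamma\neq\gamma'$; second, $a>1$ forces the strict inequality $\theta^{a-(p-2)/p}<\theta^{2/p}$, so the $D_\gamma$ are pairwise disjoint.

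For each $\gamma$ I will substitute $z=\varpi_\gamma+w$ and factor
\[
h(\varpi_\gamma+w,\delta_0)=w\,Q_\gamma(w)+\delta_0\,R(w),\quad Q_\gamma(w):=\prod_{\gamma'\neq\gamma}\bigl(w-(\varpi_{\gamma'}-\varpi_\gamma)\bigr),\ \ R(w):=(\varpi_\gamma+w)^{p-1}-1,
\]
which recasts $h=0$ as the fixed-point equation $w=F_\gamma(w):=-\delta_0 R(w)/Q_\gamma(w)$. On the disc $\{|w|\leq\theta^{a-(p-2)/p}\}$ the strict bound $|w|<\theta^{2/p}$ gives $|Q_\gamma(w)|=\theta^{2(p-1)/p}$ by ultrametricity and $|R(w)|=1$, so $|F_\gamma(0)|=|\delta_0|\theta^{-2(p-1)/p}\leq\theta^{a-(p-2)/p}$ (self-map), and a direct derivative estimate gives $|F_\gamma'(w)|\leq|\delta_0|\theta^{-2}\leq\theta^{a-1}<1$ (contraction). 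Both properties use precisely the hypothesis $a>1$. The non-archimedean Banach fixed-point theorem then produces a unique analytic section $w_\gamma(\delta_0)\in D_\gamma$ over the $\delta_0$-disc, and the identity $\delta_0=-wQ_\gamma(w)/R(w)$ gives the inverse map $w\mapsto\delta_0(w)$, whose norm satisfies $|\delta_0(w)|=|w|\cdot\theta^{2(p-1)/p}$; in particular it is a radius-preserving analytic isomorphism from $D_\gamma$ (coordinatized by $z$) onto the $\delta_0$-disc as rigid $K_*$-spaces.

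To finish I will show that every point of $Y$ lies in some $D_\gamma$. A Newton-polygon argument applied to $h(z,\delta_0)=0$ forces $|z|=\theta^{1/p}$, so $|1-z^{p-1}|=1$ and therefore $\prod_\gamma(z-\varpi_\gamma)=h(z,0)=\delta_0(1-z^{p-1})$ gives $\prod_\gamma|z-\varpi_\gamma|=|\delta_0|\leq\theta^{a+1}<\theta^2$. Since $|\varpi_\gamma-\varpi_{\gamma'}|=\theta^{2/p}$, the only ultrametric configuration compatible with the product bound is one in which exactly one $|z-\varpi_\gamma|$ is strictly less than $\theta^{2/p}$ (the cases ``all equal to $\theta^{2/p}$'' and ``all $>\theta^{2/p}$'' each force $|\delta_0|\geq\theta^2$, contradicting $a>1$); for that index we read off $|z-\varpi_\gamma|=|\delta_0|/\theta^{2(p-1)/p}\leq\theta^{a-(p-2)/p}$, as needed. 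The main technical obstacle is the sharp coincidence at the threshold $a=1$: the self-map bound, the contraction constant, and the disjointness of the $D_\gamma$ all degenerate simultaneously there, which both motivates the strict hypothesis $a>1$ and explains why the radius $\theta^{a-(p-2)/p}$ is exactly the image of $\theta^{a+1}$ under the norm-scaling $|w|=|\delta_0|/|Q_\gamma(0)|$, leaving no slack in the statement.
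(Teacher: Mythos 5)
Your proof is correct and follows essentially the same strategy as the paper's: rewriting the defining relation as $\prod_{\gamma=1}^p(z-\varpi_\gamma)=\delta_0(1-z^{p-1})$, bounding the right side by $\theta^{a+1}<\theta^2$, and invoking $|\varpi_\gamma-\varpi_{\gamma'}|=\theta^{2/p}$ together with the ultrametric inequality to force $z$ into exactly one disc of radius $|\delta_0|/\theta^{2(p-1)/p}\leq\theta^{a-(p-2)/p}$. The paper's proof is terser — it gives only that ultrametric step and leaves implicit both the splitting of the $\delta_J$-directions and the converse (that every point of each disc is attained); your Banach fixed-point middle paragraph is a sound but unnecessary elaboration, since once the ultrametric analysis traps $z\in D_\gamma$, the fact that $h(z,\delta_0)$ is linear in $\delta_0$ with unit coefficient $1-z^{p-1}$ already hands you the analytic inverse $w\mapsto\delta_0(w)=-wQ_\gamma(w)/R(w)$ without any iteration.
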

\begin{proof}
We can rewrite $z^p + (\pi_K + \delta_0)z^{p-1} - (\pi_K + \delta_0)=0$ as
\begin{equation}\label{E:K_*-K}
\prod_{\gamma=1}^p \big( z-\varpi_\gamma) = \delta_0(1-z^{p-1}).
\end{equation}
Since $|z| \leq 1$, the right hand side of \eqref{E:K_*-K} has norm $\leq \theta^{a+1} < \theta^2$.  On the left hand side, for $\gamma \neq \gamma' \in \{1, \dots, p\}$, $|\varpi_\gamma - \varpi_{\gamma'}| = \theta^{2/p}$.  This forces one of $|z - \varpi_{\gamma_0}|$ to be strictly smaller than the others, for some $\gamma_0 \in \{1, \dots, p\}$.  Thus, $|z - \varpi_{\gamma_0}| = |\delta_0| / (\theta^{2/p})^{p-1} = \theta^{a-(p-2)/p}$.
\end{proof}

\begin{notation}
For $\gamma = \serie{}p$, we define the $K_*$-homomorphism $f_\gamma^*: \calO_K \llbracket \delta_0 / \pi_K \rrbracket \rar \calO_{K_*} \llbracket \eta_0 / \varpi_\gamma \rrbracket$ by sending $\delta_0$ to
\begin{equation} \label{E:KSK-rotation}
\frac {(\varpi_\gamma + \eta_0)^p} {1 - (\varpi_\gamma + \eta_0)^{p-1}} - \pi_K = \sum_{n=0}^\infty \big( (\varpi_\gamma + \eta_0)^{p+n(p-1)} - \varpi_\gamma^{p+n(p-1)} \big).
\end{equation}
\end{notation}

\begin{lemma} \label{L:KSK-rotation}
For $a >1$, $f_\gamma^*$ induces a $K$-morphism $f_\gamma: A_{K_*}^1[0, \theta^{a-(p-2)/p}] \rar A_K^1[0, \theta^{a+1}]$, which is an isomorphism when we tensor the target with $K_*$ over $K$.  Moreover, if we use $F_{a+1}$ and $F^*_{a-(p-2)/p}$ to denote the completion of $K(\delta_0)$ and $K_*(\eta_0)$ with respect to the $\theta^{a+1}$-Gauss norm and $\theta^{a+(p-2)/p}$-Gauss norm, respectively, then $f_\gamma^*$ extends to a homomorphism $F_{a+1} \rar F^*_{a-(p-2)/p}$.
\end{lemma}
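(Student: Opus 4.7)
The plan is to realize $f_\gamma^*$ as parametrizing the $\gamma$-th geometric connected component of the logarithmic thickening space $TS_{K_*/K, \log, \psi_K}^{a+1}$ after base change to $K_*$, as identified in Lemma~\ref{L:Dwork-sp-K_*-over-K}. First, I would observe that substituting $z = \varpi_\gamma + \eta_0$ into the defining equation $z^p + (\pi_K + \delta_0)z^{p-1} - (\pi_K + \delta_0) = 0$ and solving for $\delta_0$ reproduces exactly the first expression for $f_\gamma^*(\delta_0)$; the second expression in \eqref{E:KSK-rotation} follows by expanding $1/(1 - (\varpi_\gamma + \eta_0)^{p-1})$ as a convergent geometric series (valid since $|\varpi_\gamma + \eta_0|^{p-1} = \theta^{(p-1)/p} < 1$) and applying the identity $\pi_K = \varpi_\gamma^p/(1 - \varpi_\gamma^{p-1})$ from \eqref{E:swan=1-ext}.

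To verify $|f_\gamma^*(\delta_0)| \leq \theta^{a+1}$ when $|\eta_0| \leq \theta^{a-(p-2)/p}$, write
\[
f_\gamma^*(\delta_0) = \bigl((\varpi_\gamma + \eta_0)^p - \varpi_\gamma^p\bigr) h(\eta_0) + \varpi_\gamma^p \bigl(h(\eta_0) - h(0)\bigr),
\]
where $h(\eta_0) = 1/(1 - (\varpi_\gamma + \eta_0)^{p-1})$ has norm $1$. Direct expansion of $(\varpi_\gamma + \eta_0)^p - \varpi_\gamma^p$ bounds its norm by $\theta^{a+1}$: the $\eta_0^p$ summand contributes $\theta^{pa-p+2} \leq \theta^{a+1}$ (using $a \geq 1$), while each cross term $\binom{p}{k}\varpi_\gamma^{p-k}\eta_0^k$ for $1 \leq k \leq p-1$ has norm $\leq \theta^{\beta_K + a + 1/p} < \theta^{a+1}$ since $\beta_K \geq 2$. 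Similarly $h(\eta_0) - h(0)$ equals a unit times $(\varpi_\gamma + \eta_0)^{p-1} - \varpi_\gamma^{p-1}$, whose norm is at most $\theta^a$ (the dominant contribution being $(p-1)\varpi_\gamma^{p-2}\eta_0$), so $|\varpi_\gamma^p(h(\eta_0)-h(0))| \leq \theta^{a+1}$. For the isomorphism over $K_*$, the substitution $\eta_0 \leftrightarrow z - \varpi_\gamma$ identifies $A_{K_*}^1[0,\theta^{a-(p-2)/p}]$ with the $\gamma$-th disc in $TS_{K_*/K, \log, \psi_K}^{a+1} \times_K K_*$ from Lemma~\ref{L:Dwork-sp-K_*-over-K}; the projection to $\delta_0$ coincides with $f_\gamma \times_K K_*$, and its inverse sends $\delta_0$ to $z - \varpi_\gamma$, where $z$ is the unique root of $T^p + (\pi_K + \delta_0)T^{p-1} - (\pi_K + \delta_0)$ near $\varpi_\gamma$ given by Hensel's lemma, exactly as in the proof of Lemma~\ref{L:K*-over-K}.

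For the Gauss-norm extension, the coefficient of $\eta_0$ in $f_\gamma^*(\delta_0)$ is $\varpi_\gamma^{p-1}(p - \varpi_\gamma^{p-1})/(1 - \varpi_\gamma^{p-1})^2$, of norm $\theta^{2(p-1)/p}$ (since $\beta_K \geq 2 > (p-1)/p$ forces $|p|<\theta^{(p-1)/p}$), contributing exactly $\theta^{a+1}$ to $|f_\gamma^*(\delta_0)|_{a-(p-2)/p}$; all higher-order coefficient contributions are $\leq \theta^{a+1}$ by the estimates of the previous paragraph. Hence $|f_\gamma^*(\delta_0)|_{a-(p-2)/p} = \theta^{a+1}$, and by multiplicativity of Gauss norms $|f_\gamma^*(P)|_{a-(p-2)/p} = |P|_{a+1}$ for every $P \in K[\delta_0]$, so $f_\gamma^*$ extends continuously to a homomorphism $F_{a+1} \to F^*_{a-(p-2)/p}$. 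The main obstacle is the somewhat fussy norm bookkeeping in the middle paragraph --- no individual inequality is delicate, but several competing contributions must be controlled simultaneously via the standing hypotheses $a > 1$ and $\beta_K \geq 2$.
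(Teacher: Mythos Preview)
Your argument is correct. The paper's own proof is a single line: it simply observes that in the geometric-series expansion \eqref{E:KSK-rotation} the dominant linear contribution is $(2p-1)\varpi_\gamma^{2p-2}\eta_0$ (the $n=1$ term; the $n=0$ term $p\varpi_\gamma^{p-1}\eta_0$ is smaller because $\beta_K \geq 2 > (p-1)/p$), which has norm $\theta^{(2p-2)/p}$, and leaves the remaining verifications implicit. Your route differs in two ways. First, instead of reading off the leading term from the series, you split $f_\gamma^*(\delta_0)$ as $\bigl((\varpi_\gamma+\eta_0)^p-\varpi_\gamma^p\bigr)h(\eta_0) + \varpi_\gamma^p\bigl(h(\eta_0)-h(0)\bigr)$ and bound each piece separately; this is a bit heavier but makes every estimate explicit. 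Second, for the isomorphism over $K_*$ you invoke the thickening-space interpretation and Hensel's lemma (your substitution $z=\varpi_\gamma+\eta_0$ into $z^p+(\pi_K+\delta_0)z^{p-1}=(\pi_K+\delta_0)$ indeed recovers $f_\gamma^*(\delta_0)$), whereas the paper's viewpoint is that once the linear coefficient is a unit of norm $\theta^{(2p-2)/p}$ and all higher-order terms are smaller, the map is automatically a rescaled analytic isomorphism of discs. Both approaches land on the same numerical fact $|g'(\varpi_\gamma)|=\theta^{(2p-2)/p}$; yours supplies more scaffolding, the paper's is terser but assumes the reader fills in the routine norm checks you wrote out.

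One small notational slip: you refer to $TS_{K_*/K,\log,\psi_K}^{a+1}$, but the relevant space in Lemma~\ref{L:Dwork-sp-K_*-over-K} is indexed $TS_{K_*/K,\log,\psi_K}^{a}$ (with $|\delta_0|\leq\theta^{a+1}$). This does not affect the mathematics.
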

\begin{proof}
The statement follows from the fact that the leading term in \eqref{E:KSK-rotation} is $(2p-1)\varpi_\gamma^{2p-2} \eta_0$.
\end{proof}

\begin{proposition}\label{P:IR-KSK-trick}
Assume $a>1$.  Let $\calE$ be a differential module over $A_K^1[0, \theta^{a+1}]$.  For each $\gamma \in \{ \serie{}p\}$, this gives a differential module $f_\gamma^*\calE$ over $A_{K_*}^1[0, \theta^{a-(p-2)/p}]$.  Then we have
$$
IR_0(f_\gamma^*\calE; a-(p-2)/p) = IR_0(\calE; a+1).
$$
\end{proposition}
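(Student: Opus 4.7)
The plan is to adapt the strategy of Propositions~\ref{P:off-center-tame} and~\ref{P:sp-norm-under-Frob}: apply Lemma~\ref{L:generic-point} at the generic points of the $\delta_0$- and $\eta_0$-coordinates respectively, then show that the induced map on formal Taylor discs identifies regions of convergence.  First I would construct the commutative diagram
$$
\xymatrix{
F_{a+1} \ar[d]^{f_\gamma^*} \ar[r]^-{f_{\gen,0}^*} & F_{a+1} \llbracket \pi_K^{-(a+1)} T_0 \rrbracket_0 \ar[d]^{\tilde f_\gamma^*} \\
F^*_{a-(p-2)/p} \ar[r]^-{f'^*_{\gen,0}} & F^*_{a-(p-2)/p} \llbracket \varpi_\gamma^{-p(a-(p-2)/p)} T'_0 \rrbracket_0
}
$$
where $\tilde f_\gamma^*$ extends $f_\gamma^*$ via $\tilde f_\gamma^*(T_0) = g(\eta_0+T'_0) - g(\eta_0)$ with $g(\eta) = \frac{(\varpi_\gamma+\eta)^p}{1-(\varpi_\gamma+\eta)^{p-1}} - \pi_K$.

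The crux is then to establish that for each $r\in[0,1]$, $\tilde f_\gamma^*$ induces an isomorphism of discs
$$
F^*_{a-(p-2)/p} \times_{f_\gamma^*, F_{a+1}} \bigl( A^1_{F_{a+1}}[0, r\theta^{a+1}) \bigr) \;\cong\; A^1_{F^*_{a-(p-2)/p}}[0, r\theta^{a-(p-2)/p}).
$$
Expanding \eqref{E:KSK-rotation} binomially yields
$$
g(\eta_0+T'_0) - g(\eta_0) = \sum_{n=0}^\infty \sum_{k=1}^{p+n(p-1)} \binom{p+n(p-1)}{k} (\varpi_\gamma+\eta_0)^{p+n(p-1)-k} T'^k_0.
$$
Since $a>1$ forces $|\eta_0|\leq\theta^{a-(p-2)/p}<\theta^{1/p}=|\varpi_\gamma|$, we have $|\varpi_\gamma+\eta_0|=\theta^{1/p}$; the $(k,n)=(1,1)$ contribution $(2p-1)(\varpi_\gamma+\eta_0)^{2p-2}T'_0$ has norm exactly $\theta^{(2p-2)/p}|T'_0|=r\theta^{a+1}$ when $|T'_0|=r\theta^{a-(p-2)/p}$, while a case-by-case $p$-adic bookkeeping (using $|p|=\theta^{\beta_K}$ with $\beta_K\geq 2$, and invoking $a>1$ when $(k,n)=(p,0)$) confirms every other term has strictly smaller norm.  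This yields the forward inclusion.  For the reverse inclusion, since $g'(0)=(2p-1)\varpi_\gamma^{2p-2}+(\text{terms of smaller norm})$ is a unit of norm $\theta^{(2p-2)/p}$, a formal Newton--Raphson inversion analogous to the binomial inversion in Proposition~\ref{P:off-center-tame} produces a power series expressing $T'_0$ in terms of $T_0$ which converges on $|T_0|<r\theta^{a+1}$ to an element of norm $<r\theta^{a-(p-2)/p}$.

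With this disc isomorphism in hand, Lemma~\ref{L:sp-norms-vs-gen-rad} applied in both directions gives
\begin{align*}
IR_0(\calE;a+1) \geq r &\iff f_{\gen,0}^*(\calE\otimes F_{a+1}) \text{ is trivial on } A^1_{F_{a+1}}[0,r\theta^{a+1}) \\
&\iff f'^*_{\gen,0}(f_\gamma^*\calE\otimes F^*_{a-(p-2)/p}) \text{ is trivial on } A^1_{F^*_{a-(p-2)/p}}[0,r\theta^{a-(p-2)/p}) \\
&\iff IR_0(f_\gamma^*\calE;a-(p-2)/p) \geq r,
\end{align*}
proving the desired equality.  The main obstacle will be the bookkeeping for the disc isomorphism: one must verify that the $(k,n)=(1,1)$ term strictly dominates, even against terms like $(k,n)=(p,0)$ where $\binom{p}{p}=1$ contributes a pure $T'^p_0$, and against terms like $(k,n)=(1,n)$ with $p\mid n$ where $|p+n(p-1)|$ is small but the exponent of $\varpi_\gamma$ is large.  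These checks, together with the convergence of the inverse power series, depend essentially on both $\beta_K\geq 2$ and $a>1$.
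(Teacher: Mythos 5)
Your proof is correct and takes essentially the same approach as the paper's: the commutative square from Lemma~\ref{L:generic-point}, a disc-isomorphism claim whose forward inclusion rests on identifying the dominant linear term $(2p-1)(\varpi_\gamma+\eta_0)^{2p-2}T'_0$ of norm $\theta^{(2p-2)/p}|T'_0|$ and bounding everything else, a reverse inclusion via recursive substitution (your Newton--Raphson inversion), and the conclusion via Lemma~\ref{L:sp-norms-vs-gen-rad}. The paper compresses your term-by-term bookkeeping into the single containment $T_0 \in (2p-1)(\varpi_\gamma+\eta_0)^{2p-2}T'_0 + \bigl((\varpi_\gamma+\eta_0)^{2p-1}T'_0, T'^p_0\bigr)\cdot\calO_{K_*}\langle\varpi_\gamma^{-pa+p-2}\eta_0\rangle\llbracket\varpi_\gamma^{-pa+p-2}T'_0\rrbracket$, but the cases you isolate (terms with a factor of $p$, and the pure $T'^p_0$ term needing $a>1$) are exactly what that containment encodes, and your stated conclusion $|T_0|<r\theta^{a+1}$ is the correct exponent.
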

\begin{proof}
The proof is similar to Proposition~\ref{P:off-center-tame}.  By Lemma~\ref{L:KSK-rotation}, we have the following commutative diagram
$$
\xymatrix{
F_{a+1} \ar[d]^{f_\gamma^*} \ar[r]^-{f_\gen^*} & F_{a+1} \llbracket \pi_K^{-a-1} T_0 \rrbracket_0 \ar[d]^{f_\gamma^*} \\
F^*_{a-(p-2)/p} \ar[r]^-{f_\gen^*} & F^*_{a-(p-2)/p} \llbracket \varpi_\gamma^{-pa +p-2} T'_0 \rrbracket_0 
}
$$
where we extend $f_\gamma^*$ by $f_\gamma^* (T_0) = \dfrac {(\varpi_\gamma + \eta_0 + T'_0)^p} {1 - (\varpi_\gamma + \eta_0 + T'_0)^{p-1}} - \dfrac {(\varpi_\gamma + \eta_0)^p} {1 - (\varpi_\gamma + \eta_0)^{p-1}}$.

We claim that for $r \in [0, 1)$, $f_\gamma^*$ induces an isomorphism between
$$
F^*_{a-(p-2)/p} \times_{f_\gamma^*, F_{a+1}} \big( A_{F_{a+1}}^1[0,  r \theta^{a+1}) \big) \simeq A_{F^*_{a-(p-2)/p}}^1[0, r \theta^{a-(p-2)/p}).
$$
Indeed, if $|T'_0| < r \theta^{a-(p-2)/p}$, then
\begin{eqnarray*}
T_0 &=& \frac {(\varpi_\gamma + \eta_0 + T'_0)^p} {1 - (\varpi_\gamma + \eta_0 + T'_0)^{p-1}} - \frac {(\varpi_\gamma + \eta_0)^p} {1 - (\varpi_\gamma + \eta_0)^{p-1}} \\
&=& \big( (\varpi_\gamma + \eta_0 + T'_0)^p - (\varpi_\gamma + \eta_0)^p\big) + \big( (\varpi_i + \eta_0 + T'_0)^{2p-1} - (\varpi_\gamma + \eta_0)^{2p-1}\big) + \cdots \\
& \in & (2p-1) (\varpi_\gamma + \eta_0)^{2p-2}T'_0 + \big( (\varpi_\gamma + \eta_0)^{2p-1}T'_0, T'^p_0 \big) \cdot \calO_{K_*} \langle \varpi_\gamma^{-pa +p-2} \eta_0 \rangle \llbracket \varpi_\gamma^{-pa +p-2} T'_0 \rrbracket
\end{eqnarray*}
Hence, $|T_0| = \theta^{(2p-2)/p} \cdot |T'_0| < r \theta^a$.

Conversely, if $|T_0| < r\theta^a$, we rewrite the above equation as
\begin{equation}\label{E:sp-norm-KSK-trick-1}
T'_0 \in \frac 1 {(2p-1)(\varpi_\gamma + \eta_0)^{2p-2}} T_0 + (\varpi_\gamma T'_0) \cdot \calO_{K_*} \langle \varpi_\gamma^{-pa +p-2} \eta_0 \rangle \llbracket \varpi_\gamma^{-pa +p-2} T'_0 \rrbracket.
\end{equation}
We substitute \eqref{E:sp-norm-KSK-trick-1} back into itself recursively.  The equation converges to a $T'_0$, which is an inverse.

Therefore, Lemma~\ref{L:sp-norms-vs-gen-rad} implies that for $r \in [0, 1)$,
\begin{eqnarray*}
& & IR_0(\calE; a + 1) \leq r \\
&\LRar& f_\gen^*(\calE \otimes F_{a+1}) \textrm{ is trivial on }A_{F_{a+1}}^1[0, r\theta^{a+1}) \\
&\LRar& \tilde f_\gamma^* f_\gen^*(\calE \otimes F_{a+1}) = f_\gen^* \big(f_\gamma^*\calE \otimes F^*_{a-(p-2)/p} \big) \textrm{ is trivial on }A_{F^*_{a-(p-2)/p}}^1[0, r\theta^{a-(p-2)/p})\\
&\LRar& IR_0(f_\gamma^*\calE; a-(p-2)/p) \leq r.
\end{eqnarray*}
The proposition follows.
\end{proof}

\begin{construction}\label{C:psi-K*}
Fix a $p$-basis $(b_J)$ of $K$; it naturally gives a $p$-basis of $K_*$.  Fix a choice of $\psi_K: \calO_K \rar \calO_K \llbracket \delta_0 / \pi_K, \delta_J \rrbracket$ as in Construction~\ref{C:psi-map}.  We will use the method in Construction~\ref{C:psi-map} to define $\psi_{K_*, \gamma}$ for $\gamma = \serie{}p$ such that the following diagram \emph{commutes}.
\begin{equation} \label{E:KK_*-base-change}
\xymatrix{
\calO_K \ar@{_{(}->}[d] \ar[r]^-{\psi_K} & \calO_K \llbracket \delta_0 / \pi_K, \delta_J \rrbracket \ar[d]^{f_\gamma^*}\\
\calO_{K_*} \ar[r]^-{\psi_{K_*}} & \calO_{K_*} \llbracket \eta_0 / \varpi_\gamma, \delta_J \rrbracket
}
\end{equation}

For any element $h \in \calO_{K_*}$, first write $h = \sum_{i = 0}^{p-1} h_i \varpi_\gamma^i$ where $h_i \in \calO_K$.  As in Construction~\ref{C:psi-map}, write each of $h_i$ as $h_i^\circ \pi_K^{e_i}$ for $e_i = v_K(h_i)$ and $h_i^\circ \in \OK$; chose a compatible system of $r$-th $p$-basis decomposition of $h_i^\circ$ as
$$
h_i^\circ = \sum_{e_J = 0}^{p^r-1} b_J^{e_J} \Big( \sum_{n=0}^\infty \big( \sum_{n'=0}^{\lambda_{i, (r), e_J, n}} \alpha_{i, (r), e_J, n, n'}^{p^r}\big) \pi_K^n \Big)
$$
for some $\alpha_{i, (r), e_J, n, n'} \in \calO_K^\times \cup \{0\}$ and some $\lambda_{i, (r), e_J, n} \in \ZZ_{\geq 0}$.  We choose the system of $r$-th $p$-basis decomposition of $h / \varpi_\gamma^{v_{K_*}(h)}$ to be
$$
\frac h{\varpi_\gamma^{v_{K_*}(h)}} = \frac 1{\varpi_\gamma^{v_{K_*}(h)}} \sum_{i = 0}^{p-1} \varpi_\gamma^i
\sum_{e_J = 0}^{p^r-1} b_J^{e_J} \Big( \sum_{n=0}^\infty \big( \sum_{n'=0}^{\lambda_{i, (r), e_J, n}} \alpha_{i, (r), e_J, n, n'}^{p^r}\big) (\varpi_\gamma^{p-1} + \varpi_\gamma^{2p-1} + \cdots)^{n+e_i} \Big)
$$
and define $\psi_{K_*, \gamma}(h)$ to be the limit
$$
\lim_{r \rar +\infty} 
\sum_{i = 0}^{p-1} (\varpi_\gamma + \eta_0)^i
\sum_{e_J = 0}^{p^r-1} (b_J + \delta_J)^{e_J} \Big( \sum_{n=0}^\infty \big( \sum_{n'=0}^{\lambda_{i, (r), e_J,n}} \alpha_{i, (r), e_J, n, n'}^{p^r}\big) \big( (\varpi_\gamma + \eta_0)^{p-1} + (\varpi_\gamma+ \eta_0)^{2p-1} + \cdots \big)^{n+e_i} \Big).
$$

This gives a $\psi_{K_*, \gamma}$ defined in the way of Construction~\ref{C:psi-map}; the diagram \eqref{E:KK_*-base-change} is commutative.
\end{construction}

\begin{hypo} \label{H:L-Galois}
For the rest of this subsection, let $L/K_*$ be a finite Galois extension satisfying Hypotheses~\ref{H:J-finite-set} and \ref{H:beta_K>1} and such that $L/K$ is Galois.
\end{hypo}

\begin{proposition}\label{P:BC-K*-over-K}
Let $a\in \QQ_{>1}$. Then there exists admissible $(R_{J^+}) \subset (\delta_{J^+}) \cdot \calS_K$ such that the logarithmic thickening space for $L/K$, after extension of scalars from $K$ to $K_*$, is isomorphic to a disjoint union of $p$ (different) logarithmic thickening spaces for $L / K_*$:
$$
TS_{L/K, \log, R_{J^+}}^a \times_K K_* \isom \coprod_{\gamma = 1}^p TS_{L/K_*, \log,  \psi_{K_*, \gamma}}^{pa -p+1}.
$$
\end{proposition}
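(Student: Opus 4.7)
The plan is to organize the thickening space for $L/K$ so that the intermediate extension $K_*$ becomes visible at the level of generators, apply the Dwork-type splitting of Lemma~\ref{L:Dwork-sp-K_*-over-K} inside the resulting space, and match each of the resulting geometric components with a thickening space for $L/K_*$ via the off-centered Frobenius map $f_\gamma^*$ of Lemma~\ref{L:KSK-rotation}. The bridge between the $K$-side and the $K_*$-side is the commutative diagram~\eqref{E:KK_*-base-change} from Construction~\ref{C:psi-K*}.

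First I would use the recursive formalism of Subsection~\ref{S:recursive-TS}. Since $K_* \subseteq L$ forces $p \mid e_{L/K}$, I can take one of the intermediate uniformizer-like generators $\pi_{L,i}$ to be $\varpi$; by Construction~\ref{C:small-D-sp} together with Lemma~\ref{L:K*-over-K}, the corresponding relation $\gothp_{0,i}$ is exactly $\gothu_{0,i}^p + \pi_K \gothu_{0,i}^{p-1} - \pi_K$ up to an admissible correction, so after applying $\psi_K$ it becomes the Dwork-like equation of Lemma~\ref{L:Dwork-sp-K_*-over-K} modulo an admissible error. Proposition~\ref{P:recursive-ts=ts} then presents this recursive thickening space as a standard one $TS_{L/K,\log,R_{J^+}}^a$ for a suitable admissible $R_{J^+}$. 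Base-changing to $K_*$ and running the argument of Lemma~\ref{L:Dwork-sp-K_*-over-K} inside the full thickening space, the $\gothu_{0,i}$-coordinate is forced to lie in exactly one of the $p$ geometrically disjoint discs $|\gothu_{0,i} - \varpi_\gamma| \leq \theta^{a-(p-2)/p}$, and the whole space splits into $p$ components indexed by $\gamma \in \{1,\dots,p\}$.

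On the $\gamma$-component I parameterize $\gothu_{0,i} = \varpi_\gamma + \eta_0$; pushed back to $\calR_K$ this is precisely $f_\gamma^*$, and the bound $|\eta_0| \leq \theta^{a-(p-2)/p} = |\varpi_\gamma|^{pa-p+2}$ matches log-level $pa-p+1$ in the $K_*$-normalization $|\varpi_\gamma| = \theta^{1/p}$. The commutativity of~\eqref{E:KK_*-base-change} says that for every $h \in \calO_K$ one has $f_\gamma^*(\psi_K(h)) = \psi_{K_*,\gamma}(h)$, so each surviving relation $\psi_K(\gothp_\lambda) + \gothR_\lambda$ or $\psi_K(\gothp_{0,i'}) + \gothR_{0,i'}$ (with $i' \neq i$) becomes $\psi_{K_*,\gamma}$ applied to the analogous relation for $L/K_*$, plus an admissible error measured in the $K_*$-normalization; a second application of Proposition~\ref{P:recursive-ts=ts}, now over $K_*$, then identifies the $\gamma$-component with the standard thickening space $TS_{L/K_*,\log,\psi_{K_*,\gamma}}^{pa-p+1}$. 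The main obstacle will be executing the Dwork splitting inside the \emph{full} thickening space rather than the pure Dwork space of Lemma~\ref{L:Dwork-sp-K_*-over-K}: one must show that the admissible error $\gothR_{0,i}$ is strictly dominated by the discriminant contribution $|\varpi_\gamma - \varpi_{\gamma'}|^{p-1} = \theta^{2(p-1)/p}$ of the Dwork equation so that the factorization argument still produces $p$ cleanly separated components, and one must track error gauges carefully through $f_\gamma^*$ and through both applications of Proposition~\ref{P:recursive-ts=ts}; this bookkeeping is where the hypothesis $\beta_K \geq 2$ enters essentially.
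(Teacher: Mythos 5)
Your overall route agrees with the paper's: choose a recursive presentation of $\calO_L/\calO_K$ in which $\varpi$ appears as an intermediate generator so that one relation, after $\psi_K$, becomes the Dwork polynomial; split via Lemma~\ref{L:Dwork-sp-K_*-over-K}; transport each component to $K_*$-variables via $f_\gamma^*$; and use the commutativity of~\eqref{E:KK_*-base-change}. However, there is a genuine gap in your final step.

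You say the surviving relations on the $\gamma$-component become ``$\psi_{K_*,\gamma}$ applied to the analogous relation for $L/K_*$, plus an admissible error,'' and then appeal to ``a second application of Proposition~\ref{P:recursive-ts=ts}, now over $K_*$,'' to identify the component with $TS_{L/K_*,\log,\psi_{K_*,\gamma}}^{pa-p+1}$. That last step cannot work: Proposition~\ref{P:recursive-ts=ts} converts a recursive thickening space into a thickening space $TS_{L/K_*,\log,R'_{J^+}}^{pa-p+1}$ for \emph{some} admissible $R'_{J^+}$, and in no way guarantees $R'_{J^+}=0$, i.e.\ the \emph{standard} thickening space $TS_{L/K_*,\log,\psi_{K_*,\gamma}}^{pa-p+1}$ that the proposition asserts. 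If an admissible error were actually present after the splitting, you would only prove a weaker statement --- and that would matter downstream, where Theorem~\ref{T:log-subquot-p-ele} invokes Theorem~\ref{T:base-change} $m$ times starting from the standard $\psi_{K_*,\gamma}$ thickening space and needs the full error-gauge room to survive.

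The resolution is that no error arises at all on the $K_*$-side. The whole reason Construction~\ref{C:psi-K*} is set up so elaborately (in particular, the extra freedom of the index $n'$ in the $p$-basis decomposition is introduced precisely for this) is to make the diagram~\eqref{E:KK_*-base-change} commute \emph{exactly}, not merely modulo an admissible ideal as in the generic-$p$-th-root base change of Lemma~\ref{L:base-change}. Once one expresses the $\calO_{K_*}$-coefficients of the relations $p_{J^+}$ as $\calO_K$-polynomials of degree $\le p-1$ in the $z$-variable (giving $p'_{J^+}$), the relation $\psi_K(p'_{J^+})$, pushed to the $\gamma$-component by $z\mapsto\varpi_\gamma+\eta_0$ and $\delta_0\mapsto f_\gamma^*(\delta_0)$, \emph{is} $\psi_{K_*,\gamma}(p_{J^+})$, with no correction. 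Likewise the Dwork relation $\gothp_{0,1}=\gothu_{0,1}^p+\pi_K\gothu_{0,1}^{p-1}-\pi_K$ is exact, not ``up to admissible correction,'' once you take $i=1$ with $e_1=e/p$ and $\gothd_1=1$; the term $\pi_K\gothu_{0,1}^{p-1}$ lies in $\gothN^{1+1/e}$ and belongs to the standard basis expansion. The only place Proposition~\ref{P:recursive-ts=ts} is needed is on the $K$-side, to recast the recursive presentation as a standard thickening space $TS_{L/K,\log,R_{J^+}}^a$; it is that $R_{J^+}$ which absorbs the flexibility, not anything over $K_*$.
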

\begin{proof}
Write $\calO_{K_*} \langle u_{J^+} \rangle / (p_{J^+}) = \calO_L$ using Construction~\ref{C:generators-of-calI}.  Since $\calO_K \langle z \rangle / (z^p + \pi_K z^{p-1} - \pi_K) = \calO_{K_*}$, we may replace the coefficients in $p_{J^+}$ by elements in $\calO_K \langle z \rangle$ with degree $\leq p-1$ in $z$, denoting the result polynomials by $p'_{J^+}$.  Thus by Lemma~\ref{L:Dwork-sp-K_*-over-K} and the commutativity of \eqref{E:KK_*-base-change},
\begin{align*}
& \prod_{\gamma = 1}^p K_* \langle \varpi_\gamma^{-pa+p-2} \eta_0, \varpi_\gamma^{-pa+p-1} \eta_J \rangle \langle u_{J^+} \rangle \big/ (\psi_{K_*, \gamma}(p_{J^+})) \\
\cong \ & K_* \langle \pi_K^{-a-1} \delta_0, \pi_K^{-a} \delta_J \rangle \langle u_{J^+}, z \rangle \big/ \big( \psi_K(p'_{J^+}), z^p + (\pi_K + \delta_0) z^{p-1} - (\pi_K + \delta_0) \big),
\end{align*}
where the latter one is
a recursive logarithmic thickening space for $L/K$, base changed to $K_*$.
By Proposition~\ref{P:recursive-ts=ts}, this recursive logarithmic thickening space is isomorphic to a logarithmic thickening space $TS_{L/K, \log, R_{J^+}}^a$ for $L/K$ for some admissible subset $R_{J^+} \subset (\delta_{J^+}) \cdot \calS_K$.
\end{proof}

\begin{corollary}\label{C:ELK=+ELKgamma}
Let $\calE_{L/K}$ be the differential module over $A_K^1[0, \theta^{a+1}] \times A_K^m[0, \theta^a]$ coming from $TS_{L/K, \log, R_{J^+}}^a$.  For $\gamma \in \{\serie{}p\}$, let $\calE_{L/K_*, \gamma}$ be the differential module over $A_{K_*}^1[0, \theta^{a-(p-2)/p}] \times A_{K_*}^m[0, \theta^{a-(p-1)/p}]$ coming from $TS_{L/K_*, \log, \psi_{K_*, \gamma}}^{ap-p+1}$.  Then $\calE_{L/K} \otimes_K K_* \simeq \bigoplus_{\gamma = 1}^p f_{\gamma*}\calE_{L/K_*, \gamma}$.
\end{corollary}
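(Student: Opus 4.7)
The corollary should follow essentially formally from Proposition~\ref{P:BC-K*-over-K} by translating the geometric decomposition of thickening spaces into a decomposition of the associated differential modules via pushforward of structure sheaves.

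First I would recall from Construction~\ref{C:diff-eqns} that $\calE_{L/K}=\Pi_*\calO_{TS_{L/K,\log,R_{J^+}}^{a}}$ and $\calE_{L/K_*,\gamma}=\Pi_{\gamma *}\calO_{TS_{L/K_*,\log,\psi_{K_*,\gamma}}^{pa-p+1}}$ (on the appropriate étale loci), with their connections induced by the étaleness of $\Pi$ and $\Pi_\gamma$, respectively; by the hypothesis $a>1$ (and Theorem~\ref{T:etaleness-log}) the cited polydiscs sit inside these loci, so the pushforwards genuinely carry integrable connections.

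The main step is to verify that, under the identification of Proposition~\ref{P:BC-K*-over-K}, the base-changed projection $\Pi\otimes_K K_*$ restricted to the $\gamma$-th component factors as $f_\gamma\circ\Pi_\gamma$, where $f_\gamma$ is the KSK-rotation of Lemma~\ref{L:KSK-rotation} on the $\delta_0$-coordinate and the identity on the $\delta_J$-coordinates. This factorization is precisely the content of the commutative diagram~\eqref{E:KK_*-base-change} set up in Construction~\ref{C:psi-K*}: the relations $\psi_{K_*,\gamma}(p_{J^+})$ cutting out the $K_*$-thickening space coincide, term by term, with the $f_\gamma^*$-image of the relations $\psi_K(p_{J^+})+R_{J^+}$ cutting out the $K$-thickening space on the $\gamma$-th component, which is exactly the computation carried out in the proof of Proposition~\ref{P:BC-K*-over-K}.

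Granting this factorization, the conclusion is immediate by pushforward:
\[
\calE_{L/K}\otimes_K K_*\;=\;(\Pi\otimes_K K_*)_*\calO\;\cong\;\bigoplus_{\gamma=1}^p (f_\gamma\circ\Pi_\gamma)_*\calO\;=\;\bigoplus_{\gamma=1}^p f_{\gamma*}\calE_{L/K_*,\gamma}.
\]
The identification respects the connections because $f_\gamma$ is a $K$-morphism, so pulling back functions commutes with the $K$-derivations $\partial_0,\partial_J$ defining the connections on both sides. The one mild subtlety I anticipate is the mismatch of $\delta_J$-radii: the natural polydisc for $\calE_{L/K_*,\gamma}$ carries $\delta_J$-radius $\theta^{a-(p-1)/p}$, which is strictly larger than the radius $\theta^a$ appearing on the left-hand side, forcing an implicit restriction of $\calE_{L/K_*,\gamma}$ to the subpolydisc $|\delta_J|\le\theta^a$ before applying $f_{\gamma*}$. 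Since restriction to an affinoid subdomain is exact and commutes both with pushforward along $f_\gamma$ and with the connection, this amounts to routine bookkeeping rather than a genuine obstacle.
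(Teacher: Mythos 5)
Your proposal is correct and takes the same route the paper takes: Corollary~\ref{C:ELK=+ELKgamma} really is just Proposition~\ref{P:BC-K*-over-K} (with Lemma~\ref{L:Dwork-sp-K_*-over-K}) restated at the level of pushforward structure sheaves, using that the isomorphism of thickening spaces constructed there intertwines $\Pi\otimes_K K_*$ with $f_\gamma\circ\Pi_\gamma$ on the $\gamma$-th component, and your observation about restricting the $\delta_J$-radius to $\theta^a$ before applying $f_{\gamma*}$ is the right way to read the statement. The only quibbles are cosmetic: the identification of relations is not literally term-by-term (Proposition~\ref{P:recursive-ts=ts} supplies an additional $\calR_K$-algebra isomorphism to absorb the error terms into some admissible $R_{J^+}$), and Theorem~\ref{T:etaleness-log} guarantees the relevant polydiscs lie in the \'etale locus only once $a\ge b_{\log}(L/K)-\epsilon$, not for all $a>1$; but neither affects the argument, since the asserted isomorphism is to be read over the common \'etale locus in any case.
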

\begin{proof}
It follows from Lemma~\ref{L:Dwork-sp-K_*-over-K} and Proposition~\ref{P:BC-K*-over-K}.
\end{proof}

\subsection{Subquotients of logarithmic ramification filtration}
\label{S:log-HA-thm}

In this subsection, we prove Theorem~\ref{T:log-subquot-p-ele} that the subquotients $\Fil^a_\log G_K / \Fil^{a+}_\log G_K$ of logarithmic ramification filtration are abelian groups killed by $p$ if $a \in \QQ_{>0}$ and are trivial if $a \notin \QQ$.  This uses the tricky base change discussed in previous subsection.

We assume Hypothesis~\ref{H:L-Galois} until we state the main Theorem~\ref{T:log-subquot-p-ele}.

\begin{notation}
\label{N:tilde-K-gamma}
Fix $\gamma \in \{\serie{}p\}$.  Let $(b_J)$ be a finite $p$-basis of $K$.  It naturally gives a $p$-basis of $K_*$.  Denote by $K(x_J)^\wedge$ the completion of $K(x_J)$ with respect to the $(1, \dots, 1)$-Gauss norm and by $K'$ the completion of the maximal unramified extension of $K(x_J)^\wedge$.  Write $K'_* = K_*K'$ and $L' = K'_*L$.  Set
$$
\widetilde K_\gamma = K'_*((b_J + x_J \varpi_\gamma^{p-1})^{1/p}).
$$
Denote $\beta_J = (b_J + x_J \varpi_\gamma^{p-1})^{1/p}$ for simplicity.  Denote the residue fields of $\widetilde K_\gamma$ and $\widetilde L_\gamma = L \widetilde K_\gamma$ by $\tilde k$ and $\tilde l$, respectively.  Take the uniformizer and $p$-basis of $\widetilde K_\gamma$ to be $\varpi_\gamma$ and $\{\beta_J, x_J\}$, respectively.
\end{notation}

\begin{situation}\label{Sit:field-extension}
We have the following diagram of field extensions:
$$
\xymatrix{
L \ar@{-}[d] \ar@{-}[r] & L' \ar@{-}[r] \ar@{-}[d] & \widetilde L_\gamma \ar@{-}[d]\\
K_* \ar@{-}[r] \ar@{-}[d] & \ar@{-}[r] K'_* \ar@{-}[d] & \widetilde K_\gamma \\
K \ar@{-}[r] & K'
}
$$
Note that $(\widetilde K_\gamma)_{\gamma = \serie{}p}$ are extensions of $K'_*$ conjugate over $K'$.  The ramification filtrations on $G_{\widetilde K_\gamma}$ are stable under the conjugate action of $\Gal(K'_* / K')$.  Precisely, for any $b \geq 0$ and $g \in \Gal(K'_* / K')$, $g \Fil^b_\log G_{\widetilde K_\gamma} g^{-1} = \Fil^b_\log G_{g(\widetilde K_\gamma)}$ and $g \Fil^b G_{\widetilde K_\gamma} g^{-1} = \Fil^b G_{g(\widetilde K_\gamma)}$ inside $G_{K'}$.  In particular, since $L' / K'$ and hence 
$\widetilde L_\gamma / \widetilde K_\gamma$ is Galois, $b(\widetilde L_\gamma / \widetilde K_\gamma)$ and $b_\log(\widetilde L_\gamma / \widetilde K_\gamma)$ do not depend on $\gamma = \serie{}p$.
\end{situation}

For the following theorem, we do not impose any supplementary hypothesis on the field $K$.

\begin{theorem}\label{T:log-subquot-p-ele}
Let $K$ be a complete discrete valuation field of mixed characteristic $(0, p)$.  Let $G_K$ be its Galois group.  Then the subquotients $\Fil^a_\log G_K / \Fil^{a+}_\log G_K$ of the logarithmic ramification filtration are trivial if $a \notin \QQ$ and are abelian groups killed by $p$ if $a \in \QQ_{>0}$.
\end{theorem}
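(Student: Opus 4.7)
The plan is to reduce to the non-logarithmic Hasse-Arf Theorem~\ref{T:main-thm-nonlog} via the wildly ramified base change $K \to K_*$ of Lemma~\ref{L:K*-over-K}, executing the Kedlaya-style construction set up in Subsection~\ref{S:example}. First I would carry out the standard reductions used in the proofs of Proposition~\ref{P:gen-rot=>HA-thm} and Theorem~\ref{T:log-HA-thm}: passing to a finite Galois quotient $G_{L/K}$, arranging Hypothesis~\ref{H:J-finite-set}, and performing a preliminary tame base change to achieve $\beta_K \geq 2$. It then suffices to prove: if $b \in \RR_{>0}$ is a logarithmic ramification break of $L/K$, then $b \in \QQ$ and $\Fil^b_\log G_{L/K}/\Fil^{b+}_\log G_{L/K}$ is abelian killed by $p$. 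By a further tame base change $K \mapsto K(\pi_K^{1/n})$ for $n$ coprime to $p$ with $nb > 1$ (preserving the subquotient via Proposition~\ref{P:AS-space-properties}(4)) we may assume $b > 1$, and by replacing $L$ by $LK_*$ (which only enlarges the subquotient, and both ``abelian killed by $p$'' and ``trivial'' pass to quotients) we may assume $K_* \subseteq L$, so that $G_{L/K_*}$ is an index-$p$ normal subgroup of $G_{L/K}$.

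The central move is to invoke Corollary~\ref{C:ELK=+ELKgamma}: after base change from $K$ to $K_*$, the log differential module $\calE_{L/K}$ decomposes as $\bigoplus_\gamma f_{\gamma*}\calE_{L/K_*,\gamma}$. Via Proposition~\ref{P:IR-KSK-trick}, the intrinsic $\partial_0$-radius of $f_\gamma^*\calE_{L/K}$ at radius $a-(p-2)/p$ agrees with that of $\calE_{L/K}$ at radius $a+1$. Combined with the polyhedral structure of the zero locus from Proposition~\ref{P:prop-diff-eqns}(c) and the break interpretation in Corollary~\ref{C:AS-break=spec-norms}, this translates the log break $b$ of $L/K$ into a non-log break $b^* = pb-(p-2)$ of $L/K_*$ (using $|\varpi| = \theta^{1/p}$). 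Crucially, Proposition~\ref{P:BC-K*-over-K}, which identifies the log thickening space over $K$ with a disjoint union of log thickening spaces over $K_*$, promotes this numerical coincidence to an identification of Galois-group filtrations
\[
\Fil^b_\log G_{L/K} \cap G_{L/K_*} = \Fil^{b^*} G_{L/K_*},
\]
compatibly with the ``$b+$'' convention. Since $b > 1$ forces $\Fil^b_\log G_{L/K} \subseteq G_{L/K_*}$ by Proposition~\ref{P:AS-space-properties}(6)--(7), the left side is $\Fil^b_\log G_{L/K}$ itself.

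With this in hand, I apply the non-logarithmic Hasse-Arf Theorem~\ref{T:main-thm-nonlog} to $L/K_*$: because $\beta_{K_*} = p\beta_K \geq 2p$, $K_*$ is not absolutely unramified, and Theorem~\ref{T:main-thm-nonlog} gives that the subquotient $\Fil^{b^*}G_{L/K_*}/\Fil^{b^*+}G_{L/K_*}$ is trivial when $b^* \notin \QQ$ and is abelian killed by $p$ when $b^* \in \QQ$. Since $b \in \QQ \iff b^* \in \QQ$, the same conclusions hold for $\Fil^b_\log G_{L/K}/\Fil^{b+}_\log G_{L/K}$, as required.

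The hard part will be the middle step: converting the differential-module decomposition of Corollary~\ref{C:ELK=+ELKgamma} (a statement about intrinsic radii) into the genuine identification of filtrations displayed above, not merely an equality of break \emph{numbers}. One must trace through the correspondence between geometric connected components of the recursive thickening spaces appearing in Proposition~\ref{P:BC-K*-over-K} and the Galois-group filtrations on both sides, verifying that the admissibility and error gauge of the $R_{J^+}$ produced by base change are preserved. This is precisely where the arrangement $\beta_K \geq 2$ becomes essential, in line with the exclusion of the absolutely unramified non-logarithmic case from Theorem~\ref{T:main-thm-nonlog}; once the compatibility at the level of thickening spaces is verified, the Hasse-Arf transfer from $L/K_*$ to $L/K$ is purely formal.
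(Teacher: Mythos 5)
Your plan correctly identifies the macro-structure of the paper's argument — use the wildly ramified auxiliary extension $K_*$ of Lemma~\ref{L:K*-over-K}, decompose via Proposition~\ref{P:BC-K*-over-K} and Corollary~\ref{C:ELK=+ELKgamma}, and ship the problem to the non-logarithmic Hasse-Arf Theorem~\ref{T:main-thm-nonlog} — but there is a genuine gap at the center: you try to read off a \emph{non-logarithmic} break of $L/K_*$ directly, and this cannot be done from the data you have. Proposition~\ref{P:BC-K*-over-K} equates the log thickening space for $L/K$ with a disjoint union of \emph{log} thickening spaces for $L/K_*$, and Proposition~\ref{P:IR-KSK-trick} controls only the $\partial_0$-radius. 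Together these tell you where the line $s_0 - 1 = s_1 = \cdots = s_m$ (in $K$-normalization) meets the boundary of $Z(\calE_{L/K_*,\gamma})$, but the non-log break of $L/K_*$ is governed by the line $s_0 = s_1 = \cdots = s_m$ (in $K_*$-normalization). Those two lines meet the polyhedral boundary of $Z$ at the same point only when the $\delta_0$-direction dominates, and over $K_*$ there is no reason for that to happen. So the numerical claim $b(L/K_*) = pb_\log(L/K) - (p-2)$ is unjustified, and your ``identification of filtrations'' $\Fil^b_\log G_{L/K} \cap G_{L/K_*} = \Fil^{b^*}G_{L/K_*}$ is not proved by anything you cite.

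The missing idea is precisely the step the paper inserts between $K_*$ and the Hasse-Arf theorem: one must first perform $m$ generic rotations (Notation~\ref{N:tilde-K-gamma}), passing from $K_*$ to $\widetilde K_\gamma = K'_*((b_J + x_J \varpi_\gamma^{p-1})^{1/p})$, and apply Theorem~\ref{T:base-change} $m$ times to see that the resulting fibre product is again an \emph{admissible} recursive thickening space for $\widetilde L_\gamma/\widetilde K_\gamma$. This is what forces the $\delta_0$-direction to dominate, so that the non-log break of $\widetilde L_\gamma/\widetilde K_\gamma$ is pinned down by the chain of equalities $IR(\tilde f_\gamma^* \calE_{L/K_*,\gamma}; \underline s) = IR(\calE_{L/K_*,\gamma}; s, \underline{s+\tfrac{p-2}{p}}) = IR((f_\gamma)_*\calE_{L/K_*,\gamma}; s+\tfrac{2p-2}{p}, \underline{s+\tfrac{p-2}{p}})$ (Lemma~\ref{L:IR-under-gen-rot} and Proposition~\ref{P:IR-KSK-trick}), giving $b(\widetilde L_\gamma/\widetilde K_\gamma) = pb_\log(L/K) - p + 2$. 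One then applies Theorem~\ref{T:main-thm-nonlog}(2) to $\widetilde K_\gamma$ — not to $K_*$ — and only an \emph{inclusion} $\Fil^b_\log G_{L/K} \subseteq \Fil^{pb-p+2}G_{\widetilde L_\gamma/\widetilde K_\gamma}$ is needed, which follows by running the break computation for every intermediate Galois extension, not by an identification of filtrations. Finally, the tame base change must be arranged so that $p\beta_K \geq m(p-1)+1$ (not merely $\beta_K \geq 2$), since each of the $m$ generic rotations of degree $p-1$ costs $p-1$ units of error gauge and admissibility must survive.
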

\begin{proof}
We will proceed as in the proof of Theorem~\ref{T:main-thm-nonlog}.  Fix $a>0$.  Let $L$ be a finite Galois extension of $K$ with Galois group $G_{L/K}$ with an induced ramification filtration.  We may assume that $\Fil_\log^{a+}G_{L/K}$ is the trivial group but $\Fil_\log^a G_{L/K}$ is not.  We may also assume Hypothesis~\ref{H:J-finite-set}.  Furthermore, by Proposition~\ref{P:AS-space-properties}(4), we are free to make a tame base change and assume that $a = b_\log(L/K) > 1$ and $p\beta_K \geq m(p-1) + 1$.  Finally, we may replace $L$ by $LK_*$ since $b_\log(K_* / K) = 1$ by Lemma~\ref{L:K*-over-K}.  We need to show that $\Fil_\log^a G_{L/K}$ is an abelian group killed by $p$ if $a \in \QQ_{>1}$ and is trivial if $a \notin \QQ$.

We claim that each of the logarithmic ramification breaks $b>1$ of $L/K$ will become a non-log ramification break $bp - p + 2$ on $\widetilde L_1 / \widetilde K_1$.  In other words, $\Fil_\log^b G_{L/K} \subseteq \Fil^{pb - p+2} G_{\widetilde L_\gamma / \widetilde K_\gamma}$ for any $\gamma \in \{1, \dots, p\}$ and $b >1$.  (It does not matter which $\gamma$ we choose as they give the same answer by Situation~\ref{Sit:field-extension}.)  Then the theorem is a direct consequence of the non-logarithmic Hasse-Arf theorem~\ref{T:main-thm-nonlog}(2).

To prove the claim, it suffices to prove the highest ramification breaks as the others will follow from the calculation of the other $L$'s.

For each $\gamma \in \{\serie{}p\}$, there exists a unique continuous $\calO_{K_*} \llbracket \eta_0/\varpi_\gamma \rrbracket$-homomorphism $\tilde f_\gamma^*: \calO_{K_*}\llbracket \eta_0 / \varpi_\gamma, \delta_J \rrbracket \rar \calO_{\widetilde K_\gamma} \llbracket \eta_0 /\varpi_\gamma, \eta_J, \eta'_J \rrbracket$ such that $\tilde f_\gamma^*{\delta_j} = (\beta_j + \eta_j)^p - (x_j + \eta'_j)(\varpi_\gamma + \eta_0)^{p-1} - b_j$ for $j \in J$.  For $a>1$, $\tilde f_\gamma^*$ gives a morphism $\tilde f_\gamma: A_{\widetilde K_\gamma}^{2m+1}[0, \theta^a] \rar A_{K_*}^{m+1}[0, \theta^a]$.

Let $TS_{L/K_*, \psi_{K_*, \gamma}}^a$ be the standard thickening space for $L/K_*$ and $\psi_{K_*, \gamma}$.  We have a Cartesian diagram
$$
\xymatrix{
& \ar[ld] TS_{L/K_*, \psi_{K_*, \gamma}}^a \ar[d]^\Pi & \ar[l]_-{\tilde f_\gamma} TS_{L/K_*, \psi_{K_*, \gamma}}^a \times_{A_{K_*}^{m+1}[0, \theta^a], \tilde f_\gamma} A_{\widetilde K_\gamma}^{2m+1}[0, \theta^a] \ar[d]^\Pi \\
A_{K_*}^1[0, \theta^{a+\frac{2p-2}p}] \times A_{K_*}^m[0, \theta^a] & \ar[l]_-{f_\gamma} A_{K_*}^{m+1}[0, \theta^a] & \ar[l]_{\tilde f_\gamma} A_{\widetilde K_\gamma}^{2m+1}[0, \theta^a]
}
$$
By applying Theorem~\ref{T:base-change} $m$ times, $TS_{L/K_*, \psi_{K_*, \gamma}}^a \times_{A_{K_*}^{m+1}[0, \theta^a], \tilde f_\gamma} A_{\widetilde K_\gamma}^{2m+1}[0, \theta^a]$ is an admissible recursive non-logarithmic thickening space (of error gauge $\geq p\beta_K - m(p-1) \geq 1$), which is isomorphic to an admissible non-logarithmic thickening space for $\widetilde L_\gamma / \widetilde K_\gamma$ by Proposition~\ref{P:recursive-ts=ts}.  Thus $\tilde f_\gamma^* \calE_{L/K_*, \gamma}$ is a differential module associated to $\widetilde L_\gamma / \widetilde K_\gamma$.

By Proposition~\ref{P:IR-KSK-trick} and Lemma~\ref{L:IR-under-gen-rot}, we have
$$
IR(\tilde f_\gamma^* \calE_{L/K_*, \gamma}; \underline s) = IR \Big(\calE_{L/K_*, \gamma}; s, \underline {s+\frac{p-2}p} \Big) = IR \Big( (f_\gamma)_*\calE_{L/K_*, \gamma}; s+\frac{2p-2}p, \underline {s+\frac{p-2}p} \Big).
$$
The claim follows by Corollaries~\ref{C:ELK=+ELKgamma} and \ref{C:AS-break=spec-norms}.
\end{proof}

\bibliographystyle{plain}

\begin{thebibliography}{10}

\bibitem{AM-sous-groupes}
Ahmed Abbes and Abdellah Mokrane.
\newblock Sous-groupes canoniques et cycles \'evanescents {$p$}-adiques pour
  les vari\'et\'es ab\'eliennes.
\newblock {\em Publ. Math. Inst. Hautes \'Etudes Sci.}, (99):117--162, 2004.

\bibitem{AS-cond1}
Ahmed Abbes and Takeshi Saito.
\newblock Ramification of local fields with imperfect residue fields.
\newblock {\em Amer. J. Math.}, 124(5):879--920, 2002.

\bibitem{AS-cond2}
Ahmed Abbes and Takeshi Saito.
\newblock Ramification of local fields with imperfect residue fields, {II}.
\newblock {\em Doc. Math.}, (Extra Vol.):5--72 (electronic), 2003.
\newblock Kazuya Kato's fiftieth birthday.

\bibitem{Andre-semiconti-irr}
Yves Andr{\'e}.
\newblock Structure des connexions m\'eromorphes formelles de plusieurs
  variables et semi-continuit\'e de l'irr\'egularit\'e.
\newblock {\em Invent. Math.}, 170(1):147--198, 2007.

\bibitem{Berkovich-book}
Vladimir~G. Berkovich.
\newblock {\em Spectral theory and analytic geometry over non-{Archimedean}
  fields}, volume~33 of {\em Mathematical Surveys and Monographs}.
\newblock American Mathematical Society, Providence, RI, 1990.

\bibitem{Borger-conductor}
James~M. Borger.
\newblock Conductors and the moduli of residual perfection.
\newblock {\em Math. Ann.}, 329(1):1--30, 2004.

\bibitem{ChrDwork-dif-mod-on-annulus}
G.~Christol and B.~Dwork.
\newblock Modules diff\'erentiels sur des couronnes.
\newblock {\em Ann. Inst. Fourier (Grenoble)}, 44(3):663--701, 1994.

\bibitem{CM-index-thm-III}
G.~Christol and Z.~Mebkhout.
\newblock Sur le th\'eor\`eme de l'indice des \'equations diff\'erentielles
  {$p$}-adiques. {III}.
\newblock {\em Ann. of Math. (2)}, 151(2):385--457, 2000.

\bibitem{ChrRobba-pDE}
Gilles Christol and Philippe Robba.
\newblock {\em \'{E}quations diff\'erentielles {$p$}-adiques}.
\newblock Actualit\'es Math\'ematiques. Hermann, Paris, 1994.
\newblock Applications aux sommes exponentielles.

\bibitem{Hattori-ram-ffgs}
Shin Hattori.
\newblock Ramification of a finite flat group scheme over a local field.
\newblock {\em J. Number Theory}, 118(2):145--154, 2006.

\bibitem{Hattori-tame-char}
Shin Hattori.
\newblock Tame characters and ramification of finite flat group schemes.
\newblock {\em J. Number Theory}, 128(5):1091--1108, 2008.

\bibitem{Kato-log-schemes}
Kazuya Kato.
\newblock Logarithmic structures of {F}ontaine-{I}llusie.
\newblock In {\em Algebraic analysis, geometry, and number theory (Baltimore,
  MD, 1988)}, pages 191--224. Johns Hopkins Univ. Press, Baltimore, MD, 1989.

\bibitem{Kato-Saito-Bloch-cond}
Kazuya Kato and Takeshi Saito.
\newblock On the conductor formula of {B}loch.
\newblock {\em Publ. Math. Inst. Hautes \'Etudes Sci.}, (100):5--151, 2004.

\bibitem{KSK-notes}
Kiran~S. Kedlaya, $p$-adic differential equations, \textit{Cambridge Studies in Advanced Mathematics} \textbf{125}, Cambridge Univ. Press, 2010. 

\bibitem{KSK-Swan2}
Kiran~S. Kedlaya, Swan conductors for $p$-adic differential modules, 
II: Global variation, to appear in \textit{Journal de l'Institut de Mathematiques de Jussieu}.

\bibitem{KSK-overview}
Kiran~S. Kedlaya.
\newblock Local monodromy of {$p$}-adic differential equations: an overview.
\newblock {\em Int. J. Number Theory}, 1(1):109--154, 2005.

\bibitem{KSK-Swan1}
Kiran~S. Kedlaya.
\newblock Swan conductors for {$p$}-adic differential modules. {I}. {A} local
  construction.
\newblock {\em Algebra Number Theory}, 1(3):269--300, 2007.

\bibitem{KSK-me-pDE-TRP}
Kiran~S. Kedlaya and Liang Xiao, Differential modules on $p$-adic polyannuli, \textit{Journal de l'Institut de Mathematiques de Jussieu} \textbf{9} (2010), 155-201; erratum, \textit{ibid.} \textbf{9} (2010), 669-671.

\bibitem{saito3}
Takeshi Saito, Ramification of local fields with imperfect residue fields, {III}, to appear in \textit{Mathematische Annalen}.

\bibitem{BOOK-local-fields}
Jean-Pierre Serre.
\newblock {\em Local fields}, volume~67 of {\em Graduate Texts in Mathematics}.
\newblock Springer-Verlag, New York, 1979.

\bibitem{Sweedler-insep-extn}
Moss~Eisenberg Sweedler.
\newblock Structure of inseparable extensions.
\newblock {\em Ann. of Math. (2)}, 87:401--410, 1968.

\bibitem{Me-condI}
Liang Xiao.
\newblock {On ramification filtrations and $p$-adic differential equations, I:
  equal characteristic case}.
\textit{Algebraic Number Theory}, 4-8 (2010), 969--1027.

\end{thebibliography}

\end{document}